\def\RSthmtxt{theorem~}\newref{thm}{name = \RSthmtxt}}
\def\RSlemtxt{lemma~}\newref{lem}{name = \RSlemtxt}}
\theoremstyle{plain}
\newtheorem*{thm*}{\protect\theoremname}
\theoremstyle{plain}
\newtheorem{thm}{\protect\theoremname}[section]
\theoremstyle{definition}
\newtheorem{defn}[thm]{\protect\definitionname}
\theoremstyle{plain}
\newtheorem{lem}[thm]{\protect\lemmaname}
\theoremstyle{remark}
\newtheorem{rem}[thm]{\protect\remarkname}
\providecommand{\definitionname}{Definition}
\providecommand{\lemmaname}{Lemma}
\providecommand{\remarkname}{Remark}
\providecommand{\theoremname}{Theorem}
\providecommand{\definitionname}{Definition}
\providecommand{\lemmaname}{Lemma}
\providecommand{\remarkname}{Remark}
\providecommand{\theoremname}{Theorem}
\begin{document}
\title{Sentences over Random Groups II: Sentences of Minimal Rank}
\author{Sobhi Massalha}
\maketitle
\begin{abstract}
Random groups of density $d<\frac{1}{2}$ are infinite hyperbolic,
and of density $d>\frac{1}{2}$ are finite. We prove the existence
of a uniform quantifier elimination procedure for formulas of minimal
rank (probably the superstable part of the theory). Namely, given
a minimal rank formula $V(p)$, we prove the existence of a formula
$\varphi(p)$ that belongs to the Boolean algebra of two quantifiers,
so that the two formulas $V(p)$ and $\varphi(p)$ define the same
set over the free group $F_{k}$ and over a random group of density
$d<\frac{1}{2}$. We conclude that any given sentence of minimal rank
is a truth sentence over the free group $F_{k}$ if and only if it
is a truth sentence over random groups of density $d<\frac{1}{2}$. 
\end{abstract}
\tableofcontents{}

\section{Introduction}\label{sec:Introduction}

Around 1945, a well-known question was presented by Alfred Tarski
on the first order theory of free groups. He asked if every two non-abelian
finitely generated free groups are elementary equivalent. His question
was answered affirmatively by Z. Sela through his seminal work, which
was published in a series consisting of seven papers titled ``Diophantine
Geometry'' (\cite{DGI,DGII,DGIII,DGIV,DGV,DGV2,DGVI}). For a different
approach to Tarski's problem, see also \cite{Elementary=000020theory=000020of=000020free=000020non-abelian=000020groups}.

Actually, Sela obtained a classification of all the f.g. (finitley
generated) groups that are elementary equivalent to a non-abelian
f.g. free group. In \cite{DGVII}, Sela generalized his work to a
general torsion-free hyperbolic group.

Hence, all the non-abelian f.g. free groups share the same collection
of truth sentences. This collection is called the theory of free groups.
In this paper, we will be interested in the groups that can or cannot
be distinguished from the free groups by a single sentence.

In the previous paper in this sequence, we have proved that given
a first order sentence $\psi$ in the Boolean algebra of universal
sentences, the sentence $\psi$ is a truth sentence over non-abelian
free groups if and only if $\psi$ is a truth sentence over a random
group with overwhelming probability in the (Gromov) density model
with density $d<\frac{1}{2}$.

In this paper, we consider general sentences of minimal rank. Our
aim in this paper is proving that every given minimal rank sentence
is a truth sentence over free groups if and only if it is a truth
sentence over a random group of density $d<\frac{1}{2}$. 
\begin{thm*}
(Main Theorem). Let $\psi$ be a first order sentence of minimal rank.
Let $\Gamma$ be a random group of density $d<\frac{1}{2}$. Then
$\psi$ is a truth sentence over the free group $F_{k}$ if and only
if $\psi$ is a truth sentence over $\Gamma$ (in overwhelming probability). 
\end{thm*}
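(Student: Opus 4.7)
The strategy is to derive the Main Theorem from the quantifier-elimination statement announced in the abstract, which is the main technical content of the paper: for every minimal rank formula $V(p)$ there is a formula $\varphi(p)$ in the Boolean algebra of two-quantifier formulas such that $V(p)$ and $\varphi(p)$ define the same set both over $F_k$ and over a typical random group $\Gamma$ of density $d<\frac{1}{2}$. Given this equivalence for formulas, the sentence version follows by writing $\psi$ as a short quantifier prefix applied to a minimal rank formula $V(p)$, replacing $V$ by $\varphi$, and applying the transfer theorem for the Boolean algebra of universal sentences from the previous paper in the sequence.

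\textbf{Steps.} First, over $F_k$, I would invoke Sela's sieve procedure for minimal rank Diophantine sets from \cite{DGV,DGV2}: the Makanin--Razborov diagram of the defining system is finite, its terminal vertices are rigid or solid limit groups, and the closures of these terminal resolutions organize the solution set of $V(p)$ as a finite Boolean combination of $\forall\exists$ and $\exists\forall$ formulas. This produces the candidate $\varphi(p)$. Second, I would re-run the same sieve over $\Gamma$: at density $d<\frac{1}{2}$ the random group is torsion-free hyperbolic, and the techniques of the previous paper imply that every homomorphism from a limit group $L$ into $\Gamma$ either factors through a proper quotient of $L$ already present in the MR diagram, or else avoids all random relators (an event controlled by the density estimate). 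Hence each sieve step lifts to $\Gamma$ with overwhelming probability, and the descending chain condition for limit groups guarantees termination in the same finite number of steps, so $V(p)\iff\varphi(p)$ holds over $\Gamma$ as well. Third, substituting $\varphi$ for $V$ inside $\psi$ yields a sentence of bounded quantifier depth that lies, after collapsing adjacent quantifiers, in the Boolean algebra handled by the previous paper, and the transfer result established there delivers the conclusion.

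\textbf{Main obstacle.} The substantial work lies in the second step: porting Sela's sieve machinery---formal solutions, test sequences, strict and closure resolutions, JSJ and modular decompositions---from $F_k$ to $\Gamma$ with \emph{uniform} probabilistic control. At each step one must argue that any exceptional test-sequence witness in $\Gamma$ which is unaccounted for by the free-group analysis occurs only with negligible probability in the density model, and that these failure probabilities, summed over the finitely many sieve steps, remain negligible. A secondary difficulty is certifying termination in $\Gamma$ via the same finite tree of limit quotients as in $F_k$, without losing the uniformity needed for the overwhelming-probability transfer.
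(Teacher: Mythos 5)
Your overall architecture matches the paper's: reduce $\psi$, via the uniform quantifier elimination for minimal rank formulas, to a sentence in the Boolean algebra of two-quantifier sentences that defines the same truth value over $F_{k}$ and over $\Gamma$, and then transfer truth of the reduced sentence. The reduction itself is carried out in the paper by an induction on the quantifier prefix (\defref{67} and \thmref{68}): one peels off the innermost $\forall y\,\exists x$ block, replaces the projection of the resulting minimal rank $AE$-formula by an equivalent formula $R(p)$ in the Boolean algebra of $AE$-formulas (\thmref{66}), and uses the collapse of the two adjacent universal quantifiers to shorten the prefix. Your step 3 gestures at exactly this.

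However, your final transfer step contains a genuine gap. After the quantifier elimination terminates you are left with a sentence in the Boolean algebra of $AE$-sentences (two alternating quantifiers), \emph{not} in the Boolean algebra of universal sentences, and no amount of collapsing adjacent quantifiers reduces an $AE$-sentence $\forall y\,\exists x\,(\Sigma=1\wedge\Psi\neq1)$ to a Boolean combination of one-quantifier sentences. The transfer theorem of the previous paper in the sequence covers only the Boolean algebra of universal sentences, so it handles the $EA$ direction (a witness $y_{0}\in F_{k}$ for $\exists y\,\forall x$ projects to $\Gamma$ because the resulting universal sentence with constants transfers), but it says nothing about the $AE$ direction. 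That direction is precisely what \thmref{69} of this paper establishes, and its proof is not a citation of earlier work: it requires the proof systems of \secref{Proof-Systems} --- the finitely many proof systems constructed over $F_{k}$ that validate $\psi$ over $F_{k}$ are shown (via the graded formal MR diagrams, the $ExtraPS$ and $CollapseExtra$ diagrams, and the lifting property of random groups) to validate $\psi$ over $\Gamma$ with overwhelming probability. In other words, the base case of your induction is itself a main theorem of this paper, not an import from the previous one; as written, your argument is circular at that point, since the machinery needed to prove the base case is the same machinery you invoke in step 2 to justify the quantifier elimination.
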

In \cite{DGV,DGV2}, Z. Sela obtained a quantifier elimination procedure
for general formulas over free groups. He proved that every definable
set belongs to the Boolean algebra of $AE$-sets (All-Exist) over
free groups. The tools needed for that quantifier elimination procedure
are presented in \cite{DGI,DGII,DGIII,DGIV}.

For our aim in this paper, we prove the existence of a uniform quantifier
elimination for formulas of minimal rank. Namely, for a given minimal
rank formula $V(p)$, we prove the existence of a formula $\varphi(p)$
that belongs to the Boolean algebra of $AE$-formulas, so that the
two formulas $V(p)$ and $\varphi(p)$ are equivalent (i.e., define
the same set) over the free group $F_{k}$ (over which we define the
density model; $k$ is the rank of the free group $F_{k}$), and are
equivalent over almost all the groups of density $d<\frac{1}{2}$. 
\begin{thm*}
Let $0\leq d<\frac{1}{2}$, and let $V(w,p)$ be a minimal rank formula
in the Boolean algebra of $AE$-formulas. Let $\pi V(p)$ be the projection
of $V(w,p)$. There exists a formula $R(p)$ in the Boolean algebra
of $AE$-formulas so that: 
\begin{enumerate}
\item The formulas $\pi V(p)$ and $R(p)$ define the same set over $F_{k}$. 
\item Let $\Gamma$ be a random group of density $d$. With overwhelming
probability, the formulas $\pi V(p)$ and $R(p)$ define the same
set over $\Gamma$. 
\end{enumerate}
\end{thm*}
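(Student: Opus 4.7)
The plan is to combine Sela's minimal-rank quantifier elimination procedure over free groups (as developed in \cite{DGI,DGII,DGIII,DGIV,DGV}) with the universal/AE-sentence transfer principle established in the previous paper of this sequence. Sela's machinery produces, for a minimal rank AE-formula, an explicit AE-formula $R(p)$ defining the same set over $F_{k}$; the task is then to verify that every ingredient of that construction survives passage to a random quotient of density $d<\tfrac{1}{2}$.

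First I would unpack $V(w,p)$: since it is in the Boolean algebra of AE-formulas, $\pi V(p)=\{p:\exists w\,V(w,p)\}$ is, after rewriting, a projection of a Boolean combination of sets cut out by equations together with universal statements. For each piece I would run the parametric minimal-rank Makanin--Razborov diagram for the coordinate group of the variables $(w,p)$. Each terminal minimal-rank resolution produces a family of specializations parametrized by $p$. On top of that, I would apply Sela's formal-solution (``shift'') argument to replace the existential quantifier on $w$ by a bounded collection of formal solutions living on a fixed finite list of minimal-rank resolutions. The minimal-rank hypothesis guarantees the absence of positive-genus surface factors in these resolutions, so their completions behave tamely; collecting the pieces yields an AE-formula $R(p)$ which, over $F_{k}$, defines $\pi V(p)$. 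This gives part~(1).

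For part~(2) I would transfer each AE-piece of $R(p)$ to the random group $\Gamma$ using the main theorem of the previous paper. For each specialization $p_{0}$, both $p_{0}\in\pi V(p)$ and $p_{0}\in R(p)$ are AE-sentences over $\Gamma$ with parameters, and the previous paper's transfer principle identifies their truth values over $\Gamma$ with those over $F_{k}$. The crucial extra ingredient here is \emph{uniformity}: the finite list of minimal-rank resolutions and formal solutions constructed in the previous step does not depend on $p$, so only finitely many auxiliary AE-statements need to be transferred. Each holds in $\Gamma$ with overwhelming probability, and the intersection of finitely many such events still has overwhelming probability, yielding a single event on which the equivalence $\pi V(p)\equiv R(p)$ holds for every $p\in\Gamma$.

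The main obstacle will be ruling out the two failure modes that can break the equivalence after imposing random relators: \emph{spurious} existential witnesses for $w$ that exist in $\Gamma$ but not in $F_{k}$, and \emph{lost} witnesses that exist in $F_{k}$ but are killed in $\Gamma$. To exclude spurious witnesses one must show that any candidate witness in $\Gamma$ factors through one of the finitely many fixed minimal-rank resolutions used to construct $R(p)$; this uses the structure of maps from hyperbolic groups to limit quotients of $F_{k}$ together with the density-$d<\tfrac{1}{2}$ bound, which ensures that random relators cannot create a genuinely new algebraic component. To exclude lost witnesses one must show, for each fixed resolution, that generic test sequences specialize faithfully in $\Gamma$; this is precisely where the minimal-rank assumption becomes indispensable, since without positive-genus surface factors the test sequences have controlled growth and their random evaluations are injective with overwhelming probability, while the general case would require an additional layer of argument dealing with the dynamics on surface subgroups.
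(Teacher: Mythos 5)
Your outline of part~(1) is broadly consistent with the paper (Sela's minimal-rank quantifier elimination over $F_k$, organized here through Lemmas 62--64, which split $\pi V(p)$ into an $EA$-part $H(p)$ and finitely many $EAE$-projections $\pi V_i(p)$, each handled by proof systems and the approximation procedure). The genuine gap is in part~(2). The transfer principle from the previous paper applies to \emph{sentences} in the Boolean algebra of universal sentences; it does not identify, for each parameter value $p_0\in\Gamma$, the truth value of the $AE$-statement ``$p_0\in\pi V(p)$'' over $\Gamma$ with a truth value over $F_k$ (a general $p_0\in\Gamma$ is not even a specialization coming from $F_k$, and there are infinitely many such $p_0$, so one cannot intersect the corresponding overwhelming-probability events). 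Your appeal to ``uniformity'' names the right issue but supplies no mechanism: the finitely many objects that must survive passage to $\Gamma$ are not auxiliary $AE$-sentences but the diagrams themselves --- the graded formal MR diagram, the $ExtraPS$ and $CollapseExtra$ diagrams, and the finitely many proof systems --- and the paper proves they retain their defining properties over $\Gamma$ by combining the lifting property for the finitely many finite systems of equations occurring in their construction (Theorem 1) with uniform bounds on the number of $\Gamma$-exceptional families of rigid and solid limit groups (Theorems 15 and 17), with the fact that test sequences over ascending sequences of random groups converge to limit groups over $F_k$ (Theorem 2), and with an iterative approximation whose termination is forced by a strict drop in the complexity of quotient resolutions (Theorems 48--50). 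None of this is obtainable by quoting the previous paper's result.

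A secondary inaccuracy: minimal rank does not mean ``absence of positive-genus surface factors.'' It means the relevant graded limit groups admit no restricted epimorphism onto $F\ast F_k$ with $F$ a non-trivial free group; the paper's quotient-resolution analysis (Theorem 48) explicitly isolates \emph{surviving surfaces}, so surfaces are very much present. The role of the minimal-rank hypothesis is to guarantee that the quotient resolutions built from the collapse systems have complexity no larger than --- and, when the approximation fails, strictly smaller than --- that of the original completion, which is what makes the iterative construction of the formulas $\varphi^j(p)$ and of the proof systems terminate. Your proposed dichotomy of ``spurious'' versus ``lost'' witnesses gestures at the right phenomena, but resolving both directions requires exactly the two-sided comparison between the auxiliary diagrams built over ascending sequences of random groups and the diagrams built over $F_k$ (Lemmas 21 and 24, Theorems 27 and 35), not a genericity or injectivity claim about random evaluations of test sequences.
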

The uniform quantifier elimination procedure that we present in this
paper is essentially the quantifier elimination procedure for formulas
over free groups (presented in \cite{DGV}). In other words, the quantifier
elimination procedure (for minimal rank formulas) presented in \cite{DGV}
is valid over almost all the groups of density $d<\frac{1}{2}$, for
every minimal rank formula given in advance.

The paper is organized as follows.

In \secref{Limit-Groups-over-Sequences-of-Random-Groups}, we prove
that every group obtained as a limit of a convergent sequence over
an ascending (w.r.t. the levels of the model) sequence of random groups
is a limit group (over free groups). Due to the descending chain condition
of limit groups, this observation guarantees that the iterative procedures
that we will apply over random groups along the next sections must
terminate in a finite time, since these procedures run over ascending
sequences of random groups, rather than over a single one.

In \secref{Graded-Test-Sequences-over-Random-Groups}, we define graded
test sequences over ascending sequences of random groups. The main
importance of using test sequences is to preserve the structure of
the resolutions that we use, during the constructions explained in
the next sections.

\Secref{Rigid-and-Solid-Limit-Groups} is devoted to rigid and solid
graded limit groups. In this section, we recall the definition of
graded formal Makanin-Razborov diagrams, as well as the definitions
of rigid and solid graded limit groups (introduced in \cite{DGI}).
We extend the definition of exceptional families of solutions for
rigid and solid limit groups over an arbitrary group. In addition,
we prove the existence of a uniform global bound on the number of
distinct exceptional families of a given rigid or solid limit group,
over a random group (with overwhelming probability).

The approach to quantifier elimination that we adopt, requires the
development of some algebraic objects in advance. Sections 5-7 are
devoted to constructing uniform objects, which will be applied in
the next sections in order to obtain a uniform quantifier elimination
procedure over $F_{k}$ and over almost all the groups of density
$d<\frac{1}{2}$.

In particular, in the classical approach to quantifier elimination
for positive formulas over free groups, the notion of formal solutions
was used. This notion was generalized in \cite{DGII}, in order to
prove the existence of formal solutions over Diophantine sets (resolutions)
of specializations over free groups. Moreover, for obtaining a quantifier
elimination for general formulas over free groups (as presented in
\cite{DGV,DGV2}), it was necessary to encode the whole collection
of formal solutions of every given graded system of equations, in
a single Makanin-Razborov diagram, called the graded formal MR diagram
of that given graded system.

In \secref{Formal-Solutions-over-Random-Groups}, we prove that the
graded formal Makanin-Razborov diagram of a given graded system of
equations, constructed over the fixed free group $F_{k}$ (\cite{DGII}),
encodes also the collections of formal solutions over almost all the
groups of density $d<\frac{1}{2}$.

In \secref{Proof-Systems}, we explain the construction of proof systems
for a given $EAE$-formula ($\exists\,\forall\,\exists$) of minimal
rank. The construction of proof systems (tree of stratified sets)
was introduced in \cite{DGV} for $EAE$-formulas over free groups.

Given an $EAE$-formula $T(p)$ of minimal rank, we construct finitely
many proof systems that uniformly validate the truth of the sentence
$T(p_{0})$, for each value $p_{0}$ in the set defined by the formula
$T(p)$ over the free group $F_{k}$, and for each value $p_{0}$
in the set defined by the formula $T(p)$ over a random group $\Gamma$,
for almost all the groups $\Gamma$ of density $d<\frac{1}{2}$.

This validation is interpreted in terms of positive and non-positive
Diophantine conditions, together with the requirement of the existence
of a full families of exceptional solutions for some (finite) set
of solid and rigid limit groups, specified by the (finitely many)
proof systems, so that these exceptional solutions satisfy some diophantine
conditions (determined also by the proof systems).

Of course, once a quantifier elimination for positive formulas over
free groups is given, the substantial difficulty in obtaining a quantifier
elimination for general formulas over free groups is due to the existence
of inequalities.

The collection of the proof systems that we associate with a given
$EAE$-formula (of minimal rank) can be thought of as an attempt to
get rid of the inequalities, by collecting them in a formula which
belongs to the Boolean algebra of universal formulas. Unfortunately,
this cannot be achieved in general. But in \secref{An-Approximation-to-Abstract-EAE},
we will prove that the finitely many proof systems associated with
an $EAE$-formula (of minimal rank) can be used in order to construct
a formula in the Boolean algebra of $AE$-formulas that is equivalent
(define the same set) to the given $EAE$-formula, over $F_{k}$ and
over almost all the groups of density $d<\frac{1}{2}$.

Consequently, we conclude the existence of a uniform quantifier elimination
procedure for every given formula of minimal rank, over $F_{k}$ and
over almost all the groups of density $d<\frac{1}{2}$. The method
explained in \secref{An-Approximation-to-Abstract-EAE}, introduced
originally in \cite{DGV,DGV2} for $EAE$-formulas over free groups,
is called an approximation of an $EAE$-set.

Thus, the construction of proof systems for a given formula should
conceptually precede the approximation of that formula. However, in
contrast to \cite{DGV,DGV2}, in this paper we choose to reverse the
order of presentation, because we explain the approximation method
from an abstract point of view.

In \secref{Truth-MR-Sentences-over-Random-Groups}, we conclude this
work by proving that every given sentence of minimal rank, is a truth
sentence over the free group $F_{k}$ if and only if it is a truth
sentence over a random group of density $d<\frac{1}{2}$ (with overwhelming
probability).\\
 \\

\textbf{Acknowledgment.} I am indebted to my advisor Zlil Sela who
introduced me to this problem. He shared his knowledge and ideas with
me, and without his input, I couldn't have completed this work.

\newpage{}

\section{Limit Groups over Sequences of Random Groups}\label{sec:Limit-Groups-over-Sequences-of-Random-Groups}

Along all the sections of this paper, we fix an integer $k\geq2$,
a free group $F_{k}$, and a basis $a=(a_{1},...,a_{k})$ for $F_{k}$.
We also fix a real number $0\leq d<\frac{1}{2}$, and consider the
Gromov density model of density $d$. For the definition of Gromov
density model, see section 2 in the previous paper in this series.

According to theorem 10.1 in the previous paper in this series, we
know that given a (finite) system of equations $\Sigma(x,a)$, maybe
with constants (in $F_{k}$), the probability that the random group
$\Gamma$ satisfies the property that every solution of $\Sigma(x,a)$
in $\Gamma$ can be lifted to a solution of $\Sigma(x,a)$ in the
free group $F_{k}$ equals one. We use this property along all the
coming chapters, and for brevity we will refer to it by saying that
$\Gamma$ satisfies the $\Sigma(x,a)$-l.p. (lifting property). A
direct consequence of this fact is that we can lift solutions for
all the systems, but gradually and not all at once. 
\begin{thm}
\label{thm:1} Let $H$ be the collection of all the finite systems
of equations. Then, we can order the systems in $H$ in an ascending
subcollections $H_{1}\subset H_{2}\subset H_{3}\subset...$, so that
if we denote by $p_{l}$ the probability that the random group of
level $l$, $\Gamma_{l}$, satisfies the $\Sigma$-l.p. for all $\Sigma\in H_{l}$,
then $p_{l}$ converges to one as $l$ approaches $\infty$. 
\end{thm}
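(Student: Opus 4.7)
The plan is a standard diagonal/bookkeeping argument built on Theorem 10.1 of the previous paper, which states that for every single finite system of equations $\Sigma(x,a)$, the probability that $\Gamma_l$ satisfies the $\Sigma$-l.p. tends to $1$ as $l\to\infty$. First, I would observe that the collection $H$ of all finite systems of equations over $F_k$ is countable: a system is a finite tuple of finite words in the free product $F_k\ast F(x_1,x_2,\ldots)$, and the set of such tuples is countable. Fix then any enumeration $H=\{\Sigma_1,\Sigma_2,\Sigma_3,\ldots\}$.

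Next, for each $n\geq 1$, consider the event
\[
 A_{n,l}\;:=\;\bigl\{\Gamma_l\text{ satisfies the }\Sigma_i\text{-l.p.\ for every }i=1,\ldots,n\bigr\}.
\]
By Theorem 10.1 applied separately to each $\Sigma_i$, the probability of each single $\Sigma_i$-l.p.\ event tends to $1$ as $l\to\infty$. A finite union bound on complements therefore gives $\Pr(A_{n,l})\to 1$ as $l\to\infty$, for each fixed $n$. Hence for every $n$ we can choose an integer $l_n$ such that $\Pr(A_{n,l})\geq 1-\tfrac{1}{n}$ for all $l\geq l_n$, and, by increasing the $l_n$'s if necessary, we may assume $l_1<l_2<l_3<\cdots$.

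Now define the ascending chain $H_1\subset H_2\subset H_3\subset\cdots$ by setting $H_l:=\{\Sigma_1,\ldots,\Sigma_{n(l)}\}$, where $n(l)$ is the largest index $n$ with $l_n\leq l$ (and $H_l:=\emptyset$ if no such $n$ exists). By construction, for $l\geq l_n$ we have $n(l)\geq n$, so $H_l\supseteq\{\Sigma_1,\ldots,\Sigma_n\}$ and therefore
\[
 p_l\;=\;\Pr(A_{n(l),l})\;\geq\;\Pr(A_{n,l})\;\geq\;1-\tfrac{1}{n}.
\]
Since $n$ can be taken arbitrarily large, $p_l\to 1$ as $l\to\infty$. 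Finally, every system $\Sigma\in H$ appears as some $\Sigma_m$ in the enumeration, and then $\Sigma\in H_l$ for all $l\geq l_m$, so the chain exhausts $H$, as required.

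There is no real obstacle beyond the bookkeeping: the only ingredients are countability of $H$, Theorem 10.1 of the previous paper for each single system, and a union bound to pass from one system to finitely many. The mild point worth being careful about is that we cannot ask for a single level $l$ at which \emph{all} of $H$ has the lifting property simultaneously (this is why the theorem is phrased as a growing family $H_l$ rather than a fixed set), but the diagonal construction above sidesteps this cleanly.
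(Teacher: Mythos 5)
Your argument is essentially identical to the paper's proof: enumerate the countably many finite systems, use Theorem 10.1 of the previous paper plus a union bound to find levels $l_n$ past which the first $n$ systems simultaneously have the lifting property with probability at least $1-\tfrac{1}{n}$, and define $H_l$ via the maximal $n$ with $l_n\leq l$. (The paper works with the complementary probabilities $q_l^{\Sigma}$ and sums them, but that is the same union bound.)

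One displayed inequality is stated backwards: since $n(l)\geq n$, the event $A_{n(l),l}$ is contained in $A_{n,l}$, so $\Pr(A_{n(l),l})\leq\Pr(A_{n,l})$, not $\geq$. The conclusion you want still follows directly from your construction: by definition of $n(l)$ we have $l\geq l_{n(l)}$, hence $p_l=\Pr(A_{n(l),l})\geq 1-\tfrac{1}{n(l)}$, and $n(l)\to\infty$ as $l\to\infty$. Replace the middle comparison with this one-line justification and the proof is complete.
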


\begin{proof}
Let $q_{l}=1-p_{l}$. Of course, $q_{l}$ equals the probability that
there exists some system $\Sigma$ in $H_{l}$, so that the random
group of level $l$, $\Gamma_{l}$, does not satisfy the $\Sigma$-l.p..
Given a system $\Sigma\in H$, we denote by $q_{l}^{\Sigma}$, the
probability that $\Gamma_{l}$ does not satisfy the $\Sigma$-l.p..
Fixing a system $\Sigma$, we already know that $q_{l}^{\Sigma}\rightarrow0$
as $l\rightarrow\infty$. Since there exists only a countable amount
of finite systems, we can write them in a sequence $\Sigma_{1},\Sigma_{2},\Sigma_{3},...$.
Now let $1=l_{1}\leq l_{2}\leq l_{3},...$ be an ascending sequence
of integers, so that for all $n\in\mathbb{N}$, we have that $q_{l}^{\Sigma_{1}}+q_{l}^{\Sigma_{2}}...+q_{l}^{\Sigma_{n}}\leq\frac{1}{n}$
for all $l\geq l_{n}$.

For all $l\in\mathbb{N}$, let $n$ be the maximal integer for which
$l_{n}\leq l$, and we define $H_{l}$ to be $H_{l}=\{\Sigma_{1},...,\Sigma_{n}\}$.
Of course, the probability that there exists some system $\Sigma$
in $H_{l}$, so that the random group of level $l$, $\Gamma_{l}$,
does not satisfy the $\Sigma$-l.p, $q_{l}$, satisfies $q_{l}\leq q_{l}^{\Sigma_{1}}+q_{l}^{\Sigma_{2}}...+q_{l}^{\Sigma_{n}}\leq\frac{1}{n}$.

Hence $q_{l}\rightarrow0$ as $l\rightarrow\infty$. As required. 
\end{proof}
A simple but important conclusion of the last theorem is that every
limit group over any ascending sequence $\{\Gamma_{l_{n}}\}_{n}$
of random groups, is actually a limit group over the free group $F_{k}$.
More precisely: 
\begin{thm}
\label{thm:2} For each $l$, there exists a subcollection $\mathcal{N}_{l}$
of groups in the level $l$, $\mathcal{M}_{l}$, of the density model
of density $d$, so that the following conditions are satisfied: 
\begin{enumerate}
\item $\mathcal{N}_{l}$ is negligible, i.e., $\frac{|\mathcal{N}_{l}|}{|\mathcal{M}_{l}|}\rightarrow0$
as $l$ approaches $\infty$. 
\item For every convergent sequence $\{h_{l_{n}}:F_{q}\rightarrow\Gamma_{l_{n}}\}$,
where $q$ is some integer, $l_{n}$ is an increasing unbounded sequence
of integers, and $\Gamma_{l_{n}}\in\mathcal{M}_{l_{n}}\backslash\mathcal{N}_{l_{n}}$
for all $n$, the obtained limit group $F_{q}/\underrightarrow{\ker}h_{l_{n}}$
is a limit group over $F_{k}$. 
\end{enumerate}
\end{thm}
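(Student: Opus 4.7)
The plan is to define $\mathcal{N}_l$ as the failure set of the lifting properties supplied by \thmref{1} at level $l$, and then to upgrade any convergent sequence $h_{l_n}:F_d\to\Gamma_{l_n}$ avoiding $\mathcal{N}_{l_n}$ into a convergent sequence $g_n:F_d\to F_k$ with the same limit kernel. Concretely, set
\[
\mathcal{N}_l := \{\Gamma \in \mathcal{M}_l : \Gamma \text{ fails the } \Sigma\text{-l.p.\ for some } \Sigma \in H_l\}.
\]
By \thmref{1}, $|\mathcal{N}_l|/|\mathcal{M}_l| \le q_l \to 0$, so $\mathcal{N}_l$ is negligible and condition~(1) holds.

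For (2), consider a convergent sequence $\{h_{l_n}:F_d\to\Gamma_{l_n}\}$ with $\Gamma_{l_n}\in\mathcal{M}_{l_n}\setminus\mathcal{N}_{l_n}$. Set $N:=\underrightarrow{\ker}\,h_{l_n}$ and enumerate $N=\{w_1,w_2,\ldots\}$. For each $m$ the finite system $\Sigma_{(m)}:=\{w_1=\cdots=w_m=1\}$ appears somewhere in the global enumeration of all finite systems used in \thmref{1}, hence lies in $H_l$ for all sufficiently large $l$. Since convergence ensures that each $w_i$ is killed by $h_{l_n}$ for all large $n$, a diagonal argument produces indices $m(n)\to\infty$ with $\Sigma_{(m(n))}\in H_{l_n}$ and $h_{l_n}$ solving $\Sigma_{(m(n))}$ in $\Gamma_{l_n}$.

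Because $\Gamma_{l_n}$ satisfies the $\Sigma_{(m(n))}$-l.p., the solution $h_{l_n}$ lifts to a homomorphism $g_n:F_d\to F_k$ solving $\Sigma_{(m(n))}$ and compatible with the quotient $\pi_{l_n}:F_k\twoheadrightarrow\Gamma_{l_n}$, i.e.\ $\pi_{l_n}\circ g_n=h_{l_n}$. To verify $\underrightarrow{\ker}\,g_n=N$: if $w=w_i\in N$, then $g_n(w_i)=1$ for every $n$ with $m(n)\ge i$; conversely, if $w\notin N$ then $h_{l_n}(w)\ne 1$ for all large $n$, and $\pi_{l_n}(g_n(w))=h_{l_n}(w)\ne 1$ forces $g_n(w)\ne 1$ in $F_k$. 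Thus $F_d/N$ is realized as the limit of the sequence $g_n:F_d\to F_k$, so it is a limit group over $F_k$.

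The only nontrivial point is arranging the diagonal choice of $m(n)$, which requires simultaneously tracking membership of the growing subsystem $\Sigma_{(m(n))}$ in $H_{l_n}$ and its solvability by $h_{l_n}$; both are handled by the exhaustion property of the ascending family $H_1\subset H_2\subset\cdots$ from \thmref{1} together with the convergence hypothesis. After this step, the random-group input is entirely absorbed into the definition of $\mathcal{N}_l$, and the rest of the proof is formal.
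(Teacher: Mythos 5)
Your proposal is correct and follows essentially the same route as the paper: you define $\mathcal{N}_{l}$ as the failure set of the lifting property for the systems in $H_{l}$, lift growing finite subsystems of the limit kernel to $F_{k}$ using that property, and check via $\pi_{l_{n}}\circ g_{n}=h_{l_{n}}$ that the lifted sequence converges to the same limit group. The only cosmetic difference is bookkeeping: the paper organizes the diagonalization through balls $B_{m}$ and a subsequence $h_{l_{n(m)}}$, whereas you enumerate $\underrightarrow{\ker}\,h_{l_{n}}$ directly and choose $m(n)\rightarrow\infty$ along the whole sequence.
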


\begin{proof}
We start by defining $\mathcal{N}_{l}$ to be the collection of all
the groups of level $l$ that do not satisfy the $\Sigma$-l.p. for
some system $\Sigma$ in the collection $H_{l}$ ($H_{l}$ is defined
in \thmref{1}). Then, by \thmref{1}, the first condition is satisfied,
i.e., $\frac{|\mathcal{N}_{l}|}{|\mathcal{M}_{l}|}\rightarrow0$ as
$l$ approaches $\infty$. For proving the second condition, let $\{h_{l_{n}}:F_{q}\rightarrow\Gamma_{l_{n}}\}$
be a convergent sequence, where $q$ is some integer, $l_{n}$ is
an increasing unbounded sequence of integers, and $\Gamma_{l_{n}}\in\mathcal{M}_{l_{n}}\backslash\mathcal{N}_{l_{n}}$.
We consider the resulted limit group $F_{q}/\underrightarrow{\ker}h_{l_{n}}$,
and we want to prove that it is a limit group over $F_{k}$, i.e.,
that there exists a convergent sequence of homomorphisms from $F_{q}$
to $F_{k}$, so that this sequence converges to $F_{q}/\underrightarrow{\ker}h_{l_{n}}$.

Given an integer $n_{0}$, and a collection of elements $A\subset F_{q}$,
we say that the sequence $\{h_{l_{n}}:F_{q}\rightarrow\Gamma_{l_{n}}\}$
\emph{stabilizes $A$ $n_{0}$-eventually}, if for all $w\in A$,
we have that $h_{l_{n}}(w)=1$ for all $n\ge n_{0}$, or $h_{l_{n}}(w)\neq1$
for all $n\geq n_{0}$. Note that, by definition, a sequence $\{h_{l_{n}}:F_{q}\rightarrow\Gamma_{l_{n}}\}$
is convergent, if for every finite subset $A\subset F_{q}$, there
exists $n_{0}=n_{0}(A)$, so that the sequence stabilizes $A$ $n_{0}$-eventually.

Given an integer $m$, we denote by $B_{m}$ the ball of radius $m$
in $F_{q}$. For all $m$, let $n(m)$ be the minimal integer for
which the sequence $\{h_{l_{n}}:F_{q}\rightarrow\Gamma_{l_{n}}\}$
stabilizes the ball $B_{m}$ $n(m)$-eventually, and s.t. each system
of equations composed from conjunctions of words in $B_{m}$ is contained
in $H_{l_{n(m)}}$.

Now we consider the subsequence $\{h_{l_{n(m)}}:F_{q}\rightarrow\Gamma_{l_{n(m)}}\}_{m}$
of the sequence $\{h_{l_{n}}:F_{q}\rightarrow\Gamma_{l_{n}}\}$. Obviously,
the sequence $\{h_{l_{n(m)}}:F_{q}\rightarrow\Gamma_{l_{n(m)}}\}_{m}$
is convergent, and the limit group defined by it is the same limit
group defined by the original sequence $\{h_{l_{n}}:F_{q}\rightarrow\Gamma_{l_{n}}\}$.

For all $m$, let $W_{m}$ be the collection of words in $B_{m}$
who are mapped to $1$ by $h_{l_{n(m)}}$. Since the system of equations
defined by all the words in $W_{m}$ belongs to $H_{l_{n(m)}}$, and
since $\Gamma_{l_{n(m)}}\in\mathcal{M}_{l_{n(m)}}\backslash\mathcal{N}_{l_{n(m)}}$,
we get that there exists a lift $\tilde{h}_{l_{n(m)}}$ of $h_{l_{n(m)}}$
to $F_{k}$, i.e., $\tilde{h}_{l_{n(m)}}$ is a homomorphism from
$F_{q}$ to $F_{k}$ with $h_{l_{n(m)}}=\pi\circ\tilde{h}_{l_{n(m)}}$,
where $\pi=\pi_{l_{n(m)}}$ is the given natural quotient map from
$F_{k}$ onto $\Gamma_{l_{n(m)}}$, so that $\tilde{h}_{l_{n(m)}}(w)=1$
for all $w\in W_{m}$.

We obtain a sequence of homomorphisms from $F_{q}$ into $F_{k}$,
$\{\tilde{h}_{l_{n(m)}}:F_{q}\rightarrow F_{k}\}_{m}$. By the way
this sequence was constructed, it is a convergent sequence whose limit
is the limit group defined by the sequence $\{h_{l_{n(m)}}:F_{q}\rightarrow\Gamma_{l_{n(m)}}\}_{m}$,
$F_{q}/\underrightarrow{\ker}h_{l_{n}}$. We conclude that $F_{q}/\underrightarrow{\ker}h_{l_{n}}$
is indeed a limit group defined over $F_{k}$. This implies the second
condition in the claim. 
\end{proof}
The meaning of the last theorem, is that we may assume along all the
way that any convergent sequence of homomorphisms from a given f.g.
group into a sequence of groups from the density model that are ordered
with respect to their levels in the model, converges into a group
which is a limit group over $F_{k}$ and which is defined by lifts
of the sequence of the homomorphisms in the original sequence. We
may assume this property, without changing the model, i.e., we may
drop all the groups in the collection $\underset{l}{\cup}\mathcal{N}_{l}$
from the model, without changing its (asymptotic) functionality. 
\begin{thm}
(noetherianity of random groups). Let $\Sigma_{\infty}$ be an infinite
system of equations (with finite number of variables and constants).
Let $\Sigma_{f}$ be a finite system equivalent to $\Sigma_{\infty}$
over $F_{k}$ (Guba's theorem). Then, the systems $\Sigma_{\infty}$
and $\Sigma_{f}$ are equivalent over a random group (with overwhelming
probability). 
\end{thm}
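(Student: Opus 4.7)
One direction is essentially free: by Guba's theorem, a finite equivalent $\Sigma_{f}$ can always be chosen to be a \emph{subsystem} of $\Sigma_{\infty}$, so any solution of $\Sigma_{\infty}$ in an arbitrary group (in particular in $\Gamma$) is automatically a solution of $\Sigma_{f}$. Hence the content of the statement is the reverse inclusion: with overwhelming probability, every solution of $\Sigma_{f}$ in the random group $\Gamma$ already satisfies every equation of $\Sigma_{\infty}$.

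The plan is to reduce the infinite system to the finite one via the lifting property of \thmref{1}, and then to upgrade via Guba's theorem in $F_{k}$. Explicitly, since $\Sigma_{f}$ is a fixed finite system of equations with constants in $F_{k}$, it appears as one of the systems $\Sigma_{j}$ in the fixed enumeration of $H$ used in the proof of \thmref{1}; consequently $\Sigma_{f}\in H_{l}$ for all sufficiently large $l$. By \thmref{1}, the probability that the random group $\Gamma_{l}$ of level $l$ satisfies the $\Sigma_{f}$-l.p.\ tends to one as $l\to\infty$. So with overwhelming probability, given any solution $\bar{x}\in\Gamma$ of $\Sigma_{f}(x,a)=1$, there exists a lift $\tilde{\bar{x}}\in F_{k}$ with $\pi(\tilde{\bar{x}})=\bar{x}$ and $\Sigma_{f}(\tilde{\bar{x}},a)=1$ in $F_{k}$.

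Now Guba's theorem applies in $F_{k}$: since $\Sigma_{f}$ and $\Sigma_{\infty}$ define the same set of solutions over $F_{k}$, the lift $\tilde{\bar{x}}$ satisfies $\Sigma_{\infty}(\tilde{\bar{x}},a)=1$ as well. Applying the quotient map $\pi\colon F_{k}\twoheadrightarrow\Gamma$ (which, as a homomorphism, carries equations to equations), we conclude that $\bar{x}=\pi(\tilde{\bar{x}})$ satisfies $\Sigma_{\infty}(\bar{x},a)=1$ in $\Gamma$. This gives the missing inclusion, hence equivalence of the two systems over $\Gamma$.

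There is no real obstacle here beyond bookkeeping: because $\Sigma_{f}$ is a \emph{single} fixed finite system, we do not need the full strength of \thmref{1} (the simultaneous lifting for all members of $H_{l}$); the original single-system statement from theorem 10.1 of the previous paper in the series already suffices. The only point to check is that Guba's theorem, which is purely a statement over $F_{k}$, is enough to transfer information from $\Sigma_{f}$ to $\Sigma_{\infty}$ once we have landed back in the free group — which it is, by the very definition of ``equivalent over $F_{k}$''.
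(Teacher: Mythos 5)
Your proposal is correct and, for the substantive direction, is exactly the paper's argument: lift a $\Gamma$-solution of $\Sigma_{f}$ to $F_{k}$ via the lifting property, invoke the equivalence of $\Sigma_{f}$ and $\Sigma_{\infty}$ over $F_{k}$, and project back down. The only difference is that you also spell out the easy inclusion (via taking $\Sigma_{f}$ to be a subsystem of $\Sigma_{\infty}$), which the paper leaves implicit.
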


\begin{proof}
Let $\Gamma$ be a random group. Let $x\in\Gamma$ be a solution of
$\Sigma_{f}$ over $\Gamma$. Since every solution of $\Sigma_{f}$
over $\Gamma$ can be lifted to a solution of $\Sigma_{f}$ over $F_{k}$,
there exists a lift $\tilde{x}$ of $x$ to $F_{k}$, s.t. $\tilde{x}$
is a solution of $\Sigma_{f}$ over $F_{k}$. Since $\Sigma_{\infty}$
and $\Sigma_{f}$ are equivalent over $F_{k}$, we have that $\tilde{x}$
is a solution of $\Sigma_{\infty}$ over $F_{k}$. Hence, its projection
$x$ into $\Gamma$, is a solution of $\Sigma_{\infty}$ over $\Gamma$. 
\end{proof}
We end this section by recalling the definition of a sequence of homomorphisms
over an ascending sequence of groups in the model, and the fact that
the random group of level $l$ is hyperbolic with a hyperbolicity
constant linear in $l$. 
\begin{defn}
An \emph{ascending sequence of groups in the model of density $d$},
is a sequence of groups $\{\Gamma_{l_{n}}\}_{n}$, so that: 
\begin{enumerate}
\item The sequence $\{l_{n}\}_{n}$ is an unbounded increasing sequence
of integers. 
\item For all $n$, the group $\Gamma_{l_{n}}$ belongs to the level $l_{n}$
of the model (of density $d$). 
\end{enumerate}
\end{defn}

\begin{thm}
\label{thm:3} (\cite{Sharp=000020phase=000020transition=000020theorems=000020for=000020hyperbolicity=000020of=000020random=000020groups,Some=000020small=000020cancellation=000020properties=000020of=000020random=000020groups}).
At density $d<\frac{1}{2}$, for every $\epsilon>0$, with overwhelming
probability, the random group $\Gamma$ of level $l$ is torsion-free
$\frac{12l}{(1-2d-\epsilon)^{2}}$-hyperbolic.

In particular, $\Gamma$ is $\alpha_{0}l$-hyperbolic, where 
\[
\alpha_{0}=\frac{48}{(1-2d)^{2}}\,.
\]
\end{thm}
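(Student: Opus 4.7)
The plan is to reproduce the argument of Ollivier from the cited papers, which proceeds via a probabilistic small cancellation analysis followed by a quantitative passage from isoperimetric inequalities to hyperbolicity. Since the theorem is attributed to external work, the sketch below essentially rehearses that strategy rather than introducing new ideas.

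First, I would set up the small cancellation input. At level $l$ the random presentation consists of approximately $(2k-1)^{dl}$ cyclically reduced words of length $l$ drawn uniformly and independently from all such words in $F_{k}$. Fix an auxiliary parameter $\eta>0$ depending on $\epsilon$. A first-moment/union bound shows that the expected number of pairs of cyclic conjugates of relators (or their inverses) sharing a common subword of length at least $(2d+\eta)l$ is bounded, up to polynomial factors in $l$, by
\[
l^{2}\,(2k-1)^{2dl}\cdot (2k-1)^{-(2d+\eta)l}\;=\;l^{2}\,(2k-1)^{-\eta l},
\]
which tends to $0$ as $l\to\infty$. The same type of count rules out relators being proper powers, giving torsion-freeness with overwhelming probability.

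Second, I would feed this small cancellation estimate into Gromov's scheme to derive a linear isoperimetric inequality for reduced van Kampen diagrams $D$ over the random presentation, of the form $|\partial D|\geq(1-2d-\epsilon)\,l\,|D|$. The idea is that each internal edge of $D$ lies on the boundaries of two adjacent faces; the small cancellation bound forces the total length of such shared arcs between any two relator disks to be at most $(2d+\eta)l$, so the remaining boundary of each face must be absorbed into $\partial D$. A careful accounting (Euler-type argument over the diagram graph) delivers the stated linear isoperimetric constant.

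Finally, the classical quantitative passage from a linear isoperimetric inequality to Gromov hyperbolicity, applied to a Cayley $2$-complex whose faces have length $l$ and isoperimetric constant $(1-2d-\epsilon)$, yields a hyperbolicity constant of the desired form $\delta\leq 12l/(1-2d-\epsilon)^{2}$. The main obstacle is exactly this last step: obtaining the \emph{quadratic} dependence on $(1-2d-\epsilon)^{-1}$ rather than the easier linear bound one gets from a direct application of Bridson--Haefliger. This requires the sharper diagram-surgery estimates carried out in \cite{Sharp phase transition theorems for hyperbolicity of random groups} and \cite{Some small cancellation properties of random groups}, which control also the thickness of geodesic bigons in the diagram and effectively square the isoperimetric gain when converting between the Dehn function and the thin-triangle constant.
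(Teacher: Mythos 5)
The first thing to say is that the paper offers no proof of this statement: it is quoted directly from Ollivier's work \cite{Sharp phase transition theorems for hyperbolicity of random groups,Some small cancellation properties of random groups}, so your proposal has to be measured against the external argument rather than against anything in the text.

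Against that benchmark, your outline has one genuine gap, located in the passage from the piece-length estimate to the isoperimetric inequality. The union bound in your first step does show that, with overwhelming probability, no two relators (or cyclic conjugates/inverses) share a common subword longer than $(2d+\eta)l$. But the Euler-characteristic/Greendlinger mechanism you invoke in the second step --- ``each face's shared arcs are short, so most of its boundary must survive to $\partial D$'' --- only produces a linear isoperimetric inequality when $2d+\eta<\tfrac{1}{6}$, i.e.\ at density $d<\tfrac{1}{12}$. At higher densities an interior face may have arbitrarily many neighbours, the total length of its interior arcs can exceed $l$, and nothing in the piece bound forces any boundary to survive; this is precisely why random groups at density $\tfrac{1}{12}\leq d<\tfrac{1}{2}$ are not classical small cancellation groups. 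Ollivier's actual proof of $|\partial D|\geq(1-2d-\epsilon)\,l\,|D|$ for the whole range $d<\tfrac{1}{2}$ is a direct first-moment computation over \emph{abstract} van Kampen diagrams: for each fixed combinatorial diagram shape one bounds the probability that it can be fulfilled by relators of the random presentation, by weighing the number of doubly-constrained internal edge-letters against the entropy $(2k-1)^{dl}$ of the relator set, and one then sums over all abstract diagrams with a bounded number of faces; the quantitative Gromov--Papasoglu local-to-global principle proved in \cite{Sharp phase transition theorems for hyperbolicity of random groups} upgrades this to all diagrams and yields the constant $12l/(1-2d-\epsilon)^{2}$. Relatedly, torsion-freeness up to density $\tfrac{1}{2}$ does not follow merely from relators not being proper powers (that implication is again a small-cancellation fact); it is deduced from the asphericity of the random presentation established in \cite{Some small cancellation properties of random groups}. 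Your final step, and the derivation of $\alpha_{0}=48/(1-2d)^{2}$ by specializing $\epsilon=(1-2d)/2$, are fine.
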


\section{Graded Test Sequences over Random Groups}\label{sec:Graded-Test-Sequences-over-Random-Groups}

\subsection{Limit Actions of Sequences over Random Groups}

Let $F_{m}$ be a free group with a fixed basis $x=(x_{1},...,x_{m})$. 
\begin{thm}
\label{thm:4} Let $\{h_{l_{n}}:F_{m}\rightarrow\Gamma_{l_{n}}\}$
be a sequence of homomorphisms over an ascending sequence of groups
in the model. Assume that the stretching factors $\mu_{n}=\underset{f_{n}\in\Gamma_{l_{n}}}{\min}\underset{1\le u\leq m}{\max}d_{\Gamma_{l_{n}}}(1,h_{l_{n}}(x_{u}))$,
satisfies that $\mu_{n}\geq l_{n}^{2}$ for all $n$, where $X_{n}$
is the Cayley graph of $\Gamma_{l_{n}}$ w.r.t. the generating set
$a$. Then, the sequence $\tau_{f_{n}}\circ h_{l_{n}}$, where $\tau_{f_{n}}$
is the corresponding inner automorphism of $\Gamma_{l_{n}}$ (see
section 9 in the previous paper), subconverges to an isometric non-trivial
action of $F_{m}$ on a pointed real tree $(Y,y_{0})$. 
\end{thm}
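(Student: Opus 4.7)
The plan is to carry out a Bestvina--Paulin rescaling argument, using \thmref{3} as the key new input that ties the hyperbolicity constant of $\Gamma_{l_n}$ linearly to the level $l_n$. By \thmref{3}, with overwhelming probability $\Gamma_{l_n}$ is $\alpha_{0}l_{n}$-hyperbolic, where $\alpha_0$ depends only on $d$. I would rescale each Cayley graph $X_n$ by the factor $1/\mu_n$, producing pointed metric spaces $X_n'=(X_n, d_{X_n}/\mu_n)$ that are $\delta_n$-hyperbolic with
\[
\delta_n \;\leq\; \frac{\alpha_0 l_n}{\mu_n} \;\leq\; \frac{\alpha_0}{l_n} \;\xrightarrow[n\to\infty]{}\; 0,
\]
where the second inequality uses the standing hypothesis $\mu_n \geq l_n^2$. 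Thus the rescaled Cayley graphs become asymptotically $0$-hyperbolic, which is exactly the condition that forces any pointed Gromov--Hausdorff limit to be an $\mathbb{R}$-tree.

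Next, for each $n$ I would pick $f_n$ realizing the minimum in the definition of $\mu_n$ and consider the twisted homomorphism $\tau_{f_n}\circ h_{l_n}\colon F_m\to\Gamma_{l_n}$ acting by left multiplication on $X_n'$ with basepoint the identity. By the extremality of $f_n$, the rescaled action should satisfy
\[
\max_{1\le u\le m} d_{X_n'}\bigl(1,\,(\tau_{f_n}\circ h_{l_n})(x_u)\bigr) \;=\; 1,
\]
and no conjugate can reduce this maximum below $1$. I would then invoke the equivariant pointed Gromov--Hausdorff compactness theorem (Bestvina, Paulin) for isometric actions on a sequence of $\delta_n$-hyperbolic spaces with $\delta_n\to 0$ and uniformly bounded, nonzero displacement at the basepoint: after passing to a subsequence, the actions subconverge to an isometric action of $F_m$ on a pointed real tree $(Y,y_0)$. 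Non-triviality should be inherited from the displayed equation, since the generator attaining the maximum acts by a displacement of exactly $1$ on $y_0$ in the limit.

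The main technical point will be that the rescaled Cayley graphs live in a sequence of \emph{different} groups $\Gamma_{l_n}$, so the compactness step has to be stated in the equivariant, ultralimit-style formulation rather than inside a fixed ambient space. The two inputs one needs are $\delta_n\to 0$, which the above rescaling gives for free, and uniform properness of the pointed orbits, which should follow from the fact that the generating set $a=(a_1,\dots,a_k)$ is \emph{fixed} of cardinality $2k$ and each image $(\tau_{f_n}\circ h_{l_n})(x_u)$ has rescaled word length at most $1$; balls of bounded radius around the basepoint in $X_n'$ then have uniformly bounded covering number. Given these two ingredients, the standard construction of the equivariant limit tree is essentially mechanical, and the asymptotic $0$-hyperbolicity identifies the limit as a genuine real tree.
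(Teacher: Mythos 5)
Your proposal is correct and follows essentially the same route as the paper: the paper's proof consists precisely of observing that $\delta_{n}/\mu_{n}\rightarrow0$ (since $\delta_{n}$ is linear in $l_{n}$ by \thmref{3} and $\mu_{n}\geq l_{n}^{2}$), and then deferring to the standard Bestvina--Paulin rescaling argument of \cite{DGI} Proposition 1.1, which is exactly what you spell out. The only cosmetic difference is that you make the equivariant Gromov--Hausdorff compactness step explicit, whereas the paper cites it.
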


\begin{proof}
For all $n$, let $\delta_{n}$ be the hyperbolicity constant of the
group $\Gamma_{l_{n}}$. Since the $\delta_{n}$ is linear in $l_{n}$,
then,

\[
\frac{\delta_{n}}{\mu_{n}}\overset{n\rightarrow\infty}{\longrightarrow}0\,.
\]
The remaining details follows as in \cite{DGI} Proposition 1.1. 
\end{proof}
\begin{thm}
\label{thm:5} Keep the notation of \ref{thm:4}, and assume that
the sequence $h_{l_{n}}$ converges into an isometric non-trivial
action of $F_{m}$ on a pointed real tree $(Y,y_{0})$. We denote
$K_{h_{l_{n}},\infty}=\{g\in F_{m}:\forall y\in Y\quad gy=y\}$. Let
$L_{h_{l_{n}},\infty}$ be the quotient group $F_{m}/K_{h_{l_{n}},\infty}$.
Then, 
\begin{enumerate}
\item $L_{h_{l_{n}},\infty}$ is a f.g. group. 
\item $L_{h_{l_{n}},\infty}$ is torsion-free. 
\item If $Y$ is a real line, then $L_{h_{l_{n}},\infty}$ is free abelian. 
\item If $g\in F_{m}$ stabilizes (pointwise) a non-trivial tripod in $Y$,
then $h_{l_{n}}(g)=1$ eventually. 
\item Let $g\in F_{m}$ be an element that does not belong to $K_{h_{l_{n}},\infty}$.
Then, $h_{l_{n}}(g)\neq1$ eventually. 
\item Let $[y_{1},y_{2}]\subset[y_{3},y_{4}]$ be a pair of non-degenerate
segments in $Y$, and assume that $stab([y_{3},y_{4}])$, the (pointwise)
stabilizer of $[y_{3},y_{4}]$ is non-trivial. Then, $stab([y_{3},y_{4}])$
is abelian, and 
\[
stab([y_{1},y_{2}])=stab([y_{3},y_{4}])\,.
\]
\end{enumerate}
\end{thm}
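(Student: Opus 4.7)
The plan is to reduce all six items to the corresponding classical theorem for limit groups over the free group $F_k$ (\cite{DGI}, Propositions~1.2--1.3). Two ingredients drive the reduction. First, \thmref{3} together with the stretching hypothesis $\mu_n\geq l_n^{2}$ in \thmref{4} gives $\delta_n/\mu_n\rightarrow 0$, which is the standard condition needed to run the Bestvina--Paulin--Rips machinery on the sequence $\{h_{l_n}\}$ and extract structural information from the limit action on $(Y,y_0)$. Second, by \thmref{2}, after removing the negligible subfamilies $\mathcal{N}_{l_n}$ one can lift $\{h_{l_n}:F_m\rightarrow\Gamma_{l_n}\}$ to a convergent sequence $\{\tilde{h}_{l_n}:F_m\rightarrow F_k\}$ with the same algebraic limit. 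Consequently $L_{h_{l_n},\infty}$ is a limit group over $F_k$ in the classical sense, and its intrinsic algebraic properties are available by citation.

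\textbf{Handling the individual items.} Items (1), (2), (3), (6) are statements about $L_{h_{l_n},\infty}$ or about its isometric action on the real tree $Y$. Finite generation, torsion-freeness, and the line case are intrinsic algebraic properties of limit groups over $F_k$ and follow verbatim from the classical theorem. Item (6) is purely about stabilizers of arcs in a real tree acted on by such a limit group: segment stabilizers are abelian because $L_{h_{l_n},\infty}$ is commutative transitive (a classical property of limit groups over $F_k$), and the inclusion of arcs forces equality of stabilizers by root-closedness of centralizers. Item (5) is handled directly: if $g\notin K_{h_{l_n},\infty}$, then some $y\in Y$ satisfies $gy\neq y$; writing $y$ as a limit of basepoints $y_n\in X_n$, the action $g\cdot y=\lim h_{l_n}(g)\cdot y_n$ would equal $y$ along any subsequence with $h_{l_n}(g)=1$, contradicting uniqueness of limits in the original convergent sequence. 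For item (4), the plan is to transfer the problem to the lifted sequence, apply the classical free-group tripod-stabilizer statement to conclude $\tilde{h}_{l_n}(g)=1$ eventually, and then use $h_{l_n}(g)=\pi_{l_n}\circ\tilde{h}_{l_n}(g)=1$ eventually.

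\textbf{Principal obstacle.} The only delicate step is item (4). Although the algebraic limits of $\{h_{l_n}\}$ and $\{\tilde{h}_{l_n}\}$ coincide, their limit trees are \emph{a priori} different metric spaces, and tripod-stabilization of $g$ in the tree $Y$ associated to $\{h_{l_n}\}$ must be shown to transfer to tripod-stabilization in the limit tree associated to $\{\tilde{h}_{l_n}\}$. The approach is to encode tripod-stabilization as a finite system of equations and inequations among $g$ and three elements $u_1,u_2,u_3\in F_m$ realising the three branches at the tripod centre, include this system in $H_{l_n}$ for $n$ large, and invoke the $\Sigma$-lifting property of \thmref{1} to lift it to $F_k$. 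The lifted relations force $\tilde{h}_{l_n}(g)$ to fix three independent directions at a common point of the Cayley tree of $F_k$; since the only isometry of a tree fixing three independent directions is the identity, we conclude $\tilde{h}_{l_n}(g)=1$ eventually, and composition with $\pi_{l_n}$ yields $h_{l_n}(g)=1$ eventually, closing the argument.
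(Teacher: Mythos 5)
Your reduction strategy has two genuine problems. The most serious concerns item (4). Stabilizing a tripod in $Y$ is not a finite system of equations and inequations in $F_{m}$: unwinding the definition of the limit action, ``$g$ fixes $u_{i}y_{0}$'' means $d_{n}(h_{l_{n}}(gu_{i}),h_{l_{n}}(u_{i}))=o(\mu_{n})$, a metric smallness condition in $X_{n}$, not an identity $w=1$ that the lifting property of \thmref{1} could transport to $F_{k}$. Moreover, the lifts $\tilde{h}_{l_{n}}$ are produced by a purely algebraic choice of preimages; they carry no metric information from $X_{n}$, so there is no reason for $\tilde{h}_{l_{n}}(g)$ to fix, or even approximately fix, three directions at a point of the Cayley tree of $F_{k}$ --- the rescaled limit tree of the lifted sequence, if it exists at all, is a priori unrelated to $Y$. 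The proof the paper intends (theorem 9.2 of the previous paper, following the Rips--Sela/Bestvina--Paulin scheme) is direct: approximate the tripod by points of $X_{n}$ whose pairwise Gromov products are at least $\epsilon\mu_{n}\gg\delta_{n}$ and which $h_{l_{n}}(g)$ displaces by $o(\mu_{n})$; $\delta_{n}$-hyperbolicity forces $h_{l_{n}}(g)$ to almost fix long segments in three independent directions, which is impossible for a loxodromic isometry, and since $\Gamma_{l_{n}}$ is torsion-free every nontrivial element is loxodromic, whence $h_{l_{n}}(g)=1$ eventually.

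The second problem is circularity in items (2), (3) and (6). You cite ``$L_{h_{l_{n}},\infty}$ is a limit group over $F_{k}$'', but in this paper that statement is \thmref{6}, whose proof invokes \thmref{5}; what \thmref{2} gives independently of \thmref{5} is only that $F_{m}/\underrightarrow{\ker}h_{l_{n}}$ is a limit group, and $F_{m}/K_{h_{l_{n}},\infty}$ is a priori a further proper quotient of it (the content of items (4) and (5) is precisely to control the gap between $K_{h_{l_{n}},\infty}$ and the stable kernel), and torsion-freeness does not pass to quotients. Item (6) is in any case not an intrinsic property of the group but of its action on $Y$, and its standard proof is again geometric: two elements almost fixing a segment of length $\gg\delta_{n}$ in $X_{n}$ have a commutator with much smaller displacement, so the commutator dies in the limit, and the equality of the two stabilizers for nested segments is extracted from the same estimates. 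Your argument for item (5) is correct and is the standard one; items (1) and (3) are also essentially fine once (2) is available. But the core of the theorem, items (4) and (6), must be proved inside the spaces $X_{n}$ using $\delta_{n}/\mu_{n}\rightarrow0$ and torsion-freeness of $\Gamma_{l_{n}}$, not transported to $F_{k}$ through the lifting property.
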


\begin{proof}
The same proof of theorem 9.2 in the previous paper. 
\end{proof}
\begin{thm}
\label{thm:6} Keep the notation of \ref{thm:5}. The group $L_{h_{l_{n}},\infty}$
is a limit group. In particular, the homomorphisms $h_{l_{n}}$ factor
through $L_{h_{l_{n}},\infty}$ eventually. 
\end{thm}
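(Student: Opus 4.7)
The plan is to combine Theorem~\ref{thm:2} with the pointwise convergence properties recorded in Theorem~\ref{thm:5} and the classical finite presentability of limit groups over $F_{k}$. For the first assertion, I would apply Theorem~\ref{thm:2} directly to the sequence $\{h_{l_{n}}\}$. After passing to a tail of the sequence (which does not alter the Bestvina--Paulin limit action) I may assume $\Gamma_{l_{n}}\in\mathcal{M}_{l_{n}}\setminus\mathcal{N}_{l_{n}}$ for every $n$, since the collections $\mathcal{N}_{l_{n}}$ are negligible. Theorem~\ref{thm:2} then produces, along a suitable subsequence, lifts $\tilde h_{l_{n(m)}}\colon F_{m}\to F_{k}$ whose stable kernel coincides with $K_{h_{l_{n}},\infty}$, so $L_{h_{l_{n}},\infty}=F_{m}/K_{h_{l_{n}},\infty}$ is a limit group over $F_{k}$.

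For the ``in particular'' clause I would invoke the classical finite presentability of limit groups over $F_{k}$ (Sela, independently Kharlampovich--Myasnikov) to fix a presentation $L_{h_{l_{n}},\infty}=\langle x_{1},\dots,x_{m}\mid r_{1},\dots,r_{s}\rangle$. Each relator $r_{j}$ lies in $K_{h_{l_{n}},\infty}$, so it suffices to show that $h_{l_{n}}(r_{j})=1$ eventually for each fixed $j$; taking $n_{0}=\max_{j}n_{j}$ then yields a uniform threshold beyond which $h_{l_{n}}$ factors through $L_{h_{l_{n}},\infty}$. For a single $r_{j}$: if the tree $Y$ contains a nondegenerate tripod, then $r_{j}$ fixes that tripod pointwise and Theorem~\ref{thm:5}(4) closes the case; otherwise $Y$ is a real line and $L_{h_{l_{n}},\infty}$ is free abelian by Theorem~\ref{thm:5}(3), and one reduces to the classical free-group statement via the lifts $\tilde h_{l_{n(m)}}$ from Theorem~\ref{thm:2}, together with the noetherianity theorem of this section.

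The main obstacle is bridging the gap between the pointwise ``eventually'' statements of Theorem~\ref{thm:5}(4)--(5) and a single threshold $n_{0}$ valid for the whole, a priori infinitely generated, kernel $K_{h_{l_{n}},\infty}$; finite presentability of limit groups over $F_{k}$ is precisely what reduces this to the finite family $r_{1},\dots,r_{s}$. A secondary subtlety is the real-line case, where Theorem~\ref{thm:5}(4) does not directly apply; the remedy is to pull back the question to $F_{k}$ via the lifts furnished by Theorem~\ref{thm:2} and invoke the classical argument there. Finally, although the analysis produces the conclusion for the subsequence $n(m)$, the ``eventually'' applied to each of the finitely many relators in fact holds along the original sequence $\{h_{l_{n}}\}$, so no passage to a subsequence is needed in the final statement.
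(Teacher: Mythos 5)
Your proposal is correct and follows essentially the same route as the paper: Theorem~\ref{thm:2} (together with the identification of $K_{h_{l_{n}},\infty}$ with the stable kernel furnished by Theorem~\ref{thm:5}) gives that $L_{h_{l_{n}},\infty}$ is a limit group over $F_{k}$, and finite presentability of limit groups reduces the eventual factoring of $h_{l_{n}}$ to finitely many relators, each of which is eventually killed. The paper's proof is just a terser version of this same argument.
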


\begin{proof}
By \thmref{2} and \thmref{5}, the group $L_{h_{l_{n}},\infty}$
is a limit group. Since limit groups are finitely presented, $h_{l_{n}}$
must factor through $L_{h_{l_{n}},\infty}$ for all large enough $n$. 
\end{proof}
\begin{thm}
\label{thm:7} Keep the notation of \ref{thm:5}. If the limit group
$L_{h_{l_{n}},\infty}$ is freely indecomposable, then the homomorphisms
$h_{l_{n}}$ can be shortened eventually, by precomposing with a modular
automorphism in the JSJ decomposition of $L_{h_{l_{n}},\infty}$. 
\end{thm}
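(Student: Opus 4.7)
The plan is to run the Rips--Sela shortening argument for the limit action on $(Y,y_{0})$ essentially verbatim, replacing only the final convergence step so that a strict shortening on the tree propagates to the random quotients $\Gamma_{l_{n}}$. The key observation is that the isometric action of $L:=L_{h_{l_{n}},\infty}$ on $Y$ provided by \thmref{4} and \thmref{5} has exactly the structural properties (abelian segment stabilizers, trivial tripod stabilizers modulo the kernel, finite generation, torsion-freeness) required to run the Rips machine and to extract a JSJ-type decomposition.

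First, I would apply the Rips machine to the $L$-action on $(Y,y_{0})$. Together with the assumption that $L$ is freely indecomposable, this produces a graph-of-actions decomposition of $Y$, and correspondingly an abelian (generalized) JSJ decomposition of $L$ whose vertex groups are of three types: rigid, axial (abelian), and surface-type (QH). Each non-rigid piece contributes explicit modular automorphisms of $L$: Dehn twists along edges and boundary components, affine automorphisms of abelian vertex groups, and mapping class elements supported on QH surfaces. These generate the modular group $\mathrm{Mod}(L)$ used in the shortening procedure.

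Next, I would invoke the classical shortening lemma (as in \cite{DGI}): unless the tuple $(x_{u}y_{0})_{u=1}^{m}$ is already shortest within its orbit, there exists $\sigma\in\mathrm{Mod}(L)$ and $g\in L$ such that
\[
\max_{u}d_{Y}\bigl(y_{0},\,g\sigma(x_{u})y_{0}\bigr)\ <\ \max_{u}d_{Y}(y_{0},x_{u}y_{0}).
\]
By \thmref{6}, the homomorphisms $h_{l_{n}}$ eventually factor as $\bar{h}_{l_{n}}\colon L\to\Gamma_{l_{n}}$, so the precomposition $\bar{h}_{l_{n}}\circ\sigma$ makes sense. The rescaled sequence $\tau_{f_{n}}\circ h_{l_{n}}$ converges (after normalization by $\mu_{n}$) to the $L$-action on $Y$, and since $\delta_{n}/\mu_{n}\to 0$ by \thmref{3} combined with $\mu_{n}\geq l_{n}^{2}$, the strict tree inequality transfers to a strict inequality in $\Gamma_{l_{n}}$ after conjugation by an appropriate $f_{n}^{\prime}\in\Gamma_{l_{n}}$. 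Thus for all large $n$, the stretching factor of $\bar{h}_{l_{n}}\circ\sigma$ is strictly less than that of $h_{l_{n}}$, which is precisely the required shortening.

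The main obstacle is ensuring that the Rips-machine/shortening argument --- originally written for sequences into torsion-free hyperbolic groups --- applies in our setting. The point is that, thanks to \thmref{2}, $L$ is genuinely a limit group over $F_{k}$, and its action on $Y$ satisfies the hypotheses of \cite{DGI}, so the structural part of the argument goes through without change. The only genuinely new input is the quantitative comparison between the hyperbolicity constants $\delta_{n}=O(l_{n})$ of the random groups and the stretching factors $\mu_{n}\geq l_{n}^{2}$, which is exactly what makes the rescaled geometry of $\Gamma_{l_{n}}$ converge strongly enough to $Y$ for a strict tree inequality to imply a strict inequality on the discrete level eventually.
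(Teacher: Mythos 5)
Your proposal is correct and follows essentially the same route as the paper: factor the homomorphisms through $L_{h_{l_{n}},\infty}$ via \thmref{6}, observe that the limit action on $Y$ is the limit of the actions on the Cayley graphs of the $\Gamma_{l_{n}}$, and then run the standard Rips--Sela shortening argument, with the only new ingredient being the quantitative comparison $\delta_{n}/\mu_{n}\to 0$ that lets the strict inequality on the tree descend to the groups $\Gamma_{l_{n}}$ eventually. The paper states this in three lines citing the shortening argument of Rips--Sela; you have simply unpacked the same argument.
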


\begin{proof}
According to \thmref{6}, the homomorphisms $h_{l_{n}}$ factor through
$L_{h_{l_{n}},\infty}$ eventually. Hence, the limit action of $L_{h_{l_{n}},\infty}$
on the limit real tree $Y$, is the limit of the actions of $L_{h_{l_{n}},\infty}$
on the Cayley graphs of the groups $\Gamma_{l_{n}}$. Hence, the theorem
follows by the standard shortening argument presented in \cite{Structure=000020and=000020rigidity=000020in=000020hyperbolic=000020groups=000020I}. 
\end{proof}

\subsection{Graded Test Sequences over Random Groups}

We start by defining a test sequence over an ascending (w.r.t the
levels) sequence of groups in the model. The notion of a test sequence
of a resolution was introduced in \cite{DGII} over free groups. From
now on, we will make an extensive use in the objects defined in \cite{DGII}.
In particular, we will assume that the reader is familiar with the
definitions of resolutions, graded resolutions, completions, closures,
graded completions, graded closures, and covering closures. 
\begin{defn}
\label{def:8} Let $Glim(y,p,a)$ be a graded limit group (w.r.t.
the parameters set $p$), and let $GRes(y,p,a)$ be a well-structured
graded resolution of $Glim(y,p,a)$ that terminates in a group $B(y,p,a)$
which is either $p$-rigid or $p$-solid limit group. Let $Y$ be
the Cayley graph of $Comp(z,y,p,a)$, the graded completion of the
resolution $GRes(y,p,a)$, and denote by $d_{Y}$ the corresponding
metric on $Y$. Let $\{\Gamma_{l_{n}}\}$ be an ascending sequence
of groups in the model, let $X_{n}$ be the Cayley graph of $\Gamma_{l_{n}}$
w.r.t. to the generating set $a$, and denote by $d_{n}$ the corresponding
metric on $X_{n}$. For all $n$, let $(y_{n},p_{n},a)$ be a $\Gamma_{l_{n}}$-exceptional
specialization of $B$ (see \defref[s]{12} and \ref{def:13}),
and denote by $GRes(y,p_{n},a)$ the induced ungraded resolution (ungraded
resolution over $\Gamma_{l_{n}}$). We say that the sequence $\{(y_{n},p_{n},a)\}$,
together with a sequence of automorphisms $\{(\nu_{n}^{i},\tau_{n}^{i})\}$
of the $QH$ subgroups that appear in the decompositions associated
with the various levels of the completed resolutions $Comp(GRes)(z,y,p_{n},a)$,
and a sequence of homomorphisms

\[
\{\lambda_{n}:Comp(GRes)(z,y,p_{n},a)\rightarrow\Gamma_{l_{n}}\},
\]
is a \emph{graded test sequence of the resolution $Comp(z,y,p,a)$
over the ascending sequence of groups in the model $\{\Gamma_{l_{n}}\}$},
if the following conditions hold:

For every index $n$: 
\begin{enumerate}
\item The ungraded resolution $GRes(y,p_{n},a)$ is non-degenerate. 
\item Conditions $(i)-(xiv)$ of Definition 1.20 in \cite{DGII} (w.r.t.
the metrics $d_{n}$ of $(\Gamma_{l_{n}},a)$, where $a$ is the fixed
generating set of $\Gamma_{l_{n}}$) are satisfied. 
\item If $g\in Comp(GRes)(z,y,p,a)$, $d_{Y}(g,1)\leq l_{n}$, and $g$
is not elliptic in at least one of the decompositions associated with
the various levels of the completed resolution $Comp(GRes)(z,y,p_{n},a)$,
then 
\[
l_{n}^{2}\cdot\max d_{n}(p_{n},1)<d_{n}(\lambda_{n}(g),1)\,.
\]
\end{enumerate}
\end{defn}

\begin{lem}
\label{lem:9} Keep the assumptions of \ref{def:8}. Assume further
that for all $n$, the induced ungraded resolution $GRes(y,p_{n},a)$
(ungraded resolution over $\Gamma_{l_{n}}$), is non-degenerate. Then,
there exist a sequence of automorphisms $\{(\nu_{n}^{i},\tau_{n}^{i})\}$
of the $QH$ subgroups that appear in the decompositions associated
with the various levels of the completed resolutions $Comp(GRes)(z,y,p_{n},a)$,
and a sequence of homomorphisms 
\[
\lambda_{n}:Comp(GRes)(z,y,p_{n},a)\rightarrow\Gamma_{l_{n}},
\]
so that these sequences form a graded test sequence of $Comp(z,y,p,a)$
over the ascending sequence of groups in the model $\{\Gamma_{l_{n}}\}$.
Furthermore, for every $n$, given any two integers $s_{1}<s_{2}$,
and given a basis element $q_{i}^{t}$ - which is not a peg - of some
of the abelian groups that appear in a decomposition associated with
some level of $Comp(GRes)(z,y,p_{n},a)$, we can choose that $\lambda_{n}(q_{i}^{t})=h_{n}^{ms_{2}+s_{1}}$
for some integer $m$, and so that the element $h_{n}\in\Gamma_{l_{n}}$
has no non-trivial roots in $\Gamma_{l_{n}}$. 
\end{lem}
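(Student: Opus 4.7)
The plan is to reduce to the free-group case. For each $n$, since $GRes(y, p_n, a)$ is non-degenerate, the finite system of equations with constants $a$ that the specialization $(y_n, p_n, a)$ satisfies inside $Comp(GRes)(z, y, p, a)$ lies in the collection $H_{l_n}$ of \thmref{1} for all but finitely many $n$. By \thmref{2} we may pass to a subsequence on which $\Gamma_{l_n}$ satisfies the lifting property for this system, so $(y_n, p_n, a)$ lifts to a specialization $(\tilde y_n, \tilde p_n, a)$ in $F_k$ with $\pi_{l_n}(\tilde y_n, \tilde p_n) = (y_n, p_n)$. The induced specialization $(\tilde y_n, \tilde p_n, a)$ is a valid $F_k$-specialization of $B(y, p, a)$ in the sense of \cite{DGII}, so I run the construction of graded test sequences over $F_k$ (Definition 1.24 / Proposition 1.25 of \cite{DGII}) on top of it.

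This produces automorphisms $\{(\nu_n^i, \tau_n^i)\}$ of the QH subgroups appearing in the decompositions along the various levels, together with homomorphisms $\tilde \lambda_n : Comp(GRes)(z, y, \tilde p_n, a) \to F_k$ satisfying conditions (i)--(xiv) of Definition 1.20 in \cite{DGII}, whose stretching factors along any fixed ball of $Y$ can be chosen to grow arbitrarily quickly with $n$. I then define $\lambda_n = \pi_{l_n} \circ \tilde \lambda_n : Comp(GRes)(z, y, p_n, a) \to \Gamma_{l_n}$ and reuse the same $(\nu_n^i, \tau_n^i)$ at the abstract QH level. Conditions (1) and (2) of \defref{8} are automatic: the algebraic content of (i)--(xiv) of Definition 1.20 of \cite{DGII} (commutation relations, non-triviality conditions, invariance of QH and abelian structures) is preserved by the quotient $\pi_{l_n}$, and whenever a non-triviality is asserted, enlarging $H_{l_n}$ to also contain the relevant finite systems forbidding collapse allows \thmref{1} to guarantee its survival in $\Gamma_{l_n}$.

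The main obstacle is the metric growth condition (3) of \defref{8}: for $g \in Comp(GRes)$ with $d_Y(g, 1) \leq l_n$ non-elliptic in some decomposition along the completion, I must ensure $d_n(\lambda_n(g), 1) > l_n^2 \cdot \max d_n(p_n, 1)$. Since $\pi_{l_n}$ can contract lengths, the danger is that a long image $\tilde \lambda_n(g) \in F_k$ becomes short in $\Gamma_{l_n}$. The control comes from \thmref{3}: $\Gamma_{l_n}$ is $\alpha_0 l_n$-hyperbolic and its defining relators have length $l_n$. Standard small-cancellation / hyperbolic estimates then yield a distortion bound for $\pi_{l_n}$ that is polynomial in $l_n$ on cyclically reduced words in $F_k$. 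Since the DGII construction leaves the stretching factors of $\tilde \lambda_n$ free, I choose them to dominate $l_n^C \cdot \max d_n(p_n, 1)$ for a sufficiently large fixed constant $C$ depending only on the distortion polynomial; this transports condition (3) through $\pi_{l_n}$ with room to spare.

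For the furthermore clause, pick any rootless element $h_n \in \Gamma_{l_n}$ (which exists since $\Gamma_{l_n}$ is torsion-free hyperbolic: every non-trivial element has a unique maximal root), and lift it to some $\tilde h_n \in F_k$. In the DGII construction one may prescribe the image of any non-peg basis element $q_i^t$ of an abelian vertex group as an arbitrary large power of a chosen primitive witness; setting $\tilde \lambda_n(q_i^t) = \tilde h_n^{m s_2 + s_1}$ for $m$ large enough to meet the stretching budget above yields $\lambda_n(q_i^t) = h_n^{m s_2 + s_1}$ with $h_n$ rootless in $\Gamma_{l_n}$, as required.
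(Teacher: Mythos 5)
The paper states \lemref{9} without giving a proof, so there is nothing to compare line by line; but the intended route is visible from \thmref{3} and \lemref{10}: for each fixed $n$ one builds a graded test sequence $\{\lambda_s^n\}_s$ \emph{directly over the torsion-free hyperbolic group} $\Gamma_{l_n}$ (Sela's construction of test sequences over hyperbolic groups, using the explicit hyperbolicity constant $\alpha_0 l_n$), and then \lemref{10} diagonalizes to extract $\lambda_n=\lambda_{s_n}^n$ meeting the $l_n^2\cdot\max d_n(p_n,1)$ growth bound. Your proposal instead lifts to $F_k$, runs the DGII construction there, and pushes down via $\pi_{l_n}$.

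The gap in your argument is the claimed distortion control for $\pi_{l_n}\colon F_k\rightarrow\Gamma_{l_n}$. There is no polynomial (or any) lower bound on $d_{\Gamma_{l_n}}(1,\pi_{l_n}(w))$ in terms of the reduced length of $w$ in $F_k$: the relators themselves are cyclically reduced words of length $l_n$ mapping to the identity, and products of their conjugates give arbitrarily long reduced words with trivial image. So choosing the stretching factors of $\tilde\lambda_n$ to be astronomically large does not, by itself, prevent the images from collapsing in $\Gamma_{l_n}$, and condition (3) of \defref{8} does not follow. The same problem infects your treatment of condition (2): conditions $(i)$--$(xiv)$ of Definition 1.20 in \cite{DGII} are predominantly \emph{metric} (quasi-geodesic behaviour of images of surface-group and abelian generators, comparisons of translation lengths, Gromov products), not algebraic, and \defref{8} explicitly demands them with respect to the metric $d_n$ of $(\Gamma_{l_n},a)$; they are not ``automatically preserved'' by a quotient map of unbounded distortion. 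To make a lift-and-project argument work you would have to show that the \emph{specific} words produced by the $F_k$-construction remain quasi-geodesic in $\Gamma_{l_n}$ with overwhelming probability, which is essentially equivalent to carrying out the construction inside $\Gamma_{l_n}$ in the first place. The ``furthermore'' clause is fine for the single element $h_n^{ms_2+s_1}$ (translation length of a hyperbolic element grows linearly in the exponent), but it does not rescue the general growth condition.
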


\begin{lem}
\label{lem:10} Let $Glim(y,p,a)$ be a graded limit group, and let
$GRes(y,p,a)$ be a well-structured graded resolution of $Glim(y,p,a)$
that terminates in a group $B(y,p,a)$ which is either $p$-rigid
or $p$-solid limit group. Let $\{\Gamma_{l_{n}}\}$ be an ascending
sequence of groups in the model, and assume that for all $n$, we
are given a graded test sequence $\{\lambda_{s}^{n}:Comp(GRes)(z,y,p,a)\rightarrow\Gamma_{l_{n}}\}_{s}$
(with corresponding automorphisms $\{(\nu_{s}^{n,i},\tau_{s}^{n,i})\}_{s}$),
of the completed resolution $Comp(GRes)(z,y,p,a)$ over the specific
torsion-free hyperbolic group $\Gamma_{l_{n}}$. Then, if for all
$n$, $s_{n}$ is large enough, then the sequence $\{\lambda_{s_{n}}^{n}:Comp(GRes)(z,y,p,a)\rightarrow\Gamma_{l_{n}}\}_{n}$
(with the corresponding automorphisms $\{(\nu_{s_{n}}^{n,i},\tau_{s_{n}}^{n,i})\}_{n}$),
is a graded test sequence of $Comp(GRes)(z,y,p,a)$ over the ascending
sequence of groups in the model $\{\Gamma_{l_{n}}\}$. 
\end{lem}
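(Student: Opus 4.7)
The plan is to establish \lemref{10} by a standard diagonal extraction argument, essentially the one used to pass from test sequences over a fixed hyperbolic group (as in \cite{DGII}) to test sequences over an ascending sequence of groups in the density model. For each fixed $n$, the given sequence $\{\lambda_{s}^{n}\}_{s}$ is a graded test sequence over the single torsion-free hyperbolic group $\Gamma_{l_{n}}$, so every quantitative requirement that appears in conditions $(i)$--$(xiv)$ of Definition 1.20 of \cite{DGII} (growing stretching factors, diverging ratios between consecutive levels, proper divergence of powers of abelian generators, length comparisons between segments and their conjugates, etc.) is satisfied \emph{asymptotically} in $s$. Our task is to pick, for every $n$, a single index $s_{n}$ so large that the term $\lambda_{s_{n}}^{n}$ already realizes the version of each condition that is needed at level $n$ in \defref{8}, and then to verify that the resulting sequence $\{\lambda_{s_{n}}^{n}\}_{n}$ meets all three requirements of \defref{8}.

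First I would list, for each $n$, the finite collection of inequalities that a single term of a test sequence over $\{\Gamma_{l_{n}}\}$ must satisfy at index $n$. Non-degeneracy of $GRes(y,p_{n},a)$ is inherited from the hypothesis that $\{\lambda_{s}^{n}\}_{s}$ is a test sequence over $\Gamma_{l_{n}}$. The conditions $(i)$--$(xiv)$ of Definition 1.20 in \cite{DGII}, when read at a single level $n$, reduce to finitely many inequalities involving $l_{n}$, $d_{n}$, $\mu_{s}^{n}$, and the lengths of the images of a bounded set of words (determined by the combinatorial data of $Comp(GRes)$ and by $l_{n}$); each of these is of the form ``some quantity associated with $\lambda_{s}^{n}$ exceeds a prescribed $l_{n}$-dependent threshold,'' and hence is satisfied for all $s$ large enough. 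The same is true of condition (3) in \defref{8}: the set $\{g\in Comp(GRes)(z,y,p,a):d_{Y}(g,1)\leq l_{n}\}$ is finite, the property of being elliptic or hyperbolic in a fixed decomposition is checked element by element, and the required inequality $l_{n}^{2}\cdot\max d_{n}(p_{n},1)<d_{n}(\lambda_{n}(g),1)$ is again a finite list of strict inequalities on $\lambda_{s}^{n}$.

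Having identified for each $n$ this finite list $\mathcal{C}_{n}$ of inequalities, the main step is to invoke the test-sequence hypothesis at level $n$: every inequality in $\mathcal{C}_{n}$ is of the type that is eventually satisfied by $\lambda_{s}^{n}$ as $s\to\infty$, so one may choose $s_{n}$ so large that every member of $\mathcal{C}_{n}$ holds at $s=s_{n}$. Performing this choice for every $n$ produces the sought-after diagonal sequence $\{\lambda_{s_{n}}^{n}\}_{n}$ (together with the corresponding modular automorphisms $\{(\nu_{s_{n}}^{n,i},\tau_{s_{n}}^{n,i})\}_{n}$). By construction, each of the three clauses of \defref{8} is fulfilled at every index $n$, so the resulting diagonal sequence is indeed a graded test sequence of $Comp(GRes)(z,y,p,a)$ over the ascending sequence $\{\Gamma_{l_{n}}\}$.

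The only delicate point is the bookkeeping of conditions $(i)$--$(xiv)$ of Definition 1.20 in \cite{DGII}: one must verify that each of them is, at a fixed level $n$, a finite and $l_{n}$-dependent requirement on the single term $\lambda_{s}^{n}$ (as opposed to an asymptotic condition on the full sequence $\{\lambda_{s}^{n}\}_{s}$), since only the former can be enforced by the diagonal choice. This is the genuine content of the lemma and is the expected main obstacle; however it is essentially formal, because each of those conditions was originally defined precisely so that the sequence $\{\lambda_{s}^{n}\}_{s}$ over $\Gamma_{l_{n}}$ satisfies them cofinally in $s$, and the ascending-sequence version of \defref{8} simply asks for the analogous inequality to hold at index $n$ with thresholds that grow with $l_{n}$. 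Given this translation, the diagonal extraction goes through and \lemref{10} follows.
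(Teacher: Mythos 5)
The paper states \lemref{10} without proof, so there is no argument of the author's to compare against; your diagonal extraction is exactly the intended (and standard) argument, and it is correct. The key observation you make — that each requirement imposed on the $n$-th term of a test sequence over the ascending sequence $\{\Gamma_{l_{n}}\}$ (non-degeneracy, the $l_{n}$-thresholded versions of conditions $(i)$--$(xiv)$ of Definition 1.20 of \cite{DGII}, and condition (3) of \defref{8}, which involves only the finite ball $\{g:d_{Y}(g,1)\leq l_{n}\}$ and the divergence of the ratio $d_{n}(\lambda_{s}^{n}(g),1)/\max d_{n}(\lambda_{s}^{n}(p),1)$ as $s\to\infty$) is a finite list of inequalities satisfied by $\lambda_{s}^{n}$ for all sufficiently large $s$ — is precisely what makes the choice of $s_{n}$ possible, and you have correctly identified the bookkeeping of Definition 1.20 as the only point requiring care.
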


The following lemma guarantees that the non-degenerations of a graded
resolution over any group in the model, are encoded by the singular
locus, defined over $F_{k}$, of that resolution. 
\begin{lem}
\label{lem:11} Let $Glim(y,p,a)$ be a graded limit group, and let
$GRes(y,p,a)$ be a graded resolution of $Glim(y,p,a)$ that terminates
in a group $B(y,p,a)$ which is either $p$-rigid or $p$-solid limit
group. Let $S_{1}(y,p,a),...,S_{r}(y,p,a)$ be the finitely many limit
groups that form the singular locus of the resolution $GRes(y,p,a)$
(constructed over $F_{k}$). Let $\Gamma$ be a group in the model.
Then, the $\Gamma$-specializations $(y_{0},p_{0},a)$ of $B(y,p,a)$
for which the induced resolution $GRes(y,p_{0},a)$ is degenerated
(over $\Gamma$), factors through the singular locus of the resolution
$GRes(y,p,a)$. 
\end{lem}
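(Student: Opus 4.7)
The plan is to encode the statement ``$GRes(y,p_{0},a)$ is degenerate over $\Gamma$'' as the satisfaction of a finite system of equations in $(y_{0},p_{0})$ over $\Gamma$, then use the lifting property proved in \thmref{1} and \thmref{2} to pull an offending specialization back to $F_{k}$, and finally invoke the very definition of the singular locus, which was constructed over $F_{k}$ in \cite{DGII}.

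First I would recall from \cite{DGII} that the non-degeneracy of a well-structured graded resolution $GRes(y,p,a)$ is controlled, level by level, by finitely many closed algebraic conditions: the collapse or appearance of a non-trivial root for a basis element of an abelian vertex that should be of infinite order, the vanishing or collapse of a boundary component of a $QH$ surface, the terminal $B$-specialization failing to be rigid or strictly solid, and so on. Negating all such ``non-collapse'' requirements yields a finite list of systems of equations $\Sigma_{1}(y,p,a),\ldots,\Sigma_{r}(y,p,a)$ with constants from $a$. By the $F_{k}$-construction of the singular locus in \cite{DGII}, the limit groups $S_{1}(y,p,a),\ldots,S_{r}(y,p,a)$ are precisely the maximal limit quotients of $B(y,p,a)$ obtained by imposing each $\Sigma_{i}$, so an $F_{k}$-specialization of $B$ makes $GRes(y,p,a)$ degenerate if and only if it factors through some $S_{i}$.

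Now let $(y_{0},p_{0},a)$ be a $\Gamma$-specialization of $B(y,p,a)$ for which the induced resolution $GRes(y,p_{0},a)$ is degenerate over $\Gamma$. Then $(y_{0},p_{0},a)$ satisfies some $\Sigma_{i}$ in $\Gamma$. Applying \thmref{1} to the finite system consisting of $\Sigma_{i}$ together with the (finitely many, by Guba's theorem) defining relations of $B(y,p,a)$, and after discarding the negligible subcollection $\mathcal{N}_{l}$ of \thmref{2}, we obtain a lift $(\tilde{y}_{0},\tilde{p}_{0},a)$ in $F_{k}$ of $(y_{0},p_{0},a)$ which still satisfies $\Sigma_{i}$ and still defines a specialization of $B$. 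Consequently $(\tilde{y}_{0},\tilde{p}_{0},a)$ is a degenerate $F_{k}$-specialization, so by the $F_{k}$-definition of the singular locus it factors through some $S_{i}(y,p,a)$. Composing with the natural quotient map $\pi:F_{k}\to\Gamma$, the original specialization $(y_{0},p_{0},a)=\pi\circ(\tilde{y}_{0},\tilde{p}_{0},a)$ factors through $S_{i}(y,p,a)$ viewed over $\Gamma$, as desired.

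The main obstacle is step one: checking that every manner in which $GRes(y,p_{0},a)$ can degenerate, regardless of the ambient group $\Gamma$, is detected by membership of $(y_{0},p_{0},a)$ in one of finitely many algebraic subsets defined over $F_{k}$. This requires verifying that each ``non-collapse'' condition built into the definition of $GRes$ in \cite{DGII}, for abelian, $QH$ and rigid/solid vertices alike, admits a finitary negation expressible as a finite system of equations in $(y,p,a)$; the finiteness is crucial, for only then does \thmref{1} apply to a single system $\Sigma_{i}$. Granting this finite axiomatization of degeneration, the rest of the argument is a clean application of the lifting property together with the Makanin--Razborov description of the singular locus over $F_{k}$.
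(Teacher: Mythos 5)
Your argument is correct and is essentially the paper's own proof: the paper likewise observes that degeneration of the induced resolution is witnessed by one of finitely many systems $\Sigma_{1},\ldots,\Sigma_{s}$ constructed over $F_{k}$, lifts the offending $\Gamma$-specialization of $B$ to an $F_{k}$-specialization satisfying the same system via the lifting property, and concludes by the $F_{k}$-definition of the singular locus. The only difference is cosmetic: you explicitly flag (and justify) the finite axiomatization of degeneracy, which the paper takes for granted ``by definition'' from \cite{DGII}.
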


\begin{proof}
By definition, for an ungraded resolution $GRes(y,p_{0},a)$ induced
from a $\Gamma$-specialization $(y_{0},p_{0},a)$ of $B$, to be
degenerated, the specialization $(y_{0},p_{0},a)$ has to satisfy
one of finitely many systems $\Sigma_{1}(y,p,a),...,\Sigma_{s}(y,p,a)$
that was constructed over $F_{k}$. Given such a $\Gamma$-specialization
$(y_{0},p_{0},a)$, We lift it to a specialization $(\tilde{y}_{0},\tilde{p}_{0},a)$
of $B(y,p,a)$ that satisfies one of the systems $\Sigma_{1}(y,p,a),...,\Sigma_{s}(y,p,a)$. 
\end{proof}

\section{Rigid and Solid Limit Groups}\label{sec:Rigid-and-Solid-Limit-Groups}

Let $\Sigma(y,p,a)$ be a $p$-graded system of equations, that is
$p$ is the parameter set, $y$ is the variable set, and the constants
appearing in $\Sigma$ belong to the free group $F_{k}=F(a)$. According
to \cite{DGI}, section 10, we construct the graded MR diagram $K$
of the system $\Sigma(y,p,a)$. This is a finite diagram that encodes
all the solutions of the graded system $\Sigma(y,p,a)$, so that the
JSJ-decompositions associated with the various groups along it, are
$p$-graded ($p$-JSJ's). That is, the subgroups $\langle a,p\rangle$
of each of the groups along the diagram, is contained in a distinguished
vertex of the $p$-JSJ associated with the corresponding group, and
the modular automorphisms of that group, that are induced from the
associated $p$-JSJ, must fix (elementwise) the distinguished vertex
(by definition).

Each resolution in the graded MR diagram, terminates in a limit group
which is either $p$-rigid or $p$-solid. The $p$-rigid limit groups,
denoted usually by $Rgd(y,p,a)$, are by definition, those graded
limit groups that admit a trivial $p$-JSJ. And $p$-solid limit groups,
denoted usually by $Sld(y,p,a)$, are defined as the graded limit
groups who admit a non-trivial $p$-JSJ, but admit an isomorphic shortening
quotient.

Every $p$-rigid or $p$-solid limit group in the diagram $K$, admits
a finite set (maybe empty) of maximal flexible quotients $Flx(y,p,a)$,
which are proper quotients. 
\begin{defn}
\label{def:12} Let $Rgd(y,p,a)$ be a $p$-rigid limit group. Let
$\Gamma=\langle a:\mathcal{R}\rangle$ be a group (presentation).
A homomorphism $h:Rgd(y,p,a)\rightarrow\Gamma$ is called a \emph{$\Gamma$-rigid
homomorphism of $Rgd(y,p,a)$}, if it does not factor through a flexible
quotient of $Rgd(y,p,a)$. A $\Gamma$-rigid homomorphism is called
also a \emph{$\Gamma$-exceptional solution of $Rgd(y,p,a)$}. 
\end{defn}

\begin{defn}
\label{def:13} Let $Sld(y,p,a)$ be a $p$-solid limit group.

If $Sld(y,p,a)$ admits no flexible quotients, then every homomorphism
$Sld(y,p,a)\rightarrow\Gamma$ is defined to be a \emph{$\Gamma$-strictly
solid homomorphism of $Sld(y,p,a)$}.

In general, let $\eta_{j}:Sld(y,p,a)\rightarrow Flx_{j}(y,p,a)$ be
the canonical quotient maps between the $p$-solid group onto each
of its (finitely many) maximal flexible quotients. According to {[}\cite{DGIII},
Definition 1.5 p. 15, 16{]}, we associate to each one-step resolution
$\eta_{j}:Sld(y,p,a)\rightarrow Flx_{j}(y,p,a)$ a graded completion
$Comp_{j}(z,y,p,a)$, so that the $p$-solid group $Sld(y,p,a)$ is
mapped canonically into each of these completions $\tau_{j}:Sld(y,p,a)\rightarrow Comp_{j}$.
Moreover, we consider the graded completion $Comp_{T}$ of the one-step
resolution $Sld(y,p,a)\rightarrow Sld(y,p,a)$ (the identity map).
The solid limit group $Sld(y,p,a)$ admits two natural embeddings
$\iota_{i}:Sld(y,p,a)\rightarrow Comp_{T}$, $i=1,2$.

Let $\Gamma=\langle a:\mathcal{R}\rangle$ be a group (presentation).
A homomorphism $h:Sld(y,p,a)\rightarrow\Gamma$ is called a \emph{$\Gamma$-strictly
solid homomorphism of $Sld(y,p,a)$}, if $h$ is non-degenerate (see
\lemref{11}) and does not factor through the map $\tau_{j}$ for
all $j$. Moreover, we define an equivalence relation on the set of
homomorphisms $h:Sld(y,p,a)\rightarrow\Gamma$ as follows. Two such
homomorphisms $h_{1},h_{2}$ are called equivalent, if there exists
a homomorphism $u:Comp_{T}\rightarrow\Gamma$ so that $u\circ\iota_{i}=h_{i}$,
for $i=1,2$. We call the equivalence class of a $\Gamma$-strictly
solid homomorphism a \emph{$\Gamma$-strictly solid family}. Note
that all the non-degenerate homomorphisms in a $\Gamma$-strictly
solid family, are $\Gamma$-strictly solid.

A $\Gamma$-strictly solid homomorphism is called also a \emph{$\Gamma$-exceptional
solution of $Sld(y,p,a)$}, and a $\Gamma$-strictly solid family
is called also a \emph{$\Gamma$-exceptional family of $Sld(y,p,a)$}.

When $\Gamma=F_{k}$, we remove $\Gamma$ from the notation, and write
simply strictly solid or exceptional homomorphisms, and strictly solid
or exceptional families. 
\end{defn}

\begin{thm}
\label{thm:14}(\cite{DGIII}, theorem 2.3). Let $Rgd(y,p,a)$ be
a $p$-rigid limit group. There exists a global bound $b_{R}(Rgd)$
for which, for any particular value of the defining parameters $p_{0}$,
there are at most $b_{R}$ distinct rigid homomorphisms $h:Rgd(y,p,a)\rightarrow F_{k}$
satisfying $h(p)=p_{0}$. 
\end{thm}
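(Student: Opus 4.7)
The plan is to argue by contradiction using a Bestvina--Paulin limit construction adapted to the graded setting, in the spirit of the rigidity results of \cite{DGI,DGIII}. If too many distinct rigid homomorphisms existed over a single parameter tuple, one could extract a limit action of the amalgamated double of $Rgd$ on a real tree, run Rips-machine analysis to obtain a non-trivial graded splitting, and contradict rigidity.

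First I would set up the contradiction: assume no uniform bound exists, so there is a sequence $(p_n)_n$ of specializations of $p$ in $F_k$ and, for each $n$, distinct rigid homomorphisms $h_n^1,\dots,h_n^{N_n}:Rgd(y,p,a)\to F_k$ with $h_n^j(p)=p_n$ and $N_n\to\infty$. Fix generators $y=(y_1,\dots,y_m)$ of $Rgd$. Since any ball of fixed radius in $F_k$ contains only finitely many homomorphisms of $Rgd$ sending $p$ to $p_n$, after passing to a subsequence I may select two indices $i_n\ne j_n$ so that the simultaneous minimal stretch
\[
\mu_n\;:=\;\min_{g\in F_k}\max_{\substack{1\le u\le m\\ \varepsilon\in\{i_n,j_n\}}} d\bigl(g\,h_n^{\varepsilon}(y_u)\,g^{-1},\,1\bigr)
\]
tends to infinity, while the two simultaneously conjugated homomorphisms remain distinct.

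Next I would package the pair as a single map. Since $h_n^{i_n}$ and $h_n^{j_n}$ coincide on $\langle p,a\rangle$, they induce a homomorphism $H_n:D\to F_k$ from the amalgamated double $D:=Rgd*_{\langle p,a\rangle}Rgd$. Rescaling the Cayley metric of $F_k$ by $\mu_n$ and invoking the Bestvina--Paulin argument (as in \thmref{4}), the sequence $H_n$ subconverges to a non-trivial isometric action of a limit quotient $L$ of $D$ on a pointed real tree $(Y,y_0)$; the subgroup $\langle p,a\rangle$, whose images stay bounded while $\mu_n\to\infty$, acts elliptically. Applying the Rips-machine structure theorem (\thmref{5}) to the stable action of $L$ on $Y$ then produces a non-trivial graded abelian decomposition of $L$ in which $\langle p,a\rangle$ is elliptic.

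The final step, which I expect to be the main obstacle, is to descend this decomposition of $L$ to a non-trivial $p$-graded splitting of a single copy of $Rgd$. The danger is that the whole action of $D$ on $Y$ might be carried only by the amalgamation, with both embedded copies of $Rgd$ acting elliptically; if so, one could conjugate the pair into a bounded region, contradicting $\mu_n\to\infty$ and the distinctness of $h_n^{i_n},h_n^{j_n}$. Ruling out this degenerate case, and verifying that the induced splitting of the non-elliptic copy of $Rgd$ is a genuine $p$-graded abelian decomposition with $\langle p,a\rangle$ elliptic, is essentially the graded shortening argument of Rips--Sela; it requires careful control of how the modular group of $D$ relative to $\langle p,a\rangle$ can recombine the two copies. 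Once achieved, the resulting non-trivial $p$-graded decomposition of $Rgd$ contradicts $p$-rigidity, completing the proof.
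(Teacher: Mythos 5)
This theorem is not proved in the paper at all: it is quoted verbatim from \cite{DGIII} (Theorem 2.3 there), so there is no internal proof to compare against, and your proposal has to be measured against Sela's actual argument. Your sketch contains the right circle of ideas for the \emph{weaker} statement that for each \emph{fixed} $p_{0}$ there are only finitely many rigid homomorphisms: there a single-copy Bestvina--Paulin limit plus the shortening argument works, because $p_{0}$ is fixed, so $\langle p,a\rangle$ has bounded image and acts elliptically in the limit tree, and one contradicts either the triviality of the $p$-JSJ or the non-flexibility of the homomorphisms. But the content of the theorem is the \emph{uniform} bound $b_{R}$ over all values of $p_{0}$, and for that your argument has a genuine gap at the step ``the subgroup $\langle p,a\rangle$, whose images stay bounded while $\mu_{n}\to\infty$, acts elliptically.'' In the contradiction setup the parameters $p_{n}$ vary and $|p_{n}|$ is unbounded; there is no reason for $|p_{n}|$ to be dominated by the stretch $\mu_{n}$ measured on the $y$-generators, so $\langle p\rangle$ need not be elliptic in the limit. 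Without ellipticity of $\langle p,a\rangle$ the splitting produced by the Rips machine is not a $p$-graded splitting, and $p$-rigidity (triviality of the $p$-JSJ) is not contradicted. This is precisely the obstruction that makes the uniform bound hard.

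The second gap is the degenerate case you flag as ``the main obstacle'' and hope to rule out: it cannot be ruled out, because it is the generic situation whenever $b_{R}\geq 2$. If $Rgd$ admits exactly two rigid homomorphisms over each $p_{n}$, the double $D=Rgd\ast_{\langle p,a\rangle}Rgd$ can act non-trivially on the limit tree with both copies of $Rgd$ elliptic, the action being carried entirely by the relative position of the two copies (a large twist along the amalgamated subgroup). This yields a splitting of $D$, never of $Rgd$, and contradicts nothing --- consistently with the fact that the theorem is true with $b_{R}=2$ in such a case. The contradiction in Sela's proof does not come from a pair but from amalgamating an \emph{unbounded} number of distinct rigid homomorphisms over $\langle p,a\rangle$ and running a delicate induction (using the descending chain condition, flexible quotients, and the structure of the graded diagram) to show that the number of copies that can coexist without one of them becoming flexible is bounded. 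That multi-copy counting mechanism is absent from your sketch, so the proposal as written does not prove the uniform bound.
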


\begin{thm}
\label{thm:15} Let $Rgd(y,p,a)$ be a $p$-rigid limit group. There
exists a global bound $b_{R}(Rgd)$ for which the following property
is satisfied. Let $\Gamma_{l}$ be a group of level $l$ in the model.

If $l$ is large enough, then, for any particular value of the defining
parameters $p_{0}\in\Gamma_{l}$, there are at most $b_{R}$ distinct
rigid homomorphisms $h:Rgd(y,p,a)\rightarrow\Gamma_{l}$ satisfying
$h(p)=p_{0}$. 
\end{thm}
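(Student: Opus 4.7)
The plan is to reduce \thmref{15} to its $F_{k}$-analogue, \thmref{14}, by a \emph{joint lifting} trick. I take $b_{R}=b_{R}(Rgd)$ to be the very bound already provided by \thmref{14} and argue by contradiction: suppose that for a sequence $l_{n}\to\infty$ there exist $p_{n}\in\Gamma_{l_{n}}^{|p|}$ and $b_{R}+1$ pairwise distinct $\Gamma_{l_{n}}$-rigid homomorphisms $h_{n}^{0},\ldots,h_{n}^{b_{R}}:Rgd(y,p,a)\to\Gamma_{l_{n}}$ with $h_{n}^{i}(p)=p_{n}$ for every $i$. Restricting to the non-negligible subcollection $\mathcal{M}_{l_{n}}\setminus\mathcal{N}_{l_{n}}$ of \thmref{2}, I may assume each $\Gamma_{l_{n}}$ satisfies the lifting property for any finite system that eventually enters $H_{l_{n}}$.

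The key step is to lift all $b_{R}+1$ homomorphisms to $F_{k}$ simultaneously, forcing them to share a common lift of $p_{n}$. Since $Rgd$ is finitely presented, I fix a finite system $\Sigma(y,p,a)$ of defining relations and form the finite system
\[
\tilde{\Sigma}(y^{0},\ldots,y^{b_{R}},p,a)\;:=\;\bigcup_{i=0}^{b_{R}}\Sigma(y^{i},p,a),
\]
in which the $b_{R}+1$ copies of $\Sigma$ share the parameter block $p$ and the constants $a$ but have independent $y$-blocks $y^{0},\ldots,y^{b_{R}}$. By construction, a solution of $\tilde{\Sigma}$ over any group $G$ is exactly a tuple of $b_{R}+1$ homomorphisms $Rgd\to G$ with a common image of $p$. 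Because $\tilde{\Sigma}$ is a \emph{single fixed} finite system (independent of $n$), \thmref{1} and \thmref{2} together guarantee that for all large $n$, $\tilde{\Sigma}\in H_{l_{n}}$ and $\Gamma_{l_{n}}$ satisfies the $\tilde{\Sigma}$-lifting property. Applying this lifting to the $\Gamma_{l_{n}}$-solution $(h_{n}^{0}(y),\ldots,h_{n}^{b_{R}}(y),p_{n})$ yields a solution $(\tilde{y}_{n}^{0},\ldots,\tilde{y}_{n}^{b_{R}},\tilde{p}_{n})$ over $F_{k}$, which assembles into homomorphisms $\tilde{h}_{n}^{i}:Rgd\to F_{k}$ all sharing the \emph{same} $p$-value $\tilde{p}_{n}$ and satisfying $\pi_{l_{n}}\circ\tilde{h}_{n}^{i}=h_{n}^{i}$.

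The remaining checks are immediate and yield the contradiction. The lifts $\tilde{h}_{n}^{0},\ldots,\tilde{h}_{n}^{b_{R}}$ are pairwise distinct, since their projections $h_{n}^{i}$ are, and each $\tilde{h}_{n}^{i}$ is $F_{k}$-rigid: any factorisation of $\tilde{h}_{n}^{i}$ through a flexible quotient $Flx(y,p,a)$ of $Rgd$ would descend under $\pi_{l_{n}}$ to a factorisation of $h_{n}^{i}$ through the same flexible quotient, contradicting the assumed $\Gamma_{l_{n}}$-rigidity of $h_{n}^{i}$. Hence we exhibit $b_{R}+1$ distinct $F_{k}$-rigid specializations of $Rgd$ with the common parameter value $\tilde{p}_{n}$, violating \thmref{14}. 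I expect the main (and essentially only) obstacle to be exactly this common-$p$ lifting step: lifting the $h_{n}^{i}$ one at a time would in general produce disagreeing lifts $\tilde{p}_{n}^{i}$ of $p_{n}$ and block any direct appeal to \thmref{14}; the amalgamated system $\tilde{\Sigma}$ resolves this precisely because, being a single finite system independent of $n$, its lifting property applies uniformly on a tail of the random group model.
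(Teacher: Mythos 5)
Your proposal is correct and follows essentially the same route as the paper: take $b_{R}$ from \thmref{14}, lift the putative $b_{R}+1$ distinct $\Gamma_{l}$-rigid homomorphisms to $F_{k}$ using the lifting property, observe that distinctness and rigidity are preserved under lifting, and contradict \thmref{14}. In fact your treatment is more careful than the paper's: the paper lifts the $h_{j}$ one at a time and does not address that separate lifts could send $p$ to different preimages of $p_{0}$ (which would block the appeal to \thmref{14}), whereas your amalgamated system $\tilde{\Sigma}$ with a shared $p$-block fixes exactly this point.
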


\begin{proof}
Let $b_{R}$ be the one in \thmref{14}, and assume by contradiction
that $\Gamma=\Gamma_{l}$ admits more than $b_{R}$ pairwise distinct
$\Gamma$-rigid homomorphisms $h_{j}:Rgd(y,p,a)\rightarrow\Gamma$.
For each $j$, we lift the homomorphism $h_{j}$ into a homomorphism
$\tilde{h}_{j}:Rgd(y,p,a)\rightarrow F_{k}$.

Then, since the homomorphisms $h_{j}$ are pairwise distinct, so are
the homomorphisms $\tilde{h}_{j}$. Moreover, for all $j$, the homomorphism
$\tilde{h}_{j}$ is rigid, since $h_{j}$ is. A contradiction. 
\end{proof}
\begin{thm}
\label{thm:16} (\cite{DGIII}, theorem 2.5). Let $Sld(y,p,a)$ be
a $p$-solid limit group. There exists a global bound $b_{S}(Sld)$
for which, for any particular value of the defining parameters $p_{0}$,
there are at most $b_{S}$ distinct strictly solid families of homomorphisms
$h:Sld(y,p,a)\rightarrow F_{k}$ satisfying $h(p)=p_{0}$. 
\end{thm}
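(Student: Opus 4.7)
The plan is to argue by contradiction, generalizing the scheme that yields the analogous bound for rigid limit groups (\thmref{14}) but carefully accounting for the equivalence relation built into the definition of a strictly solid family. Assume that no such global bound $b_{S}$ exists. Then for every integer $n$ there is a specialization $p_{0,n}$ of the parameters and pairwise non-equivalent strictly solid homomorphisms $h_{1,n},\ldots,h_{n,n}:Sld(y,p,a)\to F_{k}$ with $h_{j,n}(p)=p_{0,n}$.

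First I would replace each $h_{j,n}$ by a \emph{shortest} representative of its strictly solid family, pre-composing with modular automorphisms coming from the $p$-graded JSJ of $Sld(y,p,a)$ (which fix $p$ elementwise by the graded convention). For each $n$ choose two distinct families and the corresponding shortest representatives $h_{1,n},h_{2,n}$, and package them as a graded homomorphism $(h_{1,n},h_{2,n})$ from the amalgamated free product $Sld(y_{1},p,a)*_{\langle p,a\rangle}Sld(y_{2},p,a)$ into $F_{k}$. After passing to a convergent subsequence, this yields a graded limit group $L(y_{1},y_{2},p,a)$ with an isometric, non-trivial action on a real tree on which both $Sld$-factors act non-trivially.

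Next I would run Sela's shortening argument separately on each of the two $Sld$-factors inside $L$, using the fact that the chosen representatives are shortest in their respective strictly solid families. This forces the sequence to factor through a proper quotient of the amalgamated product, and the possibilities are limited: either one factor collapses onto a maximal flexible quotient $Flx_{j}(y,p,a)$, contradicting the strictly solid hypothesis for one of the sequences $h_{1,n}$ or $h_{2,n}$; or the limit identification of $y_{1}$ and $y_{2}$ must factor through the two canonical embeddings $\iota_{1},\iota_{2}:Sld\to Comp_{T}$ introduced in \defref{13}, which places $h_{1,n}$ and $h_{2,n}$ in the \emph{same} strictly solid family for all large enough $n$ — again a contradiction.

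The main obstacle is controlling the interaction between the two copies of $Sld$ inside the limit group $L$: I need to rule out "mixed" scenarios in which each factor retains its solid structure while the amalgamated product collapses in a way that corresponds neither to flexibility of one of the factors nor to the $Comp_{T}$-identification. Handling this requires the descending chain condition for limit groups, together with an induction on the complexity of the $p$-JSJ of $Sld(y,p,a)$ (so that repeated applications of the shortening procedure terminate after finitely many steps), and a diagonal argument that ensures the above dichotomy is exhaustive and produces the final contradiction, yielding the required uniform bound $b_{S}(Sld)$.
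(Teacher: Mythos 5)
The paper does not prove this statement at all: it is quoted verbatim from Sela's Diophantine Geometry III (Theorem 2.5), and in that source the bound on the number of strictly solid families is one of the deepest results of the whole sequence, occupying a long and delicate induction (together with the companion bound for rigid groups, \thmref{14}). So there is no internal proof to compare against; what has to be judged is whether your compactness-plus-shortening sketch could stand on its own, and it cannot.

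The decisive gap is quantitative. You extract, for each $n$, two non-equivalent strictly solid homomorphisms and form $Sld(y_{1},p,a)\ast_{\langle p,a\rangle}Sld(y_{2},p,a)$. Even if the shortening argument ran perfectly on this two-factor group, the conclusion would only constrain \emph{pairs} of families; it gives no bound on the \emph{number} of families for a fixed $p_{0}$. To bound the number you would have to amalgamate $n$ copies with $n\to\infty$, but then the underlying finitely generated group changes with $n$, and neither the descending chain condition nor the shortening argument applies across a sequence of different groups; there is no single limit group on which to run the machine. A second, independent gap is the asserted dichotomy at the end: that after shortening, either a factor drops onto a flexible quotient or the identification factors through the two embeddings $\iota_{1},\iota_{2}:Sld\to Comp_{T}$ of \defref{13}. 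The ``mixed scenarios'' you flag as the main obstacle are precisely the content of Sela's proof; they are not eliminated by the DCC together with an induction on the complexity of the $p$-JSJ, because the limit action of the amalgam need not decompose along the two factors, and a strictly solid family is an equivalence class under the $Comp_{T}$-relation, which is coarser than the orbit under modular automorphisms, so ``shortest representative of its family'' is not the notion the shortening argument controls. In short, the strategy reproduces the (correct) first reflex for such bounds, but the two steps that would make it a proof are exactly the ones that are missing.
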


\begin{thm}
\label{thm:17} Let $Sld(y,p,a)$ be a $p$-solid limit group. There
exists a global bound $b_{S}(Sld)$ for which the following property
is satisfied. Let $\Gamma_{l}$ be a group of level $l$ in the model.

If $l$ is large enough, then, for any particular value of the defining
parameters $p_{0}\in\Gamma_{l}$, there are at most $b_{S}$ distinct
strictly solid families of homomorphisms $h:Sld(y,p,a)\rightarrow\Gamma_{l}$
satisfying $h(p)=p_{0}$. 
\end{thm}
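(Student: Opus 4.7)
The plan is to mimic the lifting strategy of \thmref{15} while carefully tracking the equivalence relation that groups strictly solid homomorphisms into families. Let $b_{S}$ be the global bound from \thmref{16} for $Sld(y,p,a)$ over $F_{k}$. Assume for contradiction that $\Gamma=\Gamma_{l}$, for arbitrarily large $l$, admits a specialization $p_{0}\in\Gamma$ together with more than $b_{S}$ pairwise inequivalent $\Gamma$-strictly solid families, represented by homomorphisms $h_{1},\dots,h_{b_{S}+1}:Sld(y,p,a)\rightarrow\Gamma$ with common parameter $h_{j}(p)=p_{0}$.

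First I would lift the $h_{j}$'s to $F_{k}$ so that they share a common parameter value. To do so, consider the finite combined system on variables $y^{(1)},\dots,y^{(b_{S}+1)},p$ obtained by taking $b_{S}+1$ copies of the (finite) presentation of $Sld(y,p,a)$ with the parameter set $p$ shared across all copies. By \thmref{1}, for $l$ large enough $\Gamma_{l}$ satisfies the lifting property for this specific system with overwhelming probability. Applying it to the tuple assembled from $h_{1},\dots,h_{b_{S}+1}$ yields lifts $\tilde{h}_{j}:Sld(y,p,a)\rightarrow F_{k}$ all sending $p$ to a common element $\tilde{p}_{0}\in F_{k}$ with $\pi(\tilde{p}_{0})=p_{0}$, where $\pi:F_{k}\rightarrow\Gamma$ is the natural projection. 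Since $h_{j}=\pi\circ\tilde{h}_{j}$ and the $h_{j}$'s are pairwise distinct, so are the $\tilde{h}_{j}$'s.

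Next I would verify that each $\tilde{h}_{j}$ is strictly solid over $F_{k}$. If $\tilde{h}_{j}$ factored through one of the canonical maps $\tau_{i}:Sld\rightarrow Comp_{i}$ via some $\tilde{\phi}:Comp_{i}\rightarrow F_{k}$, then $h_{j}=(\pi\circ\tilde{\phi})\circ\tau_{i}$ would also factor through $\tau_{i}$, contradicting the $\Gamma$-strict solidity of $h_{j}$. Similarly, if $\tilde{h}_{j}$ were degenerate over $F_{k}$, then by construction of the singular locus $\tilde{h}_{j}$ would factor through some limit group $S_{i}$ (constructed over $F_{k}$); composing with $\pi$ would make $h_{j}$ factor through $S_{i}$, which by \lemref{11} forces $h_{j}$ to be degenerate over $\Gamma$, again a contradiction.

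The main obstacle, which goes beyond the rigid analogue \thmref{15}, is showing that the lifts represent pairwise inequivalent strictly solid families over $F_{k}$. Suppose two of them, say $\tilde{h}_{i}$ and $\tilde{h}_{j}$, lay in the same family, with witness $\tilde{u}:Comp_{T}\rightarrow F_{k}$ satisfying $\tilde{u}\circ\iota_{1}=\tilde{h}_{i}$ and $\tilde{u}\circ\iota_{2}=\tilde{h}_{j}$. Then $u:=\pi\circ\tilde{u}:Comp_{T}\rightarrow\Gamma$ would satisfy $u\circ\iota_{1}=h_{i}$ and $u\circ\iota_{2}=h_{j}$, placing $h_{i}$ and $h_{j}$ in the same $\Gamma$-strictly solid family, which is impossible. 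Hence the $b_{S}+1$ lifts give $b_{S}+1$ distinct strictly solid families over $F_{k}$ sharing parameter value $\tilde{p}_{0}$, contradicting \thmref{16} and completing the proof.
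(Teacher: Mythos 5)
Your proof is correct and follows essentially the same route as the paper's: lift representatives of the $\Gamma$-strictly solid families to $F_{k}$ and derive a contradiction with the global bound of \thmref{16}. Your write-up is in fact more careful than the paper's one-paragraph argument — in particular, arranging a common lifted parameter value $\tilde{p}_{0}$ (needed to invoke \thmref{16} for a single value of $p$), checking strict solidity of the lifts, and pushing a hypothetical witness $\tilde{u}:Comp_{T}\rightarrow F_{k}$ down to $\Gamma$ to see that distinct $\Gamma$-families lift to distinct families — all of which the paper leaves implicit.
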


\begin{proof}
Let $b_{S}$ be the one in \thmref{16}, and assume by contradiction
that $\Gamma=\Gamma_{l}$ admits more $b_{S}$ pairwise distinct $\Gamma$-strictly
solid families. Let $h_{j}$ be strictly solid representatives for
these families. For each $j$, we lift the homomorphism $h_{j}$ into
a homomorphism $\tilde{h}_{j}:Sld(y,p,a)\rightarrow F_{k}$.

Then, since the homomorphisms $h_{j}$ represent pairwise distinct
$\Gamma$-strictly solid families, the homomorphisms $\tilde{h}_{j}$
must represent pairwise distinct strictly solid families. A contradiction. 
\end{proof}
\begin{lem}
\label{lem:18} Let $B(y,p,a)$ be a $p$-rigid or a $p$-solid limit
group. Let $Glim(x,y,p,a)$ be a $p$-graded limit group, and let
$GRes(x,y,p,a)$ be a $p$-graded resolution of $Glim(x,y,p,a)$.
Assume that $B(y,p,a)$ is mapped canonically into $Glim(x,y,p,a)$,
and denote by $\eta_{j}:B(y,p,a)\rightarrow Glim_{j}(x,y,p,a)$ the
corresponding homomorphisms of $B(y,p,a)$ into the various levels
of $GRes(x,y,p,a)$. If $B(y,p,a)$ admits an exceptional specialization
$(y_{0},p_{0},a)$ that factors through the resolution $GRes(x,y,p,a)$,
then for all $j$, the $p$-JSJ of $B(y,p,a)$ is mapped compatibly
into the $p$-JSJ of $Glim_{j}(x,y,p,a)$. 
\end{lem}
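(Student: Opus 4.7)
The $p$-rigid case is vacuous: by definition a $p$-rigid limit group has trivial $p$-JSJ, so compatibility with the $p$-JSJ of $Glim_{j}$ is automatic. Assume therefore $B = Sld(y,p,a)$ is $p$-solid with $p$-JSJ $\Lambda_{B}$, and argue by contradiction. Fix a level $j$ at which $\eta_{j}$ fails to carry $\Lambda_{B}$ compatibly into the $p$-JSJ $\Lambda_{j}$ of $Glim_{j}$: some $\Lambda_{B}$-elliptic, $QH$, or abelian piece is misplaced in $\Lambda_{j}$, or equivalently there is a modular automorphism $\varphi\in Mod_{p}(\Lambda_{j})$ whose restriction to $\eta_{j}(B)$ is \emph{not} realized by any element of $Mod_{p}(\Lambda_{B})$.

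Next I would exploit the hypothesized exceptional specialization $(y_{0},p_{0},a)$ to manufacture strictly solid specializations of $B$ at $p_{0}$. By \lemref{9} and \lemref{10} applied to $Comp(GRes)(z,y,p,a)$, pick a graded test sequence $\{\lambda_{n}\}$ whose parameter specializations at every level coincide with $p_{0}$ (the pegs of $B$ are anchored to $(y_{0},p_{0},a)$). The compositions $\mu_{n} := \lambda_{n}\circ\eta_{j}$ are then specializations of $B$ with $\mu_{n}(p) = p_{0}$ that factor through $GRes$; Property~(3) of \defref{8} combined with the singular-locus description of \lemref{11} forces each $\mu_{n}$ to be strictly solid in the sense of \defref{13}, for otherwise $\mu_{n}$ would factor through some canonical map $\tau_{j'}:B\to Comp_{j'}$ into a flexible quotient, in contradiction with the growth condition built into the test sequence.

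Now I would deform by iterates of the ``unseen'' modular automorphism $\varphi$: for each $m=0,1,2,\dots$ the sequence $\{\lambda_{n}\circ\varphi^{m}\circ\eta_{j}\}$ is again a graded test sequence of strictly solid specializations of $B$ at the \emph{same} parameter value $p_{0}$. Since $\varphi|_{\eta_{j}(B)}$ is not realized in $Mod_{p}(\Lambda_{B})$, a shortening argument along the lines of \thmref{7}, with stabilizers in the limit tree controlled by \thmref{5}, shows that the limit actions on real trees arising from distinct values of $m$ lie in pairwise $Mod_{p}(\Lambda_{B})$-inequivalent orbits; hence the associated strictly solid families of $B$ are pairwise distinct. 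For $m$ large enough this produces more than $b_{S}(Sld)$ strictly solid families of $B$ at the single value $p_{0}$, contradicting \thmref{16} (and \thmref{17} in the random-group setting), which gives the desired contradiction and forces compatibility.

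The main obstacle is verifying that distinct $\varphi$-deformations truly land in genuinely different strictly solid \emph{equivalence classes} of \defref{13}, rather than being absorbed by some single homomorphism $u:Comp_{T}\to F_{k}$. An equivalence via $Comp_{T}$ would translate $\varphi|_{\eta_{j}(B)}$ back into a genuine $Mod_{p}(\Lambda_{B})$-automorphism, contradicting the choice of $\varphi$; pinning this down is precisely where the structure of stabilizers in the limit real tree (\thmref{5}) together with the shortening argument become essential, and it constitutes the technical core of the lemma.
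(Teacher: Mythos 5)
Your proposal takes a different route from the paper, and it has two genuine gaps. The paper argues as follows: incompatibility of the mapping of the $p$-JSJ of $B$ into the $p$-JSJ of $Glim_{j}$ forces the restriction to $B$ of \emph{every} test-sequence specialization of $Comp(GRes)$ to factor through a flexible quotient of $B$; encoding this by the formal MR diagram of the flexibility systems $Flx(c,y,p_{0},a)$ over the ungraded resolution $Comp(z,x,y,p_{0},a)$ yields a covering closure, so the given specialization $(y_{0},p_{0},a)$ itself factors through a flexible quotient and is not exceptional. Your argument asserts essentially the opposite at the analogous step: you claim that condition (3) of \defref{8} forces the restrictions $\mu_{n}=\lambda_{n}\circ\eta_{j}$ to be strictly solid. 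Condition (3) is a growth condition on elements of the completion of $GRes$ that are non-elliptic in its decompositions; it says nothing about the image of $B$, and under the incompatibility hypothesis the generic deformations of $Glim_{j}$ precisely \emph{flex} the image of $B$, so the $\mu_{n}$ should be expected to be flexible, not strictly solid. This inversion undermines the whole counting strategy: flexible specializations do not contribute to the count of strictly solid families, so the bound of \thmref{16} is never threatened.

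Even granting strict solidity of the $\mu_{n}$, the core step --- that the deformations $\lambda_{n}\circ\varphi^{m}\circ\eta_{j}$ for distinct $m$ lie in pairwise distinct strictly solid families in the sense of \defref{13} --- is exactly what you acknowledge you cannot verify, and it is not a technicality: a modular automorphism $\varphi$ of $\Lambda_{j}$ need not even preserve the subgroup $\eta_{j}(B)$ when that subgroup is not elliptic in $\Lambda_{j}$, so ``$\varphi|_{\eta_{j}(B)}$ is not realized in $Mod_{p}(\Lambda_{B})$'' does not parse in general, and distinct iterates can perfectly well be absorbed into a single family via the closure $Comp_{T}$. A smaller but real error is the opening claim that the $p$-rigid case is vacuous: compatibility for rigid $B$ (in the sense used in \lemref{20}) means that the image of $B$ is elliptic in the $p$-JSJ of $Glim_{j}$, which is a non-trivial assertion and is handled by the same flexibility/covering-closure argument as the solid case.
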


\begin{proof}
Assume by contradiction it is not the case. Let $Flx(c,y,p,a)=\{FlxR_{1},...,FlxR_{m}\}$
be the collection of the relations of the graded completions of the
graded resolutions $B(y,p,a)\rightarrow Flx_{1}(y,p,a),...,B(y,p,a)\rightarrow Flx_{m}(y,p,a)$,
where $Flx_{1}(y,p,a),...,Flx_{m}(y,p,a)$ are the maximal flexible
quotients of $B(y,p,a)$. Let $Comp(z,x,y,p,a)$ be the completion
of the resolution $GRes(x,y,p,a)$, and let $Comp(z,x,y,p_{0},a)$
be the ungraded resolution corresponding to a factorization of an
exceptional specialization $(y_{0},p_{0},a)$ of $B(y,p,a)$ through
$Comp(z,x,y,p,a)$.

We Construct the formal MR diagram $D$ of the systems $Flx(c,y,p_{0},a)$
w.r.t. the resolution $Comp(z,x,y,p_{0},a)$ (see \cite{DGII}, section
2, or \secref{Formal-Solutions-over-Random-Groups} below). Since
the $p$-JSJ of $B(y,p,a)$ is not mapped compatibly into the $p$-JSJ
of $Glim_{j}(x,y,p,a)$, for some $j$, we conclude that every test
sequence of $Comp(z,x,y,p_{0},a)$, sub-factors through one of the
terminal formal closures $FCl(c,z,x,y,p_{0},a)$ in the formal MR
diagram $D$. Hence, These formal closures form a covering closure
for the resolution $Comp(z,x,y,p_{0},a)$. In particular, the specialization
$(y_{0},p_{0},a)$ of $B(y,p,a)$, factors through one of these formal
closures. Thus, $(y_{0},p_{0},a)$ is not an exceptional specialization
of $B(y,p,a)$, a contradiction. 
\end{proof}
As stated in the top of this section, all the solutions of the graded
system $\Sigma(y,p,a)$, factors through the graded MR diagram $K$
of the graded system $\Sigma(y,p,a)$. By definition, a homomorphism
factors through a graded resolution in $K$, if it can be expressed
as a composition of graded modular automorphisms of the different
levels of the resolution, the canonical epimorphisms between consecutive
levels, and a rigid or a strictly solid homomorphism of the terminal
$p$-rigid or $p$-solid limit group of the resolution.

Actually, in order to cover all the solutions of the graded system
$\Sigma(y,p,a)$ by the graded resolutions of the diagram $K$, it
is not sufficient to consider only strictly solid solutions of the
terminal $p$-solid limit groups. But we need further to consider
strictly solid solutions with respect to finitely many sets of graded
closures of the graded resolutions of the graded MR diagram associated
with each $p$-solid group that appear in the diagram $K$. The details
of this more general definition are stated in definition 2.12 in \cite{DGIII}.
This definition can be converted naturally in order to define a $\Gamma$-strictly
solid solution of a $p$-solid limit group with respect to a given
set of closures for a general group $\Gamma=\langle a:\mathcal{R}\rangle$.

According to theorem 2.13 in \cite{DGIII}, the number of distinct
families of strictly solid homomorphisms of a $p$-solid group with
respect to a given set of closures, is globally bounded. Using this
theorem, a similar argument given in the proof of \thmref{17}, proves
the existence of a uniform bound on the number of distinct families
of $\Gamma$-strictly solid homomorphisms of a $p$-solid group with
respect to a given set of closures, for almost all the groups $\Gamma$
in the model.

Hence, for simplicity along the next sections, we will use the terminology
of strictly solid homomorphisms, without stating the corresponding
set of closures.

\section{Formal Solutions over Random Groups}\label{sec:Formal-Solutions-over-Random-Groups}

Along this section, we fix a $p$-graded resolution $Res(y,p,a)$,
and we consider its completion $Comp=Comp(Res)(z,y,p,a)$. We also
fix a system of equations $\Sigma(x,y,p,a)$, where $x,y$ are the
variables, and $p$ denotes the parameter set ($a$ is the fixed basis
of the fixed free group $F_{k}$). We denote by $B$ the $p$-graded
base group of $Comp(z,y,p,a)$.

Our aim is to collect all the formal solutions of $\Sigma(x,y,p,a)$,
w.r.t. to the resolution $Res(y,p,a)$, over almost all the groups
in the model, in a uniform diagram.

For that, we construct an auxiliary graded formal MR diagram that
encodes all of these formal solutions. Then, we will use it in order
to prove that the graded formal MR diagram of the system $\Sigma(x,y,p,a)$
w.r.t. the resolution $Res(y,p,a)$ over the free group $F_{k}$,
that encodes all the formal solutions of the system $\Sigma(x,y,p,a)$
over $F_{k}$, encodes also all the formal solutions of $\Sigma(x,y,p,a)$
w.r.t. the resolution $Res(y,p,a)$ over almost all the groups in
the model.

\subsection{The Construction of Graded Formal MR Diagram over Random Groups}

Let $\mathfrak{T}$ be the collection of all the sequences $\{(x_{n},z_{n},y_{n},p_{n},a)\}$
for which the following conditions hold: 
\begin{enumerate}
\item The sequence $\{(x_{n},z_{n},y_{n},p_{n},a)\}$ is defined over some
ascending sequence $\{\Gamma_{l_{n}}\}$ of groups in the model. 
\item The $\Gamma_{l_{n}}$-specialization $(x_{n},z_{n},y_{n},p_{n},a)$
satisfies the system $\Sigma$, i.e., $\Sigma(x_{n},z_{n},y_{n},p_{n},a)=1$
(in $\Gamma_{l_{n}}$), for all $n$. 
\item The restricted sequence $\{(z_{n},y_{n},p_{n},a)\}$ is a graded test
sequence over the (same) ascending sequence $\{\Gamma_{l_{n}}\}$
of groups in the model. 
\end{enumerate}
The sequences in the collection $\mathfrak{T}$, factor through finitely
many maximal limit groups $H$ (see \thmref{2}). The canonical image
of the subgroup $Comp(z,y,p,a)$ in $H$ is denoted briefly by $Comp$.
Such an $H$ could be freely decomposable w.r.t. the subgroup $Comp\leq H$.
We consider the most refined $Comp$-free factorization of $H$. We
call the free factors that do not include (a conjugate of) the subgroup
$Comp\leq H$, a \emph{standard free factors}.

We keep the standard factors, extend them by their standard MR diagrams,
and continue the construction with the free factor that contains $Comp$.
We denote this factor by $M=M(x,z,y,p,a)$, which is a limit group.
We consider the $Comp$-JSJ decomposition of the limit group $M(x,z,y,p,a)$,
i.e., the JSJ decomposition with respect to the subgroup $Comp$.
If this decomposition is trivial, we call the group $M(x,z,y,p,a)$
a \emph{$Comp$-rigid} group, and terminate the construction for this
stage. In this case, the subcollection of $\mathfrak{T}$ consisting
of all the sequences $\{(x_{n},z_{n},y_{n},p_{n},a)\}$ that factor
through $M$, is denoted by $\mathfrak{T}_{M}$.

In the case that the $Comp$-JSJ of $M(x,z,y,p,a)$ is non-trivial,
we denote by $\mathfrak{T}_{M}$ the subcollection of $\mathfrak{T}$,
consisting of all the sequences $\{(x_{n},z_{n},y_{n},p_{n},a)\}$
that satisfy the following conditions: 
\begin{enumerate}
\item The sequence $\{(x_{n},z_{n},y_{n},p_{n},a)\}$, that is defined over
an ascending sequence $\{\Gamma_{l_{n}}\}$ of groups in the model,
factors through the group $M$. 
\item For all $n$, the specialization $(x_{n},z_{n},y_{n},p_{n},a)$ cannot
be shortened (in the metric of $(\Gamma_{l_{n}},a)$, where $a$ is
the fixed generating set of $\Gamma_{l_{n}}$) by precomposing with
a modular automorphism of $M$ that is induced from its $Comp$-JSJ. 
\end{enumerate}
The sequences in the collection $\mathfrak{T}_{M}$, factor through
finitely many maximal limit quotients of $M$, that we denote - while
abusing the notation - by $H$. If $H$ is not a strict quotient of
$M$, we call $M$ a \emph{$Comp$-solid} group, and terminate the
construction for this stage.

We continue the construction iteratively. Since all the groups that
are obtained along the construction, are limit groups, then, according
to the descending chain condition of limit groups, the construction
must terminate after finitely many steps. The result is a diagram
$D$ that includes finitely many resolutions. Each of these resolutions
terminates in a free product $\langle f\rangle\ast M$, where $\langle f\rangle$
is a free group, and $M$ is either a $Comp$-rigid group or a $Comp$-solid
group. 
\begin{lem}
\label{lem:19} Assume that $M(x,z,y,p,a)$ is a $Comp$-rigid group
or a $Comp$-solid group. Then, $M$ admits a structure of a $p$-graded
tower over a $p$-graded base limit group $Term(M)$, so that: 
\begin{enumerate}
\item up to adding roots to abelian groups, this structure is similar to
the one associated with $Comp(z,y,p,a)$, 
\item this structure is compatible to the one associated with $Comp(z,y,p,a)$,
that is, the canonical map $Comp\rightarrow M$ maps the decomposition
associated with each level of $Comp$, except maybe the bottom level
of $Comp$, compatibly into the decomposition associated with the
corresponding level of $M$, 
\item the canonical image of the $p$-graded base group $B$ of $Comp(z,y,p,a)$
lies entirely in $Term(M)$. 
\end{enumerate}
\end{lem}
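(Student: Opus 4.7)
The plan is to extract the tower structure of $M$ from a convergent subsequence in $\mathfrak{T}_M$, exploiting the fact that its restriction to the coordinates $(z,y,p,a)$ is a graded test sequence of $Comp(z,y,p,a)$. By \defref{8}, such a test sequence faithfully records the whole graded structure of $Comp$ in the limit: for each level $i$ of $Comp$, a level-specific rescaling of the metrics $d_n$ on $\Gamma_{l_n}$ exhibits the $i$-th level limit group of $Comp$ as an isometric action on a real tree $T_i$, with its graph-of-groups decomposition read off from $T_i$ in the standard way (QH subgroups act with pseudo-Anosov laminations, abelian vertex groups by translations on their axes, edge groups elliptic, and non-elliptic elements translating).

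I would work through the levels of $Comp$ from top to bottom. At level $i$, the same rescaled sequence, now viewed as a sequence of homomorphisms of all of $M$ into $\Gamma_{l_n}$, yields an isometric action of $M$ on a tree extending $T_i$. A Rips machine analysis of this $M$-tree, as in \cite{DGI,DGII}, produces a graph-of-groups decomposition of the corresponding level limit of $M$. Because $M$ is $Comp$-rigid or $Comp$-solid and because, by item (2) in the definition of $\mathfrak{T}_M$, the sequences in $\mathfrak{T}_M$ cannot be shortened by modular automorphisms coming from the $Comp$-JSJ's along the resolution, this new decomposition cannot refine the level-$i$ decomposition of $Comp$ inside its QH or abelian vertex groups. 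The only admissible modifications are (a) enlargements of abelian vertex groups by roots that open up inside $M$, and (b) absorption of the extra $x$-generators into existing vertex groups, or the attachment of new standard (i.e., $Comp$-free) pieces. Iterating downward produces a $p$-graded tower structure on $M$ whose levels project compatibly onto those of $Comp$; whatever remains after the bottom level of $Comp$ — the image of the $p$-graded base group $B$ together with any standard free factors collected at earlier stages of the overall construction — is absorbed into the terminal group $Term(M)$, giving conclusion (3).

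The main obstacle is ruling out hypothetical extra splittings of $M$ inside the QH or abelian vertex groups at some level of $Comp$; this is exactly what underlies conclusion (2) and the ``up to roots'' form of conclusion (1). The argument is by contradiction: any such splitting would, via the shortening argument of \cite{Structure and rigidity in hyperbolic groups I} applied at the appropriate level, yield a non-trivial modular automorphism of $M$ that fixes the image of $Comp$ and strictly shortens $\{(x_n,z_n,y_n,p_n,a)\}$, contradicting property (2) in the definition of $\mathfrak{T}_M$. The level-by-level isolation needed for this shortening step is precisely what condition (3) of \defref{8} supplies: the inequality $l_n^2\cdot\max d_n(p_n,1)<d_n(\lambda_n(g),1)$ for non-elliptic elements prevents cancellation between distinct levels, so the limit action at each scale reflects exactly one level of the $Comp$-tower, and the admissible refinements by the $x$-generators are exactly those recorded in conclusions (1) and (2).
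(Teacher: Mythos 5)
Your proposal is correct and follows essentially the same route as the paper: take a sequence in $\mathfrak{T}_M$ converging to an action of $M$ on a real tree, use the non-shortenability condition in the definition of $\mathfrak{T}_M$ together with the shortening argument to rule out any splitting of $M$ beyond the level structure of $Comp$, and then peel off the levels of the tower iteratively (via the test-sequence analysis of \cite{DGII}, Theorems 3.7--3.8), with abelian vertex groups possibly enlarged by roots, until the remaining point stabilizer $Term(M)$ contains the image of $B$. The paper phrases the iteration as passing to point stabilizers of the single limit action rather than as level-specific rescalings, but this is the same analysis.
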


\begin{proof}
Let $(x_{n},z_{n},y_{n},p_{n},a)$ be a sequence in $\mathfrak{T}_{M}$,
that converges into a non-trivial action of $M(x,z,y,p,a)$ on a real
tree. Consider the abelian decomposition $\Pi$ that this action induces
for $M$.

If $Comp$ is contained in a proper subgraph of $\Pi$, then the $Comp$-JSJ
of $M(x,z,y,p,a)$ is non-trivial, and, according to the shortening
argument over ascending sequence of groups in the model (\thmref{7}),
the (generators of some part of the $Comp$-JSJ of $M$ induced by
the) specializations in the sequence $(x_{n},z_{n},y_{n},p_{n},a)$
can eventually be shortened by precomposing with modular automorphisms
in the $Comp$-JSJ of $M(x,z,y,p,a)$, a contradiction. Hence, the
components of the action of $M$ on the real tree are of the same
forms as those of $Comp$. Using the properties of test sequences
over ascending sequence of groups in the model, we can apply the analysis
in the proofs theorem 3.7, and theorem 3.8 of \cite{DGII}, in order
to analyze the action of $M$ on the limit real tree.

We deduce that the action of $M$ contains either a single IET, which
is the ``highest'' surface in the decomposition associated with
the completion $Comp(z,y,p,a)$, and finitely many point stabilizers,
or the action of $M$ is discrete with a unique point stabilizer and
one edge, which (iteratively as in the proof of the existence of a
formal solution over abelian groups in \cite{DGII}, Proposition 1.8)
gives a decomposition for $M$ of the form $V\ast_{Ab_{1}}Ab$, where
$Ab$ is obtained from the ``highest'' abelian group of the decomposition
associated with the completion $Comp(z,y,p,a)$ by adding some roots
for non-pegs generators.

Continuing with each of the vertices (point stabilizers) iteratively,
this uncovering procedure explains that $M$ admits a tower structure
whose upper levels are similar to those of the decomposition associated
with the highest level of the completion $Comp(z,y,p,a)$, up to adding
roots for abelian groups along this decomposition.

Finally, this procedure ends up when we get a point stabilizer $Term(M)$
whose intersection with $Comp$ is of the form $\bar{B}\ast F$, where
$\bar{B}$ is the image of $B$ and $F$ is a free group. 
\end{proof}
In addition to \lemref{19}, we want that the $p$-graded base group
$B$ of $Comp(z,y,p,a)$ to be mapped compatibly into the $p$-graded
base group $Term(M)$ of $M$.

For that, we note that at this point, we can consider the finitely
many base groups $Term(M)$ that we have constructed so far, as were
given in advance. According to \thmref{2}, each of these groups is
a limit group, and in particular a finitely presented group. Hence,
according to \thmref{1}, we can assume that if $\Gamma$ is any group
in the model, then every $\Gamma$-specialization of $Term(M)$ can
be lifted to a specialization of $Term(M)$ in $F_{k}$. As a consequence,
we obtain the compatibility that we wanted: 
\begin{lem}
\label{lem:20} Assume that $M(x,z,y,p,a)$ is a $Comp$-rigid group
or a $Comp$-solid group. Then, $M$ admits a structure of a graded
formal closure of $Comp(z,y,p,a)$ over a $p$-graded limit group
$Term(M)$. That is, $M$ admits a structure of a $p$-graded tower
over a $p$-graded base group $Term(M)$, so that: 
\begin{enumerate}
\item up to adding roots to abelian groups, this structure is similar to
the one associated with $Comp(z,y,p,a)$, 
\item this structure is compatible to the one associated with $Comp(z,y,p,a)$, 
\item the canonical image of the $p$-graded base group $B$ of $Comp(z,y,p,a)$
lies entirely in $Term(M)$, and $B$ is mapped compatibly into $Term(M)$.
That is, the minimal subgraph of groups of the $p$-JSJ of $Term(M)$
containing the canonical image $\bar{B}$ of $B$, consists only from
parts that are taken from the $p$-JSJ of $B$. 
\end{enumerate}
\end{lem}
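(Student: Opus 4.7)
The plan is to reduce the compatibility statement to the corresponding statement over $F_k$, where it is already available from \cite{DGII}. By \lemref{19}, each of the finitely many terminal groups $Term(M)$ produced in the diagram $D$ is a limit group over $F_k$ (via \thmref{2}), and in particular is finitely presented. I would therefore fix finite systems of equations $\Sigma_{Term(M)}$ presenting each $Term(M)$, together with the finitely many inequations asserting non-triviality of the vertex groups and the non-commutativity conditions along the edges of its $p$-JSJ, and insert them into the collections $H_l$ of \thmref{1}.

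Once these systems are included, \thmref{1} and \thmref{2} guarantee that, outside a negligible subcollection, every $\Gamma_{l_n}$-specialization of any $Term(M)$ lifts to a specialization over $F_k$. Hence, given any test sequence $(x_n,z_n,y_n,p_n,a)$ in $\mathfrak{T}_M$, I can lift the restriction to $Term(M)$ to a sequence of specializations over $F_k$. Using \lemref{9} and \lemref{10}, I would then extend this lift to a test sequence of $Comp(z,y,p,a)$ over $F_k$ whose limit tower structure coincides with the tower structure of $M$ described in \lemref{19}, with the same terminal base $Term(M)$.

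With $F_k$-valued test sequences in hand, I would invoke the standard construction of graded formal closures from \cite{DGII}: in that construction, the base group $B$ of the completion $Comp(z,y,p,a)$ is automatically mapped compatibly into $Term(M)$, meaning that the minimal subgraph of the $p$-JSJ of $Term(M)$ containing the canonical image $\bar B$ consists only of parts inherited from the $p$-JSJ of $B$. Combined with the non-shortenability condition built into the definition of $\mathfrak{T}_M$, the shortening argument used in the proof of \lemref{18} forbids any refinement of the image of $B$ beyond what comes from the $p$-JSJ of $B$, so the compatibility transfers from the $F_k$-setting to the random-group setting.

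The main obstacle is the bridging step from random groups to $F_k$: one must ensure that the lifted sequences really do form a test sequence of $Comp(z,y,p,a)$ over $F_k$ whose limit is $M$ with the same terminal base group $Term(M)$, including the same vertex structure of its $p$-JSJ. This is precisely where the combination of \lemref{9}, \lemref{10}, and the lifting property of \thmref{1} (applied after enlarging $H_l$ to include the presentations $\Sigma_{Term(M)}$) does the essential work. Once that lift is in place, the required compatibility of $B$ into $Term(M)$ reduces to a direct invocation of the $F_k$ theory together with the shortening argument already used inside the construction of $\mathfrak{T}_M$.
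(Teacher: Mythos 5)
Your overall strategy---enlarging the collections $H_l$ of \thmref{1} by the finitely many presentations of the groups $Term(M)$, lifting to $F_k$, and then quoting the $F_k$ theory---is the same device the paper uses. But the way you execute the reduction has two problems. First, the bridging step as you describe it would not go through: you propose to lift a test sequence from $\mathfrak{T}_M$ to a test sequence of $Comp(z,y,p,a)$ over $F_k$ with the same limit $M$. The defining conditions of a graded test sequence over an ascending sequence of random groups (\defref{8}) are metric conditions measured in the Cayley graphs of the $\Gamma_{l_n}$, calibrated against hyperbolicity constants of order $l_n$; lifts of such specializations to $F_k$ have no reason to satisfy the corresponding conditions of Definition 1.20 of \cite{DGII} over $F_k$, and \lemref{9} and \lemref{10} only produce test sequences over random groups, not over $F_k$. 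Fortunately this step is unnecessary: the tower structure of $M$ and the fact that $\bar{B}$ lies in $Term(M)$ are already supplied by \lemref{19}, so the only new content of the present lemma is the compatibility of $B$ into $Term(M)$, and for that one needs to lift only a single specialization.

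Second, the compatibility is not ``automatic'' in the construction of \cite{DGII}; it is precisely the content of \lemref{18}, whose hypothesis is that $B$ admits an \emph{exceptional} specialization factoring through the resolution (and whose proof is a formal-solution/covering-closure argument, not a shortening argument). The step your proposal is missing is the following: a sequence in $\mathfrak{T}_M$ restricts to a $\Gamma_{l_n}$-exceptional specialization of $B$ that factors through $Term(M)$; lift it (via the enlarged $H_l$'s) to an $F_k$-specialization of $B$ factoring through $Term(M)$, and observe that this lift must be $F_k$-exceptional---if it factored through a flexible quotient of $B$ over $F_k$, its projection would factor through that flexible quotient over $\Gamma_{l_n}$, contradicting $\Gamma_{l_n}$-exceptionality. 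With an $F_k$-exceptional specialization of $B$ factoring through $Term(M)$ in hand, \lemref{18} yields the compatibility directly; no lifted test sequence is needed.
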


\begin{proof}
Let $(x_{n},z_{n},y_{n},p_{n},a)$ be a sequence in $\mathfrak{T}_{M}$.
Since the sequence $(x_{n},z_{n},y_{n},p_{n},a)$ factors through
$M$, and since the graded limit quotient $\bar{B}$ of $B$ lies
entirely in $Term(M)$, we conclude that there exists a $\Gamma$-exceptional
specialization of $B$ that factors through $Term(M)$, for some group
$\Gamma=\Gamma_{n}$ in the model. We lift this specialization to
a specialization of $B$ (in $F_{k}$) that factors through $Term(M)$.
Note that this lift must be an exceptional specialization of $B$
(over $F_{k}$). Hence, according to \lemref{18}, the $p$-JSJ of
$Term(M)$ must be compatible with that of $B$. We conclude that
$M$ is a graded closure of $Comp(z,y,p,a)$ over some $p$-graded
limit group $Term(M)$. 
\end{proof}
We denote by $D$ the diagram obtained by our iterative construction.
In light of \lemref{20}, every resolution of the diagram $D$ that
we have constructed above, is defined over a graded formal closure
of the graded completion $Comp(z,y,p,a)$. And according to the construction
of $D$, every sequence $\{(x_{n},z_{n},y_{n},p_{n},a)\}$ of specializations
over an ascending sequence $\{\Gamma_{l_{n}}\}$ of groups in the
model, that satisfies the system $\Sigma(x,y,p,a)$, and restricts
to a test sequence $\{(z_{n},y_{n},p_{n},a)\}$ over the ascending
sequence $\{\Gamma_{l_{n}}\}$, subfactors through at least one of
the resolutions in $D$. Note that factorization through such a resolution
is defined using the canonical maps, factorization through the standard
MR diagrams associated with the standard free factors, together with
modular automorphisms induced by the $Comp$-JSJ's of the corresponding
(non-standard) groups along the diagram.

The obtained diagram $D$ is called \emph{the auxiliary graded formal
MR diagram over random groups of the system $\Sigma(x,y,p,a)$ over
the graded resolution $Res(y,p,a)$.}

Our aim is to show that the graded formal MR diagram over $F_{k}$
of the system $\Sigma(x,y,p,a)$ over the graded resolution $Res(y,p,a)$
(that we explain in the next subsection), encodes all the formal solutions
of a random group $\Gamma$ (in probability $1$) for any value of
the parameters set $p_{0}\in\Gamma$, for which the sentence

\[
\forall y\in Res(y,p_{0},a)\quad\Sigma(x,y,p_{0},a)=1
\]
is a truth sentence over $\Gamma$. The precise statement formulating
this property, will be given below (\thmref{27}). For that aim, we
start with the following lemma: 
\begin{lem}
\label{lem:21} Let $\Gamma_{l}$ be a group of level $l$ in the
model. If $l$ is large enough, then the following property is satisfied
in $\Gamma_{l}$.

Every sequence of specializations $\{(x_{n},z_{n},y_{n},p_{n},a)\}$
over $\Gamma_{l}$, for which $\Sigma(x_{n},y_{n},p_{n},a)=1$ in
$\Gamma_{l}$ for all $n$, and the restricted sequence $\{(z_{n},y_{n},p_{n},a)\}$
is a test sequence of $Comp(z,y,p,a)$ over $\Gamma_{l}$, subfactors
through the auxiliary graded formal MR diagram over random groups
$D$. 
\end{lem}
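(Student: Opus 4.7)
The plan is a contradiction-plus-diagonal argument against the very construction of $D$. Assume the lemma fails; then there is an unbounded non-decreasing sequence of levels $l_1\leq l_2\leq\cdots$ and, for each $n$, a sequence
\[
\{s_t^{(n)}\}_t=\{(x_t^{(n)},z_t^{(n)},y_t^{(n)},p_t^{(n)},a)\}_t
\]
of specializations over $\Gamma_{l_n}$ with $\Sigma(x_t^{(n)},y_t^{(n)},p_t^{(n)},a)=1$, whose restriction to the $(z,y,p,a)$-coordinates is a graded test sequence of $Comp(z,y,p,a)$ over $\Gamma_{l_n}$, but which fails to subfactor through any of the finitely many resolutions of $D$. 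Since $D$ has only finitely many resolutions, for each $n$ this failure forces an index $t_0(n)$ beyond which no individual $s_t^{(n)}$ factors through any resolution of $D$.

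Next I would assemble a diagonal sequence. By \lemref{10}, for each $n$ there is a threshold $\tau(n)\geq t_0(n)$ such that any diagonal choice $t_n\geq\tau(n)$, growing fast enough with $n$, produces a sequence $\{s_{t_n}^{(n)}\}_n$ which is itself a graded test sequence of $Comp(z,y,p,a)$ over the ascending sequence $\{\Gamma_{l_n}\}_n$ in the sense of \defref{8}, while still satisfying $\Sigma=1$ at every index. Applying \thmref{1} and \thmref{2} and passing to a further subsequence, I may assume each $\Gamma_{l_n}\in\mathcal{M}_{l_n}\setminus\mathcal{N}_{l_n}$, so that all the limiting procedures underlying the construction of $D$ (in particular \lemref{19} and \lemref{20}) genuinely produce limit groups over $F_k$ along the diagonal sequence, as those constructions need.

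The diagonal sequence $\{s_{t_n}^{(n)}\}_n$ now verifies all three conditions defining membership in the collection $\mathfrak{T}$ used to construct $D$. Hence, by the construction of $D$ itself, a subsequence must factor through some single resolution $R$ of $D$; in particular, infinitely many $s_{t_n}^{(n)}$ factor through $R$. But by the choice $t_n\geq t_0(n)$, no $s_{t_n}^{(n)}$ factors through any resolution of $D$, a contradiction. The existence of a single threshold ``large enough $l$'' is then automatic: any failure occurring at arbitrarily large levels assembles into precisely the contradictory diagonal above.

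The principal obstacle is arranging the diagonal so that it qualifies as a graded test sequence over the ascending sequence, rather than just as a list of separate test sequences one per level. Condition $(3)$ of \defref{8} couples a growth requirement to the hyperbolicity constant $\alpha_0 l_n$ supplied by \thmref{3} and to the non-shortenability of the specialization via the modular automorphisms of the decompositions along the various levels of $Comp(z,y,p,a)$; controlling these conditions uniformly along the diagonal is exactly what \lemref{10} packages, and applying it while simultaneously honouring $t_n\geq t_0(n)$ is where the careful bookkeeping lives. Once that verification is in hand, the remainder is a clean appeal to the construction of $D$.
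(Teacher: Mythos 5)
Your proposal is correct and follows essentially the same route as the paper: assume failure, extract for each level a sequence none of whose terms factors through the finitely many resolutions of $D$, diagonalize via \lemref{10} to obtain a graded test sequence over the ascending sequence $\{\Gamma_{l_n}\}$ satisfying $\Sigma=1$, and contradict the defining property of $D$ that every sequence in $\mathfrak{T}$ subfactors through it. The extra bookkeeping you flag (thresholds $t_0(n)$, $\tau(n)$, and discarding the negligible collections $\mathcal{N}_{l_n}$) is exactly what the paper's appeal to \lemref{10} and the standing assumptions of \thmref{2} are meant to absorb.
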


\begin{proof}
Assume by contradiction the statement is false. Then, there will be
an ascending sequence $\{\Gamma_{l_{n}}\}$ of groups in the model,
so that for all $n$, there exists a sequence $\{(x_{m}^{n},z_{m}^{n},y_{m}^{n},p_{m}^{n},a)\}_{m}$
of $\Gamma_{l_{n}}$-specializations, for which: 
\begin{enumerate}
\item For all $m$, the $\Gamma_{l_{n}}$-specialization $(x_{m}^{n},z_{m}^{n},y_{m}^{n},p_{m}^{n},a)$
satisfies the system $\Sigma$, i.e., $\Sigma(x_{m}^{n},z_{m}^{n},y_{m}^{n},p_{m}^{n},a)=1$
(in $\Gamma_{l_{n}}$). 
\item The restricted sequence $\{(z_{m}^{n},y_{m}^{n},p_{m}^{n},a)\}_{m}$
is a test sequence of $Comp(z,y,p,a)$ over $\Gamma_{l_{n}}$. 
\item For all $m$, the $\Gamma_{l_{n}}$-specialization $(x_{m}^{n},z_{m}^{n},y_{m}^{n},p_{m}^{n},a)$
does not factor through $D$. 
\end{enumerate}
Hence, using \lemref{10}, there exists a sequence $(m_{n})_{n}$
of integers, so that the sequence

\[
\{(x_{m_{n}}^{n},z_{m_{n}}^{n},y_{m_{n}}^{n},p_{m_{n}}^{n},a)\}_{n}
\]
which is defined over the ascending sequence $\{\Gamma_{l_{n}}\}$
of groups in the model, satisfies the system $\Sigma(x,y,p,a)=1$,
restricts to a test sequence $\{(z_{m_{n}}^{n},y_{m_{n}}^{n},p_{m_{n}}^{n},a)\}_{n}$
over the ascending sequence $\{\Gamma_{l_{n}}\}$, but it does not
subfactor through $D$. This contradicts the construction of $D$. 
\end{proof}

\subsection{The Construction of the Graded Formal MR Diagram over Free Groups}

In this subsection we will construct the graded formal MR diagram
over the free group $F_{k}$ of the system $\Sigma(x,y,p,a)$ w.r.t.
the graded resolution $Res(y,p,a)$. The construction is essentially
similar to the construction that we have explained above over random
groups.

Let $\mathfrak{J}$ be the collection of all the sequences $\{(x_{n},z_{n},y_{n},p_{n},a)\}$
of specializations (over $F_{k}$), for which $\Sigma(x_{n},z_{n},y_{n},p_{n},a)=1$
(in $F_{k}$) for all $n$, and for which the restricted sequence
$\{(z_{n},y_{n},p_{n},a)\}$ is a test sequence of $Comp(z,y,p,a)$
(over $F_{k}$).

The sequences in the collection $\mathfrak{J}$, factor through finitely
many maximal limit groups $H$. We consider the most refined $Comp$-free
factorization of $H$.

We keep the free factors that do not include (a conjugate of) the
subgroup $Comp\leq H$, extend them by their standard MR diagrams,
and continue the construction with the free factor that contains $Comp$.
We denote this factor by $M=M(x,z,y,p,a)$. We consider the $Comp$-JSJ
decomposition of the limit group $M(x,z,y,p,a)$. If $M(x,z,y,p,a)$
is $Comp$-rigid, i.e., if its $Comp$-JSJ is trivial, we terminate
the construction for this stage. In this case, the subcollection of
$\mathfrak{J}$ consisting of all the sequences $\{(x_{n},z_{n},y_{n},p_{n},a)\}$
that factor through $M$, is denoted by $\mathfrak{J}_{M}$.

In the case that the $Comp$-JSJ of $M(x,z,y,p,a)$ is non-trivial,
we consider the subcollection $\mathfrak{J}_{M}$ of $\mathfrak{J}$,
consisting of all sequences $\{(x_{n},z_{n},y_{n},p_{n},a)\}$ that
factor through $M$, and so that for all $n$, the specialization
$(x_{n},z_{n},y_{n},p_{n},a)$ cannot be shortened (in the metric
of $(F_{k},a)$, where $a$ is the fixed generating set of $F_{k}$)
by precomposing with a modular automorphism of $M$ that is induced
from its $Comp$-JSJ. The sequences in the collection $\mathfrak{J}_{M}$,
factor through finitely many maximal limit quotients of $M$, that
we denote again by $H$. If the obtained $H$ is not a strict quotient
of $M$, i.e., if $M$ is a $Comp$-solid group, we terminate the
construction for this stage.

We continue the construction iteratively. According to the descending
chain condition of limit groups, the construction must terminate after
finitely many steps. The result is a diagram that we denote by $K$,
that includes finitely many resolutions. Each of these resolutions
terminates in a free product $\langle f\rangle\ast M$, where $\langle f\rangle$
is a free group, and $M$ is either a $Comp$-rigid group or a $Comp$-solid
group. 
\begin{lem}
\label{lem:22} Assume that $M(x,z,y,p,a)$ is a $Comp$-rigid group
or a $Comp$-solid group. Then, $M$ admits a structure of a graded
formal closure of $Comp(z,y,p,a)$ over a $p$-graded limit group
$Term(M)$. 
\end{lem}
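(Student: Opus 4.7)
The plan is to mirror the arguments used for \lemref{19} and \lemref{20}, but now over $F_k$ rather than over an ascending sequence of random groups. Since every specialization in a sequence from $\mathfrak{J}_M$ is already a specialization over $F_k$, no appeal to the $\Sigma$-l.p.\ or \thmref{1}-style lifting is required, so the argument is in fact a cleaner, one-shot version of \lemref{20}.

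First I would pick a sequence $\{(x_n,z_n,y_n,p_n,a)\}\in\mathfrak{J}_M$ which, after passing to a subsequence, converges to a non-trivial isometric action of $M(x,z,y,p,a)$ on a pointed real tree, with induced abelian decomposition $\Pi$ of $M$. If the subgroup $Comp\le M$ were contained in a proper subgraph of $\Pi$, then the $Comp$-JSJ of $M$ would be non-trivial along that subgraph, and the classical Rips--Sela shortening argument over $F_k$ would allow us to shorten the specializations $(x_n,z_n,y_n,p_n,a)$ by precomposing with modular automorphisms coming from the $Comp$-JSJ of $M$; this contradicts the defining property of $\mathfrak{J}_M$. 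Hence the components of the action of $M$ on the limit tree have the same ``shape'' as those of $Comp$, and, using that the restricted sequence $\{(z_n,y_n,p_n,a)\}$ is a test sequence of $Comp(z,y,p,a)$, the analysis carried out in the proofs of Theorems 3.7 and 3.8 of \cite{DGII} (exactly as invoked inside \lemref{19}) applies: the top component is either a single IET isomorphic to the top surface of $Comp$ with finitely many point stabilizers, or a discrete one-edge piece of the form $V\ast_{Ab_1}Ab$, where $Ab$ is obtained from the highest abelian vertex group of $Comp$ by adjoining roots to the non-peg generators.

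Iterating this uncovering procedure level by level along $Comp(z,y,p,a)$, $M$ is exhibited as a $p$-graded tower whose upper levels coincide with those of $Comp$ up to adding roots to abelian vertex groups, and the procedure terminates in a base group $Term(M)$ whose intersection with the image of $Comp$ has the form $\bar{B}\ast F$, where $\bar{B}$ is the canonical image of $B$ and $F$ is a free group. This is exactly the content of conditions (1)--(3) of the tower structure in \lemref{19}, now established over $F_k$.

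The last point is the compatibility of the $p$-JSJ of $\bar{B}$ with the $p$-JSJ of $Term(M)$, i.e.\ that $M$ is actually a graded \emph{formal closure} of $Comp(z,y,p,a)$ and not merely a tower over it. For this I would argue directly using \lemref{18}: any specialization coming from a sequence in $\mathfrak{J}_M$ factors through $Term(M)$, and the restricted sequence $\{(z_n,y_n,p_n,a)\}$ being a test sequence of $Comp(z,y,p,a)$ forces the specializations of $B$ it induces to be exceptional (non-degenerate, and not factoring through any proper quotient corresponding to a flexible quotient of $B$). Since we are now working over $F_k$, these exceptional specializations of $B$ live directly in $F_k$, and \lemref{18} applied to the resolution leading into $Term(M)$ yields that the $p$-JSJ of $B$ is mapped compatibly into the $p$-JSJ of $Term(M)$. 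The main subtlety is precisely this verification of the exceptionality hypothesis of \lemref{18} from the test-sequence property; once that is in hand, the conclusion that $M$ is a graded formal closure of $Comp(z,y,p,a)$ over the $p$-graded limit group $Term(M)$ follows immediately.
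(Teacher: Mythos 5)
Your proposal is correct and follows essentially the same route as the paper, whose proof of this lemma simply defers to \lemref{19}, \lemref{20}, and Theorems 3.7 and 3.8 of \cite{DGII}; you have merely written out in full the shortening argument, the level-by-level uncovering, and the appeal to \lemref{18} that those references contain, correctly observing that over $F_{k}$ no lifting step is needed.
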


\begin{proof}
The proof is essentially the same as in \lemref{19} and \lemref{20}.
For more details see \cite{DGII} theorems 3.7, and 3.8. 
\end{proof}
\begin{rem}
We note that the above construction of a graded formal MR diagram
over $F_{k}$ of a given graded system $\Sigma(x,y,p,a)$ w.r.t. a
given graded resolution $Res(y,p,a)$, can be applied also in the
case that the terminal group of $Res(y,p,a)$ is not rigid or solid.
This means that we can consider the completion $Comp$ of $Res(y,p,a)$,
and then to consider the collection of all sequences $\{(x_{n},z_{n},y_{n},p_{n},a)\}$
of specializations, for which $\Sigma(x_{n},z_{n},y_{n},p_{n},a)=1$
for all $n$, and for which the sequence $\{(z_{n},y_{n},p_{n},a)\}$
is a test sequence for $Comp$ (test sequence over $Comp$ is defined
now over any sequence of specializations of $Term(Comp)$).

This way, and using the shortening procedure iterative construction,
we can construct a finite diagram $K'$, so that each of its resolutions
terminates in a graded tower (over some $p$-graded limit group $Glim_{t}(b,p,a)$
into which the base group of $Comp$ is mapped - maybe non-compatibly)
of the initial graded resolution $Res(y,p,a)$, and so that the following
property is satisfied. If $\{(x_{n},z_{n},y_{n},p_{n},a)\}$ is a
sequence of specializations for which $\Sigma(x_{n},z_{n},y_{n},p_{n},a)=1$
for all $n$, and for which the sequence $\{(z_{n},y_{n},p_{n},a)\}$
is a test sequence for $Comp$, then the sequence $\{(x_{n},z_{n},y_{n},p_{n},a)\}$
subfactors through $K'$. And in particular, each of the terminal
groups of the resolutions in $K'$ that contains the subgroup $Comp$
- which are graded towers of the resolution $Res(y,p,a)$ as aforementioned
- contains a formal solution $x=x(s,z,y,p,a)$ of the system $\Sigma(x,y,p,a)$,
where the $s$'s stand for the roots that were added for abelian groups
along the decomposition of $Comp$. 
\end{rem}

In order to complete the construction of the graded formal MR diagram
over $F_{k}$, in contrast of the construction of the auxiliary graded
formal MR diagram over random groups, we further extend the resolutions
of $K$. For every resolution in $K$ over a graded formal closure
$M$, we perform the following extension. We consider the subcollection
$\mathfrak{BJ}_{M}$ of $\mathfrak{J}_{M}$ consisting of all the
sequences $\{(x_{n},z_{n},y_{n},p_{n},a)\}$, that satisfies the following: 
\begin{enumerate}
\item If $Term(M)$ is neither $p$-rigid nor $p$-solid, then, for all
$n$, the specialization $(x_{n},z_{n},y_{n},p_{n},a)$ restricts
to a specialization of $Term(M)$ that cannot be shortened by precomposing
with a modular automorphism induced from the $p$-JSJ of $Term(M)$. 
\item If $Term(M)$ is $p$-rigid or $p$-solid, then, for all $n$, the
restriction of the specialization $(x_{n},z_{n},y_{n},p_{n},a)$ factors
through some flexible quotient of $Term(M)$. 
\end{enumerate}
The sequences in $\mathfrak{BJ}_{M}$ factor through finitely many
maximal limit groups that we denote by $H$. According to \cite{DGI},
Lemma10.4, each of the limit quotients $H$ are proper quotients of
$M$. We continue the construction iteratively. According to the descending
chain condition of limit groups, this construction must terminate
after finitely many steps. The result is a diagram $K_{M}$ with finitely
many resolutions, each of which terminates in a graded formal closure
$M'(x,z,y,p,a)$, whose terminal group is either a $p$-rigid or a
$p$-solid limit group.

Note that factorization through a resolution in the diagram $K_{M}$
is defined using the $p$-JSJ's of the base groups of the graded formal
closures along that resolution (in addition to the canonical maps,
standard MR diagrams, and the $Comp$-JSJ's of the corresponding free
factors along the resolution). 
\begin{lem}
\label{lem:23} Let $t_{0}$ be a specialization of $Term(M)$, the
$p$-graded base group of $M$, so that $t_{0}$ restricts to an exceptional
specialization of $B$, the $p$-graded base group of $Comp(z,y,p,a)$.
Denote by $M_{t_{0}}$ the induced ungraded formal closure, and let
$(x_{0},z_{0},y_{0},p_{0},a)$ be a specialization of $M_{t_{0}}$.

Then, the specialization $(x_{0},z_{0},y_{0},p_{0},a)$ factors through
one of the resolutions $GFRes(x,z,y,p,a)$ of $K_{M}$. Moreover,
if $Term(M)$ is either a $p$-rigid or a $p$-solid group, and $t_{0}$
is not an exceptional specialization of $Term(M)$, then we can assume
that the terminal graded formal closure of $GFRes(x,z,y,p,a)$ is
a proper quotient of $M$. 
\end{lem}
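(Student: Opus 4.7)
Proof plan: The strategy is to embed the given specialization $(x_0, z_0, y_0, p_0, a)$ into a graded test sequence that lies in the collection $\mathfrak{BJ}_M$ used to build $K_M$, and read off the desired factorization from that construction.

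First, using the $F_k$-analogue of \lemref{9} (i.e., the classical test-sequence construction of \cite{DGII}) applied to the ungraded formal closure $M_{t_0}$, I construct a graded test sequence $\{\eta_n = (x_n, z_n, y_n, p_n, a)\}$ of $M_{t_0}$ extending the base specialization $t_0$, chosen so that $(x_0, z_0, y_0, p_0, a)$ appears as the restriction of each $\eta_n$ to the subgroup consisting of the non-abelian-tower generators, while the $\eta_n$ differ only in sufficiently large powers assigned to the non-peg generators of the abelian groups in the tower. Since $t_0$ restricts to an exceptional specialization of $B$, the induced ungraded resolution is non-degenerate and such a test sequence exists. By the compatibility of the graded tower structure of $M$ with that of $Comp(z,y,p,a)$ from \lemref{20}, the restricted sequence $\{(z_n, y_n, p_n, a)\}$ is a graded test sequence of $Comp(z,y,p,a)$; each $\eta_n$ satisfies $\Sigma$ since $M$ has $\Sigma=1$ among its relations by the construction of $K$, so $\{\eta_n\} \in \mathfrak{J}$. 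After precomposing with modular automorphisms from the $Comp$-JSJ of $M$ that shorten each $\eta_n$, we may assume $\{\eta_n\} \in \mathfrak{J}_M$.

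Next, I arrange for $\{\eta_n\}$ to lie in $\mathfrak{BJ}_M$. If $Term(M)$ is neither $p$-rigid nor $p$-solid, I precompose with a modular automorphism from the $p$-JSJ of $Term(M)$ that renders the induced specialization of $Term(M)$ shortest, so the resulting sequence is in $\mathfrak{BJ}_M$. If $Term(M)$ is $p$-rigid or $p$-solid and $t_0$ is not exceptional for $Term(M)$, then $t_0$ factors through a maximal flexible quotient of $Term(M)$ by definition, so $\{\eta_n\}$ is already in $\mathfrak{BJ}_M$. If instead $Term(M)$ is rigid or solid and $t_0$ is exceptional, the trivial extension terminating in $M$ itself is already among the resolutions of $K_M$ (since $Term(M)$ is already rigid/solid), and the specialization factors through it directly. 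In all cases, by the construction of $K_M$ via the maximal limit quotients of sequences in $\mathfrak{BJ}_M$, the modified sequence sub-factors through some resolution $GFRes(x,z,y,p,a)$ of $K_M$, and since factorization through such a resolution is defined modulo exactly the modular automorphisms we applied, the original specialization $(x_0, z_0, y_0, p_0, a)$ factors through $GFRes(x,z,y,p,a)$.

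For the moreover clause, when $Term(M)$ is $p$-rigid or $p$-solid and $t_0$ is not exceptional, the sequence lies in $\mathfrak{BJ}_M$ via the flexible-quotient clause, so by \cite{DGI}, Lemma 10.4 (invoked in the construction of $K_M$), the maximal limit quotient obtained is a proper quotient of $M$; iterating and applying the descending chain condition for limit groups, the terminal graded formal closure of $GFRes(x,z,y,p,a)$ must therefore be a proper quotient of $M$. The main obstacle is the first step: ensuring that the test sequence $\{\eta_n\}$ can be built so that the given single specialization $(x_0, z_0, y_0, p_0, a)$ actually factors through the specific terminal of the resolution we eventually land in, and that the subsequent shortenings at the $Comp$-JSJ and $p$-JSJ levels preserve this factorization. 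This requires careful bookkeeping of which modular automorphisms act at which level of the tower, leaning on the fact that factorization in $K_M$ is defined modulo precisely these automorphisms.
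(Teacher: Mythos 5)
Your proposal correctly identifies the reduction (assume $t_0$ is short, or factors through a flexible quotient when $Term(M)$ is rigid or solid) and correctly observes that test sequences of $M_{t_0}$ lie in $\mathfrak{BJ}_M$ and hence subfactor through $K_M$. But the central step — passing from test sequences to the \emph{arbitrary} given specialization $(x_0,z_0,y_0,p_0,a)$ — is handled incorrectly, and you yourself flag it as "the main obstacle" without resolving it. You try to build a test sequence $\{\eta_n\}$ whose members all restrict to the fixed specialization $(x_0,z_0,y_0,p_0,a)$ on the "non-abelian-tower generators," varying only the powers on non-peg abelian generators. This cannot be done in general: in a test sequence of a closure, the specializations of the QH vertex groups at every level must be twisted by sequences of automorphisms with prescribed growth, so the induced values of $y$ (and $x$) necessarily change with $n$ and grow without bound; a single fixed, possibly degenerate, specialization of $M_{t_0}$ is not a member of, nor held constant inside, any test sequence. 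And even if it were, knowing that the (different) members $\eta_n$ factor through a resolution of $K_M$ does not by itself imply that the fixed specialization does.

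The missing idea is the covering closure property, which is exactly how the paper bridges this gap. The paper takes an arbitrary test sequence of $M_{t_0}$ (unrelated to the given specialization), shows it lies in $\mathfrak{BJ}_M$ and hence subfactors through $K_M$, and concludes that the induced ungraded closures obtained by specializing the terminal groups of the resolutions of $K_M$ form a \emph{covering closure} of $M_{t_0}$ — precisely because every test sequence of $M_{t_0}$ eventually factors through one of them. The defining property of a covering closure is then that \emph{every} specialization of $M_{t_0}$, in particular the given $(x_0,z_0,y_0,p_0,a)$, factors through one of these closures. Your handling of the "moreover" clause via \cite{DGI}, Lemma 10.4 and the descending chain condition is consistent with the construction of $K_M$, but it only becomes available once the factorization of the given specialization is established by the covering-closure argument.
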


\begin{proof}
Since the $p$-JSJ of $Term(M)$ is compatible with that of $B$,
we may assume without loosing the generality that the given solution
$t_{0}$ is short in the case that $Term(M)$ is neither $p$-rigid,
nor $p$-solid, and that $t_{0}$ factors through a flexible quotient
of $Term(M)$, in the case that it is $p$-rigid or $p$-solid.

We consider the ungraded formal closure $M_{t_{0}}=M(x,z,y,p_{0},a)$,
and the ungraded completion $Comp(z,y,p_{0},a)$, that are obtained
from $M$ and $Comp(z,y,p,a)$ by specializing their base groups $Term(M)$
and $B$ according to the given solution $t_{0}$, respectively.

Let $\{(x_{n},z_{n},y_{n},p_{0},a)\}$ be a test sequence of $M_{t_{0}}$.
That is, the sequence $\{(z_{n},y_{n},p_{0},a)\}$ is a test sequence
of $Comp(z,y,p_{0},a)$, and the specialization $x_{n}$ is the induced
specialization of the element $x$ of $M_{t_{0}}$, for all $n$.

Then, the sequence $\{(x_{n},z_{n},y_{n},p_{0},a)\}$ belongs to $\mathfrak{BJ}_{M}$.
Hence, using a simple induction, the sequence $\{(x_{n},z_{n},y_{n},p_{0},a)\}$
subfactors through the diagram $K_{M}$ by the construction of $K_{M}$.

This implies that there exists a collection of (boundedly many) ungraded
closures $FCl(x,z,y,p_{0},a)$, each of which is obtained by specializing
the terminal group of some graded closure in $K_{M}$ (other than
$M$ itself), so that this collection forms a covering closure of
$M_{t_{0}}$, and the Lemma follows. 
\end{proof}
\begin{lem}
\label{lem:24} Let $\Gamma_{l}$ be a group of level $l$ in the
model. If $l$ is large enough, then the following property is satisfied
in $\Gamma_{l}$.

Let $t_{0}$ be a $\Gamma_{l}$-specialization of $Term(M)$, so that
$t_{0}$ restricts to a $\Gamma_{l}$-exceptional specialization of
$B$. Denote by $M_{t_{0}}$ the induced ungraded formal closure,
and let $(x_{0},z_{0},y_{0},p_{0},a)$ be a $\Gamma_{l}$-specialization
of $M_{t_{0}}$.

Then, the $\Gamma_{l}$-specialization $(x_{0},z_{0},y_{0},p_{0},a)$
factors through one of the resolutions $GFRes(x,z,y,p,a)$ of $K_{M}$.
Moreover, if $Term(M)$ is either a $p$-rigid or a $p$-solid group,
and $t_{0}$ is not a $\Gamma_{l}$-exceptional specialization of
$Term(M)$, then we can assume that the terminal graded formal closure
of $GFRes(x,z,y,p,a)$ is a proper quotient of $M$. 
\end{lem}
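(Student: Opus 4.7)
The plan is to reduce to \lemref{23} via the lifting property (\thmref{1}), following the same strategy used to prove \thmref{15} and \thmref{17}. First I would record that, by \lemref{19} and \thmref{2}, each of $M$, $Term(M)$, $B$, and the flexible quotients $Flx_j(B)$ and $Flx_j(Term(M))$ is a finitely presented limit group. Consequently their defining relator systems are finite, and for $l$ sufficiently large they all lie in $H_l$, so the $\Sigma$-lifting property holds in $\Gamma_l$ for each of them.

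Given a $\Gamma_l$-specialization $(x_0,z_0,y_0,p_0,a)$ of $M_{t_0}$, regard it as a $\Gamma_l$-specialization of $M$ whose restriction to $Term(M)$ equals $t_0$. By the lifting property applied to the defining system of $M$, it lifts to an $F_k$-specialization $(\tilde x_0,\tilde z_0,\tilde y_0,\tilde p_0,a)$ of $M$, whose restriction $\tilde t_0$ to $Term(M)$ satisfies $\pi_l(\tilde t_0)=t_0$, where $\pi_l:F_k\to\Gamma_l$ is the natural quotient. I claim $\tilde t_0|_B$ is $F_k$-exceptional: if instead $\tilde t_0|_B$ factored through some flexible quotient $Flx_j(B)\to F_k$, post-composing with $\pi_l$ would show that $t_0|_B$ factors through $Flx_j(B)\to\Gamma_l$, contradicting the $\Gamma_l$-exceptionality hypothesis. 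Hence \lemref{23} applies to $(\tilde x_0,\tilde z_0,\tilde y_0,\tilde p_0,a)$ and produces a factorization through some resolution $GFRes(x,z,y,p,a)$ of $K_M$. Since factorization through a resolution in $K_M$ is defined in terms of canonical epimorphisms, $Comp$-JSJ's, $p$-JSJ's, and standard MR diagrams of groups along the resolution, all of which are preserved under the quotient map $\pi_l$, projecting the whole factorization yields the required factorization of $(x_0,z_0,y_0,p_0,a)$ over $\Gamma_l$.

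For the \emph{moreover} clause, suppose $Term(M)$ is $p$-rigid or $p$-solid and $t_0$ is not $\Gamma_l$-exceptional, so that $t_0$ factors through some flexible quotient $\eta_j:Term(M)\to Flx_j(Term(M))$ over $\Gamma_l$, i.e., $t_0=s_0\circ\eta_j$ for some $\Gamma_l$-specialization $s_0$ of $Flx_j(Term(M))$. A blind lift of $(x_0,z_0,y_0,p_0,a)$ need not preserve this property, so instead I would lift simultaneously through the finitely many components of the MR diagram of the amalgam $M *_{Term(M)} Flx_j(Term(M))$ — each of which is itself a finitely presented limit group — to obtain an $F_k$-lift $(\tilde x_0,\tilde z_0,\tilde y_0,\tilde p_0,a)$ together with an $F_k$-specialization $\tilde s_0$ of $Flx_j(Term(M))$ such that $\tilde t_0=\tilde s_0\circ\eta_j$ holds in $F_k$ by construction. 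Then $\tilde t_0$ is not $F_k$-exceptional, so the \emph{moreover} clause of \lemref{23} supplies a $GFRes$ whose terminal graded formal closure is a proper quotient of $M$; projecting this factorization back to $\Gamma_l$ finishes the argument. The principal obstacle is purely bookkeeping: one must ensure that the defining systems of $M$, of $Term(M)$, of $B$, of each $Flx_j(B)$, of each $Flx_j(Term(M))$, and of the components of the MR diagrams of all the relevant amalgams simultaneously lie in $H_l$ for $l$ large enough. Since only finitely many objects are involved, this is immediate from \thmref{1}.
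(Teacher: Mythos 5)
Your proof is correct and follows essentially the same route as the paper: lift the $\Gamma_l$-specialization to $F_k$ (arranging, in the non-exceptional case, that the lift's restriction to $Term(M)$ stays non-exceptional), observe that the restriction of the lift to $B$ must be $F_k$-exceptional since otherwise its projection would fail $\Gamma_l$-exceptionality, apply \lemref{23}, and project the resulting factorization back to $\Gamma_l$. Your treatment of the \emph{moreover} clause merely makes explicit a lifting step that the paper asserts in one line.
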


\begin{proof}
We lift $(x_{0},z_{0},y_{0},p_{0},a)$ to a specialization $(\tilde{x}_{0},\tilde{z}_{0},\tilde{y}_{0},\tilde{p}_{0},a)$
of $M$ in $F_{k}$, so that in the case that $Term(M)$ is either
$p$-rigid or $p$-solid, and the restriction $t_{0}$ of the $\Gamma_{l}$-specialization
$(x_{0},z_{0},y_{0},p_{0},a)$ to $Term(M)$ is not $\Gamma_{l}$-exceptional,
then the restriction $\tilde{t}_{0}$ of the lift $(\tilde{x}_{0},\tilde{z}_{0},\tilde{y}_{0},\tilde{p}_{0},a)$
to $Term(M)$ is not $F_{k}$-exceptional. Note that the restriction
of $(\tilde{x}_{0},\tilde{z}_{0},\tilde{y}_{0},\tilde{p}_{0},a)$
to $B$ must be an $F_{k}$-exceptional solution of $B$.

Hence, according to \lemref{23}, the lift $(\tilde{x}_{0},\tilde{z}_{0},\tilde{y}_{0},\tilde{p}_{0},a)$
factors through some resolution $GFRes(x,z,y,p,a)$ of $K_{M}$. Thus,
the $\Gamma_{l}$-specialization $(x_{0},z_{0},y_{0},p_{0},a)$ factors
through the resolution $GFRes(x,z,y,p,a)$ too. 
\end{proof}
Finally, We extend each of the resolutions in the diagram $K$ by
their corresponding diagram $K_{M}$. We call the obtained diagram
\emph{the graded formal MR diagram of the system $\Sigma(x,y,p,a)$
w.r.t. the resolution $Res(y,p,a)$}, and we denote it briefly by
$GFMRD$.

\subsection{General Properties of the Graded Formal MR Diagram}
\begin{thm}
\label{thm:25} Let

\[
\forall y\;\exists x\quad\Sigma(x,y,p,a)=1\,\wedge\,\Psi(x,y,p,a)\neq1\,,
\]

be an $AE$-formula. Let $Res(y,p,a)$ be a graded resolution, and
denote its completion by $Comp(z,y,p,a)$. Denote by $B$ the $p$-rigid
or $p$-solid base group of $Comp(z,y,p,a)$. Let $GFMRD$ be the
graded formal MR diagram of the system $\Sigma(x,y,p,a)$ w.r.t. the
resolution $Res(y,p,a)$. Let $p_{0}\in F_{k}$, and let $h_{p_{0}}$
be an exceptional solution of $B$ that maps $p$ to $p_{0}$. Denote
the ungraded resolution obtained from $Res(y,p,a)$ by specializing
its terminal group according to $h_{p_{0}}$ by $Res(y,p_{0},a)$,
and the corresponding ungraded completion by $Comp(z,y,p_{0},a)$.

Assume that the sentence: 
\[
\forall y\in Res(y,p_{0},a)\;\exists x\quad\Sigma(x,y,p_{0},a)=1\,\wedge\,\Psi(x,y,p_{0},a)\neq1\,,
\]
is a truth sentence over $F_{k}$.

Then, there exist some resolutions $GFRes_{1}(x,z,y,p,a),...,GFRes_{m}(x,z,y,p,a)$
in $GFMRD$, with corresponding terminal graded formal closures $GFCl_{1}(x,z,y,p,a),...,GFCl_{m}(x,z,y,p,a)$,
together with some exceptional solutions $g_{p_{0}}^{1},...,g_{p_{0}}^{m}$
for the $p$-rigid or $p$-solid base groups $Term(GFCl_{1}),...,Term(GFCl_{m})$,
so that: 
\begin{enumerate}
\item For all $i=1,...,m$, the solution $g_{p_{0}}^{i}$ restricts to an
exceptional solution of $B$ that belongs to the same exceptional
family of $h_{p_{0}}$. 
\item Every sequence of specializations $\{(x_{n},z_{n},y_{n},p_{0},a)\}$,
for which the sequence $\{(z_{n},y_{n},p_{0},a)\}$ is a test sequence
through $Comp(z,y,p_{0},a)$, and $\Sigma(x_{n},y_{n},p_{0},a)=1\,\wedge\,\Psi(x_{n},y_{n},p_{0},a)\neq1$
for all $n$, subfactors through one of the resolutions $GFRes_{1}(x,z,y,p,a),...,GFRes_{m}(x,z,y,p,a)$. 
\item The set of induced ungraded formal closures $FCl_{1}(x,z,y,p_{0},a),...,FCl_{m}(x,z,y,p_{0},a)$
forms a covering closure for $Comp(z,y,p_{0},a)$. 
\item For all $i=1,...,m$, there exists a formal solution $x_{i}=x_{i}(s,z,y,p,a)\in GFCl_{i}$
that factors through the resolution $GFRes_{i}(x,z,y,p,a)$, so that
the words $\Sigma(x_{i},y,p,a)$ represent the trivial element in
$GFCl_{i}$, and each of the words $\Psi(x_{i},y,p_{0},a)$ is non-trivial
in $FCl_{i}(x,z,y,p_{0},a)$. 
\end{enumerate}
\end{thm}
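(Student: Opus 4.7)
The plan is to feed the truth of the $AE$-sentence into the graded formal MR diagram and read off the conclusion from its construction. Concretely, for every test sequence $\{(z_{n},y_{n},p_{0},a)\}$ of the ungraded completion $Comp(z,y,p_{0},a)$, the assumed truth of the $AE$-sentence over $F_{k}$ produces witnesses $x_{n}$ with $\Sigma(x_{n},y_{n},p_{0},a)=1$ and $\Psi(x_{n},y_{n},p_{0},a)\neq1$. Each enriched sequence $\{(x_{n},z_{n},y_{n},p_{0},a)\}$ lies in the collection $\mathfrak{J}$ on which the diagrams $K$ and $GFMRD$ are built; by the very construction of $GFMRD$, such a sequence subfactors through at least one of its (finitely many) resolutions.

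Since $GFMRD$ has only finitely many resolutions, I would let $GFRes_{1},\ldots,GFRes_{m}$ be precisely those resolutions that are reached by sequences of this form. Each $GFRes_{i}$ terminates in a graded formal closure $GFCl_{i}$ whose base group $Term(GFCl_{i})$ is $p$-rigid or $p$-solid, because the extension of $K$ by the diagrams $K_{M}$ was specifically designed to descend (via the iteration inside $K_{M}$ and the descending chain condition) to rigid or solid base groups; \lemref{23} then forces the restriction to $Term(GFCl_{i})$ to be an exceptional specialization $g_{p_{0}}^{i}$, since a non-exceptional restriction would be absorbed into a proper quotient. The compatibility of $p$-JSJs established in \lemref{20} (building on \lemref{18}) guarantees that the restriction of $g_{p_{0}}^{i}$ to $B$ is an exceptional specialization of $B$, and, because this restriction must recover the base specialization used to define $Comp(z,y,p_{0},a)$, it lies in the exceptional family of $h_{p_{0}}$.

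With the list $GFRes_{1},\ldots,GFRes_{m}$ in hand, the covering closure property is immediate: as $\{(z_{n},y_{n},p_{0},a)\}$ ranges over all test sequences of $Comp(z,y,p_{0},a)$ and $x_{n}$ over all allowed witness sequences, each resulting enriched sequence subfactors through some $GFRes_{i}$, so the induced ungraded formal closures $FCl_{i}(x,z,y,p_{0},a)$ — obtained by specializing $Term(GFCl_{i})$ according to $g_{p_{0}}^{i}$ — jointly cover all test sequences of $Comp(z,y,p_{0},a)$, which is the defining property of a covering closure. The formal solution $x_{i}=x_{i}(s,z,y,p,a)\in GFCl_{i}$ is recorded by the construction of $GFMRD$, and $\Sigma(x_{i},y,p,a)=1$ in $GFCl_{i}$ follows because $\Sigma$ vanishes along every specialization factoring through $GFCl_{i}$ and $GFCl_{i}$ is a limit group. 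The inequation $\Psi(x_{i},y,p_{0},a)\neq1$ in $FCl_{i}$ is guaranteed by keeping only those resolutions where no coordinate of $\Psi(x_{i},y,p_{0},a)$ becomes trivial in $FCl_{i}$: if such a trivialization occurred, a test sequence of $FCl_{i}$ would force the corresponding coordinate of $\Psi(x_{n},y_{n},p_{0},a)$ to equal $1$ eventually, contradicting our choice of witnesses.

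The main obstacle I foresee is the coordinated bookkeeping of two exceptional structures at once: we must ensure that the iterative descent inside the $K_{M}$-diagrams simultaneously lands on a rigid or solid base group $Term(GFCl_{i})$, on an \emph{exceptional} restriction $g_{p_{0}}^{i}$ to that base, \emph{and} on a restriction to $B$ that lies in the exceptional family of $h_{p_{0}}$. This forces a careful use of \lemref{18}, \lemref{20} and \lemref{23} in tandem with the descending chain condition; without the $p$-JSJ compatibility of \lemref{20} the two exceptional structures could drift apart. A secondary difficulty, classical in this framework, is the transfer of the inequation from pointwise specializations to the formal solution: the selection of the $GFCl_{i}$ must be sharp enough to rule out any coordinate of $\Psi$ becoming trivial in $FCl_{i}$, which is why we only retain those resolutions on which a witness sequence with $\Psi\neq1$ actually subfactors.
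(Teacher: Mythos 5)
Your proposal follows essentially the same route as the paper: select the finitely many resolutions of $GFMRD$ reached by witness-enriched test sequences, read off the formal solutions and exceptional base specializations from the factorizations, and deduce the covering property from the fact that every test sequence of $Comp(z,y,p_{0},a)$ can be enriched by witnesses. One caveat: the covering step is not quite "immediate" as you state it, because the ungraded closures $FCl_{i}$ are induced by \emph{fixed} exceptional solutions $g_{p_{0}}^{i}$ chosen in advance, whereas a new enriched test sequence subfactoring through $GFRes_{i}$ induces an exceptional solution of $Term(GFCl_{i})$ that need not lie in the family of your chosen $g_{p_{0}}^{i}$; the paper handles this by listing boundedly many specializations (hence boundedly many ungraded closures, one per exceptional family that actually occurs) for each resolution, and then running a short contradiction argument showing that any uncovered test sequence would have to factor through one of the listed $FCl_{i}^{s}$ after all. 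With that bookkeeping added, your argument matches the paper's.
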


\begin{proof}
Let $GFRes_{1}(x,z,y,p,a),...,GFRes_{m'}(x,z,y,p,a)$ be a minimal
collection of resolutions in $GFMRD$, through which every sequence
of specializations $\{(x_{n},z_{n},y_{n},p_{0},a)\}$, for which the
sequence $\{(z_{n},y_{n},p_{0},a)\}$ is a test sequence of $Comp(z,y,p_{0},a)$,
and $\Sigma(x_{n},y_{n},p_{0},a)=1\,\wedge\,\Psi(x_{n},y_{n},p_{0},a)\neq1$
for all $n$, subfactors.

For every $i=1,...,m'$, we consider a (boundedly many) specializations

\[
(x_{0}^{i,1},z_{0}^{i,1},y_{0}^{i,1},p_{0},a),...,(x_{0}^{i,r(i)},z_{0}^{i,r(i)},y_{0}^{i,r(i)},p_{0},a)
\]
that factor through $GFRes_{i}(x,z,y,p,a)$, satisfy $\Sigma(x_{0}^{i,s},y_{0}^{i,s},p_{0},a)=1\,\wedge\,\Psi(x_{0}^{i,s},y_{0}^{i,s},p_{0},a)\neq1$,
and so that the restriction $(z_{0}^{i,s},y_{0}^{i,s},p_{0},a)$ is
a specialization of $Comp(z,y,p_{0},a)$, for all $s=1,...,r(i)$.
In light of \thmref[s]{14} and \ref{thm:16}, we can assume that
every sequence of specializations $\{(x_{n},z_{n},y_{n},p_{0},a)\}$,
for which the sequence $\{(z_{n},y_{n},p_{0},a)\}$ is a test sequence
through $Comp(z,y,p_{0},a)$, and $\Sigma(x_{n},y_{n},p_{0},a)=1\,\wedge\,\Psi(x_{n},y_{n},p_{0},a)\neq1$
for all $n$, subfactors through one of the formal resolutions $FRes_{i}^{s}(x,z,y,p_{0},a)$
induced from the sepecializations $(x_{0}^{i,s},z_{0}^{i,s},y_{0}^{i,s},p_{0},a)$. 

For every $s=1,...,r(i)$, the factorization of $(x_{0}^{i,s},z_{0}^{i,s},y_{0}^{i,s},p_{0},a)$
through $GFRes_{i}(x,z,y,p,a)$, induces a formal solution $x_{i}^{s}\in GFCl_{i}$
(which depends also on the specializations of the other free factors
along the resolution), together with an ungraded formal closure $FCl_{i}^{s}(x,z,y,p_{0},a)$
obtained by specializing the terminal group of the graded formal closure
$GFCl_{i}$, $Term(GFCl_{i})$, according to some exceptional solution
$g_{p_{0}}^{i,s}$. Since $\Psi(x_{0}^{i,s},y_{0}^{i,s},p_{0},a)\neq1$,
we deduce point 4.

Finally, if, by contradiction, the set of the obtained formal closures
$\{FCl_{i}^{s}(x,z,y,p_{0},a)\}_{i,s}$ did not form a covering closure
for $Comp(z,y,p_{0},a)$, then there will exist a test sequence of
specializations $\{(z_{n},y_{n},p_{0},a)\}$ of $Comp(z,y,p_{0},a)$,
so that the sepecialization $(z_{n},y_{n},p_{0},a)$ is not covered
by the closures $\{FCl_{i}^{s}(x,z,y,p_{0},a)\}$, for all $n$.

For all $n$, let $x_{n}\in F_{k}$ be such that $\Sigma(x_{n},y_{n},p_{0},a)=1\,\wedge\,\Psi(x_{n},y_{n},p_{0},a)\neq1$.
Then, by construction, the sequence $\{(x_{n},z_{n},y_{n},p_{0},a)\}$
subfactors through one of the formal resolutions $FRes_{i}^{s}(x,z,y,p_{0},a)$.
Hence the sequence $(z_{n},y_{n},p_{0},a)$ subfactors through the
ungraded formal closure $FCl_{i}^{s}$ associated with $FRes_{i}^{s}(x,z,y,p_{0},a)$,
a contradiction. 
\end{proof}
The graded formal MR diagram encodes all the formal solutions in the
following sense. 
\begin{lem}
\label{lem:26} Let $Res(y,p,a)$ be a graded resolution, and denote
its completion by $Comp(z,y,p,a)$. Denote by $B$ the $p$-rigid
or $p$-solid base group of $Comp(z,y,p,a)$. Let $GFMRD$ be the
graded formal MR diagram of the system $\Sigma(x,y,p,a)$ w.r.t. the
resolution $Res(y,p,a)$.

Let $GFCl(b,z,y,p,a)$ be a graded closure of $Comp(z,y,p,a)$, and
denote the $p$-graded base limit group of $GFCl(b,z,y,p,a)$ by $Term(GFCl)$.
Let $x$ be an element of $GFCl(b,z,y,p,a)$ so that the words $\Sigma(x,y,p,a)$
represent the trivial element in $GFCl(b,z,y,p,a)$. Then, for every
specialization of $GFCl(b,z,y,p,a)$, that restricts to an exceptional
specialization of $B$, the induced specialization $(x_{0},z_{0},y_{0},p_{0},a)$
of the tuple $(x,z,y,p,a)$, factors through one of the resolutions
in $GFMRD$. 
\end{lem}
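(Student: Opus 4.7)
The plan is to reduce the claim to \lemref{23} by taking a test sequence of the ungraded closure of $GFCl$ with its base specialized. I would write the given specialization as $h:GFCl\to F_{k}$, set $h_{T}:=h|_{Term(GFCl)}$ and $h_{B}:=h|_{B}$ (exceptional by assumption), and form the ungraded closure $GFCl_{h_{T}}$ obtained from $GFCl$ by specializing $Term(GFCl)$ according to $h_{T}$; this is a closure of the ungraded completion $Comp_{h_{B}}$ in which $\Sigma(x,z,y,p_{0},a)=1$ still holds.

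Next I would take a test sequence $\{h_{n}\}$ of $GFCl_{h_{T}}$. The restricted sequence $\{h_{n}|_{Comp}\}$ will be a test sequence of $Comp_{h_{B}}$, and setting $(x_{n},z_{n},y_{n},p_{0},a):=h_{n}(x,z,y,p,a)$ gives a sequence satisfying $\Sigma=1$, which therefore lies in the collection $\mathfrak{J}$ used to construct $GFMRD$. By the construction of the diagram $K$ (the first stage of $GFMRD$), after passing to a subsequence this sequence should subfactor through a single resolution of $K$ terminating in a graded formal closure $M$ of $Comp(z,y,p,a)$. Since every $h_{n}$ restricts to the fixed exceptional specialization $h_{B}$ on $B$, the induced specialization $t_{0}$ of $Term(M)$ coming from this subfactorization will also restrict to $h_{B}$ on $B$ and hence be exceptional. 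I would then apply \lemref{23} to the pair $(M,t_{0})$ to obtain a finite collection of ungraded formal closures $\{FCl_{i}\}\subseteq GFMRD$ forming a covering closure of $M_{t_{0}}$.

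To finish, I would use the canonical map $GFCl\to M$ arising from the subfactorization, which sends $h_{T}$ to $t_{0}$; then the given specialization $h$ will descend to a specialization $\bar{h}:M_{t_{0}}\to F_{k}$ realizing the tuple $(x_{0},z_{0},y_{0},p_{0},a)$, and by the covering-closure property $\bar{h}$ will factor through some $FCl_{i}$, so $(x_{0},z_{0},y_{0},p_{0},a)$ will factor through the resolution of $GFMRD$ terminating at $FCl_{i}$. The main obstacle is verifying that the canonical map $GFCl\to M$ is compatible with the formal solutions --- that is, that the image of $x\in GFCl$ equals the formal solution in $M$ extracted from the subfactorization of the test sequence --- which will rely on both $GFCl$ and $M$ arising from analyzing the same limit real-tree action coming from sequences in $\mathfrak{J}$.
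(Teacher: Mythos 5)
There is a genuine gap at the final step, and it is exactly the obstacle you flag but do not resolve. The subfactorization of a test sequence of $GFCl_{h_{T}}$ through a resolution of $K$ terminating in $M$ only gives information about the test-sequence elements: after shortening, \emph{those} specializations factor through $M$. It does not produce a homomorphism $GFCl\rightarrow M$ through which the \emph{given} specialization $h$ descends. Both $\langle x, Comp\rangle\leq GFCl$ and $M$ are quotients of the same maximal limit group $H$ of the collection $\mathfrak{J}$, but in different ``coordinates'' (unshortened versus shortened), and an arbitrary, possibly degenerate, specialization of $GFCl_{h_{T}}$ need not kill the kernel of $H\rightarrow M$. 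So the covering closure of $M_{t_{0}}$ supplied by \lemref{23} covers specializations of $M_{t_{0}}$, not specializations of $GFCl_{h_{T}}$, and your $\bar{h}$ does not exist in general. This is precisely the passage from ``true on a test sequence'' to ``true for every specialization'' that cannot be done by a direct group-theoretic map.

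The paper's proof closes this gap by converting ``factors through $GFMRD$'' into finitely many Diophantine conditions: it takes the defining relations $\Omega_{1},\dots,\Omega_{r}$ of the completions of all the resolutions of $GFMRD$, builds the formal MR diagram $D$ of these systems over the ungraded closure $GFCl_{g_{p_{0}}}$, and shows that the terminal closures of $D$ form a covering closure of $GFCl_{g_{p_{0}}}$ --- the covering property is verified on test sequences exactly as you do, since test sequences of $GFCl_{g_{p_{0}}}$ satisfy $\Sigma=1$ and restrict to test sequences of $Comp(z,y,p_{0},a)$, hence admit witnesses $w$ for some $\Omega_{i}$ by the construction of $GFMRD$. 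The point is that each terminal closure of $D$ carries a \emph{formal solution} $w$ of some $\Omega_{i}$; since the arbitrary specialization $(x_{0},z_{0},y_{0},p_{0},a)$ factors through one of these covering closures, the formal solution specializes to a witness showing that it satisfies $\Omega_{i}$, i.e., factors through $GFRes_{i}$. Your argument would be repaired by replacing the attempted map $GFCl\rightarrow M$ with this formal-solution/covering-closure mechanism over $GFCl_{g_{p_{0}}}$ itself.
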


\begin{proof}
The idea of proving the property of the graded formal MR diagram stated
in the lemma, is actually to use the method of its construction.

Let $GFRes_{1}(x,z,y,p,a),...,GFRes_{r}(x,z,y,p,a)$ be all the graded
formal resolutions in $GFMRD$. We consider their completions $Comp(GFRes_{1})(w,x,z,y,p,a),...,Comp(GFRes_{r})(w,x,z,y,p,a)$.
Let

\[
\Omega_{1}(w,x,z,y,p,a),...,\Omega_{r}(w,x,z,y,p,a)
\]
be the respective (finite) sets of defining relations of $Comp(GFRes_{1}),...,Comp(GFRes_{r})$.

Let $g_{p_{0}}$ be a specialization of $Term(GFCl)$, that maps $p$
to some $p_{0}\in F_{k}$, and restricts to an exceptional specialization
of $B$. The solution $g_{p_{0}}$ induces an ungraded closure $GFCl(b,z,y,p_{0},a)$
that is covered by the given graded closure $GFCl(b,z,y,p,a)$. We
emphasize the image of the given element $x$ of $GFCl(b,z,y,p,a)$
in $GFCl(b,z,y,p_{0},a)$, by writing $GFCl(x,b,z,y,p_{0},a)$.

In a similar way for constructing the graded formal MR diagram, from
the whole collection of sequences of specializations $\{(w_{n},x_{n},b_{n},z_{n},y_{n},p_{0},a)\}$,
for which the sequences of restricted specializations $\{(x_{n},b_{n},z_{n},y_{n},p_{0},a)\}$
are test sequences of the ungraded closure $GFCl(x,b,z,y,p_{0},a)$
(i.e., $\{(z_{n},y_{n},p_{0},a)\}$ is a test sequence of $Comp(z,y,p,a)$
so that $(x_{n},b_{n},z_{n},y_{n},p_{0},a)$ is a specialization of
$GFCl(x,b,z,y,p_{0},a)$ for all $n$), and that satisfy one of the
systems $\Omega_{1}(w,x,z,y,p_{0},a),...,\Omega_{r}(w,x,z,y,p_{0},a)$,
we construct the formal MR diagram $D$ of the systems $\Omega_{1}(w,x,z,y,p_{0},a),...,\Omega_{r}(w,x,z,y,p_{0},a)$
w.r.t. the closure $GFCl(x,b,z,y,p_{0},a)$.

Every resolution in $D$, has to terminate in an ungraded formal closure
$GFCl'(w,x,b,z,y,p_{0},a)$ of $GFCl(x,b,z,y,p_{0},a)$, so that the
words in at least one of the systems $\Omega_{1}(w,x,z,y,p_{0},a),...,\Omega_{r}(w,x,z,y,p_{0},a)$
represent the trivial element in $GFCl'(w,x,b,z,y,p_{0},a)$.

Now we explain the reason that the set of all the terminal closures
in the diagram $D$, forms a covering closure for the resolution $GFCl(x,b,z,y,p_{0},a)$.
For that, it suffices to show that every test sequence of $GFCl(x,b,z,y,p_{0},a)$
subfactors through some terminal formal closure $GFCl'(w,x,b,z,y,p_{0},a)$
in $D$.

Indeed, if $\{(x_{n},b_{n},z_{n},y_{n},p_{0},a)\}$ is a test sequence
of $GFCl(x,b,z,y,p_{0},a)$, then $\Sigma(x_{n},z_{n},y_{n},p_{0},a)=1$
for all $n$, and the restricted sequence $\{(z_{n},y_{n},p_{0},a)\}$
is a test sequence of $Comp(z,y,p,a)$. Hence, by the construction
of the graded formal MR diagram $GFMRD$, there exists a sequence
of specializations $\{w_{n_{s}}\}_{s}$, so that the specialization
$(w_{n_{s}},x_{n_{s}},b_{n_{s}},z_{n_{s}},y_{n_{s}},p_{0},a)$ satisfies
one of the systems $\Omega_{1}(w,x,z,y,p,a),...,\Omega_{r}(w,x,z,y,p,a)$,
for all $s$. Hence, by the construction of $D$, the sequence $(x_{n},b_{n},z_{n},y_{n},p_{0},a)$
subfactors through some terminal formal closure $GFCl'(w,x,b,z,y,p_{0},a)$.

Finally, By the construction of $D$, every terminal formal closure
$GFCl'(w,x,b,z,y,p_{0},a)$ comes with some formal solution $w$ for
one of the systems $\Omega_{1}(w,x,z,y,p,a),...,\Omega_{r}(w,x,z,y,p,a)$,
and the lemma follows. 
\end{proof}
\begin{thm}
\label{thm:27} Let

\[
\forall y\;\exists x\quad\Sigma(x,y,p,a)=1\,\wedge\,\Psi(x,y,p,a)\neq1\,,
\]

be an $AE$-formula. Let $Res(y,p,a)$ be a graded resolution, and
denote its completion by $Comp(z,y,p,a)$. Denote by $B$ the $p$-rigid
or $p$-solid base group of $Comp(z,y,p,a)$. Let $GFMRD$ be the
graded formal MR diagram of the system $\Sigma(x,y,p,a)$ w.r.t. the
resolution $Res(y,p,a)$, that was constructed over the free group
$F_{k}$.

Let $\Gamma_{l}$ be a group of level $l$ in the model. If $l$ is
large enough, then the following property is satisfied in $\Gamma_{l}$.

Every sequence $\{(x_{n},z_{n},y_{n},p_{n},a)\}$ of $\Gamma_{l}$-specializations
for which the restricted sequence $\{(z_{n},y_{n},p_{n},a)\}$ is
a $\Gamma_{l}$-test sequence of $Comp(z,y,p,a)$, and $\Sigma(x_{n},y_{n},p_{n},a)=1$
in $\Gamma_{l}$ for all $n$, factors through $GFMRD$.

Moreover, let $p_{0}\in\Gamma_{l}$, and let $h_{p_{0}}$ be a $\Gamma_{l}$-exceptional
solution of $B$ that maps $p$ to $p_{0}$. Denote by $Res(y,p_{0},a)$
the ungraded resolution over $\Gamma_{l}$ obtained from $Res(y,p,a)$
by specializing its terminal group according to $h_{p_{0}}$, and
the corresponding $\Gamma_{l}$-ungraded completion by $Comp(z,y,p_{0},a)$.

Assume that the sentence: 
\[
\forall y\in Res(y,p_{0},a)\;\exists x\quad\Sigma(x,y,p_{0},a)=1\,\wedge\,\Psi(x,y,p_{0},a)\neq1\,,
\]
is a truth sentence over $\Gamma_{l}$.

Then, there exist some resolutions $GFRes_{1}(x,z,y,p,a),...,GFRes_{m}(x,z,y,p,a)$
in $GFMRD$, with corresponding terminal graded formal closures $GFCl_{1}(x,z,y,p,a),...,GFCl_{m}(x,z,y,p,a)$,
together with some $\Gamma_{l}$-exceptional solutions $g_{p_{0}}^{1},...,g_{p_{0}}^{m}$
for the $p$-rigid or $p$-solid base groups $Term(GFCl_{1}),...,Term(GFCl_{m})$,
so that: 
\begin{enumerate}
\item For all $i=1,...,m$, the solution $g_{p_{0}}^{i}$ restricts to a
$\Gamma_{l}$-exceptional solution of $B$ that belongs to the same
$\Gamma_{l}$-exceptional family of $h_{p_{0}}$. 
\item Every sequence of $\Gamma_{l}$-specializations $\{(x_{n},z_{n},y_{n},p_{0},a)\}$,
for which $\{(z_{n},y_{n},p_{0},a)\}$ is a $\Gamma_{l}$-test sequence
through $Comp(z,y,p_{0},a)$, and $\Sigma(x_{n},y_{n},p_{0},a)=1\,\wedge\,\Psi(x_{n},y_{n},p_{0},a)\neq1$
in $\Gamma_{l}$ for all $n$, factors through one of the resolutions
$GFRes_{1}(x,z,y,p,a),...,GFRes_{m}(x,z,y,p,a)$. 
\item The induced $\Gamma_{l}$-ungraded formal closures $FCl_{1}(x,z,y,p_{0},a),...,FCl_{m}(x,z,y,p_{0},a)$
form a $\Gamma_{l}$-covering closure for $Comp(z,y,p_{0},a)$. 
\item For all $i=1,...,m$, there exists a formal solution $x_{i}=x_{i}(s,z,y,p,a)\in GFCl_{i}$
that factors through the resolution $GFRes_{i}(x,z,y,p,a)$, so that
the words $\Sigma(x_{i},y,p,a)$ represent the trivial element in
$GFCl_{i}$, and each of the words $\Psi(x_{i},y,p_{0},a)$ is non-trivial
in $FCl_{i}(x,z,y,p_{0},a)$. 
\end{enumerate}
\end{thm}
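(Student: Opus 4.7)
My plan is to establish the two halves of the theorem sequentially, using \lemref{21}, \lemref{24} and \thmref{25}. First I would prove the opening factorization statement. By \lemref{21}, for $l$ sufficiently large every $\Gamma_l$-sequence satisfying the stated hypotheses subfactors through the auxiliary graded formal MR diagram $D$ over random groups that was constructed in Section 4.1. The key observation is that $D$ and the initial diagram $K$ from Section 4.2 are built by the same iterative procedure: at each stage one takes the maximal limit quotients of the accumulated collection of sequences and then either terminates (when the $Comp$-JSJ is trivial) or passes to the shortest representatives. By \thmref{2}, every maximal limit group that arises in the construction of $D$ is in fact a limit group over $F_{k}$, and the shortening step used to define $\mathfrak{T}_M$ is carried out by \thmref{7}, which is the very same step used to define $\mathfrak{J}_M$ in the construction of $K$. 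Hence $D$ can be identified with the pre-extension part of $K$, so our sequence subfactors through $K$. Finally, \lemref{24} applied resolution-by-resolution extends this factorization through each $K_M$, yielding the desired factorization through the full diagram $GFMRD$.

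For the \emph{moreover} part I would mimic the scheme of \thmref{25}, but inside $\Gamma_l$. Given the $\Gamma_l$-truth of the $AE$ sentence, for any $\Gamma_l$-test sequence $\{(z_n,y_n,p_0,a)\}$ of $Comp(z,y,p_0,a)$ the truth of the sentence produces $\Gamma_l$-specializations $x_n$ with $\Sigma(x_n,y_n,p_0,a)=1$ and $\Psi(x_n,y_n,p_0,a)\neq 1$. The enriched sequence $\{(x_n,z_n,y_n,p_0,a)\}$ now satisfies the hypotheses of the first part, hence subfactors through $GFMRD$. I would then select a minimal collection $GFRes_{1},\dots,GFRes_{m'}$ of resolutions in $GFMRD$ through which every such $\Gamma_l$-sequence subfactors; for each $i$ I pick finitely many representative $\Gamma_l$-specializations factoring through $GFRes_{i}$, each of which yields a formal solution $x_i \in GFCl_i$, a $\Gamma_l$-exceptional solution $g^{i}_{p_0}$ of $Term(GFCl_i)$, and an induced $\Gamma_l$-ungraded closure $FCl_i(x,z,y,p_0,a)$. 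Conditions (1), (2) and (4) are then immediate from the construction, and the $\Gamma_l$-covering closure property (3) follows by the same contradiction argument as in \thmref{25}: were the $FCl_i$'s not a covering closure of $Comp(z,y,p_0,a)$, one could construct a $\Gamma_l$-test sequence avoiding every $FCl_i$, enrich it using truth of the sentence, and contradict minimality of the collection.

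The main obstacle will be rigorously verifying condition (1), namely that each $g^{i}_{p_0}$ restricts to a $\Gamma_l$-exceptional solution of $B$ belonging to the same $\Gamma_l$-exceptional family as $h_{p_0}$. This requires combining \lemref{18} (which forces the $p$-JSJ of $Term(GFCl_i)$ to be compatible with that of $B$, a property already baked into the design of $GFMRD$ via \lemref{20}), the uniform global bounds on the number of $\Gamma_l$-exceptional families provided by \thmref{15} and \thmref{17} (so that for $l$ large enough each $\Gamma_l$-family lifts bijectively to an $F_{k}$-family), and the lifting property to transfer the $F_{k}$-level identification of exceptional families obtained from \thmref{25} back down to $\Gamma_l$. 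A secondary subtlety is to check that the identification of $D$ with the pre-extension part of $K$ used in the first step is compatible with the subsequent extension by each $K_M$; this follows because the $p$-JSJ of $Term(GFCl_i)$ is already determined over $F_{k}$ by \lemref{20}, so \lemref{24} applies directly to each resolution.
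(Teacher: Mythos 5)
Your treatment of the \emph{moreover} part is essentially the paper's: once the first factorization statement is in hand, the paper simply says the rest "follows identically as the case over $F_k$," i.e., by rerunning the argument of \thmref{25} inside $\Gamma_l$, which is what you propose. The problem is in your first step.

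Your claim that the auxiliary diagram $D$ over random groups "can be identified with the pre-extension part of $K$" is a genuine gap, and it is not how the paper argues. The two diagrams are built from different collections of sequences: $\mathfrak{T}$ consists of sequences over ascending sequences of random groups (with shortening measured in the metrics of the $\Gamma_{l_n}$'s and test-sequence conditions stated in those metrics), while $\mathfrak{J}$ consists of sequences over $F_k$. \thmref{2} only tells you that each maximal limit group arising from $\mathfrak{T}$ is \emph{a} limit group over $F_k$; it does not tell you that it is one of the maximal limit groups through which $\mathfrak{J}$ factors, because the lifted sequences need not restrict to $F_k$-test sequences nor be unshortenable in the $F_k$-metric. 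So the terminal closures of $D$ need not appear in $K$ at all, and \lemref{24} (which is stated for the closures $M$ of the $F_k$-diagram and their extensions $K_M$) cannot be applied to them directly. The paper's bridge between $D$ and $GFMRD$ is \lemref{26}: a terminal resolution of $D$ ends in $\langle f\rangle\ast M$ with $M$ a graded formal closure of $Comp$ carrying a formal solution $x\theta$ with $\Sigma(x\theta,z,y,p,a)=1$ in $M$; one lifts the $\Gamma_l$-specialization $H_{p_0}$ of $M$ to an $F_k$-specialization $\tilde H_{p_0}$ whose restriction to $B$ is exceptional over $F_k$ (since $h_{p_0}$ is $\Gamma_l$-exceptional), invokes \lemref{26} to conclude that the induced $F_k$-specialization of $(x\theta,z,y,p,a)$ factors through $GFMRD$, and then projects back down to $\Gamma_l$. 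Without \lemref{26} (or some substitute establishing that $GFMRD$ absorbs formal solutions living on arbitrary graded closures, not just those appearing in $K$), your argument does not get the $\Gamma_l$-sequences from $D$ into $GFMRD$.

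A secondary point: even after reaching $GFMRD$, the paper must still adjust the induced specialization $g_{p_0}$ of $Term(GFCl)$ so that it is $\Gamma_l$-exceptional (by re-lifting and re-factoring when it is not), which is needed for conclusion (1); your closing paragraph gestures at the right ingredients (compatibility of the $p$-JSJ's via \lemref{20}, and lifting of exceptional families) but does not carry out this replacement step.
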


\begin{proof}
Denote by $D$ the auxiliary graded formal MR diagram over random
groups of the system $\Sigma(x,y,p,a)$ w.r.t. the graded resolution
$Res(y,p,a)$. Let $(z_{0},y_{0},p_{0},a)$ be a $\Gamma_{l}$-specialization
of $Comp(z,y,p,a)$, and let $x_{0}\in\Gamma_{l}$ so that the $\Gamma_{l}$-specialization
$(x_{0},z_{0},y_{0},p_{0},a)$ factors through a resolution $GFRes(x,z,y,p,a)$
in $D$. Denote by $h_{p_{0}}$ the $\Gamma_{l}$-exceptional solution
of $B$ induced by $(x_{0},z_{0},y_{0},p_{0},a)$. According to the
construction of $D$, the resolution $GFRes(x,z,y,p,a)$ terminates
in a limit group of the form $\langle f\rangle\ast M$, where $\langle f\rangle$
is a free group, and $M=M(x,z,y,p,a)$ is a graded formal closure
for $Comp(z,y,p,a)$. Moreover, there exists a formal solution $x\theta\in\langle f\rangle\ast M$,
that factors through the resolution $GFRes(x,z,y,p,a)$, and a $\Gamma_{l}$-specialization
$(x_{0}',z_{0},y_{0},p_{0},a)$ of $\langle f\rangle\ast M$, so that
$x\theta$ is mapped to $x_{0}$. Denote by $H_{p_{0}}$ the restriction
of $(x_{0}',z_{0},y_{0},p_{0},a)$ to $M$. Mapping $\langle f\rangle$
into the subgroup $F_{k}$ of $M$, we get a formal solution, denoted
again $x\theta$, that belongs to $M$, and that is mapped to $x_{0}$
by $H_{p_{0}}$. Note that, according to the construction of $D$,
$H_{p_{0}}$ restricts to $h_{p_{0}}$ on $B$.

In particular, $x\theta$ is an element of the graded closure $M$
of $Comp$, so that the words $\Sigma(x\theta,z,y,p,a)$ represent
the trivial element in $M$. Thus, according to \lemref{26}, for
every specialization of $M$ in $F_{k}$, that restricts to an exceptional
solution of $B$, the induced specialization of $(x\theta,z,y,p,a)$
factors through $GFMRD$.

We lift $H_{p_{0}}$ to a specialization $\tilde{H}_{p_{0}}$ of $M$
in $F_{k}$. Denote by $\tilde{h}_{p_{0}}$ the restriction of $\tilde{H}_{p_{0}}$
to $B$. In particular, the solution $\tilde{h}_{p_{0}}$ lifts $h_{p_{0}}$,
and the specialization $(\tilde{x}_{0},\tilde{z}_{0},\tilde{y}_{0},\tilde{p}_{0},a)$
in $F_{k}$ that $\tilde{H}_{p_{0}}$ induces for the tuple $(x\theta,z,y,p,a)$,
lifts the $\Gamma_{l}$-specialization $(x_{0},z_{0},y_{0},p_{0},a)$.
Since $h_{p_{0}}$ is a $\Gamma_{l}$-exceptional solution of $B$,
then $\tilde{h}_{p_{0}}$ must be an exceptional solution of $B$
over $F_{k}$.

Thus, according to \lemref{26} the induced specialization $(\tilde{x}_{0},\tilde{z}_{0},\tilde{y}_{0},\tilde{p}_{0},a)$
factors through $GFMRD$. This implies immediately that the original
$\Gamma_{l}$-specialization $(x_{0},z_{0},y_{0},p_{0},a)$ factors
through $GFMRD$. Moreover, if $(\tilde{x}_{0},\tilde{z}_{0},\tilde{y}_{0},\tilde{p}_{0},a)$
factors through the resolution $GFRes(x,z,y,p,a)$ of $GFMRD$ whose
terminal closure is $GFCl(x,z,y,p,a)$, then $(x_{0},z_{0},y_{0},p_{0},a)$
factors through the same resolution, and the induced $\Gamma_{l}$-specialization
$g_{p_{0}}$ of the base $p$-rigid or $p$-solid group $Term(GFCl)$
of $GFCl(x,z,y,p,a)$ restricts to a $\Gamma_{l}$-exceptional specialization
of $B$ that belongs to the same $\Gamma_{l}$-exceptional family
of $h_{p_{0}}$.

If the $\Gamma_{l}$-specialization $g_{p_{0}}$ is not itself a $\Gamma_{l}$-exceptional
specialization of the base $p$-rigid or $p$-solid group $Term(GFCl)$
of $GFCl(x,z,y,p,a)$, then we lift it to a non-exceptional specialization
of $Term(GFCl)$ over $F_{k}$. This replaces the specialization of
$GFCl(x,z,y,p,a)$ induced by the factorization of $(\tilde{x}_{0},\tilde{z}_{0},\tilde{y}_{0},\tilde{p}_{0},a)$
with another one, so that they are the same $\Gamma_{l}$-specializations.
Hence, we can assume that $g_{p_{0}}$ is a $\Gamma_{l}$-exceptional
specialization of $Term(GFCl)$.

Now let $\{(x_{n},z_{n},y_{n},p_{n},a)\}$ be a sequence of $\Gamma_{l}$-specializations,
and assume that $\{(z_{n},y_{n},p_{n},a)\}$ is a $\Gamma_{l}$-test
sequence through $Comp(z,y,p,a)$, and $\Sigma(x_{n},y_{n},p_{0},a)=1$
in $\Gamma_{l}$ for all $n$.

Then, according to \lemref{21}, the sequence $\{(x_{n},z_{n},y_{n},p_{n},a)\}$
subfactors over $\Gamma_{l}$ through some resolution in the auxiliary
graded formal MR diagram over random groups, that was denoted by $D$.
Hence, according to the above argument, the sequence $\{(x_{n},z_{n},y_{n},p_{n},a)\}$
subfactors through $GFMRD$.

The remaining part of the theorem follows identically as the case
over $F_{k}$ (with the exception that we use \thmref[s]{15} and
\ref{thm:17} instead of \thmref[s]{14} and \ref{thm:16}). 
\end{proof}
The following fact is useful when treating a sentence that includes
more than one inequality. 
\begin{lem}
\label{lem:28} Let $Res(y,p,a)$ be a graded resolution, and denote
its completion by $Comp(z,y,p,a)$. Denote by $B$ the $p$-rigid
or $p$-solid base group of $Comp(z,y,p,a)$. Let $GFMRD$ be the
graded formal MR diagram of the system $\Sigma(x,y,p,a)$ w.r.t. the
resolution $Res(y,p,a)$ (constructed over $F_{k}$).

Let $GFRes(x,z,y,p,a)$ be a graded formal resolution in $GFMRD$,
defined over the terminal closure $GFCl(x,z,y,p,a)$.

Let $\psi_{1}(x,y,p,a),...,\psi_{s}(x,y,p,a)$ be a finite collection
of words, and let $x\theta_{1},...,x\theta_{s}$ be formal solutions
that factor through the resolution $GFRes(x,z,y,p,a)$. Let $(x_{0},z_{0},y_{0},p_{0},a)$
be a specialization of $GFCl(x,z,y,p,a)$, so that the induced specialization
$(x\theta_{i,0},y_{0},p_{0},a)$, satisfies the inequality 
\[
\psi_{i}(x,y,p,a)\neq1\,,
\]
for all $i=1,...,s$.

Then, there exists a formal solution $x\theta$, that factor through
the resolution $GFRes(x,z,y,p,a)$, so that the induced specialization
$(x\theta_{0},y_{0},p_{0},a)$, satisfies the inequality $\psi_{i}(x,y,p,a)\neq1$,
for all $i=1,...,s$. 
\end{lem}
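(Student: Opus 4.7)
I would proceed by induction on $s$, the number of inequalities, combined with an application of the formal solution machinery already developed (particularly \thmref{25} and \lemref{26}). A key preliminary observation is that for each $i$, the element $\psi_{i}(x\theta_{i},y,p,a)\in GFCl(x,z,y,p,a)$ must be non-trivial as an element of $GFCl$: otherwise, specializing at $(x_{0},z_{0},y_{0},p_{0},a)$ would give $\psi_{i}(x\theta_{i,0},y_{0},p_{0},a)=1$, contradicting the hypothesis. Thus, throughout the argument, each $\psi_{i}(x\theta_{i},y,p,a)$ is a non-trivial element of the graded formal closure.

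The base case $s=1$ is immediate by taking $x\theta:=x\theta_{1}$. For the inductive step, assume the lemma for $s-1$ inequalities. Apply the inductive hypothesis to $\psi_{1},\ldots,\psi_{s-1}$ and $x\theta_{1},\ldots,x\theta_{s-1}$ to obtain a formal solution $x\theta'$ factoring through $GFRes$ with $\psi_{i}(x\theta'_{0},y_{0},p_{0},a)\neq 1$ for $i\leq s-1$. If additionally $\psi_{s}(x\theta'_{0},y_{0},p_{0},a)\neq 1$, we take $x\theta:=x\theta'$ and are finished. Otherwise, the problem reduces to the core case of combining two formal solutions $x\theta'$ and $x\theta_{s}$, each handling a disjoint set of inequalities.

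To perform the combination, I consider the augmented graded formal resolution obtained as the fiber product of two copies of $GFRes$ over $Comp(z,y,p,a)$. This resolution carries two independent formal solutions $x\alpha,x\beta$, both factoring through $GFRes$ individually. The specialization $(x_{0},z_{0},y_{0},p_{0},a)$ of $GFCl$, together with the assignments $x\alpha\mapsto x\theta'_{0}$ and $x\beta\mapsto x\theta_{s,0}$, witnesses the truth of the $AE$-formula $\forall y\,\exists x\,(\Sigma(x,y,p,a)=1\,\wedge\,\bigwedge_{i\leq s}\psi_{i}(x,y,p,a)\neq 1)$ at $(y_{0},p_{0})$. Applying \thmref{25} to this augmented $AE$-formula relative to the ungraded resolution $Res(y,p_{0},a)$ produces a finite collection of graded formal resolutions in the graded formal MR diagram of the combined system, together with formal solutions $x\theta$ in their terminal formal closures which simultaneously trivialize $\Sigma(x,y,p,a)$ and render each $\psi_{i}(x,y,p,a)$ non-trivial at the induced specialization.

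The main obstacle is verifying that the resulting $x\theta$ genuinely factors through the original resolution $GFRes(x,z,y,p,a)$, rather than through some proper refinement. This amounts to showing that the fiber-product construction and the subsequent application of \thmref{25} preserve the tower structure encoded by $GFRes$: specifically, the compatibility between the $Comp$-JSJ levels of $GFCl$ and the $p$-JSJ of $Term(GFCl)$ with that of $B$, as established in \lemref{20}. The verification is analogous to the extraction argument in the proof of \thmref{25}, since the two formal solutions $x\theta'$ and $x\theta_{s}$ each factor through $GFRes$ by construction, so the JSJ-compatible structure is inherited by any terminal closure in the graded formal MR diagram of the augmented system into which the fiber-product specialization factors; the corresponding extracted formal solution $x\theta$ therefore factors through $GFRes$, completing the induction.
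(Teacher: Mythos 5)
Your overall strategy diverges from the paper's, and the combination step contains a genuine gap. The decisive move in your argument is the appeal to \thmref{25} applied to the augmented $AE$-formula with the conjunction $\bigwedge_{i\leq s}\psi_{i}\neq1$. But the hypothesis of \thmref{25} is that the sentence $\forall y\in Res(y,p_{0},a)\;\exists x\;(\Sigma=1\wedge\Psi\neq1)$ holds for \emph{every} $y$ factoring through the ungraded resolution, whereas all you control is the single specialization $(x_{0},z_{0},y_{0},p_{0},a)$: the existence of $x\theta'$ and $x\theta_{s}$ there does not witness that universal statement. Moreover, even granting some application of the formal MR machinery to your fiber product (an object not constructed anywhere in the paper, whose status as a graded formal resolution would itself need justification), its output is a formal solution living in a terminal closure of the diagram of the \emph{augmented} system; your last paragraph asserts, but does not prove, that this formal solution factors through the original $GFRes(x,z,y,p,a)$ rather than through a proper quotient or a closure with added roots. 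So the "main obstacle" you flag is real and is not discharged.

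The intended argument is much more direct and needs neither induction nor any combination device. Pass to the ungraded formal resolution $GFRes(x,z_{0},y_{0},p_{0},a)$ induced from $(x_{0},z_{0},y_{0},p_{0},a)$ and take a test sequence $(t_{n},x_{n},z_{0},y_{0},p_{0},a)$ of its completion $FComp(t,x,z_{0},y_{0},p_{0},a)$. For each $i$, the hypothesis $\psi_{i}(x\theta_{i,0},y_{0},p_{0},a)\neq1$ shows that the word $\psi_{i}(x,y,p,a)$, viewed as an element of $FComp(t,x,z_{0},y_{0},p_{0},a)$, is non-trivial --- this is essentially your preliminary observation, but it must be made in the completion of the induced \emph{ungraded} resolution, which is the object over which one can run a test sequence with $(z_{0},y_{0},p_{0})$ fixed. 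Since a test sequence eventually separates any fixed finite set of non-trivial elements from the identity, for $n$ large all $s$ inequalities $\psi_{i}(x_{n},y_{0},p_{0},a)\neq1$ hold simultaneously, and the formal solution $x\theta$ corresponding to such an $x_{n}$ is the one required. This single genericity step is exactly what your inductive fiber-product construction is trying to reproduce.
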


\begin{proof}
Let $GFRes(x,z_{0},y_{0},p_{0},a)$ be the ungraded formal resolution
induced from the specialization $(x_{0},z_{0},y_{0},p_{0},a)$ and
covered by the resolution $GFRes(x,z,y,p,a)$. Let $FComp(t,x,z_{0},y_{0},p_{0},a)$
be the completion of this resolution, and let $(t_{n},x_{n},z_{0},y_{0},p_{0},a)$
be a test sequence of $FComp(t,x,z_{0},y_{0},p_{0},a)$.

By the assumption, for all $i$, the formal solution $x\theta_{i}$
factors through $GFRes(x,z,y,p,a)$, so that

\[
\psi_{i}(x\theta_{i,0},y_{0},p_{0},a)\neq1\,,
\]
where $(x\theta_{i,0},z_{0},y_{0},p_{0},a)$ is the specialization
of $(x\theta_{i},z,y,p,a)$ induced by $(x_{0},z_{0},y_{0},p_{0},a)$.
In particular, the word $\psi_{i}(x,y,p,a)$, viewed as an element
of $FComp(t,x,z_{0},y_{0},p_{0},a)$, must be non-trivial, for all
$i$.

Hence, and since $(t_{n},x_{n},z_{0},y_{0},p_{0},a)$ is a test sequence,
for large enough $n$, we must have that

\[
\psi_{i}(x_{n},y_{0},p_{0},a)\neq1\,.
\]
\end{proof}
\begin{lem}
\label{lem:29} Let $Res(y,p,a)$ be a graded resolution, and denote
its completion by $Comp(z,y,p,a)$. Denote by $B$ the $p$-rigid
or $p$-solid base group of $Comp(z,y,p,a)$. Let $GFMRD$ be the
graded formal MR diagram of the system $\Sigma(x,y,p,a)$ w.r.t. the
resolution $Res(y,p,a)$ (constructed over $F_{k}$), and let $GFRes(x,z,y,p,a)$
be a graded formal resolution in $GFMRD$, defined over the terminal
closure $GFCl(x,z,y,p,a)$.

Let $\Gamma_{l}$ be a group of level $l$ in the model. If $l$ is
large enough, then the following property is satisfied in $\Gamma_{l}$.

Let $\psi_{1}(x,y,p,a),...,\psi_{s}(x,y,p,a)$ be a finite collection
of words, and let $x\theta_{1},...,x\theta_{s}$ be formal solutions
that factor through the resolution $GFRes(x,z,y,p,a)$. Let $(x_{0},z_{0},y_{0},p_{0},a)$
be a $\Gamma_{l}$-specialization of $GFCl(x,z,y,p,a)$, so that the
induced $\Gamma_{l}$-specialization $(x\theta_{i,0},y_{0},p_{0},a)$,
satisfies the inequality 
\[
\psi_{i}(x,y,p,a)\neq1\,,
\]
for all $i=1,...,s$.

Then, there exists a formal solution $x\theta$, that factor through
the resolution $GFRes(x,z,y,p,a)$, so that the induced $\Gamma_{l}$-specialization
$(x\theta_{0},y_{0},p_{0},a)$, satisfies the inequality $\psi_{i}(x,y,p,a)\neq1$,
for all $i=1,...,s$. 
\end{lem}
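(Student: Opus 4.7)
The plan is to carry the proof of \lemref{28} over to the $\Gamma_l$-setting by replacing the $F_{k}$-test sequence of the induced ungraded formal completion with a $\Gamma_{l}$-test sequence supplied by the machinery of \secref{Graded-Test-Sequences-over-Random-Groups}. The structure of the argument is identical; only the ambient group is changed.

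Specifically, starting from the given $\Gamma_l$-specialization $(x_0, z_0, y_0, p_0, a)$ of $GFCl(x, z, y, p, a)$, I would form the induced ungraded formal resolution $GFRes(x, z_0, y_0, p_0, a)$ over $\Gamma_l$ and its completion $FComp(t, x, z_0, y_0, p_0, a)$. For $l$ sufficiently large, \lemref{9} and \lemref{10} guarantee the existence of a $\Gamma_l$-test sequence $\{(t_n, x_n, z_0, y_0, p_0, a)\}$ of this completion. Since each formal solution $x\theta_i$ factors through $GFRes(x, z, y, p, a)$, it extends via the given $\Gamma_l$-specialization to a $\Gamma_l$-specialization of $FComp(t, x, z_0, y_0, p_0, a)$ that maps the element $\psi_i(x, y, p, a)$ to $\psi_i(x\theta_{i,0}, y_0, p_0, a) \neq 1$. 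Hence $\psi_i(x, y, p, a)$ is non-trivial as an element of the completion $FComp(t, x, z_0, y_0, p_0, a)$ for every $i$. By the discrimination property of test sequences (encoded in \defref{8} via the stretching condition on non-elliptic elements), each non-trivial element of $FComp$ is eventually mapped to a non-trivial element of $\Gamma_l$ along the test sequence. Applying this to $\psi_1, \ldots, \psi_s$ simultaneously yields an index $n$ for which $\psi_i(x_n, y_0, p_0, a) \neq 1$ in $\Gamma_l$ for every $i$, and declaring $x\theta := x_n$ gives the desired single formal solution.

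The main technical obstacle is the discrimination property over $\Gamma_l$: namely, that a fixed non-trivial element of $FComp$ is not systematically killed along the $\Gamma_l$-test sequence. The cleanest way to secure it is a diagonalization via \lemref{10}: if some non-trivial $g \in FComp$ were mapped to $1$ along a subsequence of the $\Gamma_l$-test sequences across varying $l$, diagonalizing would produce a graded test sequence over an ascending sequence $\{\Gamma_{l_n}\}$ in the model along which $g$ is eventually trivial; \thmref{2} would then identify the associated limit group with a limit group over $F_{k}$, forcing $g$ to be trivial in $FComp$, contradicting its non-triviality there. Once this discrimination property is in hand, the remainder of the proof is a formal transcription of the proof of \lemref{28}.
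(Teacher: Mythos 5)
Your proposal follows essentially the same route as the paper's proof: pass to the induced ungraded formal resolution and its completion $FComp(t,x,z_{0},y_{0},p_{0},a)$, observe that each $\psi_{i}$ is non-trivial as an element of $FComp$ because the specialization induced by $x\theta_{i}$ maps it to a non-trivial element of $\Gamma_{l}$, and then run a $\Gamma_{l}$-test sequence through $FComp$ to find a single formal solution realizing all $s$ inequalities at once. Your extra paragraph justifying the discrimination property of $\Gamma_{l}$-test sequences by diagonalizing over ascending sequences is a reasonable elaboration of a point the paper leaves implicit, but it does not change the argument.
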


\begin{proof}
Let $GFRes(x,z_{0},y_{0},p_{0},a)$ be the ungraded formal resolution
induced from the $\Gamma_{l}$-specialization $(x_{0},z_{0},y_{0},p_{0},a)$
and covered by the resolution $GFRes(x,z,y,p,a)$. Let $FComp(t,x,z_{0},y_{0},p_{0},a)$
be the completion of this resolution, and let $(t_{n},x_{n},z_{0},y_{0},p_{0},a)$
be a $\Gamma_{l}$-test sequence of $GFComp(t,x,z_{0},y_{0},p_{0},a)$.

By the assumption, for all $i$, the formal solution $x\theta_{i}$
factors through $GFRes(x,z,y,p,a)$, so that

\[
\psi_{i}(x\theta_{i,0},y_{0},p_{0},a)\neq1\,,
\]
where $(x\theta_{i,0},z_{0},y_{0},p_{0},a)$ is the $\Gamma_{l}$-specialization
of $(x\theta_{i},z,y,p,a)$ induced by $(x_{0},z_{0},y_{0},p_{0},a)$.
In particular, the word $\psi_{i}(x,y,p,a)$, viewed as an element
of $FComp(t,x,z_{0},y_{0},p_{0},a)$, must be non-trivial, for all
$i$.

Hence, and since $(t_{n},x_{n},z_{0},y_{0},p_{0},a)$ is a $\Gamma_{l}$-test
sequence, for large enough $n$, we must have that $\psi_{i}(x_{n},y_{0},p_{0},a)\neq1$. 
\end{proof}

\section{The $ExtraPS$ MR Diagram over Random Groups  }\label{sec:ExtraPS-MR-Diagrams}

Along this section, we fix a collection $WPRS$ consisting of $wp$-rigid
and $wp$-solid limit groups

\[
G_{1}(x_{1},w,p,a),,...,G_{s}(x_{s},w,p,a)
\]
($wp$ are the parameter set for the groups $G_{i}$). For all $i=1,...,s$,
we fix an integer $e_{i}\in\{0,...,M(G_{i})\}$, where $M(G_{i})$
is the maximal number of exceptional families of specializations in
$G_{i}$ for a fixed value of the parameters.

We consider a group $PT(y,w,p,a)$ that contains the amalgamated free
product of the groups in $WPRS$ \\

\[
PT(y,w,p,a)\geq\text{\ensuremath{\stackrel[i=1]{s}{\ast_{\langle w,p,a\rangle}}\stackrel[j=1]{e_{i}}{\ast_{\langle w,p,a\rangle}}G_{i}}}
\]
amalgamated over the subgroup $\langle w,p,a\rangle$. We consider
the collection $E$ of all the specializations $(y_{0},w_{0},p_{0},a)$
of $PT(y,w,p,a)$ for which the restriction of $(y_{0},w_{0},p_{0},a)$
to $\stackrel[j=1]{e_{i}}{\ast_{\langle w,p,a\rangle}}G_{i}$ represent
a family of representatives of pairwise distinct exceptional families
of $G_{i}$ corresponding to the value $w_{0}p_{0}$ of the parameters,
for all $i=1,...,s$.

The collection $E$ factors through finitely many maximal limit groups
$PS(y,w,p,a)$. For each $PS(y,w,p,a)$, we consider the subcollection
$EPS$ of $E$ consisting of all the specializations in $E$ that
factor through $PS(y,w,p,a)$. we associate its standard $p$-graded
MR diagram (note the drop of $w$ from the parameter set). We consider
a resolution $Res(y,w,p,a)$ in this diagram (rooted at $PS(y,w,p,a)$),
through which some specialization from $E$ factors.

We consider the graded completion $Comp(z,y,w,p,a)$ of the resolution
$Res(y,w,p,a)$, and we denote the terminal $p$-rigid or $p$-solid
base group of $Comp(z,y,w,p,a)$ by $B$. 
\begin{lem}
\label{lem:30}For all $i=1,...,s$, all the mappings of $G_{i}$
into any level of $Comp(z,y,w,p,a)$ are compatible. That is, if $\bar{G}_{i}$
is one of the canonical images of $G_{i}$ in one of the levels of
$Comp(z,y,w,p,a)$, whose associated $p$-graded decomposition is
$\Pi$, for some $i=1,...,s$, then the graph of groups obtained by
collapsing the minimal subgraph of $\Pi$ containing the subgroup
$\langle w,p,a\rangle$ into the vertex $AP$ (the distinguished vertex
containing the subgroup $\langle p,a\rangle$), is compatible with
the $wp$-JSJ of $G_{i}$, i.e., the minimal subgraph of the obtained
graph containing the image $\bar{G}_{i}$ consists only from parts
of the $wp$-JSJ of $G_{i}$. 
\end{lem}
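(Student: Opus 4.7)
The plan is to follow the strategy of \lemref{18}, with each $G_i$ playing the role that $B$ played there and the graded completion $Comp(z,y,w,p,a)$ playing the role of the graded resolution. I argue by contradiction: assume there exist an index $i$, a level of $Comp(z,y,w,p,a)$ with associated decomposition $\Pi$, and a canonical image $\bar{G}_i$ in that level, such that the graph obtained from $\Pi$ by collapsing the minimal subgraph containing $\langle w,p,a\rangle$ into the single vertex $AP$ is not compatible with the $wp$-JSJ of $G_i$.

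Fix a specialization $(y_0,w_0,p_0,a)\in EPS$ that factors through $Res(y,w,p,a)$; by the defining property of $E$, its restriction to $B$ is an exceptional specialization $h_{p_0}$, and its restriction to each copy of $G_i$ inside $PT$ is a strictly solid representative of an exceptional family of $G_i$. For each $i$, let $Flx_1(x_i,w,p,a),\ldots,Flx_{n_i}(x_i,w,p,a)$ enumerate the maximal flexible quotients of $G_i$, and let $Flx(c,w,p,a)$ be the finite set of defining relations of the graded completions of the one-step resolutions $G_i\to Flx_j$. Exactly as in the proof of \lemref{18}, and using the machinery of \secref{Formal-Solutions-over-Random-Groups}, I construct the formal MR diagram $D$ of the systems $Flx(c,w,p_0,a)$ with respect to the ungraded completion $Comp(z,y,w,p_0,a)$ obtained from $Comp$ by specializing its base according to $h_{p_0}$. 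The failure of compatibility at the prescribed level forces every test sequence of $Comp(z,y,w,p_0,a)$ to subfactor through one of the terminal formal closures of $D$, so these terminal formal closures form a covering closure of $Comp(z,y,w,p_0,a)$. In particular, the specialization $(y_0,w_0,p_0,a)$ itself factors through one of them, which forces its restriction to the relevant copy of $G_i$ to factor through a flexible quotient of $G_i$, contradicting the defining property of $E$.

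The main obstacle is the interaction between the decomposition $\Pi$ of the ambient level of $Comp$ and the $wp$-JSJ of the subgroup $\bar{G}_i$: the decomposition $\Pi$ carries the full amalgamation structure of $PT$ (including all the copies of all the $G_i$'s and the way the parameters $\langle w,p,a\rangle$ are distributed across it), while the $wp$-JSJ is an intrinsic invariant of $G_i$. The collapsing of the minimal subgraph containing $\langle w,p,a\rangle$ into the vertex $AP$ is designed precisely to extract the piece of $\Pi$ that is visible to $\bar{G}_i$ relative to the parameters, and the technical heart of the argument is translating an incompatibility between this collapsed graph and the $wp$-JSJ of $G_i$ into a genuine extra $wp$-modular automorphism of $G_i$, which is what powers the covering-closure step above. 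Once that translation is in place, the rest of the proof is a direct transcription of the argument of \lemref{18}.
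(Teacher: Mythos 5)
Your proposal is correct and follows essentially the same route as the paper, whose proof is a one-line reduction to the argument of \lemref{18}: assuming incompatibility, one builds the formal MR diagram of the flexibility systems of $G_i$ over the induced ungraded completion, deduces that its terminal formal closures form a covering closure, and concludes that the chosen specialization from $E$ restricts to a flexible (non-exceptional) specialization of $G_i$, a contradiction. Your write-up is simply a more detailed transcription of that same reduction.
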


\begin{proof}
If not, then, using the same argument in the proof of \lemref{18},
we can construct a formal solution(s) in (a covering closure of) $Comp(z,y,w,p,a)$
testifying flexibility of all the specializations of $G_{i}$ that
factor through $Comp(z,y,w,p,a)$, a contradiction. 
\end{proof}
\begin{lem}
Let $\Gamma_{l}$ be a group in the model. The $p$-graded MR diagram
of the groups $PS(y,w,p,a)$ encodes all the $\Gamma_{l}$-specializations
$(y_{0},w_{0},p_{0},a)$ for which for all $i=1,...,s$, the restriction
of $(y_{0},w_{0},p_{0},a)$ to $\stackrel[j=1]{e_{i}}{\ast_{\langle w,p,a\rangle}}G_{i}$
represent a family of representatives of pairwise distinct $\Gamma_{l}$-exceptional
families of $G_{i}$ corresponding to the value $w_{0}p_{0}$ of the
parameters. 
\end{lem}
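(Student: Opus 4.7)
The plan is to use the lifting property from \thmref{1} together with the arguments already developed in \thmref[s]{15} and \ref{thm:17}. Given a $\Gamma_{l}$-specialization $(y_{0},w_{0},p_{0},a)$ of $PT(y,w,p,a)$ whose restriction to each $\stackrel[j=1]{e_{i}}{\ast_{\langle w,p,a\rangle}}G_{i}$ represents a family of representatives of pairwise distinct $\Gamma_{l}$-exceptional families of $G_{i}$, I want to show that $(y_{0},w_{0},p_{0},a)$ factors through some $PS(y,w,p,a)$, and hence through its $p$-graded MR diagram.

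First I would assume $l$ is sufficiently large that the finite systems of equations defining $PT(y,w,p,a)$ and each $G_{i}$ belong to $H_{l}$, so that every $\Gamma_{l}$-solution of these systems lifts to an $F_{k}$-solution. Using this, I lift the given $\Gamma_{l}$-specialization $(y_{0},w_{0},p_{0},a)$ to a specialization $(\tilde{y}_{0},\tilde{w}_{0},\tilde{p}_{0},a)$ of $PT(y,w,p,a)$ over $F_{k}$. The restriction of this lift to each copy of $G_{i}$ then gives $e_{i}$ honest homomorphisms $\tilde{h}_{i,j}\colon G_{i}\to F_{k}$, each of which projects down to the original $\Gamma_{l}$-strictly solid (or rigid) representative $h_{i,j}$.

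The heart of the proof is showing that the lifts $\tilde{h}_{i,1},\ldots,\tilde{h}_{i,e_{i}}$ are themselves pairwise distinct $F_{k}$-exceptional representatives of $G_{i}$, exactly as in \thmref{15} and \thmref{17}. For the rigid case, if a lift $\tilde{h}_{i,j}$ factored through a flexible quotient of $G_{i}$, then its projection $h_{i,j}$ would too, contradicting $\Gamma_{l}$-rigidity. In the solid case, if $\tilde{h}_{i,j}$ factored through $\tau_{k}$ or if two lifts $\tilde{h}_{i,j_{1}},\tilde{h}_{i,j_{2}}$ lay in the same $F_{k}$-strictly solid family via some $u\colon Comp_{T}\to F_{k}$, then composing $u$ with the quotient map $F_{k}\to\Gamma_{l}$ would place the corresponding $h_{i,j}$'s in the same $\Gamma_{l}$-strictly solid family, again contradicting the hypothesis. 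Therefore $(\tilde{y}_{0},\tilde{w}_{0},\tilde{p}_{0},a)$ belongs to the collection $E$ defined over $F_{k}$, and consequently factors through one of the maximal limit groups $PS(y,w,p,a)$ and through its $p$-graded MR diagram.

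Finally, projecting the factorization via $\pi_{l}\colon F_{k}\to\Gamma_{l}$ shows that the original $\Gamma_{l}$-specialization $(y_{0},w_{0},p_{0},a)$ factors through the same diagram. The main obstacle is really the middle step: one must verify that distinctness and non-degeneracy of strictly solid families transfer correctly from $\Gamma_{l}$ up to $F_{k}$, which relies on the existence of the $Comp_{T}$ construction as a finitely presented object and on the lifting property being applicable to all the relevant systems simultaneously (guaranteed by \thmref{1} for $l$ large enough).
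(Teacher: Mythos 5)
Your proposal is correct and follows essentially the same route as the paper, whose entire proof is the one-line observation that any lift of $(y_{0},w_{0},p_{0},a)$ to $F_{k}$ factors through the diagram. You have simply made explicit the implicit middle step — that pairwise distinctness and exceptionality of the representatives transfer from $\Gamma_{l}$ up to the lift, by the same contrapositive arguments as in \thmref{15} and \thmref{17} — which is exactly what the paper's terse proof relies on.
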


\begin{proof}
Any lift of $(y_{0},w_{0},p_{0},a)$ factors through this diagram. 
\end{proof}

\subsection{The Construction of $ExtraPS$ MR Diagram over Random Groups }

Let $Comp(z,y,w,p,a)$ be a $p$-graded completion, and assume that
the mappings of the $wp$-rigid or $wp$-solid limit groups $G_{i}$
into $Comp(z,y,w,p,a)$ are compatible, for all $i=1,...,s$ (see
\lemref{30}). We are going to construct the auxiliary $ExtraPS$
MR diagram over ascending sequences of groups in the model, for the
collection $WPRS$ and the resolution $Comp(z,y,w,p,a)$.

We look at the collection $\mathfrak{T}$ of all the sequences of
specializations $\{(u_{n},z_{n},y_{n},w_{n},p_{n},a)\}$ over an ascending
sequence $\{\Gamma_{l_{n}}\}$ of groups in the model, for which: 
\begin{enumerate}
\item The sequence $\{(u_{n},z_{n},y_{n},w_{n},p_{n},a)\}$ restricts to
a $p$-graded test sequence of $Comp(z,y,w,p,a)$ over the ascending
sequence $\{\Gamma_{l_{n}}\}$ of groups in the model. 
\item For all $n$, the specialization $(u_{n},z_{n},y_{n},w_{n},p_{n},a)$
is a specialization of the amalgamated product 
\[
\left(\stackrel[i=1]{s}{\ast_{\langle w,p,a\rangle}}\stackrel[j=1]{f_{i}}{\ast_{\langle w,p,a\rangle}}G_{i}\right)\ast_{\langle w,p,a\rangle}Comp(z,y,w,p,a)\,,
\]
where $f_{i}$ is an integer in $f_{i}\in\{0,1\}$. 
\item For all $n$, the specialization $(u_{n},z_{n},y_{n},w_{n},p_{n},a)$
is \emph{non-collapsed}, i.e., for all $i=1,...,s$, the $f_{i}+e_{i}$
specializations of $G_{i}$ that the specialization $(u_{n},z_{n},y_{n},w_{n},p_{n},a)$
induces, form a family of representatives of pairwise distinct $\Gamma_{l_{n}}$-exceptional
families of $G_{i}$ corresponding to the value $w_{n}p_{n}$ of the
parameters (see \defref[s]{12} and \ref{def:13}). 
\end{enumerate}
We apply the method of constructing the auxiliary graded formal MR
diagram over random groups (explained in \secref{Formal-Solutions-over-Random-Groups}),
on the collection $\mathfrak{T}$.

The sequences in the collection $\mathfrak{T}$, factor through finitely
many limit groups $M(u,z,y,w,p,a)$. The canonical image of the subgroup
$Comp(z,y,p,a)$ in $M$ is denoted briefly by $Comp$. In light of
point 3, the limit group $M$ cannot be freely decomposable w.r.t.
the subgroup $Comp\leq M$. We associate to $M$ its relative $Comp$-JSJ
decomposition. Point 3 implies that for all $i=1,...,s$, the mapping
of $G_{i}$ into $M$ is compatible, i.e., the $Comp$-JSJ of $M$
is compatible with the $wp$-JSJ of $G_{i}$.

If the $Comp$-JSJ of $M$ is trivial, we call the group $M(u,z,y,w,p,a)$
a $Comp$-rigid group, and terminate the construction for this stage.
In this case, the subcollection of $\mathfrak{T}$ consisting of all
the sequences $\{(u_{n},z_{n},y_{n},w_{n},p_{n},a)\}$ that factor
through $M$, is denoted by $\mathfrak{T}_{M}$.

In the case that the $Comp$-JSJ of $M(u,z,y,w,p,a)$ is non-trivial,
we consider the subcollection $\mathfrak{T}_{M}$ of $\mathfrak{T}$,
consisting of all the sequences $\{(u_{n},z_{n},y_{n},w_{n},p_{n},a)\}$
that factor through $M$, and so that for all $n$, the specialization
$\{(u_{n},z_{n},y_{n},w_{n},p_{n},a)\}$ cannot be shortened (in the
metric of $(\Gamma_{l_{n}},a)$, where $a$ is the fixed generating
set of $\Gamma_{l_{n}}$) by precomposing with a modular automorphism
of $M$ that is induced from its $Comp$-JSJ.

The sequences in the collection $\mathfrak{T}_{M}$, factor through
finitely many limit quotients of $M$, that we denote by $H$. If
$H$ is not a proper quotient of $M$, we call $M$ a $Comp$-solid
group, and terminate the construction for this stage.

We continue iteratively. Since all the groups that are obtained along
the construction, are limit groups, then, according to the descending
chain condition of limit groups, the construction must terminate after
finitely many steps. The result is a diagram $D$ that includes finitely
many resolutions. Each of these resolutions terminates in a limit
group $M(u,z,y,w,p,a)$ which is either $Comp$-rigid or $Comp$-solid. 
\begin{lem}
\label{lem:31} Assume that $M(u,z,y,w,p,a)$ is a $Comp$-rigid group
or a $Comp$-solid group. Then, $M$ admits a structure of a graded
closure of $Comp(z,y,w,p,a)$ over a $p$-graded limit group $Term(M)$.
That is, $M$ admits a structure of a $p$-graded tower over a $p$-graded
base group $Term(M)$, so that: 
\begin{enumerate}
\item up to adding roots to abelian groups, this structure is similar to
the one associated with $Comp(z,y,w,p,a)$, 
\item this structure is compatible to the one associated with $Comp(z,y,w,p,a)$ 
\item the canonical image of the $p$-graded base group $B$ of $Comp(z,y,w,p,a)$
lies entirely in $Term(M)$, and $B$ is mapped compatibly into $Term(M)$.
That is, the minimal subgraph of groups of the $p$-JSJ of $Term(M)$
containing the canonical image $\bar{B}$ of $B$, consists only from
parts that are taken from the $p$-JSJ of $B$. 
\end{enumerate}
\end{lem}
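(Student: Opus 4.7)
The proof proposal is to mimic the reasoning used for \lemref{19} and \lemref{20} in the preceding section, adapting it to the amalgamated setting in which the $G_i$ factors are present. First I would pick a sequence $\{(u_n,z_n,y_n,w_n,p_n,a)\}\in \mathfrak{T}_M$ and pass to a subsequence converging to a non-trivial isometric action of $M(u,z,y,w,p,a)$ on a pointed real tree. Such an action exists because, by the construction, the restricted sequence $\{(z_n,y_n,w_n,p_n,a)\}$ is a graded test sequence of $Comp(z,y,w,p,a)$ and the additional $u_n$-coordinates keep the stretching factors large relative to the hyperbolicity constants (cf.\ \thmref{4} together with the test-sequence conditions in \defref{8}). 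The resulting action induces an abelian decomposition $\Pi$ of $M$ relative to $Comp$.

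The first step is to rule out the possibility that $Comp$ sits inside a proper subgraph of $\Pi$. If it did, the $Comp$-JSJ of $M$ would carry a non-trivial shortening direction, and by the shortening argument over ascending sequences of random groups (\thmref{7}) the specializations in the sequence could eventually be shortened by modular automorphisms in the $Comp$-JSJ, contradicting the fact that sequences in $\mathfrak{T}_M$ are shortest in their modular classes (and, in the $Comp$-rigid case, contradicting triviality of the $Comp$-JSJ). Consequently the components of the action of $M$ are of the same forms as those appearing for $Comp$. Invoking the test-sequence properties of the restricted sequences, the analysis of \cite{DGII}, theorems 3.7 and 3.8, applies and identifies the top level of $\Pi$ either as a single IET surface matching the highest surface in the decomposition of $Comp(z,y,w,p,a)$ with point stabilizers, or as a discrete action producing an amalgam $V\ast_{Ab_1}Ab$ in which $Ab$ is obtained from the highest abelian group of $Comp$ by adjoining roots to non-peg generators.

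Iterating this uncovering procedure on each point stabilizer (as in the proofs of \lemref{19} and \cite{DGII} Proposition 1.8), we obtain a tower structure on $M$ whose upper levels match those of $Comp(z,y,w,p,a)$ up to roots added to abelian groups, compatibly with the decompositions of $Comp$. The procedure terminates at a point stabilizer $Term(M)$ that contains the canonical image $\bar{B}$ of $B$ together with a free factor; this gives properties (1) and (2) of the lemma statement. Condition (3) in the construction (non-collapsed specializations of the $G_i$) ensures that $M$ is not freely decomposable relative to $Comp$, so no standard free factors are split off during the iteration.

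The remaining and most delicate point is property (3), namely that $B$ is mapped compatibly into $Term(M)$. Here I would argue exactly as in \lemref{20}. Having constructed the (finitely many) limit groups $Term(M)$ so far, and noting that each is finitely presented, \thmref{1} lets us assume (enlarging the diagram $H_l$ if necessary) that every $\Gamma$-specialization of $Term(M)$ lifts to an $F_k$-specialization. Given any sequence in $\mathfrak{T}_M$, the induced $\Gamma_{l_n}$-specialization of $B$ is $\Gamma_{l_n}$-exceptional, and its lift to $F_k$ is an $F_k$-exceptional specialization of $B$ that factors through the resolution of $Comp(z,y,w,p,a)$ corresponding to $Term(M)$. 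Applying \lemref{18} to this lifted exceptional specialization forces the $p$-JSJ of $Term(M)$ to be compatible with the $p$-JSJ of $B$, which is precisely the content of (3). The main obstacle I anticipate is verifying that the test-sequence hypotheses survive all the way down the iteration so that the DGII tree-analysis goes through verbatim over the ascending sequence $\{\Gamma_{l_n}\}$; this should follow from the stretching condition (3) of \defref{8} combined with the linearity of hyperbolicity constants in $l$ guaranteed by \thmref{3}.
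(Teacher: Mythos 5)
Your argument is correct and is essentially the paper's intended proof: the paper disposes of this lemma by referring back to the earlier closure lemmas (the printed reference is a self-citation, evidently a typo for \lemref{19} and \lemref{20}), and your proposal reconstructs exactly that argument — the real-tree/shortening analysis for the tower structure, and the lift-plus-\lemref{18} step for compatibility of $B$ with $Term(M)$. Your added remark that the non-collapse condition prevents $Comp$-free decomposability matches the paper's own observation in the construction of $\mathfrak{T}_M$.
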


\begin{proof}
This was proved in \lemref{19}. 
\end{proof}
In ligh of the previous lemma, every resolution of the diagram $D$
that we have constructed above, is defined over a graded closure of
the graded completion $Comp(z,y,w,p,a)$. And according to the construction
of $D$, every sequence $\{(u_{n},z_{n},y_{n},w_{n},p_{n},a)\}$ of
non-collapsed specializations over an ascending sequence of groups
in the model, that restricts to a test sequence $\{(z_{n},y_{n},w_{n},p_{n},a)\}$
through $Comp(z,y,w,p,a)$ over that ascending sequence of groups
in the model, subfactors through at least one of the resolutions in
$D$.

We call this diagram \emph{the auxiliary $ExtraPS$ MR diagram over
random groups of the collection $WPRS$ w.r.t. the graded resolution
$Comp(z,y,w,p,a)$}. Our aim in this section is to show that the $ExtraPS$
MR diagram over $F_{k}$ of the collection $WPRS$ w.r.t. the graded
resolution $Comp(z,y,w,p,a)$ (which we explain in the next subsection),
encodes all the sequences $\{(u_{n},z_{n},y_{n},w_{n}p_{n},a)\}$
of non-collapsed $\Gamma$-specializations, that restrict to $\Gamma$-test
sequences $\{(z_{n},y_{n},w_{n},p_{n},a)\}$, for a random group $\Gamma$
(in probability $1$). The precise statement formulating this property,
will be given below (\thmref{35}). For that aim, we start with the
following lemma: 
\begin{lem}
Let $\Gamma_{l}$ be a group of level $l$ in the model. If $l$ is
large enough, then every sequence of non-collapsed $\Gamma_{l}$-specializations
$\{(u_{n},z_{n},y_{n},w_{n}p_{n},a)\}$, that restricts to a $\Gamma_{l}$-test
sequence $\{(z_{n},y_{n},w_{n},p_{n},a)\}$ of $Comp(z,y,w,p,a)$,
subfactors through the auxiliary $ExtraPS$ MR diagram over random
groups $D$. 
\end{lem}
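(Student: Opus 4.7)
The plan is to follow the contradiction-plus-diagonalization argument used in \lemref{21}, with the collection $\mathfrak{T}$ of non-collapsed sequences over ascending sequences playing the role of the analogous graded-formal collection. Assume the statement fails: then for arbitrarily large $l$ there is a group $\Gamma_l$ of level $l$ violating the conclusion, which yields an ascending sequence $\{\Gamma_{l_n}\}$ of groups in the model together with, for each $n$, a sequence $\{(u_m^n,z_m^n,y_m^n,w_m^n,p_m^n,a)\}_m$ of non-collapsed $\Gamma_{l_n}$-specializations whose restriction $\{(z_m^n,y_m^n,w_m^n,p_m^n,a)\}_m$ is a $\Gamma_{l_n}$-test sequence of $Comp(z,y,w,p,a)$, but none of whose members subfactors through $D$.

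Next I would apply \lemref{10} to pick, for each $n$, an index $m_n$ large enough that the diagonal $\{(z_{m_n}^n,y_{m_n}^n,w_{m_n}^n,p_{m_n}^n,a)\}_n$ is a graded test sequence of $Comp(z,y,w,p,a)$ over the ascending sequence $\{\Gamma_{l_n}\}$. The non-collapsed condition from \defref{12} and \defref{13} is a property of a single specialization in its ambient group, so it is preserved by extracting a diagonal: each $(u_{m_n}^n,z_{m_n}^n,y_{m_n}^n,w_{m_n}^n,p_{m_n}^n,a)$ is still non-collapsed in $\Gamma_{l_n}$. Hence the diagonal sequence $\{(u_{m_n}^n,z_{m_n}^n,y_{m_n}^n,w_{m_n}^n,p_{m_n}^n,a)\}_n$ lies in the very collection $\mathfrak{T}$ out of which $D$ was built.

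By the construction of $D$, every sequence in $\mathfrak{T}$ subfactors through at least one of its resolutions, so the diagonal sequence subfactors through $D$. But this means that for all sufficiently large $n$ the single $\Gamma_{l_n}$-specialization $(u_{m_n}^n,z_{m_n}^n,y_{m_n}^n,w_{m_n}^n,p_{m_n}^n,a)$ factors through some resolution of $D$ over the single group $\Gamma_{l_n}$, in direct contradiction to the choice of these $\Gamma_{l_n}$.

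The only substantive point to check is that ``non-collapsed'' really is a pointwise condition preserved by diagonalization; this reduces to the uniform exceptional-family bounds of \thmref{15} and \thmref{17}, which guarantee that the number of pairwise distinct $\Gamma_{l_n}$-exceptional families of each $G_i$ over the fixed parameter value is bounded independently of $n$ once $l_n$ is large, so no new collapsing can be introduced by passing to a subsequence. With this in hand the argument is a verbatim transcription of the proof of \lemref{21}.
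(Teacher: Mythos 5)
Your argument is correct and is essentially the paper's own proof: a contradiction hypothesis producing, for each $n$, a bad sequence over $\Gamma_{l_n}$, followed by the diagonalization of \lemref{10} to extract a sequence over the ascending family $\{\Gamma_{l_n}\}$ that lies in $\mathfrak{T}$ yet does not subfactor through $D$, contradicting the construction of $D$. Your closing remark that being non-collapsed is a pointwise property of each specialization in its own $\Gamma_{l_n}$ (so it survives passing to the diagonal) is the only point the paper leaves implicit, and it is handled correctly.
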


\begin{proof}
Assume by contradiction that the statement is false. Then, there will
be an ascending sequence $\{\Gamma_{l_{n}}\}$ of groups in the model,
so that for all $n$, there exists a sequence $\{(u_{m}^{n},z_{m}^{n},y_{m}^{n},w_{m}^{n},p_{m}^{n},a)\}_{m}$
of non-collapsed $\Gamma_{l_{n}}$-specializations, so that the restricted
sequence $\{(z_{m}^{n},y_{m}^{n},w_{m}^{n},p_{m}^{n},a)\}_{m}$ is
a $\Gamma_{l}$-test sequence of $Comp(z,y,w,p,a)$, and the specialization
$(u_{m}^{n},z_{m}^{n},y_{m}^{n},w_{m}^{n},p_{m}^{n},a)$ does not
factor through $D$ for all $m$.

By the definition of a test sequence over an ascending sequence of
groups in the model, there exists a sequence $(m_{n})_{n}$ of integers,
so that the sequence $\{(u_{m_{n}}^{n},z_{m_{n}}^{n},y_{m_{n}}^{n},w_{m_{n}}^{n},p_{m_{n}}^{n},a)\}_{n}$,
which is defined over the ascending sequence $\{\Gamma_{l_{n}}\}$
of groups in the model, consists of non-collapsed specializations,
restricts to a test sequence $\{(z_{m_{n}}^{n},y_{m_{n}}^{n},w_{m_{n}}^{n},p_{m_{n}}^{n},a)\}_{n}$
over the ascending sequence $\{\Gamma_{l_{n}}\}$, but it does not
subfactor through $D$. This contradicts the construction of $D$. 
\end{proof}

\subsection{The Construction of the $ExtraPS$ MR Diagram over Free Groups}

Now we perform the parallel construction over $F_{k}$. We consider
all the sequences of non-collapsed specializations $\{(u_{n},z_{n},y_{n},w_{n}p_{n},a)\}$
(over $F_{k}$), that restrict to test sequences $\{(z_{n},y_{n},w_{n},p_{n},a)\}$
of $Comp(z,y,w,p,a)$. We denote the collection of all such sequences
$\{(u_{n},z_{n},y_{n},w_{n}p_{n},a)\}$, by $\mathfrak{J}$. Applying
the shortening procedure iterative construction on the sequences in
$\mathfrak{J}$, we construct an MR diagram, in a similar way to the
construction that we explained above. The result of this construction
over $F_{k}$ is a finite diagram $K$, each of its resolutions $Res(u,z,y,w,p,a)$
terminates in a graded closure $M(u,z,y,w,p,a)$, and each of the
limit groups along $Res(u,z,y,w,p,a)$ is $Comp$-freely indecomposable,
and admits a $Comp$-JSJ that is compatible with the $wp$-JSJ of
$G_{i}$, for all $i=1,...,s$.

For a resolution $Res(u,z,y,w,p,a)$, we consider its terminal graded
closure $M(u,z,y,w,p,a)$, and denote its $p$-graded base group by
$Term(M)$. We want to extend the resolution $Res(u,z,y,w,p,a)$ into
finitely many resolutions, that terminate in a graded closure over
a $p$-rigid or $p$-solid limit base group. For that, we consider
the subcollection $\mathfrak{J}_{M}$ of $\mathfrak{J}$ consisting
of all the sequences $\{(u_{n},z_{n},y_{n},w_{n}p_{n},a)\}$ for which: 
\begin{enumerate}
\item If $Term(M)$ is either $p$-rigid or $p$-solid, then for all $n$,
the specialization $(u_{n},z_{n},y_{n},w_{n}p_{n},a)$ restricts to
a specialization of $Term(M)$ that factors through some flexible
quotient of $Term(M)$. 
\item Otherwise, for all $n$, the restriction of the specialization $(u_{n},z_{n},y_{n},w_{n}p_{n},a)$
into $Term(M)$ cannot be shortened by precomposing with a modular
automorphism induced by the $p$-JSJ of $Term(M)$. 
\end{enumerate}
The subcollection $\mathfrak{J}_{M}$ factors through finitely many
maximal limit groups $H$. We continue with $H$ by applying the shortening
procedure iterative construction that was presented in the beginning
of the construction. According to the descending chain condition of
limit groups, this construction must terminate after finitely many
steps. The result is a diagram $K_{M}$ with finitely many resolutions,
each of which terminates in a graded closure of $Comp(z,y,w,p,a)$. 
\begin{lem}
\label{lem:32} Let $(u_{0},z_{0},y_{0},w_{0},p_{0},a)$ be a non-collapsed
specialization that factors through $M$. Then, there exists a specialization
$(u_{0}',z_{0},y_{0},w_{0},p_{0},a)$ that factors through $M$ so
that: 
\begin{enumerate}
\item The specializations $(u_{0},z_{0},y_{0},w_{0},p_{0},a)$ and $(u_{0}',z_{0},y_{0},w_{0},p_{0},a)$
restricts to the same modular family of $Term(M)$ (and thus to the
same exceptional family of $B$). 
\item The two families of representatives that the specializations $(u_{0},z_{0},y_{0},w_{0},p_{0},a)$
and $(u_{0}',z_{0},y_{0},w_{0},p_{0},a)$ induce, represent the same
modular families of the group $G_{i}$, for all $i=1,...,s$. 
\item The specialization $(u_{0}',z_{0},y_{0},w_{0},p_{0},a)$ factors through
one of the resolutions of $K_{M}$. 
\end{enumerate}
\end{lem}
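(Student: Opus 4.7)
The plan follows the pattern of the proof of \lemref{23}, now applied to the construction of $K_M$ in this section. Denote by $t_0$ the restriction of $(u_0,z_0,y_0,w_0,p_0,a)$ to $Term(M)$, and let $M_{t_0}$ be the ungraded graded closure of $Comp(z,y,w,p_0,a)$ obtained by specializing $Term(M)$ according to $t_0$; the given specialization is then a specialization of $M_{t_0}$. Take a test sequence $\{(u_n,z_n,y_n,w_0,p_0,a)\}_n$ of $M_{t_0}$. Its restriction $\{(z_n,y_n,w_0,p_0,a)\}_n$ is a test sequence of $Comp(z,y,w,p_0,a)$, and by the compatibility between the $Comp$-JSJ of $M$ and the $wp$-JSJ of each $G_i$ guaranteed by \lemref{30}, every member of the sequence is non-collapsed. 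Hence the sequence lies in the collection $\mathfrak{J}$ used to construct $K$.

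The next step is to modify the sequence so that it lies in $\mathfrak{J}_M$. Modular automorphisms of $M$ induced by its $Comp$-JSJ fix the coordinates $(z_n,y_n,w_n,p_n,a)$ and change only $u_n$, so they shorten the specializations of $M$ without disturbing the $Comp$-data. If $Term(M)$ is neither $p$-rigid nor $p$-solid, additional modular automorphisms of $Term(M)$ induced by its $p$-JSJ shorten the restriction to $Term(M)$, realizing condition (2) of the definition of $\mathfrak{J}_M$. If $Term(M)$ is $p$-rigid or $p$-solid and $t_0$ factors through a flexible quotient, the restriction already realizes condition (1) of $\mathfrak{J}_M$; if $t_0$ is itself exceptional, then $M$ already has the desired terminal rigid/solid base, and one takes $u_0'=u_0$ with the trivial resolution $M\to M$ viewed as a resolution of $K_M$. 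In every case the modifications are modular automorphisms, so the modular family of $Term(M)$ is preserved (giving condition 1 of the lemma). By the compatibility in \lemref{30}, the same modifications act on each distinguished $G_i$-representative as a modular automorphism of $G_i$, preserving its modular family (giving condition 2).

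By the construction of $K_M$, the modified sequence subfactors through some resolution $GFRes(u,z,y,w,p,a)$ of $K_M$ terminating in a graded closure $M'$. From this factorization one extracts a single specialization $(u_0',z_0,y_0,w_0,p_0,a)$ of $M'$ at the original coordinates $(z_0,y_0,w_0,p_0,a)$: the induced specialization of $Term(M')$ agrees with the (modified) restriction of $t_0$, which remains in the same modular family as the original $t_0$, and the tower compatibility of \lemref{31} lets the $Comp(z,y,w,p,a)$-coordinates be pulled back consistently. This gives condition 3.

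The main obstacle will be to couple the two levels of modular shortening: the $Comp$-JSJ modifications of $M$ must not disturb the restriction to $Term(M)$ in a way incompatible with its $p$-JSJ, while the $p$-JSJ modifications of $Term(M)$ must extend to modular automorphisms of $M$ that still fix the $Comp$-coordinates. \lemref{30} and \lemref{31} provide the structural ingredients, but the case analysis on whether $Term(M)$ is $p$-rigid, $p$-solid, or neither, together with the flexible-versus-exceptional dichotomy for $t_0$, requires careful bookkeeping to confirm that $u_0'$ always exists with the asserted three properties.
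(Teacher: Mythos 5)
There is a genuine gap at the step where you pass from the test sequence to the given specialization. Your argument correctly shows that a test sequence $\{(u_{n},z_{n},y_{n},w_{n},p_{0},a)\}$ of $M_{t_{0}}$, being non-collapsed, lies in $\mathfrak{J}$ and (after modular shortening) subfactors through some resolution of $K_{M}$. But the lemma asks for a specialization $(u_{0}',z_{0},y_{0},w_{0},p_{0},a)$ at the \emph{original} coordinates $(z_{0},y_{0},w_{0},p_{0},a)$, which is an arbitrary specialization of $Comp(z,y,w,p_{0},a)$ and in general is not a member of any test sequence. The sentence ``from this factorization one extracts a single specialization at the original coordinates'' is precisely the assertion that needs proof, and it does not follow from the factorization of the test sequence elements $(u_{n},z_{n},y_{n},w_{n},p_{0},a)$, whose $z$-, $y$-, $w$-coordinates are different from $z_{0},y_{0},w_{0}$.

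The paper closes this gap with the formal-solution machinery. One encodes ``$(u,z,y,w,p_{0},a)$ factors through $K_{M}$'' as a collection $\Omega$ of positive Diophantine conditions and ``$(u,z,y,w,p_{0},a)$ is collapsed'' as a collection $\Sigma$, and builds the two formal MR diagrams $FMRD(\Omega)$ and $FMRD(\Sigma)$ over $M_{t_{0}}$. The test-sequence dichotomy you proved (every test sequence either collapses on a subsequence or subfactors through $K_{M}$) shows that the terminal ungraded closures of $FMRD(\Omega)\cup FMRD(\Sigma)$ form a \emph{covering closure} of $M_{t_{0}}$; covering closures cover \emph{all} specializations, not just test-sequence elements, so the given non-collapsed specialization must factor through a terminal closure of $FMRD(\Omega)$ (it cannot factor through $FMRD(\Sigma)$, since the formal solution there would testify that it is collapsed). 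Evaluating the formal solution of $\Omega$ in that closure at the given specialization produces the witness showing that some $(u_{0}',z_{0},y_{0},w_{0},p_{0},a)$ factors through $K_{M}$. Your handling of conditions 1 and 2 (preservation of modular families under the reduction to a canonical representative $t_{0}$, via \lemref{30}) is essentially right, but without the covering-closure step condition 3 is not established.
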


\begin{proof}
Let $t_{0}$ be a specialization of $Term(M)$, that maps $p$ to
some $p_{0}$ in $F_{k}$. Assume that $t_{0}$ is short when $Term(M)$
is neither $p$-rigid, nor $p$-solid, and that $t_{0}$ factors through
a flexible quotient of $Term(M)$ otherwise. Let $M_{t_{0}}$ be the
induced ungraded closure.

Let $\Omega$ be a collection of (positive) Diophantine conditions
$\Omega_{1}(q,u,z,y,w,p_{0},a),...,\Omega_{r}(q,u,z,y,w,p_{0},a)$
stating that the specialization $(u,z,y,w,p_{0},a)$ of $M_{t_{0}}$
factors through one of the resolutions of $K_{M}$. And let $\Sigma$
be a collection of (positive) Diophantine conditions $\Sigma_{1}(c,u,z,y,w,p_{0},a),...,\Sigma_{t}(c,u,z,y,w,p_{0},a)$
stating that the specialization $(u,z,y,w,p_{0},a)$ of $M_{t_{0}}$
is collapsed.

We construct two formal MR diagrams $FMRD(\Omega)$ and $FMRD(\Sigma)$
over the ungraded closure $M_{t_{0}}$, the first of the collection
$\Omega$, and the second for the collection $\Sigma$.

Let $(u_{n},z_{n},y_{n},w_{n},p_{0},a)$ be a test sequence of $M_{t_{0}}$.
Then, according to the construction of $K_{M}$, either $(u_{n},z_{n},y_{n},w_{n},p_{0},a)$
contains a subsequence that is collapsed, or it subfactors through
one of the resolutions of $K_{M}$. Hence, the collection of terminal
ungraded closures in the two formal MR diagrams $FMRD(\Omega)$ and
$FMRD(\Sigma)$, forms a covering closure for $M_{t_{0}}$.

Assume that a specialization $(u_{0},z_{0},y_{0},w_{0},p_{0},a)$
of $M_{t_{0}}$ is non-collapsed. Then, it cannot factor through $FMRD(\Sigma)$.
Hence, it must factor through one of the ungraded closures in $FMRD(\Omega)$.
Due to the existence of a formal solution in that ungraded closure,
we conclude that $(u_{0},z_{0},y_{0},w_{0},p_{0},a)$ factors through
one of the resolutions of $K_{M}$.

Finally, if $(u_{0},z_{0},y_{0},w_{0},p_{0},a)$ is a specialization
of $M_{t_{0}'}$, for some specialization $t_{0}'$ of $Term(M)$
that represents the same modular family of $t_{0}$, then there exists
some $u_{0}''$ so that the specialization $(u_{0}'',z_{0},y_{0},w_{0},p_{0},a)$
factors through $M_{t_{0}}$, and the two families of representatives
that the specializations $(u_{0},z_{0},y_{0},w_{0},p_{0},a)$ and
$(u_{0}'',z_{0},y_{0},w_{0},p_{0},a)$ induce, represent the same
modular families of the group $G_{i}$, for all $i=1,...,s$. 
\end{proof}
We continue by extending each of the terminal graded closures in the
diagram $K_{M}$, with a new diagram, in a similar way that $K_{M}$
was constructed. We continue the construction iteratively. According
to the descending chain condition of limit groups, this construction
must terminate after finitely many steps. The result is an MR diagram,
that extends the diagram $K$, and admits finitely many resolutions,
each of which terminates in a graded formal closure $M'(u,z,y,w,p,a)$,
whose terminal group is either a $p$-rigid or a $p$-solid limit
group.

We call the obtained diagram \emph{the $ExtraPS$ MR diagram of the
collection $WPRS$ over the graded resolution $Comp(z,y,w,p,a)$},
and we denote it briefly by $ExtraPSMRD$.

\subsection{General Properties of the $ExtraPS$ MR Diagram}
\begin{thm}
\label{thm:33} Let $WPRS$ be a given collection of $wp$-rigid and
$wp$-solid limit groups $G_{1}(x_{1},w,p,a),,...,G_{s}(x_{s},w,p,a)$,
and let $Comp(z,y,w,p,a)$ be a $p$-graded completion whose base
group is denoted by $B$ which is a $p$-rigid or a $p$-solid limit
group. Assume that the $wp$-rigid or $wp$-solid limit group $G_{i}$
admits $e_{i}\in\{0,...,M(G_{i})\}$ mappings into $Comp(z,y,w,p,a)$,
for all $i=1,...,s$. Assume that there exists a non-collapsed specialization
$(y_{0},w_{0},p_{0},a)$ that factors through $Comp(z,y,w,p,a)$.

Let $ExtraPSMRD$ be the $ExtraPS$ MR diagram of the collection $WPRS$
w.r.t. the resolution $Comp(z,y,w,p,a)$. Let $ExtraPSGCl_{1}(u,z,y,w,p,a),...,ExtraPSGCl_{m}(u,z,y,w,p,a)$
be the terminal graded closures of the various resolutions of $ExtraPSMRD$.

For all $n$, let $(u_{n},z_{n},y_{n},w_{n},p_{n},a)$ be a non-collapsed
specialization of 
\[
\stackrel[i=1]{s}{\ast_{\langle w,p,a\rangle}}\stackrel[j=1]{f_{i}}{\ast_{\langle w,p,a\rangle}}G_{i}\ast_{\langle w,p,a\rangle}Comp(z,y,w,p,a)\,,
\]
where $f_{i}$ is an integer in $f_{i}\in\{0,1\}$, and assume that
the restricted sequence $\{(z_{n},y_{n},w_{n},p_{n},a)\}$ is a test
sequence of $Comp(z,y,w,p,a)$.

Then, for some $k=1,...,m$, and for all $n$ in a subsequence of
the integers, there exists a non-collapsed specialization $(u'_{n},z_{n},y_{n},w_{n},p_{n},a)$,
so that: 
\begin{enumerate}
\item The specializations $(u_{n},z_{n},y_{n},w_{n},p_{n},a)$ and $(u'_{n},z_{n},y_{n},w_{n},p_{n},a)$
restrict to solutions of $B$ that represent the same exceptional
family. 
\item The two families of representatives that $(u_{n},z_{n},y_{n},w_{n},p_{n},a)$
and $(u'_{n},z_{n},y_{n},w_{n},p_{n},a)$ induce, represent the same
exceptional families of the group $G_{i}$, for all $i=1,...,s$. 
\item The specialization $(u'_{n},z_{n},y_{n},w_{n},p_{n},a)$ factors through
the graded closure $ExtraPSGCl_{k}(u,z,y,w,p,a)$, so that the restriction
of $(u'_{n},z_{n},y_{n},w_{n},p_{n},a)$ to $Term(ExtraPSGCl_{k})$
is an exceptional solution for it. 
\end{enumerate}
\end{thm}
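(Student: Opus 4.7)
The plan is to mimic the iterative construction of $ExtraPSMRD$ by applying it directly to the given sequence, passing to subsequences and using the descending chain condition on limit groups to guarantee termination.

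Since $\{(z_n,y_n,w_n,p_n,a)\}$ is a test sequence of $Comp(z,y,w,p,a)$ and each $(u_n,z_n,y_n,w_n,p_n,a)$ is a non-collapsed specialization of the amalgamated product appearing in the construction, the sequence lies in the collection $\mathfrak{J}$ used to construct $ExtraPSMRD$ over $F_k$. The collection $\mathfrak{J}$ factors through finitely many maximal limit groups $M(u,z,y,w,p,a)$; pigeonhole on $n$ yields a subsequence along which all specializations factor through a single such $M$. By \lemref{30}, every mapping of $G_i$ into $M$ is compatible with the $wp$-JSJ of $G_i$, so any modular automorphism of $M$ induced by its $Comp$-JSJ restricts on each $G_i$ to an automorphism in its $wp$-JSJ, and therefore preserves the strictly solid family of every representative of $G_i$ (\defref{13}).

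If $M$ is neither $Comp$-rigid nor $Comp$-solid, then by the shortening argument we may, along a further subsequence, precompose each $u_n$ with a modular automorphism of $M$ induced by its $Comp$-JSJ to obtain a strictly shorter specialization $u_n^{(1)}$ that factors through a proper limit quotient $M^{(1)}$ of $M$. By the previous paragraph the modified sequence remains non-collapsed and induces exactly the same exceptional families of $B$ (the restriction to $Comp$ is untouched) and of each $G_i$. Iterating and invoking the descending chain condition on limit groups, after finitely many steps the process terminates at a $Comp$-rigid or $Comp$-solid graded closure $M^{*}$ of $Comp(z,y,w,p,a)$ (\lemref{31}), together with a subsequence of modified specializations factoring through $M^{*}$.

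It remains to ensure that the restriction to $Term(M^{*})$ is itself an exceptional solution, which is achieved by applying \lemref{32} iteratively at the base group. If $Term(M^{*})$ is neither $p$-rigid nor $p$-solid, shortening via its $p$-JSJ moves the specialization to a proper quotient; if $Term(M^{*})$ is $p$-rigid or $p$-solid but the restriction is not exceptional, it factors through a flexible quotient and hence through one of the resolutions of the extension $K_{M^{*}}$. In each case \lemref{32} furnishes a replacement $u_n'$ so that $(u_n',z_n,y_n,w_n,p_n,a)$ is non-collapsed, restricts to the same exceptional family of $B$, induces the same representatives of exceptional families of each $G_i$, and factors through a proper graded closure. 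The descending chain condition forces this extension procedure to terminate at one of the finitely many terminal closures $ExtraPSGCl_k(u,z,y,w,p,a)$ of $ExtraPSMRD$; a final pigeonhole selects a single index $k$ valid along a subsequence.

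The main obstacle will be maintaining the non-collapsed property through every substitution, that is, ensuring that no two of the $f_i+e_i$ induced specializations of any $G_i$ are merged into the same exceptional family when $u_n$ is replaced by $u_n'$. This is precisely where the compatibility statements of \lemref{30} (for the $Comp$-JSJ of $M$) and \lemref{31}(3) (for the $p$-JSJ of $Term(M^{*})$) are used: each induced modular automorphism restricts on $G_i$ to an automorphism belonging to the $wp$-JSJ of $G_i$, and such automorphisms preserve each exceptional family of $G_i$ by \defref{13}.
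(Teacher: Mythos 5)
Your proposal is correct and takes essentially the same route as the paper: the paper's proof simply observes that, by construction, the sequence subfactors through some resolution of $ExtraPSMRD$ terminating in a graded closure $M$, and then cites \lemref{30} and \lemref{31} (with \lemref{32} implicitly handling the passage to an exceptional solution of the base group). Your write-up unpacks that citation — the membership in $\mathfrak{J}$, the pigeonhole onto a single maximal limit group, the shortening/DCC iteration, and the preservation of the non-collapsed property and exceptional families under $Comp$-JSJ modular automorphisms — all consistently with the paper's construction.
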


\begin{proof}
By the construction of $ExtraPSMRD$, the sequence $(u_{n},z_{n},y_{n},w_{n},p_{n},a)$
subfactors through some resolution of $ExtraPSMRD$, defined over
some graded closure $M(u,z,y,w,p,a)$ in the diagram $ExtraPSMRD$.
The remaining of the statement follows from \lemref{30} and \lemref{31}. 
\end{proof}
The $ExtraPS$ MR diagram encodes all the non-collapsed specializations
of any graded closure of $Comp(z,y,w,p,a)$, in the following sense: 
\begin{lem}
\label{lem:34} Let $GFCl(b,z,y,w,p,a)$ be a graded closure of $Comp(z,y,w,p,a)$,
and denote the $p$-graded base limit group of $GFCl(b,z,y,w,p,a)$
by $Term(GFCl)$. Assume that the amalgamated free product 
\[
\stackrel[i=1]{s}{\ast_{\langle w,p,a\rangle}}\stackrel[j=1]{f_{i}}{\ast_{\langle w,p,a\rangle}}G_{i}\ast_{\langle w,p,a\rangle}Comp(z,y,w,p,a)\,,
\]
where $f_{i}$ is an integer in $f_{i}\in\{0,1\}$, is mapped into
$GFCl(b,z,y,w,p,a)$, so that the subgroup $Comp(z,y,w,p,a)$ is mapped
canonically. Let $u$ be the images of the variables $x_{1},...,x_{s}$
of the given $wp$-rigid and $wp$-solid groups $G_{1}(x_{1},w,p,a),,...,G_{s}(x_{s},w,p,a)$.

Then, for every specialization of $GFCl(b,z,y,w,p,a)$, that restricts
to an exceptional specialization of $B$, if the induced specialization
$(u_{0},z_{0},y_{0},w_{0},p_{0},a)$ of the tuple $(u,z,y,w,p,a)$
is non-collapsed, then $(u_{0},z_{0},y_{0},w_{0},p_{0},a)$ factors
through one of the resolutions in the $ExtraPS$ MR diagram $ExtrPSMRD$. 
\end{lem}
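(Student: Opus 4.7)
The plan is to adapt the proof of \lemref{26} to the $ExtraPS$ setting, with extra bookkeeping for the non-collapsed condition. Enumerate the resolutions in $ExtraPSMRD$ as $GFRes_{1}(v_{1},u,z,y,w,p,a),\ldots,GFRes_{r}(v_{r},u,z,y,w,p,a)$, with completions $Comp(GFRes_{i})$ and finite defining relations $\Omega_{i}(v_{i},u,z,y,w,p,a)$. The given specialization of $GFCl(b,z,y,w,p,a)$ restricts to a specialization $t_{0}$ of $Term(GFCl)$ that in turn restricts to an exceptional specialization $h_{p_{0}}$ of $B$. Specializing $Term(GFCl)$ according to $t_{0}$ produces an ungraded closure, which I denote $GFCl(u,b,z,y,w,p_{0},a)$, emphasizing the image of $x_{1},\ldots,x_{s}$ as $u$. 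The task is to show that every specialization of this ungraded closure whose induced $(u_{0},z_{0},y_{0},w_{0},p_{0},a)$ is non-collapsed factors through some $GFRes_{i}$.

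The next step is to build a formal MR diagram $D$ over $GFCl(u,b,z,y,w,p_{0},a)$, parallel to the one in the proof of \lemref{26}. The diagram $D$ is constructed from sequences $\{(v_{n},u_{n},b_{n},z_{n},y_{n},w_{n},p_{0},a)\}$ whose restriction $\{(b_{n},z_{n},y_{n},w_{n},p_{0},a)\}$ is a test sequence of $GFCl(u,b,z,y,w,p_{0},a)$ and which satisfy one of two kinds of positive Diophantine conditions: the systems $\Omega_{i}$ above, or finitely many auxiliary systems $\Sigma_{j}(c,u,z,y,w,p_{0},a)$ that testify that the induced specialization is \emph{collapsed}, that is, that two among the $f_{i}+e_{i}$ representatives of purported exceptional families of some $G_{i}$ actually lie in the same exceptional family. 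The existence of finitely many such $\Sigma_{j}$ is guaranteed by the uniform bounds from \thmref{14} and \thmref{16}, using the two natural embeddings $\iota_{1},\iota_{2}\colon Sld\hookrightarrow Comp_{T}$ from \defref{13} (and the rigid analogue) to express ``belonging to the same family'' as a positive system.

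I then argue that the terminal formal closures in $D$ form a covering closure for $GFCl(u,b,z,y,w,p_{0},a)$. Indeed, take any test sequence of $GFCl(u,b,z,y,w,p_{0},a)$. Either some subsequence is eventually collapsed, in which case it subfactors through a closure associated with one of the $\Sigma_{j}$; or it remains eventually non-collapsed, in which case the restricted sequence lies in the collection $\mathfrak{J}$ used to construct $ExtraPSMRD$ over $F_{k}$, so by the construction of $ExtraPSMRD$ it subfactors through some $GFRes_{i}$, hence through the closure associated with $\Omega_{i}$. Because the terminal closures of $D$ cover all test sequences, they cover the whole ungraded closure $GFCl(u,b,z,y,w,p_{0},a)$; in particular our given specialization factors through one of these terminal closures. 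Non-collapsedness rules out the $\Sigma_{j}$-closures, so our specialization factors through an $\Omega_{i}$-closure, and the formal solution $v_{i}$ that it carries exhibits it as factoring through $GFRes_{i}$ in $ExtraPSMRD$.

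The main obstacle I expect is the finite encoding of ``collapse'' by the systems $\Sigma_{j}$. One must combine the global bounds of \thmref{14} and \thmref{16} (so that only boundedly many distinct exceptional families can arise for a fixed value of the parameters) with the solid-family equivalence relation of \defref{13} to produce, in advance, finitely many positive systems whose disjunction captures exactly the collapse phenomenon. Once this combinatorial step is in place, the covering-closure argument runs in verbatim analogy with that of \lemref{26}, and the lemma follows.
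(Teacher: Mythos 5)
Your proposal is correct and follows essentially the same route as the paper's proof: encode both ``factors through $ExtraPSMRD$'' and ``is collapsed'' as positive Diophantine conditions, build formal MR diagrams of these systems over the induced ungraded closure, show their terminal closures form a covering closure via the test-sequence dichotomy, and use the formal solution in the $\Omega$-closure to conclude. The only cosmetic difference is that you merge the two families of conditions into one diagram $D$ where the paper keeps two diagrams $FMRD(\Omega)$ and $FMRD(\Sigma)$.
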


\begin{proof}
Let $g_{p_{0}}$ be a specialization of $Term(GFCl)$, that maps $p$
to some $p_{0}\in F_{k}$, and restricts to an exceptional specialization
of $B$. The solution $g_{p_{0}}$ induces an ungraded closure $GFCl_{g_{p_{0}}}$
that is covered by the given graded closure $GFCl(b,z,y,w,p,a)$.
Note that $GFCl_{g_{p_{0}}}$ is generated by the (image of the) elements
$b,z,y,w,a$. In particular the element $u\in GFCl_{g_{p_{0}}}$ is
generated by these elements, $u=u(b,z,y,w,a)$.

Let $\Omega$ be a collection of (positive) Diophantine conditions

\[
\Omega_{1}(q,u(b,z,y,w,a),z,y,w,p_{0},a),...,\Omega_{r}(q,u(b,z,y,w,a),z,y,w,p_{0},a)
\]
stating that the specialization $(u(b,z,y,w,a),z,y,w,p_{0},a)$ factors
through the $ExtraPS$ MR diagram $ExtraPSMRD$. And let $\Sigma$
be a collection of (positive) Diophantine conditions

\[
\Sigma_{1}(c,u(b,z,y,w,a),z,y,w,p_{0},a),...,\Sigma_{t}(c,u(b,z,y,w,a),z,y,w,p_{0},a)
\]
stating that the specialization $(u(b,z,y,w,a),z,y,w,p_{0},a)$ is
collapsed.

We construct two formal MR diagrams $FMRD(\Omega)$ and $FMRD(\Sigma)$
over the ungraded closure $GFCl_{g_{p_{0}}}$, the first of the collection
$\Omega$, and the second for the collection $\Sigma$.

Let $\{(b_{n},z_{n},y_{n},w_{n},p_{0},a)\}$ be a test sequence of
$GFCl_{g_{p_{0}}}$, and consider the induced specializations $u_{n}$
of the element $u\in GFCl_{g_{p_{0}}}$. If for all $n$, the specialization
$(u_{n},z_{n},y_{n},w_{n},p_{0},a)$ is non-collapsed, then, according
to the definition of the $ExtraPS$ MR diagram $ExtrPSMRD$, the sequence
$\{(u_{n},z_{n},y_{n},w_{n},p_{0},a)\}$ subfactors through $ExtrPSMRD$.
Hence, in this case, the sequence $\{(b_{n},z_{n},y_{n},w_{n},p_{0},a)\}$
subfactors through $FMRD(\Omega)$.

If for all $n$, the specialization $(u_{n},z_{n},y_{n},w_{n},p_{0},a)$
is collapsed, then the sequence $\{(b_{n},z_{n},y_{n},w_{n},p_{0},a)\}$
subfactors through $FMRD(\Sigma)$.

We deduce that the collection of formal closures of $GFCl_{g_{p_{0}}}$,
that are terminal closures in the diagrams $FMRD(\Omega)$ and $FMRD(\Sigma)$,
form a covering closure for $GFCl_{g_{p_{0}}}$.

Finally, let $(b_{0},z_{0},y_{0},w_{0},p_{0},a)$ be a specialization
of $GFCl_{g_{p_{0}}}$, and assume that the induced specialization
$(u_{0},z_{0},y_{0},w_{0},p_{0},a)$ is non-collapsed. Then, due to
the formal solutions in the terminal closures of $FMRD(\Sigma)$,
the specialization $(b_{0},z_{0},y_{0},w_{0},p_{0},a)$ cannot factor
through a terminal closure of $FMRD(\Sigma)$. Hence, $(b_{0},z_{0},y_{0},w_{0},p_{0},a)$
factors through a terminal closure of $FMRD(\Omega)$. The formal
solution in that closure, testifies that the specialization $(u_{0},z_{0},y_{0},w_{0},p_{0},a)$
factors through the $ExtraPS$ MR diagram $ExtrPSMRD$. As required. 
\end{proof}
\begin{thm}
\label{thm:35} Let $WPRS$ be a given collection of $wp$-rigid and
$wp$-solid limit groups $G_{1}(x_{1},w,p,a),,...,G_{s}(x_{s},w,p,a)$,
and let $Comp(z,y,w,p,a)$ be a $p$-graded completion whose base
group is denoted by $B$ which is a $p$-rigid or a $p$-solid limit
group. Assume that the $wp$-rigid or $wp$-solid limit group $G_{i}$
admits $e_{i}\in\{0,...,M(G_{i})\}$ mappings into $Comp(z,y,w,p,a)$,
for all $i=1,...,s$. 

Let $\Gamma_{l}$ be a group of level $l$ in the model. If $l$ is
large enough, then the following properties are satisfied in $\Gamma_{l}$.

Assume that there exists a non-collapsed $\Gamma_{l}$-specialization
$(y_{0},w_{0},p_{0},a)$ that factors through $Comp(z,y,w,p,a)$.
Then there exists a non-collapsed $F_{k}$-specialization $(\tilde{y}_{0},\tilde{w}_{0},\tilde{p}_{0},a)$
that factors through the resolution $Comp(z,y,w,p,a)$.

Moreover, let $ExtraPSMRD$ be the $ExtraPS$ MR diagram of the collection
$WPRS$ w.r.t. the resolution $Comp(z,y,w,p,a)$. Let $ExtraPSGCl_{1}(u,z,y,w,p,a),...,ExtraPSGCl_{m}(u,z,y,w,p,a)$
be the terminal graded closures of the various resolutions of $ExtraPSMRD$.

For all $n$, let $(u_{n},z_{n},y_{n},w_{n},p_{n},a)$ be a non-collapsed
$\Gamma_{l}$-specialization of 
\[
\stackrel[i=1]{s}{\ast_{\langle w,p,a\rangle}}\stackrel[j=1]{f_{i}}{\ast_{\langle w,p,a\rangle}}G_{i}\ast_{\langle w,p,a\rangle}Comp(z,y,w,p,a)\,,
\]
where $f_{i}$ is an integer in $f_{i}\in\{0,1\}$, so that the restricted
sequence $(z_{n},y_{n},w_{n},p_{n},a)$ is a $\Gamma_{l}$-test sequence
of $Comp(z,y,w,p,a)$.

Then, for some $k=1,...,m$, and for all $n$ in a subsequence of
the integers, there exists a non-collapsed $\Gamma_{l}$-specialization
$(u'_{n},z_{n},y_{n},w_{n},p_{n},a)$, so that: 
\begin{enumerate}
\item The $\Gamma_{l}$-specializations $(u_{n},z_{n},y_{n},w_{n},p_{n},a)$
and $(u'_{n},z_{n},y_{n},w_{n},p_{n},a)$ restrict to solutions of
$B$ that represent the same $\Gamma_{l}$-exceptional family. 
\item The two families of representatives that $(u_{n},z_{n},y_{n},w_{n},p_{n},a)$
and $(u'_{n},z_{n},y_{n},w_{n},p_{n},a)$ induce, represent the same
$\Gamma_{l}$-modular families of the group $G_{i}$, for all $i=1,...,s$. 
\item The $\Gamma_{l}$-specialization $(u'_{n},z_{n},y_{n},w_{n},p_{n},a)$
factors through the graded closure $ExtraPSGCl_{k}(u,z,y,w,p,a)$,
so that the restriction of $(u'_{n},z_{n},y_{n},w_{n},p_{n},a)$ to
$Term(ExtraPSGCl_{k})$ is a $\Gamma_{l}$-exceptional solution for
it. 
\end{enumerate}
\end{thm}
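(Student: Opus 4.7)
The plan is to mirror the proof of \thmref{27}, but with the $ExtraPS$ machinery: the auxiliary $ExtraPS$ MR diagram over random groups $D$ in place of the auxiliary graded formal MR diagram, and \lemref{34} in place of \lemref{26}. The first (preliminary) assertion reduces to encoding ``factoring through $Res(y,w,p,a)$ as a non-collapsed specialization'' as a finite system of equations with auxiliary variables for the modular automorphisms in $Res$ and for exceptional solutions of the $G_i$, and then invoking \thmref{1} to lift the $\Gamma_l$-data to $F_k$. Non-collapsedness of the lift follows exactly as in the proofs of \thmref{15} and \thmref{17}: any two lifts that happened to lie in a common $F_k$-exceptional family would project, under the quotient map $F_k\twoheadrightarrow\Gamma_l$, to representatives of a common $\Gamma_l$-exceptional family, contradicting the hypothesis.

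For the main assertion, the first step is to apply the (unnumbered) lemma that closes the construction of $D$ and pass to a subsequence so that $\{(u_n,z_n,y_n,w_n,p_n,a)\}$ subfactors through a single resolution of $D$ with terminal closure $M(u,z,y,w,p,a)$; by \lemref{31}, $M$ is a graded closure of $Comp(z,y,w,p,a)$ over $F_k$ with $p$-graded base $Term(M)$. For each $n$, the induced $\Gamma_l$-specialization of $M$ is lifted via \thmref{1} to an $F_k$-specialization $(\tilde u_n,\tilde z_n,\tilde y_n,\tilde w_n,\tilde p_n,a)$ of $M$; this lift restricts to an $F_k$-exceptional solution of $B$ and remains non-collapsed over $F_k$ by the same projection argument. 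Applying \lemref{34} to $M$, the induced $F_k$-specialization of $(u,z,y,w,p,a)$ factors through some resolution of $ExtraPSMRD$ with terminal closure $ExtraPSGCl_{k(n)}$; pushing this factorization back to $\Gamma_l$ produces $(u'_n,z_n,y_n,w_n,p_n,a)$, and a pigeonhole on $k(n)$ fixes a single $k$ along a subsequence.

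Items (1) and (2) of the conclusion will then follow from the fact that the projection from $F_k$ back to $\Gamma_l$ leaves the specialization of $Comp$ unchanged, so that the restriction to $B$ is the same for $(u_n,\ldots)$ and $(u'_n,\ldots)$, while the compatibility of the $Comp$-JSJ of $M$ with the $wp$-JSJ of each $G_i$ supplied by \lemref{30}, combined with the modular-family preservation built into the $F_k$-construction of $ExtraPSMRD$ through \lemref{32}, forces the modular families induced on each $G_i$ to be preserved. For item (3), if the restriction of $(u'_n,\ldots)$ to $Term(ExtraPSGCl_k)$ is not itself $\Gamma_l$-exceptional, the plan is to reuse the descending-chain device from the last paragraph of the proof of \thmref{27}: lift the restriction to a non-$F_k$-exceptional specialization of $Term(ExtraPSGCl_k)$, obtain a replacement factorization through a proper sub-resolution of $ExtraPSMRD$, and iterate; finiteness of $ExtraPSMRD$ together with the descending chain condition on limit groups terminates the process.

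The main obstacle I anticipate is the tracking of modular families on the $G_i$ across the lift-and-project step, since $u'_n$ genuinely differs from $u_n$ and the $ExtraPS$ conclusion demands agreement at the modular, not merely exceptional, level. This is precisely where \lemref{30} and \lemref{32} earn their keep: the graded closure $M$ furnished by the auxiliary construction is built so that its $Comp$-JSJ is compatible with the $wp$-JSJ of each $G_i$, so that modular motions inside $M$ translate faithfully to modular motions inside each $G_i$; combined with the uniform bounds of \thmref{15} and \thmref{17}, which in probability $1$ provide a bijective correspondence between $\Gamma_l$-exceptional families and their $F_k$-exceptional lifts, this makes the bookkeeping of modular families tractable.
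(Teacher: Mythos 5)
Your proposal follows essentially the same route as the paper's proof: the first assertion is handled by observing that any lift of a non-collapsed $\Gamma_l$-specialization is non-collapsed over $F_k$, and the main assertion proceeds by subfactoring the sequence through the auxiliary $ExtraPS$ MR diagram $D$, lifting the induced specialization of the terminal graded closure $M$ to $F_k$, invoking \lemref{34} to factor through $ExtraPSMRD$, projecting back to $\Gamma_l$, and repairing a non-exceptional restriction to $Term(ExtraPSGCl_k)$ by the same lift-and-replace device used at the end of the proof of \thmref{27}. The approach and the key lemmas invoked match the paper's argument.
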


\begin{proof}
The first part of the claim follows immediately from that if $(\tilde{y}_{0},\tilde{w}_{0},\tilde{p}_{0},a)$
is any lift of $(y_{0},w_{0},p_{0},a)$ that factors through $Comp(z,y,w,p,a)$,
then, $(\tilde{y}_{0},\tilde{w}_{0},\tilde{p}_{0},a)$ is non-collapsed
(over $F_{k}$).

Denote by $D$ the auxiliary $ExtraPS$ MR diagram of the collection
$WPRS$ w.r.t. the resolution $Comp(z,y,w,p,a)$. Let $(u_{0},z_{0},y_{0},w_{0},p_{0},a)$
be a $\Gamma_{l}$-specialization of $Comp(z,y,w,p,a)$, that factors
through a resolution $GRes(u,z,y,w,p,a)$ in $D$. Denote by $h_{p_{0}}$
the $\Gamma_{l}$-exceptional solution of $B$ induced by $(u_{0},z_{0},y_{0},w_{0},p_{0},a)$.
According to the construction of $D$, the resolution $GRes(u,z,y,w,p,a)$
terminates in a graded closure $M(u,z,y,w,p,a)$. Moreover, there
exists an element $u\theta\in M$, that factors through the resolution
$GFRes(u,z,y,w,p,a)$ as the image of $u$, and a $\Gamma_{l}$-specialization
$H_{p_{0}}=(u_{0}',z_{0},y_{0},w_{0},p_{0},a)$ of $M$, so that $u\theta$
is mapped to $u_{0}$. Note that, according to the construction of
$D$, $H_{p_{0}}$ restricts to $h_{p_{0}}$ on $B$.

In particular, the amalgamated free product 
\[
\stackrel[i=1]{s}{\ast_{\langle w,p,a\rangle}}\stackrel[j=1]{f_{i}}{\ast_{\langle w,p,a\rangle}}G_{i}\ast_{\langle w,p,a\rangle}Comp(z,y,w,p,a)\,,
\]
 where $f_{i}\in\{0,1\}$ are the corresponding integers, is mapped
into the graded closure $M$, so that the subgroup $Comp(z,y,w,p,a)$
is mapped canonically, and the elements $u\theta$ are the images
of the variables $x_{1},...,x_{s}$ of the given groups $G_{1}(x_{1},w,p,a),,...,G_{s}(x_{s},w,p,a)$.

Thus, according to \lemref{34}, for every specialization of $M$
in $F_{k}$, that restricts to an exceptional solution of $B$, the
induced specialization of $(u\theta,z,y,w,p,a)$ factors through $ExtraPSMRD$.

We lift $H_{p_{0}}$ to a specialization $\tilde{H}_{p_{0}}$ of $M$
in $F_{k}$. Denote by $\tilde{h}_{p_{0}}$ the restriction of $\tilde{H}_{p_{0}}$
to $B$. In particular, the solution $\tilde{h}_{p_{0}}$ lifts $h_{p_{0}}$,
and the specialization $(\tilde{u}_{0},\tilde{z}_{0},\tilde{y}_{0},\tilde{w}_{0},\tilde{p}_{0},a)$
in $F_{k}$ that $\tilde{H}_{p_{0}}$ induces for the tuple $(u\theta,z,y,w,p,a)$,
lifts the $\Gamma_{l}$-specialization $(u_{0},z_{0},y_{0},w_{0},p_{0},a)$.
Since $h_{p_{0}}$ is a $\Gamma_{l}$-exceptional solution of $B$,
then $\tilde{h}_{p_{0}}$ must be an exceptional solution of $B$
over $F_{k}$. Thus, the induced specialization $(\tilde{u}_{0},\tilde{z}_{0},\tilde{y}_{0},\tilde{w}_{0},\tilde{p}_{0},a)$
factors through $ExtraPSMRD$. This implies immediately that the original
$\Gamma_{l}$-specialization $(u_{0},z_{0},y_{0},w_{0},p_{0},a)$
factors through $ExtraPSMRD$. Moreover, if $(\tilde{u}_{0},\tilde{z}_{0},\tilde{y}_{0},\tilde{w}_{0},\tilde{p}_{0},a)$
factors through the resolution $ExtraPSGRes(u,z,y,w,p,a)$ of $ExtraPSMRD$
whose terminal closure is\\
 $ExtraPSGCl(u,z,y,w,p,a)$, then $(u_{0},z_{0},y_{0},w_{0},p_{0},a)$
factors through the same resolution, and the induced $\Gamma_{l}$-specialization
$g_{p_{0}}$ of the base $p$-rigid or $p$-solid group $Term(ExtraPSGCl)$
of $ExtraPSGCl(u,z,y,w,p,a)$ restricts to a $\Gamma_{l}$-exceptional
specialization of $B$ that belongs to the same $\Gamma_{l}$-exceptional
family of $h_{p_{0}}$.

If the $\Gamma_{l}$-specialization $g_{p_{0}}$ is not itself a $\Gamma_{l}$-exceptional
specialization of the base $p$-rigid or $p$-solid group $Term(ExtraPSGCl)$
of $ExtraPSGCl(u,z,y,w,p,a)$, then we lift it to a non-exceptional
specialization of $Term(ExtraPSGCl)$ over $F_{k}$. This replaces
the non-collapsed specialization $ExtraPSGCl(u,z,y,w,p,a)$ induced
by the factorization of $(\tilde{u}_{0},\tilde{z}_{0},\tilde{y}_{0},\tilde{w}_{0},\tilde{p}_{0},a)$
with another non-collapsed one, so that they are the same $\Gamma_{l}$-specializations.
Hence, we can assume that $g_{p_{0}}$ is a $\Gamma_{l}$-exceptional
specialization of $Term(ExtraPSGCl)$.

Now let $\{(u_{n},z_{n},y_{n},w_{n},p_{n},a)\}$ be a sequence of
non-collapsed $\Gamma_{l}$-specializations, and assume that $\{(z_{n},y_{n},w_{n},p_{n},a)\}$
is a $\Gamma_{l}$-test sequence through $Comp(z,y,w,p,a)$. Then,
the sequence $\{(u_{n},z_{n},y_{n},w_{n},p_{n},a)\}$ subfactors through
some resolution in the auxiliary $ExtraPS$ MR diagram over random
groups, that was denoted by $D$. Hence, according to the above argument,
the sequence $\{(u_{n},z_{n},y_{n},w_{n},p_{n},a)\}$ subfactors through
$ExtraPSMRD$.

The remaining part of the theorem follows identically as the case
over $F_{k}$. 
\end{proof}

\section{The Construction of the CollapseExtra MR Diagram}\label{sec:CollapseExtra-MR-Diagram}

Let $ExtraPSGCl(u,z,y,w,p,a)$ be one of the terminal graded closures
in the $ExtraPS$ MR diagram $ExtraPSMRD$ of the collection $WPRS$
over the $p$-graded resolution $Res(y,w,p,a)$ (constructed in \secref{ExtraPS-MR-Diagrams}).
Let $Comp(z,y,w,p,a)$ be the graded completion of $Res(y,w,p,a)$,
and denote its terminal $p$-rigid or $p$-solid base group by $B$.
Denote the $p$-graded base group of $ExtraPSGCl(u,z,y,w,p,a)$, which
is either $p$-rigid or $p$-solid, by $Term(ExtraPSGCl)$.

Let $\Sigma_{i}(c,u,z,y,w,p,a)$, $i=1,...,r$, be the finitely many
systems that testify that the specialization $(u,z,y,w,p,a)$ is collapsed.
We are going to construct a graded formal MR diagram of the systems
$\Sigma_{i}(c,u,z,y,w,p,a)$ over the graded closure $ExtraPSGCl(u,z,y,w,p,a)$
(over $F_{k}$).

We consider the collection $\mathfrak{J}$ of all the sequences of
specializations $\{(c_{n},u_{n},z_{n},y_{n},w_{n},p_{n},a)\}$, for
which the restricted sequence $\{(u_{n},z_{n},y_{n},w_{n},p_{n},a)\}$
is a sequence of specializations of $ExtraPSGCl(u,z,y,w,p,a)$, the
restricted sequence $\{(z_{n},y_{n},w_{n},p_{n},a)\}$ is a test sequence
of $Comp(z,y,w,p,a)$, and for all $n$: 
\begin{enumerate}
\item The restricted specialization $(y_{n},w_{n},p_{n},a)$ is non-collapsed. 
\item The specialization $(c_{n},u_{n},z_{n},y_{n},w_{n},p_{n},a)$ restricts
to an exceptional solution of the base group \\
 $Term(ExtraPSGCl)$ of $ExtraPSGCl$, and to an exceptional solution
of $B$. 
\item The specialization $c_{n}$ explains how the specialization $(c_{n},u_{n},z_{n},y_{n},w_{n},p_{n},a)$
is collapsed, i.e., the elements of one of the systems $\Sigma_{i}(c_{n},u_{n},z_{n},y_{n},w_{n},p_{n},a)$
represent the trivial word. 
\end{enumerate}
The sequences in the collection $\mathfrak{J}$, factor through finitely
many limit groups $H(c,u,z,y,w,p,a)$. The canonical image of the
subgroup $Comp(z,y,w,p,a)$ in $H$ is denoted briefly by $Comp$,
and the canonical image of the subgroup $ExtraPSGCl(u,z,y,w,p,a)$
in $H$ is denoted by $ExtraPSGCl$. The limit group $H$ could be
freely decomposable w.r.t. the subgroup $Comp\leq H$. We consider
the most refined $Comp$-free factorization of $H$. We keep the free
factors that do not include (a conjugate of) the subgroup $Comp\leq H$,
extend them by their standard MR diagrams, and continue the construction
with the free factor that contains $Comp$. We denote this factor
by $M(c,u,z,y,w,p,a)$, which is a limit group. We consider the $Comp$-JSJ
decomposition of the limit group $M(c,u,z,y,w,p,a)$, i.e., the JSJ
decomposition with respect to the subgroup $Comp$. 
\begin{lem}
\label{lem:36} The canonical image $ExtraPSGCl$ of $ExtraPSGCl(u,z,y,w,p,a)$
is contained in the free factor $M$ of $H$ containing $Comp$. Moreover,
the $Comp$-JSJ of $M$ is compatible with the $p$-JSJ of $Term(ExtraPSGCl)$. 
\end{lem}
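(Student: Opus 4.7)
The plan is to establish the two assertions separately: the containment by a Kurosh-style argument leveraging the relative free indecomposability of the closure, and the JSJ compatibility by the formal MR diagram / covering-closure argument already executed in \lemref{18} (and reused in \lemref{30}).

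For the first statement, recall from \lemref{20} that $ExtraPSGCl(u,z,y,w,p,a)$ carries a $p$-graded tower structure over $Term(ExtraPSGCl)$ whose upper levels extend $Comp(z,y,w,p,a)$ in a compatible fashion. In particular the closure is freely indecomposable relative to its subgroup $Comp$: any nontrivial $Comp$-free decomposition of the canonical image $ExtraPSGCl$ in $H$ would lift, via the canonical epimorphism from the closure to its image, to such a decomposition of the closure itself. Applying the Kurosh subgroup theorem to the embedding of this image in the most refined $Comp$-free factorization $H = M \ast F_{1} \ast \cdots \ast F_{t}$, we deduce that the image is conjugate into a single free factor. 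Since $Comp \leq ExtraPSGCl$ is already placed in $M$ by the choice of $M$, this free factor is $M$ itself, so $ExtraPSGCl \leq M$.

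For the second statement, assume for contradiction that the $Comp$-JSJ of $M$ is not compatible with the $p$-JSJ of $Term(ExtraPSGCl)$. Following the template of \lemref{18}, let $Flx_{1}(Term(ExtraPSGCl)), \ldots, Flx_{m}(Term(ExtraPSGCl))$ denote the maximal flexible quotients of $Term(ExtraPSGCl)$, and let $FlxR$ be the collection of defining relations of the graded completions of the associated one-step resolutions. Construct the formal MR diagram of $FlxR$ relative to a resolution whose completion covers $M$ over $Comp(z,y,w,p,a)$. The assumed incompatibility of the two JSJ's forces every graded test sequence of $M$ that restricts to a test sequence of $Comp$ to subfactor through one of the terminal formal closures of this diagram; equivalently, these terminal formal closures form a covering closure of the resolution of $M$. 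Since the sequences in $\mathfrak{J}$ (through which both $H$ and $M$ were built) restrict to test sequences of $Comp$ and to exceptional specializations of $Term(ExtraPSGCl)$, some such specialization must factor through one of these formal closures, hence through a flexible quotient of $Term(ExtraPSGCl)$. This contradicts the exceptionality of the restriction built into the defining condition of $\mathfrak{J}$.

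The main obstacle is the bookkeeping in the second part: one must verify that the covering-closure argument from \lemref{18} transfers cleanly from the $B/Glim$ setting to the present $Term(ExtraPSGCl)/M$ setting, and in particular that test sequences of $M$ restricting to test sequences of $Comp$ are available in sufficient supply (which we get from \lemref{10}) to trigger the covering property. Once these checks are performed, the contradiction is structurally identical to the one already obtained in \lemref{18} and \lemref{30}, and the lemma follows.
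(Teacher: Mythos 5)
Your treatment of the second assertion is essentially the paper's: the paper simply observes that, by the defining conditions of the collection $\mathfrak{J}$, some specialization of $H$ restricts to an exceptional specialization of $Term(ExtraPSGCl)$, and then invokes \lemref{18}; what you have done is unpack the proof of \lemref{18} (flexible quotients, formal MR diagram, covering closure, contradiction with exceptionality) in the present setting, which is correct and matches \lemref{30}.

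The first assertion, however, contains a genuine gap. You claim that a nontrivial $Comp$-free decomposition of the canonical image of $ExtraPSGCl$ in $H$ "would lift, via the canonical epimorphism, to such a decomposition of the closure itself." Free indecomposability (relative or not) does not pass from a group to its quotients: a freely indecomposable limit group can perfectly well have a freely decomposable (even free) limit quotient, so the relative free indecomposability of $ExtraPSGCl$ tells you nothing a priori about its image in $H$, and the Kurosh step has nothing to stand on. The mechanism that actually forces the image into the single factor $M$ is the same one you use for the second assertion: the parts of $ExtraPSGCl$ generated by $Comp$ and by the roots $s$ of elements of abelian subgroups of $Comp$ lie in $M$ automatically ($Comp\leq M$ by the choice of $M$, and in a free product of torsion-free groups an element whose proper power lies in a factor lies in that factor), while the base group $Term(ExtraPSGCl)$ is $p$-rigid or $p$-solid and admits exceptional specializations factoring through $H$; if its image were spread over several free factors, the induced splitting would be an abelian decomposition incompatible with its $p$-JSJ, and the formal-solution/covering-closure argument of \lemref{18} would certify that every such specialization is flexible, contradicting condition 2 in the definition of $\mathfrak{J}$. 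In other words, both halves of the lemma should be derived from the exceptionality of the induced specializations of $Term(ExtraPSGCl)$, as the paper does in one stroke via \lemref{18}; the containment is not a separate Kurosh-theoretic fact about the abstract closure.
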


\begin{proof}
According to the definition of $H$ and the sequences in $\mathfrak{J}$,
there exists a specialization $(c_{0},u_{0},z_{0},y_{0},w_{0},p_{0},a)$
of $H$ that restricts to an exceptional specialization of $Term(ExtraPSGCl)$.
Hence, the claim follows using \lemref{18}. 
\end{proof}
If the $Comp$-JSJ of $M(c,u,z,y,w,p,a)$ is trivial, we call the
group $M(c,u,z,y,w,p,a)$ a $Comp$-rigid group, and terminate the
construction for this stage. In this case, the subcollection of $\mathfrak{J}$
consisting of all the sequences $(c_{n},u_{n},z_{n},y_{n},w_{n},p_{n},a)$
that factor through $M$, is denoted by $\mathfrak{J}_{M}$.

In the case that the $Comp$-JSJ of $M(c,u,z,y,w,p,a)$ is non-trivial,
we consider the subcollection $\mathfrak{J}_{M}$ of $\mathfrak{J}$,
consisting of all the sequences $(c_{n},u_{n},z_{n},y_{n},w_{n},p_{n},a)$
that factor through $M$, and so that for all $n$, the specialization
$(c_{n},u_{n},z_{n},y_{n},w_{n},p_{n},a)$ cannot be shortened (in
the metric of $(F_{k},a)$) by precomposing with a modular automorphism
of $M$ that is induced from its $Comp$-JSJ. The sequences in the
collection $\mathfrak{J}_{M}$, factor through finitely many maximal
limit quotients of $M$, that we denote - while abusing the notation
- by $H$. If $H$ is not a proper quotient of $M$, we call $M$
a $Comp$-solid group, and terminate the construction for this stage. 
\begin{lem}
Let $\{(c_{n},u_{n},z_{n},y_{n},w_{n},p_{n},a)\}$ be a sequence in
the collection $\mathfrak{J}$ that factors through $M$. Then, the
shortened sequence $\{((c\theta)_{n},(u\theta)_{n},z_{n},y_{n},w_{n},p_{n},a)\}$
belongs to $\mathfrak{J}$. 
\end{lem}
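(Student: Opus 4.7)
The plan is to verify each of the three defining conditions of $\mathfrak{J}$ for the shortened sequence, exploiting that any modular automorphism $\theta$ of $M$ induced from its $Comp$-JSJ fixes pointwise the distinguished vertex containing $Comp$. In particular, $\theta$ fixes the generators $z, y, w, p, a$, and hence also fixes the base group $B \subseteq Comp$. Consequently the restricted sequence $\{(z_n, y_n, w_n, p_n, a)\}$ is unchanged under the shortening, so it remains a test sequence of $Comp(z,y,w,p,a)$; the sub-specialization $(y_n, w_n, p_n, a)$ remains non-collapsed; and the restriction of the shortened sequence to $B$ coincides with that of the original sequence, hence is still an exceptional solution of $B$. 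Moreover, since $\theta$ is an automorphism of $M$ and $ExtraPSGCl \subseteq M$, the shortened tuple $((u\theta)_n, z_n, y_n, w_n, p_n, a)$ is still a specialization of $ExtraPSGCl$, so condition (1) of $\mathfrak{J}$ is immediate.

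Next I would invoke \lemref{36} to handle the restriction to $Term(ExtraPSGCl)$. By that lemma, the $Comp$-JSJ of $M$ is compatible with the $p$-JSJ of $Term(ExtraPSGCl)$, so the restriction of $\theta$ to $Term(ExtraPSGCl)$ is, up to conjugation, a modular automorphism of $Term(ExtraPSGCl)$ induced from its own $p$-JSJ. By the definition of strictly solid families (\defref{13}), precomposition with such a modular automorphism preserves the exceptional family of a specialization. Hence the restriction of the shortened specialization to $Term(ExtraPSGCl)$ remains in the same exceptional family as the restriction of the original, and in particular is still exceptional; combined with the previous paragraph, this yields the full condition 3(b).

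Finally I would verify the collapse condition 3(c). Since there are only finitely many systems $\Sigma_1,\dots,\Sigma_r$, I may pass to a subsequence along which a single index $i^{*}$ witnesses the collapse, so that $\Sigma_{i^{*}}(c_n, u_n, z_n, y_n, w_n, p_n, a) = 1$ in $\Gamma_{l_n}$ for every $n$ in the subsequence. Because the sequence factors through the limit group $M$, the element $\Sigma_{i^{*}}(c,u,z,y,w,p,a)$ must represent the trivial element of $M$. Since $\theta$ is an automorphism of $M$ fixing $z, y, w, p, a$, one has
\[
\Sigma_{i^{*}}(\theta(c), \theta(u), z, y, w, p, a) = \theta\bigl(\Sigma_{i^{*}}(c, u, z, y, w, p, a)\bigr) = 1 \text{ in } M,
\]
and applying $h_n \colon M \to \Gamma_{l_n}$ yields $\Sigma_{i^{*}}((c\theta)_n, (u\theta)_n, z_n, y_n, w_n, p_n, a) = 1$ in $\Gamma_{l_n}$. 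I expect the main obstacle to lie precisely in this last step: one needs the reduction to a single witnessing system to give a relation that genuinely holds in $M$, which relies on the precise way $M$ arises from the limit-group construction on $\mathfrak{J}$, together with a careful handling of the subsequences so that the conclusion is not weakened to a mere sub-sequence of the shortened sequence.
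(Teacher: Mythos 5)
Your treatment of the first two membership conditions is essentially the paper's: the modular automorphisms induced from the $Comp$-JSJ of $M$ fix the distinguished vertex containing $Comp$ (hence the generators $z,y,w,p,a$ and the base group $B\leq Comp$), so the restricted test sequence, the non-collapse of $(y_{n},w_{n},p_{n},a)$, and the restriction to $B$ are literally unchanged, and \lemref{36} transports exceptionality of the restriction to $Term(ExtraPSGCl)$ across the shortening. That part is sound.

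The gap is in your verification of the collapse condition. The assertion that, because the sequence factors through the limit group $M$, the word $\Sigma_{i^{*}}(c,u,z,y,w,p,a)$ must represent the trivial element of $M$ does not follow: factoring through $M$ only means each $h_{n}$ decomposes through $M$, so $\Sigma_{i^{*}}(c_{n},u_{n},\dots)=1$ in $\Gamma_{l_{n}}$ places the word in the kernel of the $n$-th map $M\rightarrow\Gamma_{l_{n}}$, not in the trivial element of $M$; consequently $h_{n}\circ\theta_{n}$ need not kill it even though $h_{n}$ does. In addition, your reduction to a single witnessing system $i^{*}$ is only valid along a subsequence, whereas the lemma asserts that the \emph{entire} shortened sequence belongs to $\mathfrak{J}$. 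The paper avoids both problems by arguing one level up: by \lemref{36} together with the construction of the $ExtraPS$ MR diagram, the original and the shortened specializations induce, for every $n$, the same exceptional families of each $G_{i}\in WPRS$, and being collapsed is by definition a property of these induced families; hence the shortened $u$-specialization is collapsed in exactly the same form as the original, and a witness $(c\theta)_{n}$ exists for every $n$ because the collapse is expressed by the positive Diophantine systems $\Sigma_{i}$. If you insist on your word-level computation, you must first establish the extra structural fact that the construction subdivides $\mathfrak{J}$ so that each limit group $M$ in the diagram kills one of the systems $\Sigma_{i}$ as a relation; this is not a consequence of the bare statement that the sequence factors through $M$.
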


\begin{proof}
For all $n$, let $((c\theta)_{n},(u\theta)_{n},z_{n},y_{n},w_{n},p_{n},a)$
be a specialization of $M$ that belongs to the same modular family
of $(c_{n},u_{n},z_{n},y_{n},w_{n},p_{n},a)$. According to \lemref{36},
and according to the construction of the $ExtraPS$ MR diagram, the
two families of representatives, that the specializations $(c_{n},u_{n},z_{n},y_{n},w_{n},p_{n},a)$
and \\
 $((c\theta)_{n},(u\theta)_{n},z_{n},y_{n},w_{n},p_{n},a)$ induce,
represent the same exceptional families for the group $G_{i}\in WPRS$,
for all $i=1,...,s$. 
\end{proof}
We continue the construction iteratively. Since all the groups that
are obtained along the construction, are limit groups, then, according
to the descending chain condition of limit groups, the construction
must terminate after finitely many steps. The result is a diagram
$K$ that includes finitely many resolutions. Each of these resolutions
terminates in a free product $\langle f\rangle\ast M$, where $\langle f\rangle$
is a free group, and $M$ is either a $Comp$-rigid or a $Comp$-solid
group. 
\begin{lem}
Assume that $M(c,u,z,y,w,p,a)$ is a $Comp$-rigid group or a $Comp$-solid
group. Then, $M$ admits a structure of a graded formal closure of
$Comp(z,y,w,p,a)$ over a $p$-graded limit group $Term(M)$. That
is, $M$ admits a structure of a $p$-graded tower over a $p$-graded
base group $Term(M)$, so that: 
\begin{enumerate}
\item up to adding roots to abelian groups, this structure is similar to
the one associated with $Comp(z,y,w,p,a)$, 
\item this structure is compatible to the one associated with $Comp(z,y,w,p,a)$ 
\item the canonical image of the $p$-graded base group $B$ of $Comp(z,y,w,p,a)$
lies entirely in $Term(M)$, and $B$ is mapped compatibly into $Term(M)$.
That is, the minimal subgraph of groups of the $p$-JSJ of $Term(M)$
containing the canonical image $\bar{B}$ of $B$, consists only from
parts that are taken from the $p$-JSJ of $B$. 
\end{enumerate}
Moreover, the mapping of $Term(ExtraPSGCl)$ into any level of $M$
is compatible. 
\end{lem}

\begin{proof}
The proof of the first part of the claim is done in the proof of \lemref{20}.

If the last statement was wrong, then, using the same argument in
the proof of \lemref{18}, we can construct a formal solution(s) in
(a covering closure of) $Comp(z,y,w,p,a)$ testifying flexibility
of all the specializations of $Term(ExtraPSGCl)$ that factor through
$M$. 
\end{proof}
\begin{lem}
\label{lem:37} Let $t_{0}$ be a specialization of $Term(M)$, the
$p$-graded base group of $M$. Consider the induced ungraded closure
$M_{t_{0}}$, and assume that there exists a specialization $(c_{0},u_{0},z_{0},y_{0},w_{0},p_{0},a)$
that factors through the ungraded closure $M_{t_{0}}$ and satisfies
the conditions 1-3 in the top of the section.

Then, there exists a test sequence $\{(z_{n},y_{n},w_{n},p_{0},a)\}$
of $Comp(z,y,w,p,a)$, so that for every specialization $t_{0}'$
of $Term(M)$ that belongs to the same modular family of $t_{0}$,
the specialization $(z_{n},y_{n},w_{n},p_{0},a)$ factors through
the ungraded closure $M_{t_{0}'}$, and the induced specialization
$(c_{n}',u_{n}',z_{n},y_{n},w_{n},p_{0},a)$ satisfies the conditions
1-3, for all $n$. 
\end{lem}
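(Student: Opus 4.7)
The plan is as follows. By hypothesis, $(c_0,u_0,z_0,y_0,w_0,p_0,a)$ factors through the ungraded closure $M_{t_0}$ and satisfies conditions (1)--(3); in particular its restriction to $B$ is an exceptional solution, and its restriction to $\mathrm{Term}(M)$ is the given $t_0$. First I invoke \lemref{9} to construct a test sequence $\{(z_n,y_n,w_n,p_0,a)\}$ of $\mathrm{Comp}(z,y,w,p,a)$ whose base specialization of $B$ is the restriction of $t_0$ to $B$. Because $M$ is a graded formal closure of $\mathrm{Comp}(z,y,w,p,a)$ over $\mathrm{Term}(M)$ (the preceding lemma), the ungraded closure $M_{t_0}$ has the same tower levels above $B$ as $\mathrm{Comp}$ does (up to roots for abelian groups), so the test sequence lifts naturally to a sequence of specializations of $M_{t_0}$. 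Pulling back along the formal relations $\Sigma_i(c,u,z,y,w,p,a)=1$ present in $M$, the specializations of the extra variables $c,u$ induced from $(c_0,u_0)$ along the test sequence yield a sequence $\{(c_n,u_n,z_n,y_n,w_n,p_0,a)\}$ of specializations of $M_{t_0}$.

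Next, I verify conditions (1)--(3) for the induced sequence. Condition (2) holds by construction of the base. Condition (3) holds automatically because the Diophantine relation $\Sigma_i(c,u,z,y,w,p,a)=1$ is a relation of $M$, hence of $M_{t_0}$, so it is satisfied by every induced specialization. Condition (1), the non-collapsedness of $(y_n,w_n,p_0,a)$, is the main obstacle. Following the argument of \lemref{29}, the assertion that $(c_0,u_0,z_0,y_0,w_0,p_0,a)$ is non-collapsed translates into the non-triviality in $M_{t_0}$ of finitely many ``collapse-detecting'' words (expressing, for some pair $i,j,j'$, that two induced specializations of $G_i$ coincide as exceptional families, or that one of them factors through a flexible quotient of $G_i$). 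Since these words are non-trivial in $M_{t_0}$, and $\{(z_n,y_n,w_n,p_0,a)\}$ is a test sequence of $\mathrm{Comp}$, the applied formal solutions force them to be non-trivial along the induced sequence for $n$ large; pass to a subsequence to secure this for all $n$.

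Finally, I extend the conclusion to any $t_0'$ in the same modular family of $t_0$ in $\mathrm{Term}(M)$. By \lemref{18} (and the compatibility established in the preceding lemma), the $p$-JSJ of $\mathrm{Term}(M)$ is compatible with that of $B$, so a modular automorphism of $\mathrm{Term}(M)$ sending $t_0$ to $t_0'$ restricts to a modular automorphism of $B$ preserving its exceptional family, and its action on the higher levels of $M$ can be absorbed into the automorphisms of the $QH$ and abelian vertex groups already present in the definition of a test sequence of $\mathrm{Comp}$. The same test sequence $\{(z_n,y_n,w_n,p_0,a)\}$ therefore factors through $M_{t_0'}$, and the induced specializations $(c_n',u_n',z_n,y_n,w_n,p_0,a)$ satisfy (1)--(3) by the same argument as above; the collapse-detecting words remain non-trivial because modular automorphisms of $\mathrm{Term}(M)$ act trivially on the images of the $G_i$'s, by the $Comp$-JSJ compatibility. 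This yields the desired uniform test sequence.
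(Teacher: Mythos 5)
Your overall architecture matches the paper's: conditions 2--3 are essentially automatic from the compatibility of the embeddings of $Term(ExtraPSGCl)$ and $B$ into $M$ and from the fact that the collapse relations $\Sigma_i(c,u,\ldots)=1$ are relations of $M$ itself, and the real work is producing a test sequence whose restrictions $(y_n,w_n,p_0,a)$ are non-collapsed. But your treatment of that last point contains a genuine gap. Non-collapsedness of $(y_n,w_n,p_0,a)$ is the \emph{negation of a positive existential Diophantine condition}: being collapsed means there exist auxiliary witnesses (a homomorphism from $Comp_T$ identifying two induced solutions of some $G_i$ in the same strictly solid family, or a factorization through a flexible quotient). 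It is therefore not equivalent to the non-triviality of finitely many fixed ``collapse-detecting words'' in $M_{t_0}$, and \lemref{29} --- which handles genuine inequalities $\psi_i\neq 1$ evaluated on fixed elements of a closure --- is the wrong tool. The correct mechanism, which is what the paper invokes by referring to the argument of \lemref{32}, is to build the formal MR diagram $FMRD(\Sigma)$ of the collapse conditions over $M_{t_0}$: if every test sequence of $M_{t_0}$ had (eventually) collapsed restrictions, the terminal closures of $FMRD(\Sigma)$ would form a covering closure of $M_{t_0}$, and the formal solutions carried by those closures would force \emph{every} specialization of $M_{t_0}$ --- in particular $(c_0,u_0,z_0,y_0,w_0,p_0,a)$ --- to have collapsed restriction, contradicting condition 1. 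Hence some test sequence of $M_{t_0}$ avoids $FMRD(\Sigma)$ and is eventually non-collapsed. Your sentence ``the applied formal solutions force them to be non-trivial along the induced sequence'' gestures at this but inverts the logic; without the covering-closure step the conclusion does not follow.

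A secondary inaccuracy: you claim that a test sequence of $Comp(z,y,w,p,a)$ with the prescribed base ``lifts naturally'' to $M_{t_0}$. Since $M_{t_0}$ is a proper closure (roots have been added to abelian vertex groups), an arbitrary test sequence of $Comp$ need not factor through it; you must \emph{choose} the test sequence so that the abelian specializations lie in the cosets determined by the closure --- equivalently, take a test sequence of $M_{t_0}$ itself and restrict it. Once that choice is made, the passage to $M_{t_0'}$ for $t_0'$ in the same modular family works as you describe, since non-collapsedness of the restriction to $Comp$ and the exceptional-family conditions are invariant under replacing $t_0$ by $t_0'$.
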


\begin{proof}
Since $Term(ExtraPSGCl)$ is mapped compatibly into $M$, and $(c_{0},u_{0},z_{0},y_{0},w_{0},p_{0},a)$
is a specialization of $M_{t_{0}}$ that restricts to an exceptional
solution for $Term(ExtraPSGCl)$ and $B$, we conclude that every
specialization of $M_{t_{0}'}$, where $t_{0}'$ is a specialization
of $Term(M)$ in the same modular family of $t_{0}$, satisfies the
conditions 2-3 in the beginning of the section.

Now a similar argument to the proof of \lemref{32}, implies that
there must exist some test sequence $\{(z_{n},y_{n},w_{n},p_{0},a)\}$
of $Comp(z,y,w,p,a)$ consisting of non-collapsed specializations,
that factor through $M_{t_{0}}$. Hence, if $t_{0}'$ is a specialization
of $Term(M)$ that belongs to the same modular family of $t_{0}$,
the specialization $(z_{n},y_{n},w_{n},p_{0},a)$ factors through
$M_{t_{0}'}$, and the induced specialization $(c_{n}',u_{n}',z_{n},y_{n},w_{n},p_{0},a)$
satisfies the conditions 1-3, for all $n$. 
\end{proof}
We continue the construction of our graded formal MR diagram. For
a resolution $GFRes(c,u,z,y,w,p,a)$ in the diagram $K$ constructed
so far, we consider its terminal graded formal closure $M(c,u,z,y,w,p,a)$,
and denote its $p$-graded base group by $Term(M)$. We want to extend
the resolution $GFRes(c,u,z,y,w,p,a)$ into finitely many resolutions,
that terminate in a graded closure over a $p$-rigid or a $p$-solid
base limit group. For that, we consider the subcollection $\mathfrak{J}_{M}$
of $\mathfrak{J}$ consisting of all the sequences of specializations
$\{(c_{n},u_{n},z_{n},y_{n},w_{n},p_{n},a)\}$ of $M$, for which: 
\begin{enumerate}
\item If $Term(M)$ is either $p$-rigid or $p$-solid, then, for all $n$,
the specialization $(c_{n},u_{n},z_{n},y_{n},w_{n},p_{n},a)$ restricts
to a specialization of $Term(M)$ that factors through some flexible
quotient of $Term(M)$. 
\item Otherwise, for all $n$, the restriction of the specialization $(c_{n},u_{n},z_{n},y_{n},w_{n},p_{n},a)$
into $Term(M)$ cannot be shortened by precomposing with a modular
automorphism induced by the $p$-JSJ of $Term(M)$. 
\end{enumerate}
The subcollection $\mathfrak{J}_{M}$ factors through finitely many
maximal limit groups $H$. We continue with $H$ by applying the shortening
procedure iterative construction that was presented in the beginning
of the construction. According to the descending chain condition of
limit groups, this construction must terminate after finitely many
steps. The result is a diagram $K_{M}$ with finitely many resolutions,
each of which terminates in a graded formal closure of $Comp(z,y,w,p,a)$. 
\begin{lem}
\label{lem:38} Let $(c_{0},u_{0},z_{0},y_{0},w_{0},p_{0},a)$ be
a specialization of $M$ that satisfies the conditions 1-3 in the
top of the section. Then, there exists a specialization $(c_{0}',u_{0}',z_{0},y_{0},w_{0},p_{0},a)$
that factors through $M$ so that: 
\begin{enumerate}
\item The specializations $(c_{0},u_{0},z_{0},y_{0},w_{0},p_{0},a)$ and
$(c_{0}',u_{0}',z_{0},y_{0},w_{0},p_{0},a)$ restricts to the same
modular family of $Term(M)$ (and thus to the same exceptional family
of $Term(ExtraPSGCl)$, and to the same exceptional family of $B$). 
\item The two families of representatives that the specializations $(c_{0},u_{0},z_{0},y_{0},w_{0},p_{0},a)$
and $(c_{0}',u_{0}',z_{0},y_{0},w_{0},p_{0},a)$ induce, represent
the same modular families of the group $G_{i}$, for all $i=1,...,s$. 
\item The specialization $(c_{0}',u_{0}',z_{0},y_{0},w_{0},p_{0},a)$ satisfies
the conditions 1-3 in the top of the section, and factors through
one of the resolutions $GFRes(c,u,z,y,w,p,a)$ of $K_{M}$. 
\end{enumerate}
Moreover, if $Term(M)$ is either $p$-rigid or $p$-solid, and the
restriction of the specialization $(c_{0},u_{0},z_{0},y_{0},w_{0},p_{0},a)$
to $Term(M)$ is not exceptional, then we can assume that the terminal
graded formal closure of the resolution $GFRes(c,u,z,y,w,p,a)$ is
a proper quotient of $M$. 
\end{lem}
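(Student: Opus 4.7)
The plan is to follow the template of \lemref{32}, extended by additional Diophantine conditions that encode the non-collapsed and exceptional requirements of conditions 1--3. First I would normalize the given specialization $(c_{0},u_{0},z_{0},y_{0},w_{0},p_{0},a)$ within its modular family: using the $Comp$-JSJ modular automorphisms of $M$ and, independently, the modular automorphisms of $Term(M)$, we may replace its restriction $t_{0}$ to $Term(M)$ by a representative that either is shortest (when $Term(M)$ is neither $p$-rigid nor $p$-solid) or factors through a flexible quotient of $Term(M)$ (when $Term(M)$ is $p$-rigid or $p$-solid and $t_{0}$ is not itself exceptional). This normalization places the resulting sequence into the collection $\mathfrak{J}_{M}$, and in the latter case guarantees the ``moreover'' clause: any resolution of $K_{M}$ through which the normalized specialization factors terminates in a proper quotient of $M$, since by construction $\mathfrak{J}_{M}$ factors through such proper quotients.

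Next I would fix the induced ungraded closure $M_{t_{0}}$ and assemble four families of positive Diophantine conditions over it: a family $\Omega$ stating that $(c,u,z,y,w,p_{0},a)$ factors through one of the finitely many resolutions of $K_{M}$; a family $\Sigma_{\text{coll}}$ consisting of the finitely many systems $\Sigma_{i}(c,u,z,y,w,p_{0},a)$ that witness collapse of the $(y,w,p_{0},a)$-restriction; and two families $\Sigma_{\text{flex},B}$ and $\Sigma_{\text{flex},T}$ encoding, respectively, that the restrictions to $B$ and to $Term(ExtraPSGCl)$ factor through one of their (finitely many) flexible quotients. Then I would construct the corresponding formal MR diagrams $FMRD(\Omega)$, $FMRD(\Sigma_{\text{coll}})$, $FMRD(\Sigma_{\text{flex},B})$, $FMRD(\Sigma_{\text{flex},T})$ over $M_{t_{0}}$, as in \cite{DGII}, Section 2.

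The key step is the covering-closure argument. By \lemref{37}, there exists a test sequence $\{(z_{n},y_{n},w_{n},p_{0},a)\}$ of $Comp(z,y,w,p,a)$ through $M_{t_{0}}$ whose induced specializations $(c_{n}',u_{n}',z_{n},y_{n},w_{n},p_{0},a)$ satisfy conditions 1--3. Such a test sequence cannot subfactor through any terminal closure of $FMRD(\Sigma_{\text{coll}})$, $FMRD(\Sigma_{\text{flex},B})$ or $FMRD(\Sigma_{\text{flex},T})$ (those would respectively force collapse, or non-exceptional restriction to $B$ or $Term(ExtraPSGCl)$), hence it must subfactor through a terminal closure of $FMRD(\Omega)$. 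Consequently, the terminal closures of the four diagrams together form a covering closure of $M_{t_{0}}$, and any specialization satisfying 1--3 factors through a terminal closure of $FMRD(\Omega)$, whose formal solution then yields the desired $(c_{0}',u_{0}',z_{0},y_{0},w_{0},p_{0},a)$ factoring through a resolution of $K_{M}$.

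The main obstacle I expect is the simultaneous preservation of the various modular families: we must arrange that $(c_{0}',u_{0}')$ lies in the same modular family of $Term(M)$ as $(c_{0},u_{0})$ (so that the restrictions to $Term(ExtraPSGCl)$ and $B$ sit in the same exceptional families) \emph{and} that the induced families of representatives of each $G_{i}$ represent the same modular families. This is handled by the normalization step together with the compatibility of the mapping of $Term(ExtraPSGCl)$ into $M$ (established for graded closures in \lemref{20} and for the $ExtraPS$-diagram's base groups via \lemref{36}), which forces modular equivalence on $Term(M)$ to descend to modular equivalence of the induced $G_{i}$-families.
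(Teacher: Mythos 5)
Your overall architecture matches the paper's: normalize $t_{0}$ within its modular family, build formal MR diagrams over the induced ungraded closure $M_{t_{0}}$ for the condition ``factors through $K_{M}$'' and for the collapse conditions, establish a covering closure, and then push the given specialization into $FMRD(\Omega)$. However, the covering-closure step as you wrote it is logically inverted, and this is the crux of the proof. You take the \emph{single} test sequence produced by \lemref{37}, argue it cannot subfactor through the ``bad'' diagrams, conclude ``hence it must subfactor through a terminal closure of $FMRD(\Omega)$,'' and then ``consequently'' that the terminal closures of all the diagrams form a covering closure of $M_{t_{0}}$. Both inferences fail: elimination (``not through the bad ones, so through $\Omega$'') presupposes that the test sequence subfactors through \emph{some} terminal closure, which is exactly the covering property you are trying to prove; and one test sequence subfactoring through $FMRD(\Omega)$ does not yield a covering closure. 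The correct argument, which is the paper's, runs in the opposite direction: take an \emph{arbitrary} test sequence $\{(c_{n},u_{n},z_{n},y_{n},w_{n},p_{0},a)\}$ of $M_{t_{0}}$ and observe that whenever the restriction $(z_{n},y_{n},w_{n},p_{0},a)$ is non-collapsed the specialization automatically satisfies conditions 1--3, so by the construction of $K_{M}$ (membership in $\mathfrak{J}_{M}$ after the normalization of $t_{0}$) the sequence either has a collapsed subsequence or subfactors through a resolution of $K_{M}$; this dichotomy for \emph{every} test sequence is what gives the covering closure, after which the given specialization, being non-collapsed, must land in a closure of $FMRD(\Omega)$ and its formal solution produces $(c_{0}',u_{0}')$.

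Two smaller remarks. First, your diagrams $FMRD(\Sigma_{\mathrm{flex},B})$ and $FMRD(\Sigma_{\mathrm{flex},T})$ are superfluous: since $B$ and $Term(ExtraPSGCl)$ are mapped compatibly into $Term(M)$ and $t_{0}$ restricts to exceptional solutions of both, \emph{every} specialization of $M_{t_{0}'}$, for $t_{0}'$ in the modular family of $t_{0}$, automatically satisfies conditions 2--3 (this is exactly what the proof of \lemref{37} records); only the collapse condition 1 can fail, so only $FMRD(\Sigma_{\mathrm{coll}})$ is needed alongside $FMRD(\Omega)$. Carrying the extra diagrams also forces you to rule them out in the elimination step, which is where your reasoning about ``non-exceptional restriction'' is slightly off for solid groups (exceptionality there is failure to factor through the completions $Comp_{j}$, not merely through the flexible quotients). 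Second, your treatment of the ``moreover'' clause and of the preservation of the $G_{i}$-modular families via the normalization of $t_{0}$ and the compatibility lemmas is in line with the paper's closing argument and is fine.
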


\begin{proof}
Let $t_{0}$ be a specialization of $Term(M)$, that maps $p$ to
some $p_{0}$ in $F_{k}$. Assume that $t_{0}$ is short when $Term(M)$
is neither $p$-rigid, nor $p$-solid, and that $t_{0}$ factors through
a flexible quotient of $Term(M)$ otherwise. Let $M_{t_{0}}$ be the
induced ungraded closure, and assume that $(c_{0},u_{0},z_{0},y_{0},w_{0},p_{0},a)$
is a specialization of $M_{t_{0}}$ that satisfies the conditions
1-3.

Let $\Omega$ be a collection of (positive) Diophantine conditions
$\Omega_{1}(q,c,u,z,y,w,p_{0},a),...,\Omega_{r}(q,c,u,z,y,w,p_{0},a)$
stating that the specialization $(c,u,z,y,w,p_{0},a)$ of $M$ factors
through one of the resolutions of $K_{M}$ (without $M$ itself).
And let $\Sigma$ be a collection of (positive) Diophantine conditions
$\Sigma_{1}(f,c,u,z,y,w,p_{0},a),...,\Sigma_{t}(f,c,u,z,y,w,p_{0},a)$
stating that the specialization $(c,u,z,y,w,p_{0},a)$ of $M$ satisfies
that the restriction $(z,y,w,p_{0},a)$ is collapsed.

We construct two formal MR diagrams $FMRD(\Omega)$ and $FMRD(\Sigma)$
over the ungraded closure $M_{t_{0}}$, the first for the collection
$\Omega$, and the second for the collection $\Sigma$.

Let $\{(c_{n},u_{n},z_{n},y_{n},w_{n},p_{0},a)\}$ be a test sequence
of $M_{t_{0}}$. Note that, for all $n$, if the restriction $(z_{n},y_{n},w_{n},p_{0},a)$
is non-collapsed, then the specialization $(c_{n},u_{n},z_{n},y_{n},w_{n},p_{0},a)$
satisfies the conditions 1-3. Thus, according to the construction
of $K_{M}$, the sequence $\{(c_{n},u_{n},z_{n},y_{n},w_{n},p_{0},a)\}$
either contains a subsequence for which the restricted sequence $\{(z_{n},y_{n},w_{n},p_{0},a)\}$
consists of collapsed specializations, or it subfactors through one
of the resolutions of $K_{M}$. Hence, the collection of terminal
ungraded closures in the two formal MR diagrams $FMRD(\Omega)$ and
$FMRD(\Sigma)$, forms a covering closure for $M_{t_{0}}$.

Now consider the specialization $(c_{0},u_{0},z_{0},y_{0},w_{0},p_{0},a)$
of $M_{t_{0}}$ that satisfies conditions 1-3. It cannot factor through
$FMRD(\Sigma)$. Hence, it must factor through one of the ungraded
closures in $FMRD(\Omega)$. Due to the existence of a formal solution
in that ungraded closure, we conclude that the specialization $(c_{0},u_{0},z_{0},y_{0},w_{0},p_{0},a)$
factors through one of the resolutions of $K_{M}$.

Finally, if $(c_{0}',u_{0}',z_{0},y_{0},w_{0},p_{0},a)$ is a specialization
of $M_{t_{0}'}$, for some specialization $t_{0}'$ of $Term(M)$
that represents the same modular family of $t_{0}$, then there exists
some $c_{0}'',u_{0}''\in F_{k}$ so that the specialization $(c_{0}'',u_{0}'',z_{0},y_{0},w_{0},p_{0},a)$
factors through $M_{t_{0}}$, and the two families of representatives
that the specializations $(c_{0}',u_{0}',z_{0},y_{0},w_{0},p_{0},a)$
and $(c_{0}'',u_{0}'',z_{0},y_{0},w_{0},p_{0},a)$ induce, represent
the same modular families of the group $G_{i}$, for all $i=1,...,s$. 
\end{proof}
\begin{lem}
\label{lem:39} Let $\Gamma_{l}$ be a group of level $l$ in the
model. If $l$ is large enough, then the following property is satisfied
for $\Gamma_{l}$.

Let $(c_{0},u_{0},z_{0},y_{0},w_{0},p_{0},a)$ be a $\Gamma_{l}$-specialization
of $M$ that satisfies the conditions 1-3 in the top of the section
(w.r.t. $\Gamma_{l}$). Then, there exists a $\Gamma_{l}$-specialization
$(c_{0}',u_{0}',z_{0},y_{0},w_{0},p_{0},a)$ that factors through
$M$ so that: 
\begin{enumerate}
\item The $\Gamma_{l}$-specializations $(c_{0},u_{0},z_{0},y_{0},w_{0},p_{0},a)$
and $(c_{0}',u_{0}',z_{0},y_{0},w_{0},p_{0},a)$ restrict to the same
$\Gamma_{l}$-modular family of $Term(M)$ (and thus to the same $\Gamma_{l}$-exceptional
family of $Term(ExtraPSGCl)$, and to the same $\Gamma_{l}$-exceptional
family of $B$). 
\item The two families of representatives that the $\Gamma_{l}$-specializations
$(c_{0},u_{0},z_{0},y_{0},w_{0},p_{0},a)$ and $(c_{0}',u_{0}',z_{0},y_{0},w_{0},p_{0},a)$
induce, represent the same $\Gamma_{l}$-modular families of the group
$G_{i}$, for all $i=1,...,s$. 
\item The $\Gamma_{l}$-specialization $(c_{0}',u_{0}',z_{0},y_{0},w_{0},p_{0},a)$
satisfies the conditions 1-3 in the top of the section, and factors
through one of the resolutions $GFRes(c,u,z,y,w,p,a)$ of $K_{M}$. 
\end{enumerate}
Moreover, if $Term(M)$ is either $p$-rigid or $p$-solid, and the
restriction of the $\Gamma_{l}$-specialization $(c_{0},u_{0},z_{0},y_{0},w_{0},p_{0},a)$
to $Term(M)$ is not $\Gamma_{l}$-exceptional, then we can assume
that the terminal graded formal closure of the resolution $GFRes(c,u,z,y,w,p,a)$
is a proper quotient of $M$. 
\end{lem}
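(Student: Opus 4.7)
The plan is to reduce the statement to its $F_k$-analogue, \lemref{38}, via the lifting property, following the template used in the proofs of \lemref{24} and \thmref{35}. First I would choose $l$ large enough that $\Gamma_l$ satisfies the lifting property with respect to the finitely many finitely presented limit groups that enter the picture --- namely $M$, $Term(M)$, $Term(ExtraPSGCl)$, $B$, $ExtraPSGCl$, the maximal flexible quotients of $Term(M)$, and the collapse systems $\Sigma_i(c,u,z,y,w,p,a)$. This is guaranteed by \thmref{1} together with the fact that all of these groups are finitely presented as limit groups.

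Given a $\Gamma_l$-specialization $(c_0, u_0, z_0, y_0, w_0, p_0, a)$ of $M$ satisfying conditions 1--3 from the top of the section, I would lift it to an $F_k$-specialization $(\tilde{c}_0, \tilde{u}_0, \tilde{z}_0, \tilde{y}_0, \tilde{w}_0, \tilde{p}_0, a)$ of $M$. The lift automatically satisfies the $F_k$-analogues of the three conditions: condition 2 carries over as in the proofs of \thmref{15} and \thmref{17}, since a lift of a $\Gamma_l$-exceptional solution of $Term(ExtraPSGCl)$ (respectively of $B$) must be $F_k$-exceptional; condition 1 is preserved because non-collapsedness is the failure of a positive Diophantine system, so the lift of a non-collapsed $\Gamma_l$-specialization is non-collapsed over $F_k$; condition 3 is a positive Diophantine statement on $\tilde{c}_0$ and is inherited. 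Applying \lemref{38} to the lifted specialization yields an $F_k$-specialization $(\tilde{c}_0', \tilde{u}_0', \tilde{z}_0, \tilde{y}_0, \tilde{w}_0, \tilde{p}_0, a)$ factoring through some resolution $GFRes$ of $K_M$, whose restriction to $Term(M)$ lies in the same modular family as the restriction $\tilde{t}_0$ of the lift, and which induces the same modular families of representatives for each $G_i$. Projecting through the quotient map $\pi_l\colon F_k \to \Gamma_l$ then yields the desired $\Gamma_l$-specialization $(c_0', u_0', z_0, y_0, w_0, p_0, a)$: conclusions (1) and (2) follow because $\Gamma_l$-modular and $\Gamma_l$-exceptional families are the projections of the corresponding $F_k$-families, and conclusion (3) is preserved since factorization through $GFRes$ and conditions 1--3 are all positive Diophantine statements stable under the projection $\pi_l$.

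The main obstacle will be the ``Moreover'' clause: when the restriction $t_0$ of the original $\Gamma_l$-specialization to $Term(M)$ is non-$\Gamma_l$-exceptional, the lift $\tilde{t}_0$ must be chosen to be non-$F_k$-exceptional as well, since otherwise \lemref{38} does not deliver the proper-quotient terminal closure. I would handle this exactly as in the proof of \lemref{24}: the maximal flexible quotients of $Term(M)$ are themselves limit groups constructed over $F_k$, so non-$\Gamma_l$-exceptionality of $t_0$ means that it factors through one of them over $\Gamma_l$; applying the lifting property to the composed specialization into that flexible quotient produces an $F_k$-lift $\tilde{t}_0$ which factors through the same flexible quotient and is therefore automatically non-$F_k$-exceptional. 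A second application of the lifting property (to $M$ relative to $Term(M)$) then extends $\tilde{t}_0$ to a compatible lift of the entire tuple, after which \lemref{38} produces a resolution whose terminal graded formal closure is a proper quotient of $M$, as required.
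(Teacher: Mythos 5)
Your proposal is correct and follows essentially the same route as the paper: lift the $\Gamma_l$-specialization to $F_k$ (arranging, in the ``Moreover'' case, that the restriction to $Term(M)$ stays non-exceptional), apply \lemref{38} over $F_k$, and project the resulting specialization back to $\Gamma_l$, using that conditions 1--3, factorization through $K_M$, and membership in a given modular family are all stable under lifting and projection. The paper states these stability facts without elaboration, whereas you justify them explicitly (non-collapsedness and exceptionality lift because their negations are positive Diophantine conditions; the non-exceptional lift is produced by lifting through a flexible quotient, as in \lemref{24}); this is a faithful filling-in of the same argument rather than a different one.
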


\begin{proof}
Every lift $(\tilde{c}_{0},\tilde{u}_{0},\tilde{z}_{0},\tilde{y}_{0},\tilde{w}_{0},\tilde{p}_{0},a)$
of $(c_{0},u_{0},z_{0},y_{0},w_{0},p_{0},a)$ to a specialization
of $M$ in $F_{k}$, satisfies the conditions 1-3. We lift $(c_{0},u_{0},z_{0},y_{0},w_{0},p_{0},a)$
to a specialization $(\tilde{c}_{0},\tilde{u}_{0},\tilde{z}_{0},\tilde{y}_{0},\tilde{w}_{0},\tilde{p}_{0},a)$
of $M$ in $F_{k}$, so that in the case that $Term(M)$ is either
$p$-rigid or $p$-solid, and the restriction of the $\Gamma_{l}$-specialization
$(c_{0},u_{0},z_{0},y_{0},w_{0},p_{0},a)$ to $Term(M)$ is not $\Gamma_{l}$-exceptional,
the restriction of the lift $(\tilde{c}_{0},\tilde{u}_{0},\tilde{z}_{0},\tilde{y}_{0},\tilde{w}_{0},\tilde{p}_{0},a)$
to $Term(M)$ is not $F_{k}$-exceptional.

Hence, according to \lemref{38}, there exists an $F_{k}$-specialization
$(\tilde{c}_{0}',\tilde{u}_{0}',\tilde{z}_{0},\tilde{y}_{0},\tilde{w}_{0},\tilde{p}_{0},a)$
of $M$ that restricts to the modular family of $Term(M)$ represented
by $(\tilde{c}_{0},\tilde{u}_{0},\tilde{z}_{0},\tilde{y}_{0},\tilde{w}_{0},\tilde{p}_{0},a)$,
and factors through one of the resolutions $GFRes(c,u,z,y,w,p,a)$
of $K_{M}$. Let $(c_{0}',u_{0}',z_{0},y_{0},w_{0},p_{0},a)$ be the
projection of $(\tilde{c}_{0}',\tilde{u}_{0}',\tilde{z}_{0},\tilde{y}_{0},\tilde{w}_{0},\tilde{p}_{0},a)$
to $\Gamma_{l}$. Then, $(c_{0},u_{0},z_{0},y_{0},w_{0},p_{0},a)$
is a $\Gamma_{l}$-specialization of $M$ that restricts to the $\Gamma_{l}$-modular
family of $Term(M)$ represented by $(c_{0},u_{0},z_{0},y_{0},w_{0},p_{0},a)$,
and factors through the resolution $GFRes(c,u,z,y,w,p,a)$ of $K_{M}$.
The other details of the claim follows immediately from this. 
\end{proof}
Finally, we continue by extending each of the terminal graded closures
in the diagram $K_{M}$, with a new diagram, in a similar way that
$K_{M}$ was constructed. We continue the construction iteratively.
According to the descending chain condition of limit groups, this
construction must terminate after finitely many steps. The result
is an MR diagram, that extends the diagram $K$, and admits finitely
many resolutions, each of which terminates in a graded formal closure
$M'(u,z,y,w,p,a)$, whose terminal group is either a $p$-rigid or
a $p$-solid limit group.

We call the obtained diagram \emph{the $CollapseExtra$ MR diagram
of the collection $WPRS$ over the graded closure $ExtraPSGCl(u,z,y,w,p,a)$},
and we denote it by $CollapseExtraMRD(ExtraPSGCl)$, or briefly $CollapseExtraMRD$.
We summarize the general properties of the construction in the next
theorem.

\subsection{General Properties of the $CollapseExtra$ MR Diagram}
\begin{thm}
\label{thm:40} Let $ExtraPSGCl(u,z,y,w,p,a)$ be a terminal graded
closure in $ExtraPSMRD$.

Let $CollapseExtraMRD$ be the $CollapseExtra$ MR diagram of the
collection $WPRS$ over the graded closure $ExtraPSGCl(u,z,y,w,p,a)$.

Let $\{(c_{n},u_{n},z_{n},y_{n},w_{n},p_{n},a)\}$ be a sequence of
specializations, for which the restricted sequence $\{(u_{n},z_{n},y_{n},w_{n},p_{n},a)\}$
is a sequence of specializations of $ExtraPSGCl(u,z,y,w,p,a)$, the
restricted sequence $\{(z_{n},y_{n},w_{n},p_{n},a)\}$ is a test sequence
of $Comp(z,y,w,p,a)$, and for all $n$: 
\begin{enumerate}
\item The restriction $(y_{n},w_{n},p_{n},a)$ is non-collapsed. 
\item The specialization $(c_{n},u_{n},z_{n},y_{n},w_{n},p_{n},a)$ restricts
to an exceptional solution of the base group \\
 $Term(ExtraPSGCl)$ of $ExtraPSGCl$, and to an exceptional solution
of $B$. 
\item The specialization $c_{n}$ explains how the specialization $(u_{n},z_{n},y_{n},w_{n},p_{n},a)$
is collapsed. 
\end{enumerate}
Then, for all $n$ in a subsequence of the integers, there exist $c_{n}',u_{n}'\in F_{k}$,
so that: 
\begin{enumerate}
\item The specialization $(c_{n}',u_{n}',z_{n},y_{n},w_{n},p_{n},a)$ factors
through one of the terminal graded formal closures $CollapseExtraGFCl(c,u,z,y,w,p,a)$
of $CollapseExtraMRD$. 
\item The specialization $(c_{n}',u_{n}',z_{n},y_{n},w_{n},p_{n},a)$ restricts
to an exceptional solution of the $p$-rigid or $p$-solid base group
$Term(CollapseExtraGFCl)$ of $CollapseExtraGFCl(c,u,z,y,w,p,a)$. 
\item The specialization $(c_{n}',u_{n}',z_{n},y_{n},w_{n},p_{n},a)$ restricts
to an exceptional solution of the $p$-rigid or $p$-solid base group
$Term(ExtraPSGCl)$ of $ExtraPSGCl(u,z,y,w,p,a)$, that belongs to
the same exceptional family of the restriction of $(c_{n},u_{n},z_{n},y_{n},w_{n},p_{n},a)$
to $Term(ExtraPSGCl)$. 
\item The specialization $(c_{n}',u_{n}',z_{n},y_{n},w_{n},p_{n},a)$ restricts
to an exceptional solution of $B$, the $p$-rigid or $p$-solid base
group of $Comp(z,y,w,p,a)$, that belongs to the same exceptional
family of the restriction of $(c_{n},u_{n},z_{n},y_{n},w_{n},p_{n},a)$
to $B$. 
\item The specializations $(c_{n},u_{n},z_{n},y_{n},w_{n},p_{n},a)$ and
$(c_{n}',u_{n}',z_{n},y_{n},w_{n},p_{n},a)$ are collapsed in the
same form. 
\end{enumerate}
\end{thm}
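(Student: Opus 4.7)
The plan is to mimic, at the $CollapseExtra$ level, the arguments used for \thmref{25} and \thmref{33}, but now we must track three layers of exceptional structure ($B$, $Term(ExtraPSGCl)$, and the groups $G_i$) simultaneously, together with the collapse encoding carried by the variables $c$. The given sequence $\{(c_n,u_n,z_n,y_n,w_n,p_n,a)\}$ satisfies, by hypothesis, exactly the three defining conditions of the collection $\mathfrak{J}$ used in the construction of $CollapseExtraMRD$. Hence, up to passing to a subsequence, we may assume it converges and factors through one of the maximal limit groups $H(c,u,z,y,w,p,a)$ that appear at some stage of the iterative construction of the initial diagram $K$.

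First, I would run the iterative shortening step on this sequence with respect to the $Comp$-JSJ of the resulting limit group $M$, as in the construction of $K$. After finitely many iterations (descending chain condition), the sequence subfactors through some resolution in $K$ whose terminal group $M(c,u,z,y,w,p,a)$ is either $Comp$-rigid or $Comp$-solid. By \lemref{36} applied at each stage, $Term(ExtraPSGCl)$ is mapped compatibly into every such $M$, so modifications by modular automorphisms coming from the $Comp$-JSJ of $M$ do not change the exceptional family represented in $Term(ExtraPSGCl)$ or in $B$; they also fix the exceptional families represented in each $G_i$ (as recorded in the construction of $ExtraPSMRD$). Finally, since $c$ appears only through a positive Diophantine system $\Sigma_i$, whichever system is satisfied by $(c_n,u_n,z_n,y_n,w_n,p_n,a)$ is still satisfied after the modular shortening; so the collapse-form is preserved.

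Next I would invoke \lemref{38}, iteratively applied to successive terminal graded closures of the diagrams $K_M$ that extend $K$. At each stage, \lemref{38} produces replacement specializations $(c_n',u_n',z_n,y_n,w_n,p_n,a)$ in the same modular family of $Term(M)$; by compatibility (\lemref{36} and the compatibility clause in the construction of $M$), this preserves the exceptional family in $B$ and in $Term(ExtraPSGCl)$, and by the analogue of \thmref{33}(2) preserves the exceptional families in the $G_i$; because the modification takes place inside the graded closure that already witnesses the collapse, the system $\Sigma_i$ witnessing the collapse remains satisfied, with $c_n'$ reading off the new witness. The ``moreover'' clause of \lemref{38} guarantees that whenever the restriction to $Term(M)$ is not itself exceptional we descend to a proper quotient, so by the descending chain condition this iteration terminates in a resolution whose terminal graded formal closure $CollapseExtraGFCl(c,u,z,y,w,p,a)$ has a $p$-rigid or $p$-solid base $Term(CollapseExtraGFCl)$, and the restriction of $(c_n',u_n',z_n,y_n,w_n,p_n,a)$ to this base is exceptional.

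The main obstacle I anticipate is the bookkeeping in the last paragraph: one must check that the five properties (1)--(5) in the conclusion are all preserved simultaneously at every single step of the iteration, and are preserved through a single replacement by \lemref{38}. Properties (3) and (4) require the compatibility of the $p$-JSJ of $Term(M)$ with those of $Term(ExtraPSGCl)$ and $B$, which is precisely what \lemref{36} (together with the compatibility part of the construction of graded formal closures, as in \lemref{20}) delivers. Property (5) requires noticing that ``collapsed in the same form'' is a positive Diophantine condition stable under modular families. Once these compatibilities are in place, the iteration and the descending chain condition for limit groups close the argument in the usual way.
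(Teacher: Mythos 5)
Your proposal is correct and follows essentially the route the paper intends: the paper states \thmref{40} without a separate proof, as a summary of the construction, and the ingredients you assemble — membership of the given sequence in the collection $\mathfrak{J}$, the shortening lemma following \lemref{36} showing the shortened sequence stays in $\mathfrak{J}$ with the same exceptional families, \lemref{38} for the descent through the diagrams $K_{M}$, and the descending chain condition — are exactly the ones the construction relies on.
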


The $CollapseExtra$ MR diagram $CollapseExtra(ExtraPSGCl)$ encodes
all the specializations of \\
 $ExtraPSGCl(u,z,y,w,p,a)$ that are collapsed in a formal way, in
the following more precise sense: 
\begin{lem}
\label{lem:41} Let $ExtraPSGCl(u,z,y,w,p,a)$ be a terminal graded
closure in $ExtraPSMRD$.

Let $CollapseExtraMRD$ be the $CollapseExtra$ MR diagram of the
collection $WPRS$ over the graded closure $ExtraPSGCl(u,z,y,w,p,a)$.
Let $\Lambda$ be a collection of (positive) Diophantine conditions
stating that the specialization $(u,z,y,w,p,a)$ is collapsed.

Let $GCl(b,z,y,w,p,a)$ be a graded closure of $Comp(z,y,w,p,a)$,
and denote its $p$-graded base limit group by $Term(GCl)$. Assume
that the graded closure $ExtraPSGCl(u,z,y,w,p,a)$ is mapped into
$GCl(b,z,y,w,p,a)$, so that the subgroup $Comp(z,y,w,p,a)$ is mapped
canonically. Assume that there exists an element \\
 $c\in GCl(b,z,y,w,p,a)$ so that the words of one of the systems
$\Lambda_{i}(c,u,z,y,w,p,a)$ in $\Lambda$, $i=1,...,s$, represent
the trivial element in $GCl(b,z,y,w,p,a)$.

Then, every specialization $(c_{0},u_{0},z_{0},y_{0},w_{0},p_{0},a)$
that factors through $GCl(b,z,y,w,p,a)$, and satisfies the conditions
1-3 in the top of the section, factors through the $CollapseExtra$
MR diagram $CollapseExtraMRD$. 
\end{lem}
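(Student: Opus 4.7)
The plan is to mimic the proof of \lemref{26}, with the $CollapseExtra$ MR diagram playing the role of $GFMRD$ and \thmref{40} serving as the analogue of the property used there. First I would take the specialization $g_{p_{0}}$ of $Term(GCl)$ induced by the given $(c_{0},u_{0},z_{0},y_{0},w_{0},p_{0},a)$; by condition 2 it restricts to an exceptional specialization of $B$, and it yields an ungraded closure $GCl_{g_{p_{0}}}$ generated by the images of $b,z,y,w,a$. The element $c\in GCl(b,z,y,w,p,a)$ becomes an explicit word $c=c(b,z,y,w,a)\in GCl_{g_{p_{0}}}$, and by the hypothesis one of the systems $\Lambda_{i}(c(b,z,y,w,a),u,z,y,w,p_{0},a)$ is trivial in $GCl_{g_{p_{0}}}$.

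Following the strategy in \lemref{26}, I would introduce two collections of positive Diophantine conditions over $GCl_{g_{p_{0}}}$: a collection $\Omega$ asserting that $(c(b,z,y,w,a),u,z,y,w,p_{0},a)$ factors through one of the terminal completed resolutions of $CollapseExtraMRD$, and a collection $\Sigma$ asserting that the restricted part $(z,y,w,p_{0},a)$ is collapsed (violating condition 1). I would then build the corresponding formal MR diagrams $FMRD(\Omega)$ and $FMRD(\Sigma)$ over $GCl_{g_{p_{0}}}$.

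The central step is to show that the union of their terminal ungraded formal closures is a covering closure of $GCl_{g_{p_{0}}}$. To verify this, take any test sequence $\{(b_{n},z_{n},y_{n},w_{n},p_{0},a)\}$ of $GCl_{g_{p_{0}}}$ and set $c_{n}:=c(b_{n},z_{n},y_{n},w_{n},a)$, with $u_{n}$ the induced specialization of $u$. Either the restricted sequence $\{(z_{n},y_{n},w_{n},p_{0},a)\}$ is eventually collapsed, in which case the sequence subfactors through $FMRD(\Sigma)$, or it is non-collapsed, in which case $(c_{n},u_{n},z_{n},y_{n},w_{n},p_{0},a)$ satisfies the conditions 1--3 in the top of the section: condition 1 by this branch, condition 2 by the compatibility of the embedding of $Term(ExtraPSGCl)$ in $GCl$ (so that $g_{p_{0}}$ restricts to exceptional specializations of both $B$ and $Term(ExtraPSGCl)$), and condition 3 because $\Lambda_{i}(c,u,z,y,w,p,a)$ is trivial in $GCl$. \thmref{40} then supplies subfactorization through $CollapseExtraMRD$ (up to a modular-equivalent replacement of $c_{n},u_{n}$), hence subfactorization through $FMRD(\Omega)$.

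Once the covering property is established, the conclusion is immediate: the specialization $(c_{0},u_{0},z_{0},y_{0},w_{0},p_{0},a)$ factors through some terminal closure of $FMRD(\Omega)\cup FMRD(\Sigma)$, but condition 1 rules out $FMRD(\Sigma)$ (whose formal solutions would witness collapse of $(z_{0},y_{0},w_{0},p_{0},a)$), so it factors through a terminal closure of $FMRD(\Omega)$, and the formal solution living there realizes the required factorization through $CollapseExtraMRD$. The main obstacle is the covering-closure verification: one must match conditions 2--3 all along a test sequence of $GCl_{g_{p_{0}}}$ using compatibility of the embeddings $B\hookrightarrow Term(ExtraPSGCl)\hookrightarrow Term(GCl)$, and then properly absorb the modular-equivalence ``replacement'' from \thmref{40} into the factorization through $FMRD(\Omega)$ rather than the bare specialization.
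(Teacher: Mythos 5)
Your proposal follows essentially the same route as the paper's own proof: restrict to the induced ungraded closure $GCl_{g_{p_{0}}}$, form the two formal MR diagrams $FMRD(\Omega)$ and $FMRD(\Sigma)$, verify via test sequences (using the defining property of $CollapseExtraMRD$, i.e.\ \thmref{40}) that their terminal closures form a covering closure, and then rule out $FMRD(\Sigma)$ by condition 1 to extract the factorization through $CollapseExtraMRD$ from a formal solution in $FMRD(\Omega)$. The argument is correct and matches the paper's.
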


\begin{proof}
Assume that there exists a specialization $(c_{0},u_{0},z_{0},y_{0},w_{0},p_{0},a)$
that factors through $GCl$, and satisfies the conditions 1-3 in the
top of the section. Then, the mapping of $Term(ExtraPSGCl)$ into
any level of $GCl$ is compatible.

Let $g_{p_{0}}$ be the restriction of $(c_{0},u_{0},z_{0},y_{0},w_{0},p_{0},a)$
to $Term(GCl)$. The solution $g_{p_{0}}$ induces an ungraded closure
$GCl_{g_{p_{0}}}$ that is covered by the given graded closure $GCl(b,z,y,p,a)$.
Note that $GCl_{g_{p_{0}}}$ is generated by the (image of the) elements
$b,z,y,w,a$. In particular the elements $u,c\in GCl_{g_{p_{0}}}$
are generated by these elements, $u=u(b,z,y,w,a),\,c=c(b,z,y,w,a)$.
Moreover, every specialization $(c'_{0},u'_{0},z'_{0},y'_{0},w'_{0},p_{0},a)$
of $GCl_{g_{p_{0}}}$ satisfies the conditions 2-3 in the beginning
of the section.

Let $\Omega$ be a collection of (positive) Diophantine conditions
\[
\Omega_{1}(q,c(b,z,y,w,a),u(b,z,y,w,a),z,y,w,p_{0},a),...,\Omega_{r}(q,c(b,z,y,w,a),u(b,z,y,w,a),z,y,w,p_{0},a)
\]
stating that the specialization $(c(b,z,y,w,a),u(b,z,y,w,a),z,y,w,p_{0},a)$
factors through the $CollapseExtra$ MR diagram $CollapseExtraMRD$.

And let $\Sigma$ be a collection of (positive) Diophantine conditions
\[
\Sigma_{1}(f,c(b,z,y,w,a),u(b,z,y,w,a),z,y,w,p_{0},a),...,\Sigma_{t}(f,c(b,z,y,w,a),u(b,z,y,w,a),z,y,w,p_{0},a)
\]
stating that the specialization $(c,u,z,y,w,p_{0},a)$ satisfies that
the restriction $(z,y,w,p_{0},a)$ is collapsed.

We construct two formal MR diagrams $FMRD(\Omega)$ and $FMRD(\Sigma)$
over the ungraded closure $GCl_{g_{p_{0}}}$, the first of the collection
$\Omega$, and the second for the collection $\Sigma$.

Let $\{(b_{n},z_{n},y_{n},w_{n},p_{0},a)\}$ be a test sequence of
$GCl_{g_{p_{0}}}$, and consider the induced specializations $\{u_{n}\}$
and $\{c_{n}\}$ of the elements $u,c\in GFCl_{g_{p_{0}}}$. Then,
the sequence $\{(c_{n},u_{n},c_{n},y_{n},w_{n},p_{0},a)\}$ either
admits a subsequence for which the restricted specializations $\{(z_{n},y_{n},w_{n},p_{0},a)\}$
are collapsed, or it subfactors through one of the resolutions of
$CollapseExtraMRD$. Hence, the collection of terminal ungraded closures
in the two formal MR diagrams $FMRD(\Omega)$ and $FMRD(\Sigma)$,
forms a covering closure for $GCl_{g_{p_{0}}}$.

Now consider the specialization $(c_{0},u_{0},z_{0},y_{0},w_{0},p_{0},a)$
that factors through $GCl_{g_{p_{0}}}$ and satisfies the conditions
1-3. It cannot factor through $FMRD(\Sigma)$, and hence it factors
through some terminal ungraded closure of $FMRD(\Omega)$. Due to
the formal solution in that ungraded closure, we deduce that $(c_{0},u_{0},z_{0},y_{0},w_{0},p_{0},a)$
factors through $CollapseExtraMRD$.
\end{proof}
\begin{lem}
\label{lem:42} Keep the notation of \ref{lem:41}. Let $\Gamma_{l}$
be a group of level $l$ in the model. If $l$ is large enough, then
the following property is satisfied in $\Gamma_{l}$.

Every $\Gamma_{l}$-specialization $(c_{0},u_{0},z_{0},y_{0},w_{0},p_{0},a)$
that factors through $GCl(b,z,y,w,p,a)$, and satisfies the conditions
1-3 in the top of the section (w.r.t. $\Gamma_{l}$), factors through
the $CollapseExtra$ MR diagram $CollapseExtraMRD$. 
\end{lem}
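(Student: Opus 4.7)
The plan is to reduce the statement to \lemref{41} over $F_k$ via the lifting strategy that was already used to deduce \lemref{24}, \thmref{27}, \lemref{39} and \thmref{35}. Concretely, given a $\Gamma_l$-specialization $(c_0,u_0,z_0,y_0,w_0,p_0,a)$ of $GCl(b,z,y,w,p,a)$ that satisfies conditions 1-3 from the top of the section, I first lift it to an $F_k$-specialization $(\tilde c_0,\tilde u_0,\tilde z_0,\tilde y_0,\tilde w_0,\tilde p_0,a)$ that factors through $GCl(b,z,y,w,p,a)$; I then verify that the lift still satisfies conditions 1-3 over $F_k$; I apply \lemref{41} to the lift to conclude that it factors through $CollapseExtraMRD$; and I project back to $\Gamma_l$ to conclude that the original $\Gamma_l$-specialization factors through $CollapseExtraMRD$.

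For the lifting step, recall that $GCl(b,z,y,w,p,a)$ is a finitely presented graded closure, and that the existence of $c$ and of the collapsing element is testified by one of finitely many systems $\Lambda_i(c,u,z,y,w,p,a)=1$. By \thmref{1} (and the noetherianity theorem), for $l$ large enough the random group $\Gamma_l$ satisfies the lifting property with respect to the finite union of the defining relations of $GCl$ together with the systems $\Lambda_1,\dots,\Lambda_s$, so every $\Gamma_l$-specialization of $GCl$ that satisfies some $\Lambda_i$ admits an $F_k$-lift satisfying both the defining relations of $GCl$ and the same $\Lambda_i$. Thus I obtain the desired lift $(\tilde c_0,\tilde u_0,\tilde z_0,\tilde y_0,\tilde w_0,\tilde p_0,a)$, and condition 3 is preserved automatically.

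Conditions 1 and 2 are preserved by the standard argument used in the proofs of \thmref{15}, \thmref{17} and \thmref{35}. Indeed, if the restriction of $(\tilde c_0,\tilde u_0,\tilde z_0,\tilde y_0,\tilde w_0,\tilde p_0,a)$ to $B$ (resp.\ to $Term(ExtraPSGCl)$) factored through a flexible quotient, then its $\Gamma_l$-projection, which is the restriction of $(c_0,u_0,z_0,y_0,w_0,p_0,a)$, would also factor through a flexible quotient, contradicting condition 2 over $\Gamma_l$; hence the lift is $F_k$-exceptional on both $B$ and $Term(ExtraPSGCl)$. Likewise, if the lifted restriction $(\tilde y_0,\tilde w_0,\tilde p_0,a)$ were collapsed over $F_k$, then its projection would be collapsed over $\Gamma_l$, contradicting condition 1; indeed the collapse would be realized by some element of the (finitely many) systems $\Sigma_i(c,u,z,y,w,p,a)$, whose triviality is preserved under the quotient map $F_k \twoheadrightarrow \Gamma_l$. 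So the lift satisfies conditions 1-3 over $F_k$, and \lemref{41} applies, yielding a factorization of $(\tilde c_0,\tilde u_0,\tilde z_0,\tilde y_0,\tilde w_0,\tilde p_0,a)$ through $CollapseExtraMRD$; projecting this factorization to $\Gamma_l$ gives the required factorization of the original $\Gamma_l$-specialization.

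The only real obstacle is the uniformity of $l$: the lifting property, exceptionality transfer, and the bounds from \thmref{15} and \thmref{17} must all hold simultaneously for the same $l$. This is guaranteed by \thmref{1} applied to the finite collection consisting of the defining relations of $GCl$, the systems $\Lambda_1,\dots,\Lambda_s$, the systems $\Sigma_i$, and the Diophantine conditions carving out the flexible quotients of $B$ and $Term(ExtraPSGCl)$; for all large enough $l$, every group $\Gamma_l$ of level $l$ satisfies the lifting property simultaneously for this finite collection, which is exactly what the proof requires.
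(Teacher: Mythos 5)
Your proposal is correct and follows essentially the same route as the paper's proof: lift the $\Gamma_l$-specialization to an $F_k$-specialization of $GCl(b,z,y,w,p,a)$, observe that the lift still satisfies conditions 1--3, apply \lemref{41}, and project the resulting factorization back to $\Gamma_l$. Your version simply spells out the verification that the lift preserves conditions 1--3, which the paper asserts without detail.
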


\begin{proof}
We lift $(c_{0},u_{0},z_{0},y_{0},w_{0},p_{0},a)$ to a specialization
$(\tilde{c}_{0},\tilde{u}_{0},\tilde{z}_{0},\tilde{y}_{0},\tilde{w}_{0},\tilde{p}_{0},a)$
that factors through $GCl(b,z,y,w,p,a)$. Then, the specialization
$(\tilde{c}_{0},\tilde{u}_{0},\tilde{z}_{0},\tilde{y}_{0},\tilde{w}_{0},\tilde{p}_{0},a)$
satisfies the conditions 1-3, and thus, according to \lemref{41},
it factors through $CollapseExtraMRD$. Hence, the $\Gamma_{l}$-specialization
$(c_{0},u_{0},z_{0},y_{0},w_{0},p_{0},a)$ factors through $CollapseExtraMRD$. 
\end{proof}
\begin{lem}
\label{lem:43} Let $ExtraPSGCl(u,z,y,w,p,a)$ be a terminal graded
closure in $ExtraPSMRD$.

Let $CollapseExtraMRD$ be the $CollapseExtra$ MR diagram of the
collection $WPRS$ and the graded closure $ExtraPSGCl(u,z,y,w,p,a)$.

Let $CollapseExtraGFCl(c,u,z,y,w,p,a)$ be a terminal graded formal
closure in $CollapseExtraMRD$, and let $(c_{0},u_{0},z_{0},y_{0},w_{0},p_{0},a)$
be a specialization of $CollapseExtraGFCl(c,u,z,y,w,p,a)$.

Then, every specialization $(u_{0}',z_{0},y_{0},w_{0},p_{0},a)$ of
$ExtraPSGCl(u,z,y,w,p,a)$, that restricts to a solution of $Term(ExtraPSGCl)$
that belongs to the same modular family of the restriction of $(c_{0},u_{0},z_{0},y_{0},w_{0},p_{0},a)$
to $Term(ExtraPSGCl)$, is collapsed. 
\end{lem}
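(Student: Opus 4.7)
The strategy is to exploit the formal collapse identity that holds inside $CollapseExtraGFCl$ by construction, and then transport it across the modular family of $Term(ExtraPSGCl)$. First, by the iterative construction of $CollapseExtraMRD$, each terminal graded formal closure $CollapseExtraGFCl(c,u,z,y,w,p,a)$ is produced from sequences satisfying condition~3 in the definition of the collection $\mathfrak{J}$, namely that $c_n$ witnesses the collapse of $(u_n,z_n,y_n,w_n,p_n,a)$ via one of the systems $\Sigma_i$. Passing to limits, there is an index $i\in\{1,\dots,r\}$ such that the elements of $\Sigma_i(c,u,z,y,w,p,a)$ represent the trivial element of $CollapseExtraGFCl$. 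Evaluating this identity under the homomorphism induced by the given specialization yields $\Sigma_i(c_0,u_0,z_0,y_0,w_0,p_0,a)=1$, which is precisely the statement that $(u_0,z_0,y_0,w_0,p_0,a)$ is collapsed.

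Next, I would show that the collapsed/non-collapsed status of a specialization of $ExtraPSGCl$ depends only on the modular family of its restriction to $Term(ExtraPSGCl)$. Being collapsed records that the induced specializations of the groups $G_i$ fail to form pairwise distinct exceptional families. By \lemref{30} and the compatibility results propagated into the $CollapseExtra$ construction, the embedding of each $G_i$ into every level of the tower of $ExtraPSGCl$ is compatible with the $wp$-JSJ of $G_i$. In particular, modular automorphisms induced by the $p$-JSJ of $Term(ExtraPSGCl)$ lift through the tower and act on the embedded copies of $G_i$ through modular automorphisms of $G_i$ itself, which preserve the exceptional families of $G_i$ by definition. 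Consequently, any two specializations of $ExtraPSGCl$ whose $Term(ExtraPSGCl)$ restrictions lie in the same modular family induce the same tuple of exceptional families of each $G_i$, and therefore share the same collapsed/non-collapsed status.

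Combining the two steps gives the lemma: $(u_0,z_0,y_0,w_0,p_0,a)$ is collapsed by the formal identity, the hypothesis matches the modular family at $Term(ExtraPSGCl)$ of $(u_0',z_0,y_0,w_0,p_0,a)$ to that of $(u_0,z_0,y_0,w_0,p_0,a)$, and so $(u_0',z_0,y_0,w_0,p_0,a)$ is collapsed as well. The main obstacle is the modular-family-invariance step: one must rule out that freedom in $u$ above $Term(ExtraPSGCl)$ not controlled by the modular family at the base could alter the exceptional family data of some $G_i$. This reduces to a careful application of \lemref{30} combined with an argument in the spirit of \lemref{18}: any such uncontrolled freedom would produce a formal solution testifying to flexibility of the $G_i$-specializations induced by factorizations through $ExtraPSGCl$, contradicting the compatibility of $G_i$ with the levels of the tower built during the $ExtraPS$ construction.
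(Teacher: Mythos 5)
Your proof is correct and follows essentially the same route as the paper's: first deduce that $(u_{0},z_{0},y_{0},w_{0},p_{0},a)$ is collapsed from the formal collapse identity built into $CollapseExtraGFCl$, then transfer the collapse across the modular family of $Term(ExtraPSGCl)$ using the compatibility of the embeddings of the $G_{i}$. The paper simply asserts the second step, whereas you justify it via \lemref{30}; otherwise the arguments coincide.
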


\begin{proof}
Since $(u_{0},z_{0},y_{0},w_{0},p_{0},a)$ factors through $CollpaseExtraMRD$,
we know that it is collapsed. Since $(u_{0},z_{0},y_{0},w_{0},p_{0},a)$
and $(u_{0}',z_{0},y_{0},w_{0},p_{0},a)$ are specializations of $ExtraPSGCl(u,z,y,w,p,a)$
that restrict to the same modular family of $Term(ExtraPSGCl)$, the
two families of representatives that they induce, represent the same
modular families of the group $G_{i}$, for all $i=1,...,s$. Hence,
$(u_{0}',z_{0},y_{0},w_{0},p_{0},a)$ is collapsed. 
\end{proof}
\begin{lem}
\label{lem:44} Let $ExtraPSGCl(u,z,y,w,p,a)$ be a terminal graded
closure in $ExtraPSMRD$.

Let $CollapseExtraMRD$ be the $CollapseExtra$ MR diagram of the
collection $WPRS$ and the graded closure $ExtraPSGCl(u,z,y,w,p,a)$.

Let $\Gamma_{l}$ be a group of level $l$ in the model. If $l$ is
large enough, then the following property is satisfied in $\Gamma_{l}$.

Let $CollapseExtraGFCl(c,u,z,y,w,p,a)$ be a terminal graded formal
closure in $CollapseExtraMRD$, and let $(c_{0},u_{0},z_{0},y_{0},w_{0},p_{0},a)$
be a $\Gamma_{l}$-specialization of $CollapseExtraGFCl(c,u,z,y,w,p,a)$.

Then, every $\Gamma_{l}$-specialization $(u_{0}',z_{0},y_{0},w_{0},p_{0},a)$
of $ExtraPSGCl(u,z,y,w,p,a)$, that restricts to a solution of $Term(ExtraPSGCl)$
that belongs to the same modular family of the restriction of $(c_{0},u_{0},z_{0},y_{0},w_{0},p_{0},a)$
to $Term(ExtraPSGCl)$, is collapsed. 
\end{lem}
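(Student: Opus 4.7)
The plan is to transport the $F_k$-argument of Lemma 43 to the random group $\Gamma_l$ via the lifting property, choosing $l$ large enough so that the relevant finite systems of equations satisfy the $\Sigma$-l.p. Specifically, the lift is used both to certify collapse (by applying Lemma 43 over $F_k$) and to reconcile the $\Gamma_l$-modular family information with $F_k$-modular family information. The underlying reason the statement is true is the same as in Lemma 43: being collapsed depends only on the $\Gamma_l$-modular families of the induced representatives of the $G_i$'s, and these families are determined by the modular family of the restriction to $Term(ExtraPSGCl)$.

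Concretely, I would first lift $(c_0,u_0,z_0,y_0,w_0,p_0,a)$ to a specialization $(\tilde c_0,\tilde u_0,\tilde z_0,\tilde y_0,\tilde w_0,\tilde p_0,a)$ of $CollapseExtraGFCl(c,u,z,y,w,p,a)$ over $F_k$; this lift factors through the same terminal graded formal closure, so by construction of $CollapseExtraMRD$ (the formal solution it carries testifies that one of the systems $\Sigma_i$ vanishes), its restriction $(\tilde u_0,\tilde z_0,\tilde y_0,\tilde w_0,\tilde p_0,a)$ to $ExtraPSGCl$ is a collapsed $F_k$-specialization. Next, I lift $(u_0',z_0,y_0,w_0,p_0,a)$ to a specialization $(\tilde u_0',\tilde z_0,\tilde y_0,\tilde w_0,\tilde p_0,a)$ of $ExtraPSGCl$ over $F_k$. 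Using the uniform bounds on exceptional families in Theorems \ref{thm:15} and \ref{thm:17}, together with the compatibility of the map $Term(ExtraPSGCl)\hookrightarrow ExtraPSGCl$ with the $p$-JSJ (implicit in the construction of $ExtraPSMRD$ and in Lemma \ref{lem:30}), I can arrange the lift so that its restriction to $Term(ExtraPSGCl)$ lies in the same $F_k$-modular family as the restriction of $(\tilde c_0,\tilde u_0,\tilde z_0,\tilde y_0,\tilde w_0,\tilde p_0,a)$ to $Term(ExtraPSGCl)$; this is possible because the $\Gamma_l$-modular family determines the $F_k$-modular family once one fixes representatives and uses the lift.

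Applying Lemma \ref{lem:43} to these two $F_k$-specializations of $ExtraPSGCl$, we conclude that the lift $(\tilde u_0',\tilde z_0,\tilde y_0,\tilde w_0,\tilde p_0,a)$ is collapsed over $F_k$, i.e. satisfies some $\Sigma_i$ identically; projecting back to $\Gamma_l$, the original specialization $(u_0',z_0,y_0,w_0,p_0,a)$ of $ExtraPSGCl$ is collapsed as well. The main obstacle is the middle step: verifying that a lift of $(u_0',z_0,y_0,w_0,p_0,a)$ exists whose restriction to $Term(ExtraPSGCl)$ lands in the \emph{same} $F_k$-modular family as the chosen lift of the restriction of $(c_0,u_0,z_0,y_0,w_0,p_0,a)$ rather than merely the same $\Gamma_l$-modular family. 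This is handled by lifting a single representative of the common $\Gamma_l$-modular family once and for all and then using the flexibility in the lifting (modifying by elements that specialize trivially in $\Gamma_l$) together with Lemma \ref{lem:32} to align the two lifts, which in turn relies on the uniform global bounds from Theorems \ref{thm:15} and \ref{thm:17} to ensure that the number of distinct $F_k$-exceptional families projecting into a fixed $\Gamma_l$-exceptional family is controlled.
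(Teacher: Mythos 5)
Your opening paragraph already contains the paper's entire proof: the paper disposes of Lemma \ref{lem:44} with ``the same proof as the case of $F_k$'', i.e.\ it reruns the two-line argument of Lemma \ref{lem:43} intrinsically inside $\Gamma_l$. Both steps of that argument survive the change of group without any lifting: (i) a $\Gamma_l$-specialization of $CollapseExtraGFCl$ is collapsed because the formal solution $c$ carried by that closure makes the words of some $\Sigma_i(c,u,z,y,w,p,a)$ trivial \emph{in the group} $CollapseExtraGFCl$, hence trivial in $\Gamma_l$ under any homomorphism; (ii) two $\Gamma_l$-specializations of $ExtraPSGCl$ whose restrictions to $Term(ExtraPSGCl)$ lie in the same $\Gamma_l$-modular family induce the same $\Gamma_l$-modular families of representatives of each $G_i$, because the witnessing homomorphism from the doubled completion maps into $\Gamma_l$ just as well as into $F_k$. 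So the statement is ``soft'' over $\Gamma_l$ and needs no use of the lifting property at all.

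The detour you then take --- lift both specializations to $F_k$, apply Lemma \ref{lem:43} there, and project back --- has a genuine gap exactly where you flag it. Two $\Gamma_l$-specializations whose restrictions to $Term(ExtraPSGCl)$ lie in the same $\Gamma_l$-modular family do \emph{not} in general admit independent lifts landing in the same $F_k$-modular family: membership in a common family is witnessed by a homomorphism $Comp_T\rightarrow\Gamma_l$, and to preserve it one must lift that witness \emph{jointly} with the factorizations through $CollapseExtraGFCl$ and $ExtraPSGCl$ (as a single finite system, as is done e.g.\ in the proof of Lemma \ref{lem:39}). Your proposed repair --- modifying one lift by elements that specialize trivially in $\Gamma_l$, combined with the bounds of Theorems \ref{thm:15} and \ref{thm:17} --- does not achieve this: such modifications need not preserve factorization through $ExtraPSGCl$ nor place the restriction in a prescribed $F_k$-family, and the finiteness of the number of exceptional families is irrelevant to aligning two particular lifts. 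The joint-lifting version of your argument can be made to work, but it is strictly more machinery than the statement requires; the intrinsic argument you sketched in your first paragraph is the one to write down.
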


\begin{proof}
The same proof as the case of $F_{k}$. 
\end{proof}

\section{Approximations of $EAE$-Sets under Minimal Rank Assumption}\label{sec:An-Approximation-to-Abstract-EAE}

Let $WPRS$ be a given collection of $wp$-rigid and $wp$-solid limit
groups $G_{1}(x_{1},w,p,a),,...,G_{s}(x_{s},w,p,a)$ ($wp$ is the
parameter set for those graded limit groups). For all $i=1,...,s$,
we fix an integer $e_{i}\in\{0,...,M(G_{i})\}$, where $M(G_{i})$
is the maximal number of exceptional families of specializations in
$G_{i}$ for a fixed value of the parameters. We consider the graded
completion $Comp(z,y,w,p,a)$ of some $p$-graded resolution $Res(y,w,p,a)$,
and we denote the terminal $p$-rigid or $p$-solid base group of
$Comp(z,y,w,p,a)$ by $B$ (note that $p$ is the parameter set for
the graded completion $Comp(z,y,w,p,a)$). We assume that the amalgamated
free product of the groups in the collection $WPRS$ 
\[
\text{\ensuremath{\stackrel[i=1]{s}{\ast_{\langle w,p,a\rangle}}\stackrel[j=1]{e_{i}}{\ast_{\langle w,p,a\rangle}}G_{i}}}
\]
amalgamated over the subgroup $\langle w,p,a\rangle$, is mapped into
each of the levels of $Comp(z,y,w,p,a)$ (in a way that respects the
canonical maps between those levels). We assume further that $Comp(z,y,w,p,a)$
admits some specialization $(z_{0},y_{0},w_{0},p_{0},a)$ so that
the restriction of $(y_{0},w_{0},p_{0},a)$ to $\stackrel[j=1]{e_{i}}{\ast_{\langle w,p,a\rangle}}G_{i}$
represent a family of representatives of pairwise distinct exceptional
families of $G_{i}$ corresponding to the value $w_{0}p_{0}$ of the
parameters, for all $i=1,...,s$.

Let $AbsSys(c,z,y,w,p,a)$ be a finite collection of systems of equations.
We add to the systems in $AbsSys(c,z,y,w,p,a)$, the finitely many
systems stating that the specialization $(z,y,w,p,a)$ of $Comp(z,y,w,p,a)$
is collapsed, i.e., that for some $i=1,...,s$, the $e_{i}$ specializations
of $G_{i}$ that the specialization $(z,y,w,p,a)$ induces, do not
form a family of representatives of pairwise distinct exceptional
families of $G_{i}$ corresponding to the value $wp$ of the parameters.

We construct the $p$-graded formal MR diagram of the systems $AbsSys(c,z,y,w,p,a)$
w.r.t. the resolution $Comp(z,y,w,p,a)$. We denote this diagram by
$PosGFMRD$. We also construct the $ExtraPS$ MR diagram $ExtraPSMRD$
of the collection $WPRS$ over the resolution $Comp(z,y,w,p,a)$. 
\begin{defn}
Let $(z_{0},y_{0},w_{0},p_{0},a)$ be a specialization of $Comp(z,y,w,p,a)$
(constructed with respect to some proof system PS). We say that $(z_{0},y_{0},w_{0},p_{0},a)$
is an \emph{abstract validPS statement}, if the following conditions
hold: 
\begin{enumerate}
\item The specialization $(z_{0},y_{0},w_{0},p_{0},a)$ is non-collapsed. 
\item For all $i=1,...,s$, the family of representatives of the exceptional
families of the $wp$-rigid or $wp$-solid group $G_{i}$, that the
specialization $(z_{0},y_{0},w_{0},p_{0},a)$ induces, forms a full
family w.r.t. the value $w_{0}p_{0}$ of the parameters. 
\item There exists no $c_{0}\in F_{k}$, so that the specialization $(c_{0},z_{0},y_{0},w_{0},p_{0},a)$
solves some of the systems in $AbsSys(c,z,y,w,p,a)$. 
\end{enumerate}
The set of all the values $p_{0}\in F_{k}$, for which there exists
an abstract validPS statement $(z_{0},y_{0},w_{0},p_{0},a)$, is denoted
by $EAE(p)$. 
\end{defn}

\begin{defn}
Let $\Gamma$ be a group in the model. Let $(z_{0},y_{0},w_{0},p_{0},a)$
be a specialization of $Comp(z,y,w,p,a)$ (constructed with respect
to some proof system PS). We say that $(z_{0},y_{0},w_{0},p_{0},a)$
is an \emph{abstract $\Gamma$-validPS statement}, if the following
conditions hold: 
\begin{enumerate}
\item The $\Gamma$-specialization $(z_{0},y_{0},w_{0},p_{0},a)$ is non-collapsed. 
\item For all $i=1,...,s$, the family of representatives of the $\Gamma$-exceptional
families of the $wp$-rigid or $wp$-solid group $G_{i}$, that the
$\Gamma$-specialization $(z_{0},y_{0},w_{0},p_{0},a)$ induces, forms
a full family w.r.t. the value $w_{0}p_{0}$ of the parameters. 
\item There exists no $c_{0}\in\Gamma$, so that the specialization $(c_{0},z_{0},y_{0},w_{0},p_{0},a)$
solves some of the systems in $AbsSys(c,z,y,w,p,a)$. 
\end{enumerate}
The set of all the values $p_{0}\in\Gamma$, for which there exists
an abstract $\Gamma$-validPS statement $(z_{0},y_{0},w_{0},p_{0},a)$,
is denoted by $EAE_{\Gamma}(p)$. 
\end{defn}

Our aim in this section, is to reduce the sets $EAE(p)$ and $EAE_{\Gamma}(p)$
uniformly, for almost all the groups $\Gamma$, into the Boolean algebra
of $AE$-sets. That is, to construct a formula $\varphi(p)$ in the
Boolean algebra of $AE$-formulas, that defines each of these sets
over the corresponding groups. We achieve this aim using a process
that terminates in finitely many steps. In each step, we define a
formula in the Boolean algebra of $AE$-formulas that approximates
our sets $EAE(p)$ and $EAE_{\Gamma}(p)$ uniformly, on one hand.
And on the other hand, every specialization $p=p_{0}$ in $EAE(p)$
or in $EAE_{\Gamma}(p)$, must be collected (over the corresponding
group) by a formula defined in some step along the process that we
are going to explain below.

\subsection{First Approximation of $EAE(p)$}
\begin{thm}
\label{thm:45} Let $p_{0}\in F_{k}$. Assume that there exists a
specialization $(z_{0},y_{0},w_{0},p_{0},a)$ that factors through
$Comp(z,y,w,p,a)$, and satisfies the following conditions: 
\begin{enumerate}
\item $(z_{0},y_{0},w_{0},p_{0},a)$ restricts to an exceptional specialization
$(b_{0},p_{0},a)$ of $B$. 
\item $(z_{0},y_{0},w_{0},p_{0},a)$ does not factor through $PosGFMRD$. 
\item For every terminal graded closure $ExtraPSGCl(u,z,y,w,p,a)$ in $ExtraPSMRD$,
for every $u_{0}\in F_{k}$, if $(u_{0},z_{0},y_{0},w_{0},p_{0},a)$
factors through $ExtraPSGCl(u,z,y,w,p,a)$, then there exists some
$u_{0}'\in F_{k}$, so that: 
\begin{enumerate}
\item The specializations $(u_{0},z_{0},y_{0},w_{0},p_{0},a)$ and $(u_{0}',z_{0},y_{0},w_{0},p_{0},a)$
restrict to the same exceptional family of $Term(ExtraPSGCl)$. 
\item The specialization $(u_{0}',z_{0},y_{0},w_{0},p_{0},a)$ factors through
the collapse-extra MR diagram associated with the graded closure $ExtraPSGCl(u,z,y,w,p,a)$. 
\end{enumerate}
\end{enumerate}
Then $p_{0}\in EAE(p)$. The set of all the specializations $p=p_{0}$
of the parameters, for which there exists some specialization $(z_{0},y_{0},w_{0},p_{0},a)$
who satisfies the conditions in the theorem, is denoted by $TSPS(p)$. 
\end{thm}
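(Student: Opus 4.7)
The plan is to verify that the given specialization $(z_0,y_0,w_0,p_0,a)$ itself serves as an abstract validPS statement witnessing $p_0\in EAE(p)$. According to the definition, I must check three conditions: that $(z_0,y_0,w_0,p_0,a)$ is non-collapsed, that for every $i=1,\dots,s$ the induced family of representatives for $G_i$ is a full family at the value $w_0 p_0$, and that no $x_0\in F_k$ extends the specialization to a solution of $AbsSys(x,z,y,w,p,a)$.

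I would handle non-collapsedness and the non-existence of an extending $x_0$ together, using hypothesis (2) and the construction of $PosGFMRD$. Recall that by construction the systems defining $PosGFMRD$ were augmented with the finitely many systems that witness collapsedness of a specialization of $Comp(z,y,w,p,a)$. Thus, if $(z_0,y_0,w_0,p_0,a)$ were collapsed, or if there existed some $x_0$ with $(x_0,z_0,y_0,w_0,p_0,a)$ solving $AbsSys$, then combined with the exceptional restriction to $B$ supplied by hypothesis (1), \lemref{26} would force $(z_0,y_0,w_0,p_0,a)$ to factor through some resolution of $PosGFMRD$. This contradicts hypothesis (2), so both the non-collapsedness and the non-existence of $x_0$ are established.

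The main step, and the technically most delicate one, is the verification of fullness for each $G_i$. I would argue by contradiction: suppose that for some index $i$ there exists $u_0\in F_k$ representing an exceptional family of $G_i$ (at $w_0p_0$) distinct from all the $e_i$ exceptional families already realized by the restriction of $(y_0,w_0,p_0,a)$. Then $(u_0,z_0,y_0,w_0,p_0,a)$ is a non-collapsed specialization of the amalgamated product
\[
\stackrel[j=1]{s}{\ast_{\langle w,p,a\rangle}}\stackrel[k=1]{f_j}{\ast_{\langle w,p,a\rangle}}G_j\ \ast_{\langle w,p,a\rangle}\ Comp(z,y,w,p,a),
\]
where $f_i=1$ and $f_j=0$ for $j\neq i$, whose restriction to $B$ is exceptional. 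By \lemref{34} this specialization factors through some terminal graded closure $ExtraPSGCl(u,z,y,w,p,a)$ of $ExtraPSMRD$. Now hypothesis (3) applies and produces some $u_0'$ in the same exceptional family of $Term(ExtraPSGCl)$ as the restriction of $(u_0,z_0,y_0,w_0,p_0,a)$, such that $(u_0',z_0,y_0,w_0,p_0,a)$ factors through the associated collapse-extra MR diagram $CollapseExtraMRD(ExtraPSGCl)$. Invoking \lemref{43}, every specialization of $ExtraPSGCl$ whose restriction to $Term(ExtraPSGCl)$ lies in the same modular family as that of $u_0'$ must itself be collapsed; in particular, $(u_0,z_0,y_0,w_0,p_0,a)$ is collapsed, contradicting the assumption that $u_0$ represents a genuinely new exceptional family of $G_i$.

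Combining the three verifications, $(z_0,y_0,w_0,p_0,a)$ is an abstract validPS statement for $p_0$, so $p_0\in EAE(p)$. The principal obstacle is the bookkeeping in the fullness argument: one must match the factorization produced by \lemref{34} with the conclusion of hypothesis (3), and then confirm that the modular equivalence on $Term(ExtraPSGCl)$ triggered by \lemref{43} indeed transfers collapsedness from $u_0'$ back to $u_0$, using that the two families of representatives differ only by a modular automorphism which preserves exceptional-family membership in each $G_i$ (as recorded in the construction of $CollapseExtraMRD$). Once this is verified, the theorem follows.
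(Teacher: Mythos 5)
Your overall strategy --- to exhibit an abstract validPS statement with parameter value $p_{0}$ --- is the right one, but the specific claim you make, namely that the \emph{given} specialization $(z_{0},y_{0},w_{0},p_{0},a)$ is itself such a statement, is not justified by the hypotheses, and this is a genuine gap. The trouble is that both of your verifications invert implications that only go one way for a single specialization. First, hypothesis (2) (not factoring through $PosGFMRD$) does not rule out the existence of some $x_{0}\in F_{k}$ with $(x_{0},z_{0},y_{0},w_{0},p_{0},a)$ solving $AbsSys$: the graded formal MR diagram encodes \emph{formal} solutions, and \lemref{26} gives ``factors through a formal closure $\Rightarrow$ induced specialization factors through $GFMRD$,'' not the converse. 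A sporadic pointwise solution $x_{0}$ at the single point $(z_{0},y_{0},w_{0},p_{0},a)$ need not arise from any formal solution, so condition 3 of the definition of a validPS statement may genuinely fail for the given specialization. Second, in your fullness argument you apply \lemref{34} to the tuple $(u_{0},z_{0},y_{0},w_{0},p_{0},a)$ where $u_{0}$ is an arbitrary element of $F_{k}$ witnessing a new exceptional family of $G_{i}$; but \lemref{34} requires that tuple to be the specialization of the distinguished elements of some graded closure $GCl(b,z,y,w,p,a)$ into which the amalgamated product containing $G_{i}$ is mapped, and \thmref{33} requires a test sequence. Neither applies to a single, arbitrary $(u_{0},z_{0},y_{0},w_{0},p_{0},a)$, so you cannot conclude that it (or a modular-equivalent $u_{0}'$) factors through $ExtraPSMRD$, and hypothesis (3) never gets triggered.

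The paper's proof circumvents exactly this by not working with the given specialization at all. It uses the hypotheses only to record which ungraded closures of $Comp(z,y,w,p_{0},a)$ the given specialization does and does not factor through (the collections $CollapseExtra_{p_{0}}$ on one side, and $Pos_{p_{0}}$ together with $ExtraPS_{p_{0}}\backslash ExtraPS_{(z_{0},y_{0},w_{0},p_{0},a)}$ on the other), and then produces a \emph{test sequence} $(z_{n},y_{n},w_{n},p_{0},a)$ of $Comp(z,y,w,p_{0},a)$ factoring through the former and avoiding the latter. For generic points of such a test sequence the one-way implications above become equivalences (via \thmref{25}, \thmref{33} and \lemref{43}), and the paper shows the $(z_{n},y_{n},w_{n},p_{0},a)$ are eventually validPS statements; these, not the original specialization, witness $p_{0}\in EAE(p)$. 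To repair your argument you would need to insert this passage to a test sequence; the final contradiction step you sketch (matching \thmref{33} with hypothesis (3) and \lemref{43}) is then essentially the paper's, but it only becomes available after that reduction.
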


\begin{proof}
We consider the ungraded completion $Comp(z,y,w,p_{0},a)$ induced
by $(z_{0},y_{0},w_{0},p_{0},a)$.

For every terminal graded formal closure $PosGFCl(c,z,y,w,p,a)$,
in $PosGFMRD$, we consider its $p$-rigid or $p$-solid base group
$Term(PosGFCl)$. If $Term(PosGFCl)$ admits an exceptional solution
$h_{p_{0}}$, so that the restriction of $h_{p_{0}}$ to $B$ is an
exceptional solution of $B$ that belongs to the same exceptional
family of $(b_{0},p_{0},a)$, we list the ungraded closure $PosGFCl_{h_{p_{0}}}$
induced by (some exceptional solution in the exceptional family of)
$h_{p_{0}}$. The collection of all such (boundedly many) ungraded
closures, is denoted by $Pos_{p_{0}}$.

For every terminal graded closure $ExtraPSGCl(u,z,y,w,p,a)$, in $ExtraPSMRD$,
we consider its $p$-rigid or $p$-solid base group $Term(ExtraPSGCl)$.
If $Term(ExtraPSGCl)$ admits an exceptional solution $h_{p_{0}}$,
so that the restriction of $h_{p_{0}}$ to $B$ is an exceptional
solution of $B$ that belongs to the same exceptional family of $(b_{0},p_{0},a)$,
we list the ungraded closure $ExtraPSCl_{h_{p_{0}}}$ induced by (some
exceptional solution in the exceptional family of) $h_{p_{0}}$. The
collection of all such (boundedly many) ungraded closures, is denoted
by $ExtraPS_{p_{0}}$. We denote the subcollection of $ExtraPS_{p_{0}}$
consisting of the ungraded closures through which the specialization
$(z_{0},y_{0},w_{0},p_{0},a)$ factors (for some $u_{0}$), by $ExtraPS_{(z_{0},y_{0},w_{0},p_{0},a)}$.

By the assumption, and according to the construction of the collapse-extra
MR diagram, for every ungraded closure $ExtraPSCl_{h_{p_{0}}}$ in
$ExtraPS_{(z_{0},y_{0},w_{0},p_{0},a)}$, who is covered by the graded
closure \\
 $ExtraPSGCl(u,z,y,w,p,a)$, there exists a specialization $(c_{0},u_{0},z_{0},y_{0},w_{0},p_{0},a)$
of some terminal graded formal closure $CollapseExtraGFCl(c,u,z,y,w,p,a)$
in the collapse-extra MR diagram $CollapseExtraMRD$ associated with
$ExtraPSGCl(u,z,y,w,p,a)$, so that the restriction of $(c_{0},u_{0},z_{0},y_{0},w_{0},p_{0},a)$
to $B$ belongs to the exceptional family represented by $(b_{0},p_{0},a)$,
the restriction of $(c_{0},u_{0},z_{0},y_{0},w_{0},p_{0},a)$ to $Term(ExtraPSGCl)$
belongs to the exceptional family represented by $h_{p_{0}}$, and
the restriction of $(c_{0},u_{0},z_{0},y_{0},w_{0},p_{0},a)$ to \\
 $Term(CollapseExtraGFCl)$ is some exceptional solution which we
denote by $g_{p_{0}}$. We choose such a terminal graded formal closure
$CollapseExtraGFCl(c,u,z,y,w,p,a)$ and a specialization $(c_{0},u_{0},z_{0},y_{0},w_{0},p_{0},a)$
of it, and denote the induced ungraded closure by $CollapseExtraFCl_{g_{p_{0}}}$.
For every $ExtraPSCl_{h_{p_{0}}}$ in $ExtraPS_{(z_{0},y_{0},w_{0},p_{0},a)}$,
we have associated an ungraded closure $CollapseExtraFCl_{g_{p_{0}}}$.
The collection of all the (boundedly many) ungraded closures $CollapseExtraFCl_{g_{p_{0}}}$
that we obtain, is denoted briefly by $CollapseExtra_{p_{0}}$.

Let $ExtraPSCl_{h_{p_{0}}}$ be an ungraded closure in $ExtraPS_{(z_{0},y_{0},w_{0},p_{0},a)}$
who is covered by the graded closure $ExtraPSGCl(u,z,y,w,p,a)$, and
let $CollapseExtraFCl_{g_{p_{0}}}$ be the ungraded closure in $CollapseExtra_{p_{0}}$
associated with $ExtraPSCl_{h_{p_{0}}}$. Then, according to the construction
of the collapse extra MR diagram, every specialization $(u_{0}',z_{0}',y_{0}',w_{0}',p_{0},a)$
that factors through $CollapseExtraFCl_{g_{p_{0}}}$, restricts to
an exceptional specialization of $Term(ExtraPSGCl)$ that belongs
to the same exceptional family of $h_{p_{0}}$. Hence, according to
\lemref{43}, if $(z_{0}',y_{0}',w_{0}',p_{0},a)$ is a specialization
that factors through $CollapseExtraFCl_{g_{p_{0}}}$, then, for every
$u_{0}''\in F_{k}$, if $(u_{0}'',z_{0}',y_{0}',w_{0}',p_{0},a)$
is a specialization of $ExtraPSGCl(u,z,y,w,p,a)$ that restricts to
an exceptional family of $Term(ExtraPSGCl)$ that belongs to the same
exceptional family of $h_{p_{0}}$, then $(u_{0}'',z_{0}',y_{0}',w_{0}',p_{0},a)$
is collapsed.

We consider now the collection of ungraded closures $CollapseExtra_{p_{0}}$,
in one hand, and on the other hand, we consider the collection of
ungraded closures $Pos_{p_{0}}$ and $ExtraPS_{p_{0}}\backslash ExtraPS_{(z_{0},y_{0},w_{0},p_{0},a)}$.
Each of these closures, is in particular an ungraded closure of the
ungraded completion $Comp(z,y,w,p_{0},a)$. However, the specialization
$(z_{0},y_{0},w_{0},p_{0},a)$ of $Comp(z,y,w,p_{0},a)$, factors
through each of the closures in $CollapseExtra_{p_{0}}$, but it does
not factor through any of the closures in $Pos_{p_{0}}$ and $ExtraPS_{p_{0}}\backslash ExtraPS_{(z_{0},y_{0},w_{0},p_{0},a)}$.
Thus, there exits a test sequence of specializations $(z_{n},y_{n},w_{n},p_{0},a)$
of $Comp(z,y,w,p_{0},a)$, so that for all $n$, the specialization
$(z_{n},y_{n},w_{n},p_{0},a)$ factors through each of the closures
in $CollapseExtra_{p_{0}}$, and does not factor through any of the
closures in $Pos_{p_{0}}$ and $ExtraPS_{p_{0}}\backslash ExtraPS_{(z_{0},y_{0},w_{0},p_{0},a)}$.
We are going to prove that $(z_{n},y_{n},w_{n},p_{0},a)$ is eventually
a validPS statement.

By the construction of the sequence $(z_{n},y_{n},w_{n},p_{0},a)$,
and since, for all $n$, the specialization $(z_{n},y_{n},w_{n},p_{0},a)$
restricts to an exceptional solution of $B$ that belongs to the same
exceptional family of $(b_{0},p_{0},a)$, we have that for all $n$,
the specialization $(z_{n},y_{n},w_{n},p_{0},a)$ does not factor
through the diagram $PosGFMRD$, nor through any of the terminal graded
closures $ExtraPSGCl(u,z,y,w,p,a)$ in $ExtraPSMRD$ that does not
cover some closure in $ExtraPS_{p_{0}}$, and if it factors through
an ungraded closure $ExtraPSCl_{h_{p_{0}}}$ in $ExtraPS_{p_{0}}$,
then $ExtraPSCl_{h_{p_{0}}}$ belongs to $ExtraPS_{(z_{0},y_{0},w_{0},p_{0},a)}$.

By the constructions of the diagram $PosGFMRD$, this implies that
the specialization $(z_{n},y_{n},w_{n},p_{0},a)$ includes (eventually)
only exceptional solutions of the corresponding $wp$-rigid and $wp$-solid
limit groups that are mapped into $Comp(z,y,w,p,a)$, i.e., the specialization
$(z_{n},y_{n},w_{n},p_{0},a)$ is non-collapsed.

Assume by contradiction that for a subsequence $n$ of the integers,
the sequence $(z_{n},y_{n},w_{n},p_{0},a)$ consists of non-validPS
statements. Let $u_{n}\in F_{k}$ be a sequence of specializations
testifying this property, i.e., $(u_{n},z_{n},y_{n},w_{n},p_{0},a)$
is non-collapsed, for all $n$. In particular, according to \thmref{33},
there exists some terminal graded closure $ExtraPSGCl(u,z,y,w,p,a)$
in the diagram $ExtraPSMRD$, so that for all $n$ in a subsequence
of the integers, there exists $u_{n}'\in F_{k}$, so that $(u_{n}',z_{n},y_{n},w_{n},p_{0},a)$
is a specialization of $ExtraPSGCl(u,z,y,w,p,a)$, and: 
\begin{enumerate}
\item The specialization $(u_{n}',z_{n},y_{n},w_{n},p_{0},a)$ restricts
to an exceptional specialization of $B$ that belongs to the same
exceptional family of $(b_{0},p_{0},a)$. 
\item The specialization $(u_{n}',z_{n},y_{n},w_{n},p_{0},a)$ restricts
to an exceptional specialization $h_{p_{0}}^{n}$ of $Term(ExtraPSGCl)$. 
\item The specialization $(u_{n}',z_{n},y_{n},w_{n},p_{0},a)$ is a non-collapsed. 
\end{enumerate}
Denote by $ExtraPSCl_{h_{p_{0}}^{n}}$ the ungraded closure obtained
by specializing the base group $Term(ExtraPSGCl)$ of $ExtraPSGCl(u,z,y,w,p,a)$
according to the restriction $h_{p_{0}}^{n}$ of $(u_{n}',z_{n},y_{n},w_{n},p_{0},a)$
to it. By the construction of the test sequence $(z_{n},y_{n},w_{n},p_{0},a)$,
the specialization $(z_{n},y_{n},w_{n},p_{0},a)$ does not factor
through any of the closures $ExtraPS_{p_{0}}\backslash ExtraPS_{(z_{0},y_{0},w_{0},p_{0},a)}$.
Thus, the ungraded closure $ExtraPSCl_{h_{p_{0}}^{n}}$ must be represented
by some ungraded closure $ExtraPSCl_{h_{p_{0}}}$, that is covered
by the graded closure $ExtraPSGCl(u,z,y,w,p,a)$, and belongs to $ExtraPS_{(z_{0},y_{0},w_{0},p_{0},a)}$.
That is, $h_{p_{0}}^{n}$ and $h_{p_{0}}$ belong to the same exceptional
family of $Term(ExtraPSGCl)$, for all $n$.

However, by construction, the specialization $(z_{n},y_{n},w_{n},p_{0},a)$
factors through the ungraded closure \\
 $CollapseExtraFCl_{g_{p_{0}}}$ associated with $ExtraPSCl_{h_{p_{0}}}$,
so $(u_{n}',z_{n},y_{n},w_{n},p_{0},a)$ must be collapsed, for all
$n$, a contradiction. 
\end{proof}
\begin{thm}
\label{thm:46} Let $\Gamma$ be a group of level $l$ in the model.
If $l$ is large enough, then the following property is satisfied
for $\Gamma$.

Let $p_{0}\in\Gamma$. Assume that there exists a $\Gamma$-specialization
$(z_{0},y_{0},w_{0},p_{0},a)$ that factors through $Comp(z,y,w,p,a)$,
and satisfies the following conditions: 
\begin{enumerate}
\item $(z_{0},y_{0},w_{0},p_{0},a)$ restricts to a $\Gamma$-exceptional
specialization $(b_{0},p_{0},a)$ of $B$. 
\item $(z_{0},y_{0},w_{0},p_{0},a)$ does not factor through $PosGFMRD$. 
\item For every terminal graded closure $ExtraPSGCl(u,z,y,w,p,a)$ in $ExtraPSMRD$,
for every $u_{0}\in\Gamma$, if $(u_{0},z_{0},y_{0},w_{0},p_{0},a)$
factors through $ExtraPSGCl(u,z,y,w,p,a)$, then there exists some
$u_{0}'\in\Gamma$, so that: 
\begin{enumerate}
\item The $\Gamma$-specializations $(u_{0},z_{0},y_{0},w_{0},p_{0},a)$
and $(u_{0}',z_{0},y_{0},w_{0},p_{0},a)$ restrict to the same $\Gamma$-exceptional
family of $Term(ExtraPSGCl)$. 
\item The $\Gamma$-specialization $(u_{0}',z_{0},y_{0},w_{0},p_{0},a)$
factors through the collapse-extra MR diagram associated with the
graded closure $ExtraPSGCl(u,z,y,w,p,a)$. 
\end{enumerate}
\end{enumerate}
Then $p_{0}\in EAE_{\Gamma}(p)$. The set of all the $\Gamma$-specializations
$p=p_{0}$ of the parameters, for which there exists some specialization
$(z_{0},y_{0},w_{0},p_{0},a)$ who satisfies the conditions in the
theorem, is denoted by $TSPS_{\Gamma}(p)$. 
\end{thm}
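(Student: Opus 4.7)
The plan is to mirror the proof of \thmref{45} step by step, replacing each tool used over $F_{k}$ by its random-group counterpart proved earlier in the paper. First, I would fix the level $l$ large enough so that \thmref[s]{15}, \ref{thm:17}, \ref{thm:27}, \ref{thm:35}, and \lemref[s]{42}, \ref{lem:44} all hold in $\Gamma_{l}$, and so that the lifting property \thmref{1} applies to every finite system that will arise (there are only finitely many relevant systems, coming from the finitely many terminal graded closures of $PosGFMRD$, $ExtraPSMRD$, and the various $CollapseExtraMRD$'s). Starting from the given $\Gamma_{l}$-specialization $(z_{0},y_{0},w_{0},p_{0},a)$, I would consider the induced $\Gamma_{l}$-ungraded completion $Comp(z,y,w,p_{0},a)$.

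Next, I would build the $\Gamma_{l}$-analogs of the four collections of ungraded closures used in \thmref{45}. Namely, $Pos_{p_{0}}$ consists of those $\Gamma_{l}$-ungraded closures $PosGFCl_{h_{p_{0}}}$ coming from a terminal closure of $PosGFMRD$ whose base group admits a $\Gamma_{l}$-exceptional solution $h_{p_{0}}$ whose restriction to $B$ lies in the same $\Gamma_{l}$-exceptional family of $(b_{0},p_{0},a)$; $ExtraPS_{p_{0}}$ is defined analogously from $ExtraPSMRD$; $ExtraPS_{(z_{0},y_{0},w_{0},p_{0},a)}$ is the subcollection of $ExtraPS_{p_{0}}$ whose closures are traversed by $(z_{0},y_{0},w_{0},p_{0},a)$ for some $u_{0}\in\Gamma_{l}$; finally, for each element of $ExtraPS_{(z_{0},y_{0},w_{0},p_{0},a)}$, hypothesis (3) together with \lemref{42} produces a $\Gamma_{l}$-specialization $(c_{0},u_{0}',z_{0},y_{0},w_{0},p_{0},a)$ factoring through some terminal graded formal closure of the associated $CollapseExtraMRD$ and restricting to $\Gamma_{l}$-exceptional solutions at every required base group; this defines $CollapseExtra_{p_{0}}$. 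The uniform bounds of \thmref[s]{15} and \ref{thm:17} guarantee that all four collections are finite of uniformly bounded size.

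The heart of the argument is the construction of a graded $\Gamma_{l}$-test sequence $\{(z_{n},y_{n},w_{n},p_{0},a)\}$ of $Comp(z,y,w,p_{0},a)$ over $\Gamma_{l}$ which factors through every closure in $CollapseExtra_{p_{0}}$ but through no closure in $Pos_{p_{0}}\cup(ExtraPS_{p_{0}}\setminus ExtraPS_{(z_{0},y_{0},w_{0},p_{0},a)})$. Existence of such a test sequence follows from \lemref{9}, since factoring through (or avoiding) each of the finitely many listed closures is a finite diophantine (or negated diophantine) condition compatible with a test sequence of $Comp$; noetherianity of random groups lets one replace each infinite system by a finite one. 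Having such a sequence, I would argue exactly as in \thmref{45} that it consists eventually of $\Gamma_{l}$-abstract validPS statements: avoidance of $PosGFMRD$ together with the construction of the graded formal MR diagram (via \thmref{27}) forces each $(z_{n},y_{n},w_{n},p_{0},a)$ to be non-collapsed and to induce full families of $\Gamma_{l}$-exceptional solutions of the $G_{i}$'s; and a failure of validPS, witnessed by some $u_{n}\in\Gamma_{l}$ with $(u_{n},z_{n},y_{n},w_{n},p_{0},a)$ non-collapsed, would, by \thmref{35}, subfactor through some $ExtraPSGCl$, producing (after passing to a subsequence) a $\Gamma_{l}$-exceptional restriction $h_{p_{0}}^{n}$ of $Term(ExtraPSGCl)$ in the same $\Gamma_{l}$-exceptional family as some $h_{p_{0}}$ in $ExtraPS_{(z_{0},y_{0},w_{0},p_{0},a)}$, and then \lemref{44} applied to the corresponding $CollapseExtraFCl_{g_{p_{0}}}$ (through which $(z_{n},y_{n},w_{n},p_{0},a)$ factors by construction) would force $(u_{n},z_{n},y_{n},w_{n},p_{0},a)$ to be collapsed, a contradiction.

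The main obstacle I anticipate is the construction of the desired $\Gamma_{l}$-test sequence: one must arrange, simultaneously over a single $\Gamma_{l}$, factorization through the positive diophantine conditions defining the closures in $CollapseExtra_{p_{0}}$ and avoidance of the finitely many closures listed in $Pos_{p_{0}}$ and $ExtraPS_{p_{0}}\setminus ExtraPS_{(z_{0},y_{0},w_{0},p_{0},a)}$. This requires not only \lemref{9} to produce a test sequence with prescribed Diophantine behavior, but also control over the dependence of "large enough $l$" on all the previously invoked lifting and uniform-bound statements, since the argument uses them simultaneously. Once this dependence is handled, the rest of the reasoning is a faithful transcription of the proof of \thmref{45} with every $F_{k}$-exceptional/modular/test notion replaced by its $\Gamma_{l}$-counterpart.
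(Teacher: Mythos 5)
Your proposal matches the paper's proof essentially verbatim: the paper likewise forms the $\Gamma$-versions of $Pos_{p_{0}}$, $ExtraPS_{p_{0}}$, $ExtraPS_{(z_{0},y_{0},w_{0},p_{0},a)}$ and $CollapseExtra_{p_{0}}$ (citing \lemref{39} and \lemref{44} for the last), extracts a $\Gamma$-test sequence of $Comp(z,y,w,p_{0},a)$ factoring through the $CollapseExtra_{p_{0}}$ closures and avoiding the others, and then declares the remainder identical to the argument of \thmref{45} via \thmref[s]{27} and \ref{thm:35}. The "main obstacle" you flag (simultaneous Diophantine/avoidance conditions for the test sequence and uniformity of "large enough $l$") is treated at the same level of detail in the paper, so no gap relative to the paper's own standard.
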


\begin{proof}
In a similar way of the proof of \thmref{45}, we consider the ungraded
completion $Comp(z,y,w,p_{0},a)$ induced by $(z_{0},y_{0},w_{0},p_{0},a)$.

We list, in a collection denoted $Pos_{p_{0}}$, the (boundedly many)
ungraded closures $PosGFCl_{h_{p_{0}}}$ who are covered by terminal
graded formal closures $PosGFCl(c,z,y,w,p,a)$ in $PosGFMRD$, so
that $h_{p_{0}}$ is a $\Gamma$-exceptional solution of $Term(PosGFCl)$
whose restriction to $B$ belongs to the same $\Gamma$-exceptional
family of $(b_{0},p_{0},a)$.

We list, in a collection denoted $ExtraPS_{p_{0}}$, the (boundedly
many) ungraded closures $ExtraPSCl_{h_{p_{0}}}$ who are covered by
terminal graded closures $ExtraPSGCl(u,z,y,w,p,a)$ in $ExtraPSMRD$,
so that $h_{p_{0}}$ is a $\Gamma$-exceptional solution of $Term(ExtraPSGCl)$
whose restriction to $B$ belongs to the same $\Gamma$-exceptional
family of $(b_{0},p_{0},a)$. The subcollection of $ExtraPS_{p_{0}}$
consisting of the ungraded closures through which the specialization
$(z_{0},y_{0},w_{0},p_{0},a)$ factors, is denoted by $ExtraPS_{(z_{0},y_{0},w_{0},p_{0},a)}$.

By the assumption, and according to the construction of the collapse-extra
MR diagram, \lemref{39} and \lemref{44}, for every ungraded closure
$ExtraPSCl_{h_{p_{0}}}$ in $ExtraPS_{(z_{0},y_{0},w_{0},p_{0},a)}$,
who is covered by the graded closure $ExtraPSGCl(u,z,y,w,p,a)$, we
can associate an ungraded closure $CollapseExtraFCl_{g_{p_{0}}}$,
that is induced from some $\Gamma$-exceptional solution $g_{p_{0}}$
of the base $p$-rigid or $p$-solid group of some graded formal closure
$CollapseExtraGFCl(c,u,z,y,w,p,a)$ in the collapse-extra MR diagram
$CollapseExtraMRD$ associated with $ExtraPSGCl(u,z,y,w,p,a)$, so
that the restriction of $g_{p_{0}}$ to $B$ is a $\Gamma$-exceptional
solution of $B$ that belongs to the $\Gamma$-exceptional family
of $(b_{0},p_{0},a)$, and that for every $\Gamma$-specialization
$(z_{0}',y_{0}',w_{0}',p_{0},a)$ that factors through $CollapseExtraFCl_{g_{p_{0}}}$,
and for every $u_{0}''\in\Gamma$, if $(u_{0}'',z_{0}',y_{0}',w_{0}',p_{0},a)$
is a specialization of $ExtraPSGCl(u,z,y,w,p,a)$ that restricts to
an exceptional family of $Term(ExtraPSGCl)$ that belongs to the exceptional
family of $h_{p_{0}}$, then $(u_{0}'',z_{0}',y_{0}',w_{0}',p_{0},a)$
is collapsed. The collection of all such (boundedly many) associated
ungraded closures $CollapseExtraFCl_{g_{p_{0}}}$, is denoted briefly
by $CollapseExtra_{p_{0}}$.

Now since the $\Gamma$-specialization $(z_{0},y_{0},w_{0},p_{0},a)$
of $Comp(z,y,w,p_{0},a)$, factors through each of the ungraded closures
in ungraded closures $CollapseExtra_{p_{0}}$, but does not factor
through any of the ungraded closures in $Pos_{p_{0}}$ and $ExtraPS_{p_{0}}\backslash ExtraPS_{(z_{0},y_{0},w_{0},p_{0},a)}$,
then, there must exist a $\Gamma$-test sequence $(z_{n},y_{n},w_{n},p_{0},a)$
of $Comp(z,y,w,p_{0},a)$, so that for all $n$, the $\Gamma$-specialization
$(z_{n},y_{n},w_{n},p_{0},a)$ factors through each of the closures
in $CollapseExtra_{p_{0}}$, and does not factor through any of the
closures in $Pos_{p_{0}}$ and \\
 $ExtraPS_{p_{0}}\backslash ExtraPS_{(z_{0},y_{0},w_{0},p_{0},a)}$.
We are going to prove that $(z_{n},y_{n},w_{n},p_{0},a)$ is eventually
a $\Gamma$-validPS statement.

According to \thmref[s]{27} and \ref{thm:35}, the remaining part
of the proof is identical to the case over $F_{k}$ presented in \thmref{45}. 
\end{proof}
\begin{thm}
\label{thm:47} Let $p_{0}\in F_{k}$. There exists a formula $\varphi(p)$
that belongs to the Boolean algebra of $AE$-formulas, so that: 
\begin{enumerate}
\item The formula $\varphi(p)$ defines the set $TSPS(p)$. 
\item Let $\Gamma$ be a group of level $l$ in the model. If $l$ is large
enough, then the formula $\varphi(p)$ defines the set $TSPS_{\Gamma}(p)$. 
\end{enumerate}
\end{thm}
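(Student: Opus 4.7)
The plan is to construct $\varphi(p)$ by translating each of the three defining conditions of $TSPS(p)$ from Theorem~\ref{thm:45} into a first-order formula, analyzing the quantifier structure of each piece, and then combining them; the random-group statement will follow from the encoding theorems already established for the three auxiliary MR diagrams.

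First, I would express condition~1 (existence of a specialization $(z_{0},y_{0},w_{0},p_{0},a)$ of $Comp(z,y,w,p,a)$ whose restriction to $B$ is exceptional) as an existential quantification over $(z,y,w)$ of a universal condition: satisfaction of the finitely many defining relations of $Comp$ together with the universal characterization of being a rigid or strictly solid homomorphism of $B$ — non-factoring through the finitely many flexible quotients in the rigid case, and additionally non-degeneracy and non-factoring through each of the canonical maps $\tau_{j}$ into the $Comp_{j}$'s in the solid case (see \defref{13}). Condition~2 (non-factoring through $PosGFMRD$) is a finite conjunction of universal statements, since factoring through any of the finitely many resolutions of $PosGFMRD$ is an existential statement on closure witnesses.

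The delicate point is condition~3. Using \lemref{43} (and its random-group analogue \lemref{44}), the existence of $u_{0}'$ lying in the same exceptional family of $Term(ExtraPSGCl)$ as $u_{0}$ and factoring through the associated $CollapseExtra$ MR diagram is equivalent to $(u_{0},z_{0},y_{0},w_{0},p_{0},a)$ being itself collapsed. Condition~3 therefore reduces to the assertion that for each of the finitely many terminal graded closures $ExtraPSGCl_{i}$ in $ExtraPSMRD$, every $u$ for which $(u,z,y,w,p,a)$ factors through $ExtraPSGCl_{i}$ also makes $(u,z,y,w,p,a)$ collapsed — an $\forall u\,[\,\neg\mathrm{factor}\vee\mathrm{collapsed}\,]$ statement, which after standard pulling of quantifiers is of $\forall\exists$ form, i.e., $AE$. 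The main obstacle is then combining the outer existential $\exists(z,y,w)$ with the $AE$-content of condition~3 without producing a genuine $EAE$-formula. I plan to resolve this using the uniform global bounds of \thmref{14}--\thmref{17} on the number of distinct exceptional families of a fixed rigid or solid limit group with a fixed value of the parameters: these bounds allow the inner existentials witnessing ``collapsed'' to be Skolemized by finitely many bounded tuples of representatives, which can be absorbed into the outer existential block. The inner matrix then becomes universal, so the overall formula is of the form $EA$ and hence lies in the Boolean algebra of $AE$-formulas.

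Finally, for part~(2), the same formula $\varphi(p)$ defines $TSPS_{\Gamma}(p)$ over a random group $\Gamma$ of sufficiently high level $l$. This is built from the transfer statements already at our disposal: \thmref{15} and \thmref{17} give uniform bounds on exceptional families over $\Gamma$; \thmref{27}, \thmref{35}, \lemref{42} and \lemref{44} show that $PosGFMRD$, $ExtraPSMRD$ and the various $CollapseExtra$ MR diagrams, all constructed over $F_{k}$, still encode the relevant classes of $\Gamma$-specializations, with the $F_{k}$-exceptional-family labelling of their terminal base groups corresponding to the $\Gamma$-exceptional-family labelling. Plugging these into the construction of $\varphi(p)$ above yields the desired uniformity for random groups of large enough level.
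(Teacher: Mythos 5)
Your overall strategy is the paper's: conditions 1 and 2 of \thmref{45} become an existential block over $(z,y,w)$ followed by universal conditions (being an exceptional solution of $B$, and non-factoring through the finitely many terminal closures of $PosGFMRD$), and condition 3 is handled by replacing the inner existential data by boundedly many collapse witnesses indexed by the exceptional families of the base groups $Term(ExtraPSGCl_i)$, justified by \lemref{43}/\lemref{44} together with the global bounds of \thmref[s]{14}--\ref{thm:17}. The transfer to random groups via \thmref{27}, \thmref{35}, \lemref{42} and \lemref{44} is also exactly as in the paper.

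The one point you leave unresolved is how to assert, inside the Boolean algebra of $AE$-formulas, that your Skolem representatives $h_{1}^{i},\dots,h_{n_{i}}^{i}$ exhaust \emph{all} the exceptional families of $Term(E_{i})$ for the value $p_{0}$. This is not optional: the universally quantified $u_{0}$ in condition 3 is constrained only by the clauses attached to the listed representatives, so a $u_{0}$ whose restriction to $Term(E_{i})$ lies in an unlisted exceptional family would satisfy your formula vacuously while condition 3 fails for it. Expressing exhaustiveness naively (``every exceptional solution of $Term(E_{i})$ is equivalent to some $h_{j}^{i}$'') reintroduces an inner existential, since equivalence of strictly solid solutions is itself an existential condition, and this would yield a genuine $EAE$-formula. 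The paper's fix is a finite disjunction over all possible exact values of the numbers $n_{i}$ of exceptional families (available because of the uniform bounds), where each disjunct is conjoined with the condition that $Term(E_{i})$ admits exactly $n_{i}$ exceptional families at $p_{0}$ --- a condition lying in the Boolean algebra of $AE$-sets. The resulting $\varphi(p)$ is therefore a Boolean combination of $AE$-formulas rather than the single $EA$-formula you describe. With that amendment your argument matches the paper's proof.
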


\begin{proof}
We show how to construct $\varphi(p)$, and explain the reason that
it defines the sets $TSPS(p)$ and $TSPS_{\Gamma}(p)$.

Let $(z_{0},y_{0},w_{0},p_{0},a)$ be a {[}$\Gamma$-{]}specialization
that factors through $Comp(z,y,w,p,a)$.

The third condition of \thmref{45}{[}\thmref{46}{]}, is equivalent
to the following: for every graded terminal closure \\
 $ExtraPSCl(u,z,y,w,p,a)$ of $ExtraMRD$, there exists a {[}$\Gamma$-{]}full
family of representatives $h_{1},...,h_{r}$, for the {[}$\Gamma$-{]}exceptional
families of its base group $Term(ExtraPSCl)$ corresponding to $p=p_{0}$,
so that, for all $i=1,...,r$, if the {[}$\Gamma$-{]}specialization
$(z_{0},y_{0},w_{0},p_{0},a)$ factors through the ungraded closure
covered by $ExtraPSCl(u,z,y,w,p,a)$ and induced from $h_{i}$, and
$u_{0}^{i}$ is the corresponding induced {[}$\Gamma$-{]}specialization
of the variables $u$, then the {[}$\Gamma$-{]}specialization $(u_{0}^{i},z_{0},y_{0},w_{0},p_{0},a)$
factors through some terminal graded formal closure in $CollapseExtraMRD$
associated with $ExtraPSCl(u,z,y,w,p,a)$.

For simplicity, we denote by $E_{1},...,E_{r}$ all the terminal graded
closures in $ExtraPSMRD$, and by $C_{1},...,C_{s}$ all the graded
formal terminal closures in $CollapseExtraMRD$. We denote also the
terminal graded formal closures of $PosGFMRD$ by $S_{1},...,S_{q}$.

Hence, given any $p=p_{0}\in F_{k}$, if for every terminal graded
closure $E_{i}$ in $ExtraPSMRD$, the exact number $n_{i}$ of exceptional
families of solutions of its $p$-rigid or $p$-solid base group corresponding
to the value $p=p_{0}$ of the parameters is known, then there will
exist a specialization $(z_{0},y_{0},w_{0},p_{0},a)$ of $Comp(z,y,w,p,a)$
that satisfies the conditions in \thmref{45}, if and only if the
following sentence is true:

\begin{align*}
 & \exists z_{0},y_{0},w_{0},\stackrel[i=1]{r}{\exists}h_{1}^{i},...,h_{n_{i}}^{i}\stackrel[i=1]{r}{\exists}c_{1}^{i},...,c_{n_{i}}^{i},\quad\text{s.t.}\\
 & \left[(z_{0},y_{0},w_{0},p_{0},a)\text{ restricts to an exceptional solution for }B\right],\text{ and}\\
 & \left[\stackrel[i=1]{q}{\wedge}\forall c_{0}\,S_{i}(c_{0},z_{0},y_{0},w_{0},p_{0},a)\neq1\right],\text{ and}\\
 & \left[\stackrel[i=1]{r}{\wedge}\stackrel[j=1]{n_{i}}{\wedge}h_{j}^{i}\text{ is an exceptional solution for }Term(E_{i})\right],\text{ and}\\
 & \left[\stackrel[i=1]{r}{\wedge}\stackrel[j\neq j']{}{\wedge}h_{j}^{i}\text{ and }h_{j'}^{i}\text{ represent distinct exceptional families of }Term(E_{i})\right],\text{ and}\\
 & \left[\stackrel[i=1]{r}{\wedge}\stackrel[j=1]{n_{i}}{\wedge}\forall u_{0}\,\left[E_{h_{j}^{i}}(u_{0},z_{0},y_{0},w_{0},p_{0},a)=1\implies\stackrel[k=1]{s}{\vee}C_{k}(c_{j}^{i},u_{0},z_{0},y_{0},w_{0},p_{0},a)=1\right]\right].
\end{align*}

Since the numbers $n_{i}$ admits a uniform upper bound for all the
values $p=p_{0}\in F_{k}$ {[}$p=p_{0}\in\Gamma${]}, and since the
values $p=p_{0}$ for which a given $p$-rigid or $p$-solid limit
group admits exactly a given number of {[}$\Gamma$-{]}exceptional
families corresponding to $p=p_{0}$ form a set that belongs to the
Boolean algebra of $AE$-sets, a finite disjunction that runs on all
the possible values of the numbers $n_{i}$, and for each possibility
it lists the corresponding formula similar to the aforementioned one,
defines the desired formula $\varphi(p)$. 
\end{proof}

\subsection{Reducing the Set $EAE(p)$ under Minimal-Rank Assumption }

Now we explain how \thmref{47} can be applied iteratively, in order
to collect all the values $p_{0}\in F_{k}$ {[}resp. $p_{0}\in\Gamma${]},
for which there exists some validPS statement {[}resp. $\Gamma$-validPS
statement{]} that factors through the completed resolution $Comp(z,y,w,p,a)$.
For that aim, we use the method of constructing quotient resolutions
presented in \cite{DGIV}.
\begin{thm}
\label{thm:48} Let $Rlim(y,p,a)$ be a minimal rank $p$-graded limit
group, and let $Res(y,p,a)$ be a graded resolution of $Rlim(y,p,a)$.
Let $Comp(z,y,p,a)$ be the graded completion of $Res(y,p,a)$, and
denote its base $p$-rigid or $p$-solid group by $B$. Let $GCl(u,z,y,p,a)$
be a graded closure for $Comp(z,y,p,a)$. Let $\Sigma=\{\Sigma_{i}(c,u,z,y,p,a)\}_{i=1,...,r}$
be a finite collection of systems. Let $\mathscr{A}$ be the collection
of all the specializations $(u_{0},z_{0},y_{0},p_{0},a)$ for which: 
\begin{enumerate}
\item The specialization $(u_{0},z_{0},y_{0},p_{0},a)$ factors through
$GCl(u,z,y,p,a)$. 
\item The specialization $(u_{0},z_{0},y_{0},p_{0},a)$ restricts to an
exceptional specialization of $Term(GCl)$ and to an exceptional specialization
of $B$. 
\item There exists some $c_{0}\in F_{k}$, so that $\Sigma_{i}(c_{0},u_{0},z_{0},y_{0},p_{0},a)$
for some $i=1,...,r$. 
\end{enumerate}
Then, there exists finitely many $p$-graded completed resolutions
$Comp_{1}^{2}(t,c,u,z,y,p,a),...,Comp_{s}^{2}(t,c,u,z,y,p,a)$, so
that for all $j=1,...,s$: 
\begin{enumerate}
\item The graded closure $GCl(u,z,y,p,a)$ is mapped canonically into $Comp_{j}^{2}(t,c,u,z,y,p,a)$. 
\item The words of the system $\Sigma_{i}(c,u,z,y,p,a)$, for some $i=1,...,r$,
represent the trivial element in $Comp_{j}^{2}(t,c,u,z,y,p,a)$. 
\item The image of $Term(GCl)$ is contained in the base group $Term(Comp_{j}^{2})$,
and is mapped compatibly, i.e., the minimal subgraph of $Term(Comp_{j}^{2})$
containing the image of $Term(GCl)$, in the $p$-JSJ of $Term(Comp_{j}^{2})$,
consists only from parts of the $p$-JSJ of $Term(GCl)$. 
\item The complexity of $Comp_{j}^{2}(t,c,u,z,y,p,a)$ is not larger than
the complexity of $Comp(z,y,p,a)$ (definition 1.16 in \cite{DGV}). 
\item If the complexity of $Comp_{j}^{2}(t,c,u,z,y,p,a)$ equals the complexity
of $Comp(z,y,p,a)$, then $Comp_{j}^{2}(t,c,u,z,y,p,a)$ is a graded
formal closure $GFCl(c,u,z,y,p,a)$ of $Comp(z,y,p,a)$. 
\end{enumerate}
Moreover, for every specialization $(u_{0},z_{0},y_{0},p_{0},a)$
in the collection $\mathscr{A}$, there exists a specialization $(u_{0}',z_{0},y_{0},p_{0},a)$
that belongs to $\mathscr{A}$, so that $(u_{0}',z_{0},y_{0},p_{0},a)$
factors through one of the resolutions

\[
Comp_{1}^{2}(t,c,u,z,y,p,a),...,Comp_{s}^{2}(t,c,u,z,y,p,a)\,,
\]
and the specializations $(u_{0},z_{0},y_{0},p_{0},a)$ and $(u_{0}',z_{0},y_{0},p_{0},a)$
restrict to the same exceptional family of $Term(GCl)$.

We call the resolutions $Comp_{1}^{2},...,Comp_{s}^{2}$, the quotient
resolutions of the systems $\Sigma$ w.r.t. the graded closure $GCl(u,z,y,p,a)$. 
\end{thm}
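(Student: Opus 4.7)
The plan is to adapt the quotient-resolution construction of \cite{DGIV} (as used in the proof of the minimal-rank quantifier elimination in \cite{DGV}) to the present graded closure $GCl(u,z,y,p,a)$, exploiting the machinery already developed in Sections 5--7. First I would consider the collection of all tuples $(c_0,u_0,z_0,y_0,p_0,a)$ for which $(u_0,z_0,y_0,p_0,a)\in\mathscr{A}$ and $c_0$ witnesses the satisfaction of one of the systems $\Sigma_i(c,u,z,y,p,a)$. By Noetherianity, these tuples factor through finitely many maximal limit groups $L(c,u,z,y,p,a)$, each of which contains a canonical image of the given graded closure $GCl(u,z,y,p,a)$.

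Next I would run the shortening-procedure construction of Section~6 on each such $L$, but now \emph{relative to the subgroup $GCl$}: repeatedly consider the most refined $GCl$-free factorization, extend the free factors not containing $GCl$ by their standard MR diagrams, and analyze the $GCl$-JSJ of the factor containing $GCl$. The iteration terminates after finitely many steps by the descending chain condition on limit groups, producing finitely many terminal groups $M(c,u,z,y,p,a)$ that are either $GCl$-rigid or $GCl$-solid. Lemma~\ref{lem:20} (applied with $GCl$ in place of $Comp$) endows each such $M$ with the structure of a $p$-graded tower over a $p$-graded base group $Term(M)$ whose decomposition is compatible with that of $Comp(z,y,p,a)$ up to adding roots, and into which $Term(GCl)$ embeds compatibly by Lemma~\ref{lem:18}. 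Setting $Comp_j^2(t,c,u,z,y,p,a):=M_j$ yields diagrams in which $GCl$ is mapped canonically, the words of one of the $\Sigma_i$ represent the trivial element (by construction of $\mathfrak{J}$), and $Term(GCl)$ is mapped compatibly into $Term(Comp_j^2)$; these are properties 1--3.

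The complexity bound (property 4) and the rigidity conclusion (property 5) are where the minimal rank hypothesis on $Rlim(y,p,a)$ enters, and they are the main obstacle. The argument is the one of \cite{DGV}, definition 1.16 and the discussion that follows: in minimal rank, the limit actions that arise along the shortening procedure applied to test sequences cannot produce surfaces of strictly higher complexity than those that already appear in $Comp(z,y,p,a)$, since any such surface would force a resolution of larger rank than $Rlim(y,p,a)$, contradicting minimality. Consequently the complexity of each $Comp_j^2$ is bounded by that of $Comp(z,y,p,a)$; and in the equality case, no new surface/abelian piece can appear at any level, so the tower structure of $Comp_j^2$ coincides with that of $Comp(z,y,p,a)$ up to adding roots to abelian vertex groups, i.e.\ $Comp_j^2$ is a graded formal closure $GFCl(c,u,z,y,p,a)$ of $Comp(z,y,p,a)$, matching the framework of Section~6.

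Finally, for the ``moreover'' clause I would argue as in Lemma~\ref{lem:32}: given $(u_0,z_0,y_0,p_0,a)\in\mathscr{A}$ with witness $c_0$, the tuple $(c_0,u_0,z_0,y_0,p_0,a)$ factors through one of the $M_j$, and by precomposing with a modular automorphism of the $GCl$-JSJ one obtains a shortened tuple $(c_0',u_0',z_0,y_0,p_0,a)$ that factors through the corresponding resolution of $Comp_j^2$. Because the applied modular automorphisms belong to the $GCl$-JSJ and $Term(GCl)$ is mapped compatibly, the restrictions of $(u_0,z_0,y_0,p_0,a)$ and $(u_0',z_0,y_0,p_0,a)$ to $Term(GCl)$ lie in the same modular family, hence in the same exceptional family; the membership of $(u_0',z_0,y_0,p_0,a)$ in $\mathscr{A}$ is then immediate since the associated witness $c_0'$ still satisfies one of the systems in $\Sigma$.
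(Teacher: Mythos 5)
There is a genuine gap in the middle of your argument, at the point where you invoke \lemref{19} and \lemref{20} ``with $GCl$ in place of $Comp$'' to conclude that each terminal group $M$ carries the structure of a $p$-graded tower similar to that of $Comp(z,y,p,a)$ up to adding roots. Those lemmas are proved for collections of sequences that \emph{restrict to test sequences} of the completion; the uncovering of the tower structure in the proof of \lemref{19} depends essentially on the properties of the limit action induced by a test sequence. The collection $\mathscr{A}$ of the present theorem consists of arbitrary specializations satisfying the Diophantine conditions $\Sigma_{i}$, with no test-sequence restriction, so the limit groups through which $\mathscr{A}$ factors need not be closures of $Comp$ at all --- indeed, if they always were, the complexity could never drop, whereas the strict drop in properties 4--5 is exactly what makes the iteration in Sections 7--8 terminate. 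Your own later acknowledgment that the complexity may decrease is therefore in tension with the structure you claim to extract from \lemref{20}.

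What the paper actually does at this point is different and cannot be replaced by the $GCl$-JSJ shortening argument. After reducing to the $p$-freely indecomposable case by minimal rank (which you do correctly), it regards each maximal limit group $QRlim(c,u,z,y,p,a)$ as graded with respect to the \emph{second level} $zup_{b}^{1}$ of the closure $GCl$, builds the $zup_{b}^{1}$-graded MR diagram of the subcollection of $\mathscr{A}$, isolates the \emph{surviving surfaces} (in the sense of definitions 1.7 and 1.10 of \cite{DGIV}) by pushing the isolated $QH$ subgroups into the distinguished vertex, and then repeats level by level down $GCl$ with $zup_{b}^{2}$, etc. The completion $Comp^{2}$ of each resulting resolution is then modified by mapping to the next level every $QH$ subgroup and every abelian generator whose associated subgroup does not intersect a conjugate of the image of $GCl$ non-trivially. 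The complexity inequality of property 4 and the dichotomy of property 5 then come from the quantitative analysis of quotient resolutions in the minimal rank case (Lemmas 1.3, 1.5, 1.11 and 1.12 of \cite{DGIV}), not from a general ``no new surfaces can appear'' principle; in the equality case one must still drop and merge levels to compare abelian ranks before concluding that $Comp_{j}^{2}$ is a graded formal closure. Your treatment of the ``moreover'' clause is closer to the mark, but it presupposes the factorization structure produced by the level-by-level construction, so it cannot be completed until that construction is in place.
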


\begin{proof}
The collection $\mathscr{A}$ factors through finitely many maximal
limit groups $QRlim(c,u,z,y,p,a)$. Since $Rlim(y,p,a)$ is assumed
to be a minimal rank graded limit group, if $QRlim(c,u,z,y,p,a)$
is $p$-freely decomposable, then the factor containing the subgroup
$\langle a,p\rangle$ must contain the whole image of the graded completion
$Comp(z,y,p,a)$. Since the specializations in $\mathscr{A}$ restrict
to exceptional specializations for $Term(GCl)$, we conclude that
the whole image of the graded closure $GCl(u,z,y,p,a)$ is contained
in the free factor of $QRlim(c,u,z,y,p,a)$ containing the subgroup
$\langle a,p\rangle$, in the case that $QRlim(c,u,z,y,p,a)$ is $p$-freely
decomposable. Hence, By mapping the factors that does not intersect
a conjugate of the subgroup $\langle a,p\rangle$ to the trivial element,
we may assume that $QRlim(c,u,z,y,p,a)$ is $p$-freely indecomposable.

Now we denote by $zup_{b}^{1}$, a set of generators for the first
level of the graded closure $GCl(u,z,y,p,a)$. We recognize the limit
group $QRlim(c,u,z,y,p,a)$ as a $zup_{b}^{1}$-graded limit group
$QRlim(c,u,z,y,p,a)=QRlim(c,u,z,y,zup_{b}^{1},a)$. The subcollection
of specializations in $\mathscr{A}$ that factor through $QRlim$,
induces a $zup_{b}^{1}$-graded MR diagram of the $zup_{b}^{1}$-graded
limit group $QRlim$. Let $QRes(c,u,z,y,zup_{b}^{1},a)$ be a resolution
in this MR diagram.

Let $Q_{1},...,Q_{t}$ be some surviving surfaces (definitions 1.7
and 1.10 in \cite{DGIV}) that appear in the $p$-JSJ of the first
level of the given resolution $Res(y,p,a)$. That is, $Q_{i}$ is
mapped monomorphically onto a subgroup of finite index of a $QH$
subgroup of the $j$-th level $zup_{b}^{1}$-graded limit group $Glim_{j}(c,u,z,y,zup_{b}^{1},a)$,
for some $j$. If $Q$ is a surviving surface, and $j$ is the minimal
integer for which $Q$ is mapped monomorphically onto a subgroup of
finite index of a $QH$ subgroup of the $j$-th level $zup_{b}^{1}$-graded
limit group $Glim_{j}(c,u,z,y,zup_{b}^{1},a)$, we denote by $Q'$
its isomorphic image in $Glim_{j}$.

Let $j$ be an integer and assume that all the images $Q_{1}',...,Q_{t}'$
appear in the $j$-th level of the $zup_{b}^{1}$-graded limit group
$Glim_{j}(c,u,z,y,zup_{b}^{1},a)$. Assume further that for all $i=1,...,t$,
there is no surviving surface $Q$, other than $Q_{i}$, that is mapped
non-trivially into a conjugate of $Q_{i}'$, and that $j$ is the
minimal integer for which the group $Glim_{j}$ contains the subgroup
$Q'$ for some surviving surface $Q$.

According to the proof of theorem 1.7 in \cite{DGIV}, we can isolate
the images of the surviving surfaces $Q_{1},...,Q_{t}$ along the
subresolution $QRes_{j}$, consisting of the first $j$ levels of
$QRes$. We map the isolated $QH$ subgroups $Q_{1}',...,Q_{t}'$
into the distinguished vertex containing $zup_{b}^{1}$, and continue
iteratively.

We finish up with finitely many $zup_{b}^{1}$-graded resolutions,
denoted again by $QRes(c,u,z,y,zup_{b}^{1},a)$, in which all the
surviving surfaces are isolated. Each of the terminal groups of the
new resolutions is either $zup_{b}^{1}$-rigid or $zup_{b}^{1}$-solid,
and these resolutions cover all the specializations in the collection
$\mathscr{A}$.

We continue by considering the subcollection $\mathscr{A}_{1}$ of
$\mathscr{A}$ that consists of the specializations $(u_{0},z_{0},y_{0},p_{0},a)$
for which there exists some $c_{0}\in F_{k}$, so that $(c_{0},u_{0},z_{0},y_{0},p_{0},a)$
is a $zup_{b}^{1}$-exceptional specialization of $T_{1}(c,u,z,y,zup_{b}^{1},a)$,
the terminal group of $QRes(c,u,z,y,zup_{b}^{1},a)$.

The specializations in the collection $\mathscr{A}_{1}$ factors through
finitely many maximal limit groups $QRlim^{2}$. We denote by $zup_{b}^{2}$,
a set of generators for the second level of the graded closure $GCl(u,z,y,p,a)$.We
recognize each of the maximal limit groups $QRlim^{2}$ as a $zup_{b}^{2}$-graded
limit group. For each resolution in the $zup_{b}^{2}$-graded MR diagram
of $QRlim^{2}$, we use a similar construction to the one described
above, in order to isolate all the surviving surfaces that appear
in the second level of the graded closure $GCl(u,z,y,p,a)$. And we
continue iteratively.

This process terminates in finitely many resolutions denoted again
by $QRes(c,u,z,y,p,a)$. Each of the resolutions terminate in $p$-rigid
or $p$-solid group, and these resolutions cover all the specializations
in the collection $\mathscr{A}$. Moreover, according to the construction,
and using Propositions 1.11 and 1.12 of \cite{DGIV}, all the surviving
surfaces w.r.t. some obtained resolution $QRes(c,u,z,y,p,a)$, in
all the levels of the given resolution $Res(y,p,a)$, are isolated
along the resolution $QRes(c,u,z,y,p,a)$.

We consider the completion $Comp^{2}$ of some obtained resolution
$QRes(c,u,z,y,zup_{b}^{1},a)$. We are going now to modify the completed
resolution $Comp^{2}$. For each of the abelian vertex groups in the
decomposition $\Lambda_{1}$ associated with the top level of $Comp^{2}$,
we fix a basis. We map the $QH$ subgroups that appear in $\Lambda_{1}$
and do not intersect a conjugate of the image of $GCl$ non-trivially,
into the next level of $Comp^{2}$. For each generator of an abelian
vertex in $\Lambda_{1}$, if the cyclic subgroup generated by this
generator, does not intersect a conjugate of the image of $GCl$ non-trivially,
we map it to the next level of $Comp^{2}$.

We continue modifying the various levels of $Comp^{2}$ successively,
until we reach the terminal group of $Comp^{2}$, that we do not change.
According to the analysis of quotient resolutions in the minimal rank
case, that appear in Lemmas 1.3, 1.5, 1.11, and 1.12 in \cite{DGIV},
the quadratic complexity of the (sub)resolution (of) $Comp^{2}$,
is no larger than that of the given resolution $Comp$. In the case
of equality, we can modify the tower structure of $Comp^{2}$ by ``dropping''
some levels that their decompositions do not contain any isolated
surviving surface, and by ``merging'' remaining levels, so that
the quadratic complexity do not change, and the abelian rank of the
obtained resolution is no larger than the abelian rank of the given
resolution $Comp$. In the case of equality in the abelian ranks too,
the obtained resolution $Comp^{2}$ will be a graded formal closure
of $Comp$. 
\end{proof}
\begin{lem}
\label{lem:49} Let $Rlim(y,p,a)$ be a minimal rank $p$-graded limit
group, and let $Res(y,p,a)$ be a graded resolution of $Rlim(y,p,a)$.
Let $Comp(z,y,p,a)$ be the graded completion of $Res(y,p,a)$, and
denote its base $p$-rigid or $p$-solid group by $B$. Let $GCl(u,z,y,p,a)$
be a graded closure for $Comp(z,y,p,a)$. Let $\Sigma=\{\Sigma_{i}(c,u,z,y,p,a)\}_{i=1,...,r}$
be a finite collection of systems. Let $\mathscr{A}$ be the collection
of all the specializations $(u_{0},z_{0},y_{0},p_{0},a)$ for which: 
\begin{enumerate}
\item The specialization $(u_{0},z_{0},y_{0},p_{0},a)$ factors through
$GCl(u,z,y,p,a)$. 
\item The specialization $(u_{0},z_{0},y_{0},p_{0},a)$ restricts to an
exceptional specialization of $Term(GCl)$ and to an exceptional specialization
of $B$. 
\item There exists some $c_{0}\in F_{k}$, so that $\Sigma_{i}(c_{0},u_{0},z_{0},y_{0},p_{0},a)$
for some $i=1,...,r$. 
\end{enumerate}
Let $Comp_{1}^{2}(t,c,u,z,y,p,a),...,Comp_{s}^{2}(t,c,u,z,y,p,a)$
be the $p$-graded completed resolutions from \ref{thm:48}.

Let $\Gamma$ be a group of level $l$ in the model. If $l$ is large
enough, then, the following property is satisfied for $\Gamma$. Let
$\Gamma\mathscr{A}$ be the collection of all the $\Gamma$-specializations
$(u_{0},z_{0},y_{0},p_{0},a)$ for which: 
\begin{enumerate}
\item The $\Gamma$-specialization $(z_{0},y_{0},p_{0},a)$ factors through
$GCl(u,z,y,p,a)$. 
\item The $\Gamma$-specialization $(u_{0},z_{0},y_{0},p_{0},a)$ restricts
to an exceptional specialization of $Term(GCl)$ and to an exceptional
specialization of $B$. 
\item There exists some $c_{0}\in\Gamma$, so that $\Sigma_{i}(c_{0},u_{0},z_{0},y_{0},p_{0},a)$
for some $i=1,...,r$. 
\end{enumerate}
Then, the resolutions $Comp_{1}^{2}(t,c,u,z,y,p,a),...,Comp_{s}^{2}(t,c,u,z,y,p,a)$
cover the collection $\Gamma\mathscr{A}$.

For every $\Gamma$-specialization $(u_{0},z_{0},y_{0},p_{0},a)$
in the collection $\Gamma\mathscr{A}$, there exists a $\Gamma$-specialization
$(u_{0}',z_{0},y_{0},p_{0},a)$ that belongs to $\Gamma\mathscr{A}$,
so that $(u_{0}',z_{0},y_{0},p_{0},a)$ factors through one of the
resolutions

\[
Comp_{1}^{2}(t,c,u,z,y,p,a),...,Comp_{s}^{2}(t,c,u,z,y,p,a)\,,
\]
and the $\Gamma$-specializations $(u_{0},z_{0},y_{0},p_{0},a)$ and
$(u_{0}',z_{0},y_{0},p_{0},a)$ restrict to the same $\Gamma$-exceptional
family of $Term(GCl)$. 
\end{lem}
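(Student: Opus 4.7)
The plan is to follow the standard lifting strategy used throughout this paper (see e.g.\ \lemref{24}, \lemref{39}, and \thmref{46}): reduce the $\Gamma$-statement to the corresponding $F_k$-statement, which is \thmref{48}, by lifting $\Gamma$-specializations to $F_k$-specializations, applying \thmref{48} in $F_k$, and projecting back. I may assume $\Gamma$ is of sufficiently high level so that the lifting property holds simultaneously for the (finitely many) systems associated with $\mathscr{A}$, $GCl(u,z,y,p,a)$, the collection $\Sigma$, and the resolutions $Comp^2_j$; and so that the uniform exceptional-family bounds of \thmref{15} and \thmref{17} apply.

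Fix $(u_0,z_0,y_0,p_0,a)\in\Gamma\mathscr{A}$, together with a witness $c_0\in\Gamma$ and an index $i$ so that $\Sigma_i(c_0,u_0,z_0,y_0,p_0,a)=1$ in $\Gamma$. Using the lifting property, I lift the tuple $(c_0,u_0,z_0,y_0,p_0,a)$ to an $F_k$-tuple $(\tilde c_0,\tilde u_0,\tilde z_0,\tilde y_0,\tilde p_0,a)$ that still satisfies $\Sigma_i=1$ and factors through the closure $GCl(u,z,y,p,a)$. The key observation is that any $F_k$-lift of a $\Gamma$-exceptional specialization of $Term(GCl)$ (resp.\ of $B$) is itself $F_k$-exceptional: if such a lift factored through a flexible quotient, its projection to $\Gamma$ would factor through the same quotient, contradicting the $\Gamma$-exceptionality of the original (the same argument that appears in \thmref{15} and \thmref{17}). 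Hence $(\tilde u_0,\tilde z_0,\tilde y_0,\tilde p_0,a)\in\mathscr{A}$.

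Applying \thmref{48} to this lifted specialization produces an $F_k$-specialization $(\tilde u_0',\tilde z_0,\tilde y_0,\tilde p_0,a)\in\mathscr{A}$ that factors through one of the resolutions $Comp^2_j(t,c,u,z,y,p,a)$, together with some $\tilde c_0'\in F_k$ witnessing condition 3, and whose restriction to $Term(GCl)$ lies in the same $F_k$-exceptional family as the restriction of $(\tilde u_0,\tilde z_0,\tilde y_0,\tilde p_0,a)$. Projecting $\tilde u_0'$ and $\tilde c_0'$ back to $\Gamma$ yields the desired $(u_0',z_0,y_0,p_0,a)\in\Gamma\mathscr{A}$ factoring through $Comp^2_j$: factorization is preserved under the quotient map, and condition 3 is witnessed by the projection of $\tilde c_0'$.

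The main technical point to verify is that the $F_k$-exceptional-family equivalence for $Term(GCl)$ descends to a single $\Gamma$-exceptional family under the quotient map $F_k\twoheadrightarrow\Gamma$, so that $(u_0,z_0,y_0,p_0,a)$ and $(u_0',z_0,y_0,p_0,a)$ indeed restrict to the same $\Gamma$-exceptional family. For a $p$-rigid terminal group this is immediate since a rigid family is a single homomorphism and projects to a single $\Gamma$-homomorphism. For a $p$-solid terminal group (see \defref{13}), $F_k$-equivalence of two strictly solid homomorphisms is witnessed by a homomorphism $u:Comp_T\to F_k$; composing $u$ with the quotient map gives a homomorphism $Comp_T\to\Gamma$ that witnesses equivalence of the projected strictly solid homomorphisms in the sense of \defref{13}. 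Hence each $F_k$-exceptional family of $Term(GCl)$ projects into a single $\Gamma$-exceptional family, which yields the coincidence of $\Gamma$-exceptional families and completes the proof.
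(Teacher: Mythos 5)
Your overall strategy is exactly the paper's: lift the $\Gamma$-specialization (together with its witness $c_0$) to $F_k$ using the lifting property, observe that $\Gamma$-exceptionality of the restrictions to $Term(GCl)$ and $B$ forces $F_k$-exceptionality of the lifts, apply \thmref{48}, and project back. Your verification that $F_k$-exceptional-family equivalence descends to $\Gamma$-equivalence (via composing the witness $Comp_T\to F_k$ with the quotient map) is also correct and is the right way to get the last clause of the statement.

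However, there is a genuine gap at the step ``factorization is preserved under the quotient map.'' Factoring through a graded resolution is defined (see the discussion after \lemref{18}) as a composition of modular automorphisms, canonical epimorphisms, and a \emph{rigid or strictly solid} homomorphism of the terminal $p$-rigid or $p$-solid group of that resolution. The specialization $(\tilde u_0',\tilde z_0,\tilde y_0,\tilde p_0,a)$ produced by \thmref{48} restricts to an $F_k$-exceptional specialization of $Term(Comp_j^2)$, but the projection of an $F_k$-exceptional specialization to $\Gamma$ need \emph{not} be $\Gamma$-exceptional --- the implication only goes the other way (lifts of $\Gamma$-exceptional specializations are $F_k$-exceptional). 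So the projected specialization may fail to factor through $Comp_j^2$ over $\Gamma$ in the required sense. The paper's proof devotes its entire final paragraph to exactly this: when the induced $\Gamma$-specialization of $Term(Comp_j^2)$ is not $\Gamma$-exceptional, one lifts it to a \emph{non}-exceptional $F_k$-specialization, which then factors through a flexible quotient and hence through another resolution $Comp_{j'}^2$ whose terminal group is a proper quotient of $Term(Comp_j^2)$, while preserving the exceptional family of $Term(GCl)$; a finite induction (using the descending chain condition) then terminates with a resolution whose induced terminal $\Gamma$-specialization is $\Gamma$-exceptional. You need to add this descent argument to close the proof.
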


\begin{proof}
Let $(u_{0},z_{0},y_{0},p_{0},a)$ be a $\Gamma$-specialization in
$\Gamma\mathscr{A}$, and let $c_{0}\in\Gamma$ be so that $\Sigma_{i}(c_{0},u_{0},z_{0},y_{0},p_{0},a)$
for some $i=1,...,r$. We lift $(c_{0},u_{0},z_{0},y_{0},p_{0},a)$
to a specialization $(\tilde{c}_{0},\tilde{u}_{0},\tilde{z}_{0},\tilde{y}_{0},\tilde{p}_{0},a)$
in $F_{k}$, so that $(\tilde{u}_{0},\tilde{z}_{0},\tilde{y}_{0},\tilde{p}_{0},a)$
is a specialization of $GCl$, and $\Sigma_{i}(\tilde{c}_{0},\tilde{u}_{0},\tilde{z}_{0},\tilde{y}_{0},\tilde{p}_{0},a)=1$.

Hence, the specialization $(\tilde{u}_{0},\tilde{z}_{0},\tilde{y}_{0},\tilde{p}_{0},a)$
belongs to the collection $\mathscr{A}$, and according to \thmref{48},
there exists a specialization $(\tilde{u}_{0}',\tilde{z}_{0},\tilde{y}_{0},\tilde{p}_{0},a)$
that belongs to $\mathscr{A}$, so that $(\tilde{u}_{0}',\tilde{z}_{0},\tilde{y}_{0},\tilde{p}_{0},a)$
factors through one of the resolutions $Comp_{1}^{2}(t,c,u,z,y,p,a),...,Comp_{s}^{2}(t,c,u,z,y,p,a)$,
and the specializations $(\tilde{u}_{0},\tilde{z}_{0},\tilde{y}_{0},\tilde{p}_{0},a)$
and $(\tilde{u}_{0}',\tilde{z}_{0},\tilde{y}_{0},\tilde{p}_{0},a)$
restrict to the same exceptional family of $Term(GCl)$. We denote
by $(u_{0}',z_{0},y_{0},p_{0},a)$, the projection of $(\tilde{u}_{0}',\tilde{z}_{0},\tilde{y}_{0},\tilde{p}_{0},a)$
to $\Gamma$. Then, $(u_{0}',z_{0},y_{0},p_{0},a)$ belongs to the
collection $\Gamma\mathscr{A}$ and factors through the resolution
$Comp_{j}^{2}(t,c,u,z,y,p,a)$.

If the induced $\Gamma$-specialization $(c_{0},u_{0}'',z_{0}',y_{0}',p_{0},a)$
of the terminal $p$-rigid or $p$-solid group $Term(Comp_{j}^{2})$
of $Comp_{j}^{2}$ is not $\Gamma$-exceptional, we lift it to a non-exceptional
specialization $(\tilde{c}_{0},\tilde{u}_{0}'',\tilde{z}_{0}',\tilde{y}_{0}',\tilde{p}_{0},a)$
of $Term(Comp_{j}^{2})$ in $F_{k}$. The specialization $(u_{0}'',z_{0}',y_{0}',p_{0},a)$
belongs to $\Gamma\mathscr{A}$ and hence $(\tilde{u}_{0}'',\tilde{z}_{0}',\tilde{y}_{0}',\tilde{p}_{0},a)$
belongs to $\mathscr{A}$. Thus, and according to the construction
of the resolutions $Comp_{1}^{2},...,Comp_{s}^{2}$, there exists
some resolution $Comp_{j'}^{2}$, for $j'\neq j$, whose upper levels
are identical to that of $Comp_{j}^{2}$ but its terminal group is
a $p$-rigid or $p$-solid limit quotient of the $Term(Comp_{j}^{2})$,
so that there exists a specialization $(\tilde{u}_{0}^{(3)},\tilde{z}_{0}'',\tilde{y}_{0}'',\tilde{p}_{0},a)$
of $Term(Comp_{j'}^{2})$ that restrict to the same exceptional family
of $Term(GCl)$ represented by $(\tilde{u}_{0},\tilde{z}_{0},\tilde{y}_{0},\tilde{p}_{0},a)$.
In particular, the specialization $(\tilde{z}_{0}'',\tilde{y}_{0}'',\tilde{p}_{0},a)$
restrict to the same exceptional family of $B$ represented by $(\tilde{z}_{0},\tilde{y}_{0},\tilde{p}_{0},a)$.
Hence, there exists some $\tilde{u}_{0}^{(4)}\in F_{k}$, so that
the specialization $(\tilde{u}_{0}^{(4)},\tilde{z}_{0},\tilde{y}_{0},\tilde{p}_{0},a)$
factors through $Comp_{j'}$, and restricts to the same exceptional
family of $Term(GCl)$ represented by $(\tilde{u}_{0},\tilde{z}_{0},\tilde{y}_{0},\tilde{p}_{0},a)$.
We denote by $u_{0}^{(4)}$ the projection of $\tilde{u}_{0}^{(4)}$
to $\Gamma$. Then, the $\Gamma$-specialization $(u_{0}^{(4)},z_{0},y_{0},p_{0},a)$
factors through $Comp_{j'}$, and restricts to the same $\Gamma$-exceptional
family represented by $(u_{0},z_{0},y_{0},p_{0},a)$. A finite induction
argument, implies that we can assume that the induced $\Gamma$-specialization
of $Term(Comp_{j'}^{2})$ is $\Gamma$-exceptional. 
\end{proof}
For each terminal graded closure $ExtraPSGCl(u,z,y,w,p,a)$ in the
$ExtraPS$ MR diagram $ExtraPSMRD$ of the collection $WPRS$ over
the resolution $Comp(z,y,w,p,a)$, we consider the finitely many systems
$\Sigma=\{\Sigma_{i}(c,u,z,y,w,p,a)\}_{i}$ stating that the specializations
of the variables $u$ in the specialization $(u,z,y,w,p,a)$ are collapsed.
Let $Comp_{1}^{2}(t,c,u,z,y,w,p,a),...,Comp_{s}^{2}(t,c,u,z,y,w,p,a)$
be the quotient resolutions of the systems $\Sigma$ w.r.t. the graded
closure $ExtraPSGCl(u,z,y,w,p,a)$. 
\begin{thm}
\label{thm:50} 
\begin{enumerate}
\item Let $(z_{0},y_{0},w_{0},p_{0},a)$ be a validPS statement that factors
through the completed resolution $Comp(z,y,w,p,a)$. If $p_{0}\notin TSPS(p)$,
then $(z_{0},y_{0},w_{0},p_{0},a)$ factors through some quotient
resolution $Comp^{2}(t,c,u,z,y,w,p,a)$, associated with some terminal
graded closure $ExtraPSGCl(u,z,y,w,p,a)$ in the $ExtraPS$ MR diagram
$ExtraPSMRD$, so that the complexity of $Comp^{2}$ is strictly smaller
than that of $Comp(z,y,w,p,a)$. 
\item Let $\Gamma$ be a group of level $l$ in the model. If $l$ is large
enough, then, the following property is satisfied for $\Gamma$. Let
$(z_{0},y_{0},w_{0},p_{0},a)$ be a $\Gamma$-validPS statement that
factors through the completed resolution $Comp(z,y,w,p,a)$. If $p_{0}\notin TSPS_{\Gamma}(p)$,
then $(z_{0},y_{0},w_{0},p_{0},a)$ factors through some quotient
resolution $Comp^{2}(t,c,u,z,y,w,p,a)$, associated with some terminal
graded closure $ExtraPSGCl(u,z,y,w,p,a)$ in the $ExtraPS$ MR diagram
$ExtraPSMRD$, so that the complexity of $Comp^{2}$ is strictly smaller
than that of $Comp(z,y,w,p,a)$. 
\end{enumerate}
\end{thm}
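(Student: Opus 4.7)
The plan is to take the validPS witness $(z_0,y_0,w_0,p_0,a)$ and, assuming $p_0 \notin TSPS(p)$, identify which of the three defining conditions of $TSPS$ in \thmref{45} it fails. By the setup of the proof system and the basic properties of validPS statements, the restriction of the witness to $B$ is an exceptional specialization, so condition~1 holds. Condition~2 also holds: if $(z_0,y_0,w_0,p_0,a)$ factored through $PosGFMRD$, the formal solution carried by the terminal graded formal closure would supply an $x_0$ solving one of the systems in $AbsSys(x,z,y,w,p,a)$, or would witness that the specialization is collapsed, contradicting clause~3 or clause~1 respectively of the validPS definition. Consequently condition~3 must fail: there exist a terminal graded closure $ExtraPSGCl(u,z,y,w,p,a)$ in $ExtraPSMRD$ and an element $u_0$ such that $(u_0,z_0,y_0,w_0,p_0,a)$ factors through $ExtraPSGCl$, while no $u_0'$ in the same exceptional family of $Term(ExtraPSGCl)$ lifts this to a specialization factoring through the collapse-extra diagram $CollapseExtraMRD(ExtraPSGCl)$.

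Next I would exploit fullness of the exceptional families. Because $(z_0,y_0,w_0,p_0,a)$ is validPS, the induced family of representatives of exceptional families of each $G_i$ is already full at the value $w_0 p_0$. The additional specialization $u_0$ inside $(u_0,z_0,y_0,w_0,p_0,a)$ must therefore represent an exceptional family already covered by $(z_0,y_0,w_0,p_0,a)$, forcing the enlarged specialization to be collapsed in the sense of \secref{CollapseExtra-MR-Diagram}. Choosing a witness $c_0$ with $\Sigma_i(c_0,u_0,z_0,y_0,w_0,p_0,a)=1$ for some $i$ then places $(u_0,z_0,y_0,w_0,p_0,a)$ inside the collection $\mathscr{A}$ of \thmref{48} (or the $\Gamma$-analogue $\Gamma\mathscr{A}$ of \lemref{49} in the random-group case).

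Applying \thmref{48} over $F_k$ (respectively \lemref{49} over $\Gamma$) to $ExtraPSGCl(u,z,y,w,p,a)$ together with the collapse systems, I obtain a specialization $(u_0',z_0,y_0,w_0,p_0,a)$ in the same exceptional family of $Term(ExtraPSGCl)$ as $u_0$, factoring through one of the quotient resolutions $Comp_j^2(t,c,u,z,y,w,p,a)$. The main obstacle is to rule out equality of complexities. I would argue by contradiction: suppose the complexity of $Comp_j^2$ equals that of $Comp$; then clause~5 of \thmref{48} identifies $Comp_j^2$ with a graded formal closure $GFCl$ of $Comp$ in which one of the systems $\Sigma_i$ is a trivial relation. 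Invoking \lemref{41} in the $F_k$-case and \lemref{42} in the $\Gamma$-case then forces $(u_0',z_0,y_0,w_0,p_0,a)$ to factor through $CollapseExtraMRD(ExtraPSGCl)$. Since $u_0'$ and $u_0$ lie in the same exceptional family of $Term(ExtraPSGCl)$, this directly contradicts the failure of condition~3 identified at the outset. Hence $Comp_j^2$ must have complexity strictly smaller than that of $Comp(z,y,w,p,a)$, and $(z_0,y_0,w_0,p_0,a)$ factors through it, proving the $F_k$-statement. The $\Gamma$-statement is proved identically, replacing each citation by its random-group counterpart (\thmref{46}, \lemref{42}, \lemref{44}, \lemref{49}), all of which are available at sufficiently large levels $l$.
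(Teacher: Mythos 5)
Your proposal is correct and follows essentially the same route as the paper's proof: both identify that the failure of $p_{0}\in TSPS(p)$ must come from condition~3 of \thmref{45}, note that fullness of the validPS families forces any $(u_{0},z_{0},y_{0},w_{0},p_{0},a)$ factoring through a terminal closure of $ExtraPSMRD$ to be collapsed, apply \thmref{48} (resp. \lemref{49}) to produce $u_{0}'$ in the same exceptional family factoring through a quotient resolution, and rule out maximal complexity via \lemref{41} (resp. \lemref{42}) since equality would place $u_{0}'$ in $CollapseExtraMRD$, contradicting the failure of condition~3. Your presentation merely reorders the paper's argument into an explicit proof by contradiction and adds a justification (omitted in the paper) for why a validPS statement cannot factor through $PosGFMRD$.
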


\begin{proof}
We start with the first part of the claim. Since the specialization
$(z_{0},y_{0},w_{0},p_{0},a)$ is a validPS statement, it does not
factor through the diagram $PosGFMRD$. Let $ExtraPSGCl(u,z,y,w,p,a)$
be a terminal graded closure in the $ExtraPS$ MR diagram $ExtraPSMRD$,
and assume that there exists some $u_{0}\in F_{k}$, so that $(u_{0},z_{0},y_{0},w_{0},p_{0},a)$
factors through $ExtraPSGCl(u,z,y,w,p,a)$.

Since $(z_{0},y_{0},w_{0},p_{0},a)$ is a validPS statement, the specialization
$u_{0}$ must be collapsed in the specialization $(u_{0},z_{0},y_{0},w_{0},p_{0},a)$.
Hence, and according to \thmref{48}, there exists some $u_{0}'\in F_{k}$,
so that $(u_{0}',z_{0},y_{0},w_{0},p_{0},a)$ factors through some
quotient resolution $Comp^{2}(t,c,u,z,y,w,p,a)$ associated with \\
 $ExtraPSGCl(u,z,y,w,p,a)$, and the specializations $(u_{0},z_{0},y_{0},w_{0},p_{0},a)$
and $(u_{0}',z_{0},y_{0},w_{0},p_{0},a)$ restrict to the same exceptional
family of $Term(ExtraPSGCl)$.

If the complexity of $Comp^{2}$ equals that of $Comp(z,y,w,p,a)$,
then $Comp^{2}$ is a graded formal closure of $Comp(z,y,w,p,a)$.
Hence, according to \lemref{41}, in this case, the specialization
$(u_{0}',z_{0},y_{0},w_{0},p_{0},a)$ factors through the $CollapseExtra$
MR diagram $CollapseExtraMRD$ associated with the graded closure
$ExtraPSGCl(u,z,y,w,p,a)$.

Hence, and since $p_{0}\notin TSPS(p)$, there must exist some terminal
graded closure $ExtraPSGCl(u,z,y,w,p,a)$ in the $ExtraPS$ MR diagram
$ExtraPSMRD$, and some $u_{0}\in F_{k}$, so that $(u_{0},z_{0},y_{0},w_{0},p_{0},a)$
factors through $ExtraPSGCl(u,z,y,w,p,a)$ and for all $u_{0}'\in F_{k}$,
for which the specializations $(u_{0},z_{0},y_{0},w_{0},p_{0},a)$
and $(u_{0}',z_{0},y_{0},w_{0},p_{0},a)$ restrict to the same exceptional
family of $Term(ExtraPSGCl)$, the specialization $(u_{0}',z_{0},y_{0},w_{0},p_{0},a)$
does not factor through any of the quotient resolutions $Comp^{2}$
associated with \\
 $ExtraPSGCl(u,z,y,w,p,a)$ and have the same complexity as that of
$Comp(z,y,w,p,a)$. That is, $(z_{0},y_{0},w_{0},p_{0},a)$ must factor
through some quotient resolution $Comp^{2}$ associated with $ExtraPSGCl(u,z,y,w,p,a)$,
with complexity strictly smaller than that of $Comp(z,y,w,p,a)$.

The second statement follows by a similar argument that uses \lemref[s]{49}
and \ref{lem:42} in place of \thmref[s]{48} and \ref{lem:41}. 
\end{proof}
In light of \thmref{50}, for every terminal graded closure $ExtraPSGCl(u,z,y,w,p,a)$
in the $ExtraPS$ MR diagram $ExtraPSMRD$ of the collection $WPRS$
over the resolution $Comp(z,y,w,p,a)$, we collect those quotient
resolutions $Comp^{2}$ associated with $ExtraPSGCl(u,z,y,w,p,a)$,
who admit a strictly smaller complexity than that of $Comp(z,y,w,p,a)$,
and for which there exists some non-collapsed specialization $(z_{0},y_{0},w_{0},p_{0},a)$
of $Comp(z,y,w,p,a)$ that factors through $Comp^{2}$. According
to \thmref{47}, the sets $TSPS^{2}(p)$ and $TSPS_{\Gamma}^{2}(p)$,
that correspond to the new resolution $Comp^{2}$, are defined by
one formula $\varphi^{2}(p)$ in the Boolean algebra of $AE$-formulas,
for every group $\Gamma$ that belongs to sufficiently high level
of the model.

We Continue iteratively. By the definition of the complexity of a
graded resolution, this process must terminate after finitely many
steps. In each step, we have a formula $\varphi^{j}(p)$ in the Boolean
algebra of $AE$-formulas, that defines a subset of $EAE(p)$ and
a subset of $EAE_{\Gamma}(p)$, for every group $\Gamma$ that belongs
to sufficiently high level of the model. Moreover, all the quotient
resolutions in the final step of this procedure, must be of maximal
complexity. Hence, every $p_{0}\in EAE(p)$ for which there exists
some validPS statement that factors through the initial resolution
$Comp(z,y,w,p,a)$, must be defined by some formula $\varphi^{j}(p)$
along the process. Similarly, if $\Gamma$ belongs to sufficiently
high level in the model, then, every $p_{0}\in EAE_{\Gamma}(p)$ for
which there exists some $\Gamma$-validPS statement that factors through
the initial resolution $Comp(z,y,w,p,a)$, must be defined by some
formula $\varphi^{j}(p)$ along the process.

\section{Proof Systems}\label{sec:Proof-Systems}

Assume that we have a $EAE(p)$ set over $F_{k}$ defined by the formula:

\[
\exists w\;\forall y\;\exists x\quad\Sigma(x,y,w,p,a)=1\,\wedge\,\Psi(x,y,w,p,a)\neq1\,.
\]
Given a group $\Gamma$, we denote by $EAE_{\Gamma}(p)$, the set
of $\Gamma$-specializations $p_{0}\in\Gamma$ of the parameters $p$,
that are defined by the above formula when applied on $\Gamma$. We
also denote by $AE(w,p)$ and by $AE_{\Gamma}(w,p)$ the sets defined
by the formula

\[
\forall y\;\exists x\quad\Sigma(x,y,w,p,a)=1\,\wedge\,\Psi(x,y,w,p,a)\neq1\,,
\]
over $F_{k}$ and over $\Gamma$, correspondingly. If $(w_{0},p_{0})\in AE(w,p)$,
we say that $w_{0}$ is a \emph{witness} for $p_{0}\in EAE(p)$. Note
that for every $p_{0}\in EAE(p)$ there must exist some witness $w_{0}\in F_{k}$
so that $(w_{0},p_{0})\in AE(w,p)$. Similarly, if $(w_{0},p_{0})\in AE_{\Gamma}(w,p)$,
we say that $w_{0}$ is a \emph{witness} for $p_{0}\in EAE_{\Gamma}(p)$,
and we note that for every $p_{0}\in EAE_{\Gamma}(p)$ there must
exist some witness $w_{0}\in\Gamma$ so that $(w_{0},p_{0})\in AE_{\Gamma}(w,p)$.

Our aim in this section, is proving quantifier elimination. That is,
we will show that the sets $EAE(p)$ and $EAE_{\Gamma}(p)$ belong
to the Boolean algebra of $AE$-sets. Moreover, we show that there
exists a formula $\varphi(p)$ in the Boolean algebra of $AE$-formulas,
that defines each of these sets uniformly (over the corresponding
groups), for almost all the groups $\Gamma$. For that aim, we bring
the situation to the setting of \secref{An-Approximation-to-Abstract-EAE}.
For that, we construct finitely many (graded) \emph{proof systems}.
A proof system, roughly, is a finite tree whose branches consist of
alternating graded MR diagrams and graded formal MR diagrams. Using
the graded formal MR diagrams along a proof system, we can apply the
formal solutions of our fixed system $\Sigma(x,y,w,p,a)$ on the specializations
that factor through a graded resolution in the graded MR diagram that
was placed in the previous stage in the proof system. Those specializations
for which all of the formal solutions applied on them do not satisfy
our fixed set of inequalities $\Psi(x,y,w,p,a)\neq1$ (the remaining
$y$'s), are passed to the next graded MR diagram in the proof system,
and so on.

\subsection{The Construction of Proof Systems of Depth 2 under Minimal Rank Assumption}

Let $GFMRD_{1}$ be the graded formal MR diagram of the system $\Sigma(x,y,w,p,a)$
over the $wp$-graded resolution $F_{y}\ast F(w,p,a)$. According
to the construction of $GFMRD_{1}$, every graded formal resolution
$GFRes^{1}(x,y,w,p,a)$ in it, is defined over a graded formal closure
of $F_{y}\ast F(w,p,a)$, which is a limit group of the form $WPH(h_{1},w,p,a)\ast F_{y}$,
where $WPH(h_{1},w,p,a)$ is a $wp$-rigid or a $wp$-solid limit
group.

We apply all the modular automorphisms of the decompositions associated
with the various groups along the resolution $GFRes^{1}(x,y,w,p,a)$,
and we specialize the free factors that do not intersect a conjugate
of the subgroup $\langle y,w,p,a\rangle$ in all the possible ways.
That way, each word $\psi_{i}(x,y,w,p,a)$ in the system $\Psi(x,y,w,p,a)$,
induces a set of elements $\lambda'{}_{i}^{1}(h_{1},y,w,p,a)$ in
the group $WPH(h_{1},w,p,a)\ast F_{y}$. We think on the induced set
$\lambda'{}_{i}^{1}(h_{1},y,w,p,a)$ as a system of equations imposed
on the variables $(h_{1},y,w,p,a)$. By Guba's theorem, for all $i$,
the system $\lambda'{}_{i}^{1}(h_{1},y,w,p,a)$ is equivalent to a
finite system $\lambda_{i}^{1}(h_{1},y,w,p,a)$ over the free group
$F_{k}$.

Denote by $\mathfrak{J}^{2}$ the collection of all the specializations
$(h_{1,0},y_{0},w_{0},p_{0},a)$ of $WPH(h_{1},w,p,a)\ast F_{y}$,
so that $(h_{1,0},y_{0},w_{0},p_{0},a)$ restricts to an exceptional
specialization of $WPH(h_{1},w,p,a)$, and solves the system $\lambda_{i}^{1}(h_{1},y,w,p,a)$,
for some $i$. From the collection $\mathfrak{J}^{2}$, we construct
a $wp$-graded MR diagram, denoted $GRemMRD_{2}$. Note that the $wp$-graded
limit group $WPH(h_{1},w,p,a)$ is mapped compatibly into each of
the $wp$-graded limit groups in this MR diagram $GRemMRD_{2}$. Every
resolution $\lambda^{1}WPGRes(h_{1},y,w,p,a)$ in the diagram $GRemMRD_{2}$,
terminates in a group $WPHG(g_{1},h_{1}',w,p,a)$ which is either
$wp$-rigid or $wp$-solid. The variables $g_{1},h_{1}'$ stand for
the canonical images of the variables $y,h_{1}$ correspondingly in
the terminal group $WPHG(g_{1},h_{1}',w,p,a)$ of the resolution $\lambda^{1}WPGRes(h_{1},y,w,p,a)$.
We denote the collection of all the resolutions in the diagram $GRemMRD_{2}$
briefly by $GRem_{2}$.

We denote the first level groups in $GRemMRD_{2}$, i.e., the finitely
many maximal limit groups associated with the collection $\mathfrak{J}^{2}$,
by $\lambda^{1}WPGL_{1}(h_{1},y,w,p,a),...,\lambda^{1}WPGL_{d}(h_{1},y,w,p,a)$. 
\begin{lem}
\label{lem:51} Let $\Gamma$ be a group in the model. Then, $\lambda_{i}^{1}(h_{1},y,w,p,a)$
is equivalent to $\lambda'{}_{i}^{1}(h_{1},y,w,p,a)$ over $\Gamma$.
Moreover, if $(h_{1,0},y_{0},w_{0},p_{0},a)$ is a $\Gamma$-specialization
of $WPH(h_{1},w,p,a)\ast F_{y}$, so that $(h_{1,0},y_{0},w_{0},p_{0},a)$
restricts to a $\Gamma$-exceptional specialization of $WPH(h_{1},w,p,a)$,
and solves the system $\lambda_{i}^{1}(h_{1},y,w,p,a)$, for some
$i$, then $(h_{1,0},y_{0},w_{0},p_{0},a)$ factors through one of
the resolutions in $GRem_{2}$. 
\end{lem}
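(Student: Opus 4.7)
The plan is to deduce both assertions by combining the noetherianity of random groups with the lifting property established in Section 2. For the first assertion, I would observe that $\lambda'^1_i(h_1,y,w,p,a)$ is an infinite system of equations in the variables $(h_1,y,w,p,a)$, and that $\lambda_i^1(h_1,y,w,p,a)$ is, by Guba's theorem, a finite system equivalent to it over $F_k$. The noetherianity theorem for random groups (the unnumbered theorem immediately after \thmref{2}) then yields that $\lambda_i^1$ and $\lambda'^1_i$ remain equivalent over $\Gamma$ with overwhelming probability.

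For the second assertion, the strategy is to lift to $F_k$, apply what we already know over $F_k$, and then project back to $\Gamma$. Given a $\Gamma$-specialization $(h_{1,0},y_0,w_0,p_0,a)$ of $WPH(h_1,w,p,a)\ast F_y$ that restricts to a $\Gamma$-exceptional specialization of $WPH$ and solves $\lambda_i^1$, I would first use \thmref{1} applied to the finite system consisting of $\lambda_i^1(h_1,y,w,p,a)$ together with the defining relations of $WPH(h_1,w,p,a)\ast F_y$ to produce a lift $(\tilde h_{1,0},\tilde y_0,\tilde w_0,\tilde p_0,a)$ in $F_k$. By the first part of the lemma, this lift solves $\lambda_i^1$, hence also $\lambda'^1_i$, over $F_k$.

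The key remaining point is that the restriction of this lift to $WPH$ must be $F_k$-exceptional. This is exactly the contrapositive argument used in the proofs of \thmref{15} and \thmref{17} (and extended to strictly solid families with respect to given sets of closures at the end of Section 4): a lift of a $\Gamma$-exceptional solution of a $p$-rigid or $p$-solid limit group cannot factor through a flexible quotient, since its projection to $\Gamma$ would then factor through the same flexible quotient, contradicting $\Gamma$-exceptionality. Therefore the lift belongs to the collection $\mathfrak{J}^2$ from which $GRemMRD_2$ was built, and so it factors through one of the resolutions of $GRem_2$.

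Finally, since the resolutions in $GRem_2$ are purely group-theoretic objects defined over $F_k$, one can compose the factorization obtained in $F_k$ with the natural quotient map $F_k\to\Gamma$ to produce a factorization of the original $\Gamma$-specialization through the very same resolution. The main thing to be careful about is the preservation of exceptionality under lifting, which is a mildly subtle point but has already been handled in Section 4; beyond that, the proof is essentially a direct application of the lifting property and the equivalence over $\Gamma$ established in the first part.
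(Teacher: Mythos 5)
Your overall strategy (lift to $F_{k}$, use the $F_{k}$-equivalence of $\lambda_{i}^{1}$ and $\lambda'{}_{i}^{1}$, invoke preservation of exceptionality under lifting, then project back) is exactly the paper's, and the first assertion together with the claim that the lift lands in $\mathfrak{J}^{2}$ are handled correctly. But the final step --- ``compose the factorization obtained in $F_{k}$ with the quotient map $F_{k}\rightarrow\Gamma$ to produce a factorization of the original $\Gamma$-specialization through the very same resolution'' --- has a gap. Factoring through a graded resolution is not merely a matter of composing maps: by the definition recalled in \secref{Rigid-and-Solid-Limit-Groups}, it requires that the induced specialization of the terminal $wp$-rigid or $wp$-solid group $WPHG(g_{1},h_{1}',w,p,a)$ be a rigid or strictly solid (i.e.\ exceptional) one. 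Exceptionality is preserved in the direction you cite (a lift of a $\Gamma$-exceptional solution is $F_{k}$-exceptional), but it is \emph{not} preserved under projection: the terminal specialization of your $F_{k}$-factorization is $F_{k}$-exceptional, yet its image in $\Gamma$ may well factor through a flexible quotient of $WPHG$ over $\Gamma$. So you have identified the wrong direction as the subtle point.

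The paper closes this gap with a finite induction: if the induced $\Gamma$-specialization of $WPHG(g_{1},h_{1}',w,p,a)$ is not $\Gamma$-exceptional, one chooses a lift of it to $F_{k}$ that is likewise non-exceptional (i.e.\ factors through a flexible quotient over $F_{k}$); this lifted specialization still belongs to $\mathfrak{J}^{2}$, so the factorization continues into the deeper levels of $GRemMRD_{2}$ through the flexible quotients of $WPHG$, and the descending chain condition guarantees termination at a resolution whose terminal specialization is genuinely $\Gamma$-exceptional. You should add this descent argument; without it you have only shown that the $\Gamma$-specialization factors through a resolution in the weak, purely compositional sense.
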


\begin{proof}
Let $(h_{1,0},y_{0},w_{0},p_{0},a)$ be a $\Gamma$-solution for the
system $\lambda_{i}^{1}(h_{1},y,w,p,a)$. We lift $(h_{1,0},y_{0},w_{0},p_{0},a)$
to a solution $(\tilde{h}_{1,0},\tilde{y}_{0},\tilde{w}_{0},\tilde{p}_{0},a)$
of the system $\lambda_{i}^{1}(h_{1},y,w,p,a)$ in $F_{k}$. Since
$\lambda_{i}^{1}(h_{1},y,w,p,a)$ and $\lambda'{}_{i}^{1}(h_{1},y,w,p,a)$
are equivalent over $F_{k}$, then $(\tilde{h}_{1,0},\tilde{y}_{0},\tilde{w}_{0},\tilde{p}_{0},a)$
is a solution for $\lambda'{}_{i}^{1}(h_{1},y,w,p,a)$ in $F_{k}$.
Hence, the projection of $(\tilde{h}_{1,0},\tilde{y}_{0},\tilde{w}_{0},\tilde{p}_{0},a)$
to $\Gamma$, which is $(h_{1,0},y_{0},w_{0},p_{0},a)$, is a solution
for $\lambda'{}_{i}^{1}(h_{1},y,w,p,a)$ in $\Gamma$.

Assume further that $(h_{1,0},w_{0},p_{0},a)$ is a $\Gamma$-exceptional
specialization of $WPH(h_{1},w,p,a)$. Then $(\tilde{h}_{1,0},\tilde{w}_{0},\tilde{p}_{0},a)$
is an exceptional specialization of $WPH(h_{1},w,p,a)$ in $F_{k}$,
and the specialization $(\tilde{h}_{1,0},\tilde{y}_{0},\tilde{w}_{0},\tilde{p}_{0},a)$
belongs to the collection $\mathfrak{J}^{2}$. Hence, by the definition
of $GRem_{2}$, the specialization $(\tilde{h}_{1,0},\tilde{y}_{0},\tilde{w}_{0},\tilde{p}_{0},a)$
factors through some resolution in $GRem_{2}$, and thus, the $\Gamma$-specialization
$(h_{1,0},y_{0},w_{0},p_{0},a)$ factors through that resolution.
If the $\Gamma$-specialization of the terminal group $WPHG(g_{1},h_{1}',w,p,a)$
of that resolution is not $\Gamma$-exceptional, we lift it to a non-exceptional
specialization of $WPHG(g_{1},h_{1}',w,p,a)$. This lift must belong
to the collection $\mathfrak{J}^{2}$, and we can continue the factorization
through the flexible quotients of $WPHG(g_{1},h_{1}',w,p,a)$. Hence,
a finite induction implies the desired. 
\end{proof}
\begin{lem}
\label{lem:52}$ $ 
\begin{enumerate}
\item Let $(h_{1,0},w_{0},p_{0},a)$ be an exceptional specialization of
$WPH(h_{1},w,p,a)$, and assume that it factors through one of the
groups $\lambda^{1}WPGL_{1}(h_{1},y,w,p,a),...,\lambda^{1}WPGL_{d}(h_{1},y,w,p,a)$.
Then, every specialization of $WPH(h_{1},w,p,a)$ that belongs to
the exceptional family represented by $(h_{1,0},w_{0},p_{0},a)$,
factors through one of the groups $\lambda^{1}WPGL_{1}(h_{1},y,w,p,a),...,\lambda^{1}WPGL_{d}(h_{1},y,w,p,a)$. 
\item Let $\Gamma$ be a group in the model. Let $(h_{1,0},w_{0},p_{0},a)$
be a $\Gamma$-exceptional specialization of $WPH(h_{1},w,p,a)$,
and assume that it factors through one of the groups $\lambda^{1}WPGL_{1}(h_{1},y,w,p,a),...,\lambda^{1}WPGL_{d}(h_{1},y,w,p,a)$.
Then, every $\Gamma$-specialization of $WPH(h_{1},w,p,a)$ that belongs
to the $\Gamma$-exceptional family represented by $(h_{1,0},w_{0},p_{0},a)$,
factors through one of the groups $\lambda^{1}WPGL_{1}(h_{1},y,w,p,a),...,\lambda^{1}WPGL_{d}(h_{1},y,w,p,a)$. 
\end{enumerate}
\end{lem}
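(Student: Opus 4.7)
The plan is to reduce both parts to the observation that the set of words $\lambda'{}_i^1\subset WPH(h_1,w,p,a)\ast F_y$ is invariant under the modular automorphisms of the $wp$-JSJ of the base group $WPH$, a property that is built directly into its construction.

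For Part 1, I would first unpack the hypothesis: since $\lambda^1 WPGL_i$ is by construction a maximal limit quotient through which some specialization in $\mathfrak{J}^2$ factors, the assumption that $(h_{1,0},w_0,p_0,a)$ factors through it produces some $y_0\in F_k$ for which $(h_{1,0},y_0,w_0,p_0,a)\in\mathfrak{J}^2$; in particular, using the equivalence of $\lambda^1_i$ and $\lambda'{}_i^1$ over $F_k$, each element of $\lambda'{}_i^1$ evaluates to the identity at $(h_{1,0},y_0,w_0,p_0,a)$. Given any other representative $(h'_{1,0},w_0,p_0,a)$ of the same exceptional family of $WPH$, \defref{13} together with the standard characterization of strictly solid families produces a modular automorphism $\sigma$ of the $wp$-JSJ of $WPH$ (fixing $\langle w,p,a\rangle$ pointwise) with $h'_{1,0}=h_{1,0}\circ\sigma^{-1}$. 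Since the construction of $\lambda'{}_i^1$ applied \emph{every} modular automorphism of the decompositions along $GFRes^1$ to $\psi_i$, the set $\lambda'{}_i^1$ is $\sigma$-invariant, so for every $w\in\lambda'{}_i^1$ there exists $w'\in\lambda'{}_i^1$ with $w(h_1,y,w,p,a)=w'(\sigma(h_1),y,w,p,a)$; evaluating at $(h'_{1,0},y_0,w_0,p_0,a)$ yields
\[
w(h'_{1,0},y_0,w_0,p_0,a)=w'(\sigma(h'_{1,0}),y_0,w_0,p_0,a)=w'(h_{1,0},y_0,w_0,p_0,a)=1\,,
\]
and therefore $\lambda'{}_i^1(h'_{1,0},y_0,w_0,p_0,a)=1$. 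Hence $(h'_{1,0},y_0,w_0,p_0,a)\in\mathfrak{J}^2$, and $(h'_{1,0},w_0,p_0,a)$ factors through some $\lambda^1 WPGL_j$.

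Part 2 would follow by the same argument with $F_k$ replaced by $\Gamma$: the modular automorphism $\sigma$ is a purely algebraic feature of the limit group $WPH$ and acts identically on $\Gamma$-specializations; the equivalence of $\lambda'{}_i^1$ and $\lambda^1_i$ over $\Gamma$, and the conclusion that a $\Gamma$-specialization meeting the conditions defining $\mathfrak{J}^2$ factors through $GRem_2$ and hence through some $\lambda^1 WPGL_j$, are exactly the two statements supplied by \lemref{51}.

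The main technical point is the $\sigma$-invariance of $\lambda'{}_i^1$. Since $GFRes^1$ is a graded formal resolution whose terminal graded formal closure is $WPH\ast F_y$ with $WPH$ as the $wp$-graded base, the modular automorphisms of the $wp$-JSJ of $WPH$ are among those applied when constructing $\lambda'{}_i^1$ from $\psi_i$; combined with the characterization of strictly solid families as orbits of the modular automorphism group, this is what bridges the algebraic definition of exceptional families in \defref{13} with the existence of a matching $y$-specialization.
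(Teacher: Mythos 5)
Your argument for part 1 --- the invariance of the set $\lambda'{}_{i}^{1}$ under the modular automorphisms of $WPH(h_{1},w,p,a)$, so that precomposing an exceptional representative with a modular automorphism preserves being a solution of $\lambda_{i}^{1}$ while keeping the same $y_{0}$ --- is exactly the paper's proof of the first part. For part 2 the paper lifts the $\Gamma$-specialization to $F_{k}$, applies part 1 there and projects back, whereas you run the invariance argument directly over $\Gamma$ and invoke \lemref{51}; the two routes are interchangeable and rest on the same ingredients, so the proposal is essentially the paper's argument.
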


\begin{proof}
Assume that the specialization $(h_{1,0},y_{0},w_{0},p_{0},a)$ factors
through one of the groups

\[
\lambda^{1}WPGL_{1}(h_{1},y,w,p,a),...,\lambda^{1}WPGL_{d}(h_{1},y,w,p,a)\,,
\]
for some $y_{0}\in F_{k}$. Then, the specialization $(h_{1,0},y_{0},w_{0},p_{0},a)$
is a solution for the system $\lambda_{i}^{1}(h_{1},y,w,p,a)$, for
some $i$. By the definition of the set of elements $\lambda'{}_{i}^{1}(h_{1},y,w,p,a)$
of the group $WPH(h_{1},w,p,a)\ast F_{y}$, if $\theta$ is a modular
automorphism of the $wp$-graded limit group $WPH(h_{1},w,p,a)$,
then, applying $\theta$ on the set of elements $\lambda'{}_{i}^{1}(h_{1},y,w,p,a)$,
does not change this set. Hence, if $(h_{1,0}\theta,y_{0},w_{0},p_{0},a)$
is the specialization of $WPH(h_{1},w,p,a)\ast F_{y}$ obtained by
precomposing $\theta$ with the specialization $(h_{1,0},y_{0},w_{0},p_{0},a)$,
then $(h_{1,0}\theta,y_{0},w_{0},p_{0},a)$ is a solution for the
system $\lambda_{i}^{1}(h_{1},y,w,p,a)$.

For the second part of the claim, let $(h_{1,0},y_{0},w_{0},p_{0},a)$
be a $\Gamma$-specialization of one of the groups $\lambda^{1}WPGL_{j}(h_{1},y,w,p,a)$,
for some $j=1,...,d$ and some $y_{0}\in\Gamma$. We lift $(h_{1,0},y_{0},w_{0},p_{0},a)$
to a specialization $(\tilde{h}_{1,0},\tilde{y}_{0},\tilde{w}_{0},\tilde{p}_{0},a)$
of $\lambda^{1}WPGL_{j}(h_{1},y,w,p,a)$ in $F_{k}$. Then, by the
above argument, every specialization of $WPH(h_{1},w,p,a)$ that belongs
to the same exceptional family represented by $(\tilde{h}_{1,0},\tilde{w}_{0},\tilde{p}_{0},a)$,
factors through one of the groups $\lambda^{1}WPGL_{1}(h_{1},y,w,p,a),...,\lambda^{1}WPGL_{d}(h_{1},y,w,p,a)$.
Hence, every $\Gamma$-specialization of $WPH(h_{1},w,p,a)$ that
belongs to the same $\Gamma$-exceptional family represented by $(h_{1,0},w_{0},p_{0},a)$,
factors through one of the groups $\lambda^{1}WPGL_{1}(h_{1},y,w,p,a),...,\lambda^{1}WPGL_{d}(h_{1},y,w,p,a)$. 
\end{proof}
Before we continue the construction of the proof systems, we stop
for an approximation of the sets $EAE(p)$ and $EAE_{\Gamma}(p)$
that is uniform for all the groups $\Gamma$ in the model. 
\begin{lem}
\label{lem:53} We denote: 
\begin{enumerate}
\item Let $U_{1}(p)$ be the set of all $p_{0}\in F_{k}$, for which there
exists $w_{0}\in F_{k}$, so that for one of the groups $WPH(h_{1},w,p,a)$,
there exists $h_{1,0}\in F_{k}$, so that $(h_{1,0},w_{0},p_{0},a)$
is an exceptional specialization of $WPH(h_{1},w,p,a)$, and for every
limit group $\lambda^{1}WPGL_{j}(h_{1},y,w,p,a)$ associated with
$WPH(h_{1},w,p,a)$, there exists no $y_{0}\in F_{k}$, so that $(h_{1,0},y_{0},w_{0},p_{0},a)$
is a specialization of $\lambda^{1}WPGL_{j}(h_{1},y,w,p,a)$. 
\item Given a group $\Gamma$ in the model, we let $U_{1}^{\Gamma}(p)$
be the set of all $p_{0}\in\Gamma$, for which there exists $w_{0}\in\Gamma$,
so that for one of the groups $WPH(h_{1},w,p,a)$, there exists $h_{1,0}\in\Gamma$,
so that $(h_{1,0},w_{0},p_{0},a)$ is a $\Gamma$-exceptional specialization
of $WPH(h_{1},w,p,a)$, and for every limit group $\lambda^{1}WPGL_{j}(h_{1},y,w,p,a)$
associated with $WPH(h_{1},w,p,a)$, there exists no $y_{0}\in\Gamma$,
so that $(h_{1,0},y_{0},w_{0},p_{0},a)$ is a $\Gamma$-specialization
of $\lambda^{1}WPGL_{j}(h_{1},y,w,p,a)$. 
\end{enumerate}
Then, 
\begin{enumerate}
\item $U_{1}(p)\subset EAE(p)$, and there exists a formula $\varphi_{1}(p)$
in the Boolean algebra of $AE$-formulas, that defines the set $U_{1}(p)$. 
\item Let $\Gamma$ be a group in the model. Then, $U_{1}^{\Gamma}(p)\subset EAE_{\Gamma}(p)$,
and the same formula $\varphi_{1}(p)$ above, defines the set $U_{1}^{\Gamma}(p)$
in $\Gamma$. 
\end{enumerate}
\end{lem}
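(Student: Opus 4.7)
The plan. Given $p_0\in U_1(p)$, I take the guaranteed witnesses $w_0$, $WPH(h_1,w,p,a)$, and exceptional specialization $h_{1,0}$ provided by the definition, and I claim that $w_0$ witnesses $p_0\in EAE(p)$. Fix an arbitrary $y_0\in F_k$. The tuple $(h_{1,0},y_0,w_0,p_0,a)$ is automatically a specialization of $WPH(h_1,w,p,a)\ast F_y$ since $y$ lies in the free factor. The hypothesis that no $y_0$ makes this tuple a specialization of any $\lambda^1 WPGL_j$, combined with exceptionality of the restriction to $WPH$, means the tuple is not in the collection $\mathfrak{J}^2$, so it fails to solve $\lambda_i^1$ for every $i$. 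Because $\lambda_i^1$ is equivalent to $\lambda'{}_i^1$ over $F_k$, for each $i$ at least one element of $\lambda'{}_i^1$, evaluated at this tuple, is non-trivial; tracing this through the construction of $\lambda'{}_i^1$ from the formal solutions of $\Sigma$ along $GFRes^1$, one obtains, for each $i$, a formal solution $x\theta_i$ factoring through $GFRes^1$ and an induced $x_0^{(i)}$ with $\Sigma(x_0^{(i)},y_0,w_0,p_0,a)=1$ and $\psi_i(x_0^{(i)},y_0,w_0,p_0,a)\neq 1$.

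The merging step. All of $x\theta_1,\dots,x\theta_s$ factor through the same $GFRes^1$, so \lemref{28} supplies a single formal solution $x\theta$ through $GFRes^1$ whose specialization $x_0$ at $(h_{1,0},y_0,w_0,p_0,a)$ satisfies $\psi_i(x_0,y_0,w_0,p_0,a)\neq 1$ for every $i$ simultaneously. Since $\Sigma(x_0,y_0,w_0,p_0,a)=1$ is automatic, and $y_0\in F_k$ was arbitrary, this gives $(w_0,p_0)\in AE(w,p)$ and therefore $p_0\in EAE(p)$.

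Constructing $\varphi_1(p)$. The set $U_1(p)$ is defined by
\[
\exists w\,\exists h_1\, \bigvee_{WPH}\Bigl[\,(h_1,w,p,a)\text{ is exceptional for }WPH \,\wedge\, \bigwedge_j \forall y\,\bigl((h_1,y,w,p,a)\text{ is not a specialization of }\lambda^1 WPGL_j\bigr)\Bigr].
\]
Being exceptional (rigid or strictly solid) is a conjunction of positive Diophantine equations with universally quantified inequalities expressing non-factorization through each of the finitely many flexible quotients; by Guba's theorem each $\lambda^1 WPGL_j$ has a finite defining system, so non-factorization through it is also universal in $y$. Thus the full sentence has the form $\exists(w,h_1)\forall(\text{auxiliaries},y)\phi$ with $\phi$ quantifier-free, i.e., an $EA$-sentence, which is the complement of an $AE$-sentence and so lies in the Boolean algebra of $AE$-formulas. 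This is $\varphi_1(p)$.

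The $\Gamma$ case and the main obstacle. Interpreted over $\Gamma$, the same $\varphi_1(p)$ defines $U_1^\Gamma(p)$ by construction: the defining relations, the finitely many flexible quotients of each $WPH$, and the groups $\lambda^1 WPGL_j$ are independent of the ambient group. The inclusion $U_1^\Gamma(p)\subseteq EAE_\Gamma(p)$ is proved by the same argument as above, invoking \lemref{51} for the equivalence of $\lambda_i^1$ and $\lambda'{}_i^1$ over $\Gamma$, \lemref{52} to work with an entire $\Gamma$-exceptional family when convenient, and \lemref{29} in place of \lemref{28} for the merging step over $\Gamma$. The main obstacle is precisely this merging step: the per-$\psi_i$ formal solutions must all factor through the same graded formal resolution $GFRes^1$, a fact guaranteed because they arise as modular-automorphism and free-factor specializations of the canonical formal $x$ attached to $GFRes^1$ in $GFMRD_1$.
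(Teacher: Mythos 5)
Your proposal is correct and follows essentially the same route as the paper: the paper's (very terse) proof likewise derives $U_1(p)\subset EAE(p)$ from the fact that the $\lambda^1WPGL_j$ are the maximal limit groups through which $\mathfrak{J}^2$ factors together with Lemma \ref{lem:28}, handles the $\Gamma$ case via Lemmas \ref{lem:51} and \ref{lem:29}, and treats the existence of $\varphi_1(p)$ as obvious (your explicit $EA$-form of it is exactly what is intended). Your elaboration of the merging step via Lemma \ref{lem:28}/\ref{lem:29} correctly identifies the one point the paper leaves implicit.
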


\begin{proof}
The existence of a formula $\varphi_{1}(p)$ that satisfies the claim
is obvious. The fact that $U_{1}(p)\subset EAE(p)$ follows from the
definition of the limit groups $\lambda^{1}WPGL_{j}(h_{1},y,w,p,a)$
as the maximal limit groups through which the specializations in the
collection $\mathfrak{J}^{2}$ factor, and from \lemref{28}.

Now let $\Gamma$ be a group in the model. We prove that $U_{1}^{\Gamma}(p)\subset EAE_{\Gamma}(p)$.
Let $(h_{1,0},w_{0},p_{0},a)$ be a $\Gamma$-exceptional specialization
of $WPH(h_{1},w,p,a)$, and assume it does not factor through any
of the limit groups $\lambda^{1}WPGL_{j}(h_{1},y,w,p,a)$ associated
with $WPH(h_{1},w,p,a)$. Hence, according to \lemref{51}, every
$\Gamma$-specialization $(h_{1,0},y_{0},w_{0},p_{0},a)$ of $WPH(h_{1},w,p,a)\ast F_{y}$,
does not solve any of the systems $\lambda_{i}^{1}(h_{1},y,w,p,a)$.
Hence, and using \lemref{29}, we deduce that $p_{0}\in EAE_{\Gamma}(p)$. 
\end{proof}
Now we consider a $wp$-graded resolution $\lambda^{1}WPGRes(h_{1},y,w,p,a)$
in $GRem_{2}$, who terminates in a group $WPHG(g_{1},h_{1}',w,p,a)$
which is either $wp$-rigid or $wp$-solid. Let $Comp^{2}=Comp(\lambda^{1}WPGRes)(z,h_{1},y,w,p,a)$
be the completion of the resolution $\lambda^{1}WPGRes(h_{1},y,w,p,a)$,
and denote its base group by \\
 $B=WPHG(g_{1},h_{1}',w,p,a)$.

We construct the $wp$-graded formal MR diagram $GFMRD_{2}$ of the
system $\Sigma(x,y,w,p,a)$ over the resolution $Comp(\lambda^{1}WPGRes)(z,h_{1},y,w,p,a)$.
Every graded formal resolution $GFRes^{2}(x,z,h_{1},y,w,p,a)$ in
$GFMRD_{2}$, terminates in a graded formal closure $GFCl(x,z,h_{1},y,w,p,a)$
of $Comp(\lambda^{1}WPGRes)(z,h_{1},y,w,p,a)$.

We fix a generating set $(h_{2},g_{1},h_{1}',w,p,a)$ for the terminal
group $Term(GFCl)$ of $GFCl(x,z,h_{1},y,w,p,a)$, and denote $Term(GFCl)=WPHGH(h_{2},g_{1},h_{1}',w,p,a)$.
The elements $(g_{1},h_{1}',w,p,a)$ of \\
 $WPHGH(h_{2},g_{1},h_{1}',y,w,p,a)$ are the canonical images of
the elements $(g_{1},h_{1}',w,p,a)$ of $B$ in $Term(GFCl)$. Note
that, since $GFCl(x,z,h_{1},y,w,p,a)$ is a graded formal closure
for the resolution $Comp(\lambda^{1}WPGRes)$, it is generated, as
a group, by $Term(GFCl)$ and (the canonical image of) $Comp(\lambda^{1}WPGRes)$,
together with a finite collection of elements $s$, that are roots
for some elements in the abelian vertex groups that appear in the
decompositions associated with the various levels of $Comp(\lambda^{1}WPGRes)$.

We apply all the modular automorphisms of the decompositions associated
with the various groups along the resolution $GFRes^{2}(x,z,h_{1},y,w,p,a)$,
and we specialize the free factors that do not intersect a conjugate
of the subgroup $\langle z,h_{1},y,w,p,a\rangle$ in all the possible
ways. That way, each word $\psi_{i}(x,y,w,p,a)$ in the system $\Psi(x,y,w,p,a)$,
induces a set of elements $\lambda'{}_{i}^{2}(s,z,h_{2},g_{1},h_{1}',y,w,p,a)$
in the group $GFCl(x,z,h_{1},y,w,p,a)$. We think on the induced set
$\lambda'{}_{i}^{2}(s,z,h_{2},g_{1},h_{1}',y,w,p,a)$ as a system
of equations imposed on the variables $(s,z,h_{2},g_{1},h_{1}',y,w,p,a)$.
By Guba's theorem, for all $i$, the system $\lambda'{}_{i}^{2}(s,z,h_{2},g_{1},h_{1}',y,w,p,a)$
is equivalent to a finite system $\lambda_{i}^{2}(s,z,h_{2},g_{1},h_{1}',y,w,p,a)$
over the free group $F_{k}$.

Let $\mathfrak{J}^{3}$ be the collection of all the specializations
$(s_{0},z_{0},h_{2,0},g_{1,0},h_{1,0}',y_{0},w_{0},p_{0},a)$ of $GFCl$
that restrict to exceptional specializations of $Term(GFCl)$, and
satisfy one of the systems $\lambda_{i}^{2}(s,z,h_{2},g_{1},h_{1}',y,w,p,a)$,
for some $i$. The specializations in the collection $\mathfrak{J}^{3}$,
factor through finitely many maximal limit groups $\lambda^{2}WPGL_{1}(s,z,h_{2},g_{1},h_{1}',y,w,p,a),...,\lambda^{2}WPGL_{d_{2}}(s,z,h_{2},g_{1},h_{1}',y,w,p,a)$. 
\begin{lem}
\label{lem:54} Let $\Gamma$ be a group in the model. Then, $\lambda_{i}^{2}(s,z,h_{2},g_{1},h_{1}',y,w,p,a)$
is equivalent to \\
 $\lambda'{}_{i}^{2}(s,z,h_{2},g_{1},h_{1}',y,w,p,a)$ over $\Gamma$.
Moreover, if $(s_{0},z_{0},h_{2,0},g_{1,0},h_{1,0}',y_{0},w_{0},p_{0},a)$
is a $\Gamma$-specialization of $GFCl$, so that $(s_{0},z_{0},h_{2,0},g_{1,0},h_{1,0}',y_{0},w_{0},p_{0},a)$
restricts to a $\Gamma$-exceptional specialization of $Term(GFCl)$,
and solves the system $\lambda_{i}^{2}(s,z,h_{2},g_{1},h_{1}',y,w,p,a)$,
for some $i$, then $(s_{0},z_{0},h_{2,0},g_{1,0},h_{1,0}',y_{0},w_{0},p_{0},a)$
factors through one of the groups $\lambda^{2}WPGL_{1}(s,z,h_{2},g_{1},h_{1}',y,w,p,a),...,\lambda^{2}WPGL_{d_{2}}(s,z,h_{2},g_{1},h_{1}',y,w,p,a)$. 
\end{lem}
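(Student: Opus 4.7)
The plan is to follow the same two-step template that established Lemma 51, adapted to the level-$2$ objects built from the graded formal closure $GFCl(x,z,h_1,y,w,p,a)$ and its $p$-rigid or $p$-solid terminal group $Term(GFCl)=WPHGH(h_2,g_1,h_1',w,p,a)$. The equivalence of $\lambda_i^2$ and $\lambda'^2_i$ over $\Gamma$ is not really a new fact about level $2$: it is an instance of the noetherianity of random groups established in Section 2 (the theorem following \thmref{2}). By Guba's theorem the infinite system $\lambda'^2_i$ is equivalent to the finite system $\lambda_i^2$ over $F_k$, and noetherianity then propagates the equivalence to $\Gamma$ with overwhelming probability, once $l$ is large enough that $\Gamma_l$ satisfies the lifting property for both systems simultaneously (which is possible because there are only finitely many $i$ and finitely many resolutions under consideration). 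So the first sentence of the lemma requires no further input.

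For the moreover clause, I would mimic verbatim the lifting argument in the proof of \lemref{51}. Starting from a $\Gamma$-specialization $(s_0,z_0,h_{2,0},g_{1,0},h_{1,0}',y_0,w_0,p_0,a)$ of $GFCl$ that restricts to a $\Gamma$-exceptional specialization of $Term(GFCl)$ and solves $\lambda_i^2$, I would invoke \thmref{1} to lift it to an $F_k$-specialization $(\tilde s_0,\tilde z_0,\tilde h_{2,0},\tilde g_{1,0},\tilde h_{1,0}',\tilde y_0,\tilde w_0,\tilde p_0,a)$ of $GFCl$ satisfying the same (finite) system $\lambda_i^2$, hence by equivalence over $F_k$ satisfying $\lambda'^2_i$ as well. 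The restriction to $Term(GFCl)$ lifts to an exceptional specialization of $Term(GFCl)$ over $F_k$, exactly as in \thmref{17} and the corresponding step of \lemref{51}. Therefore the lifted tuple lies in the collection $\mathfrak{J}^3$, and by the very definition of $\lambda^2WPGL_1,\ldots,\lambda^2WPGL_{d_2}$ as the finitely many maximal limit groups through which $\mathfrak{J}^3$ factors, it factors through one of them. Projecting the factorization back to $\Gamma$ gives the desired factorization of the original $\Gamma$-specialization.

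The only potentially non-routine point is to guarantee that at the final stage one can upgrade the factorization through the maximal limit group to a factorization that respects exceptional solutions of the corresponding terminal group of the induced resolution; this is precisely the ``If the $\Gamma$-specialization of the terminal group is not $\Gamma$-exceptional, we lift it to a non-exceptional specialization'' iteration that appears at the end of the proof of \lemref{51}. That iteration terminates by the descending chain condition on limit groups, applied to the proper flexible quotients of the terminal $wp$-rigid or $wp$-solid group, and the same finite induction works here because Lemma 54 as stated only asserts factorization through one of the maximal $\lambda^2WPGL_j$'s (the full downstream resolution can be handled, if needed, by an entirely parallel extension later).

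Thus the whole proof is a mechanical transcription of the proof of \lemref{51}, with $WPH(h_1,w,p,a)\ast F_y$ replaced by $GFCl(x,z,h_1,y,w,p,a)$, with $WPH$ replaced by $Term(GFCl)$, and with the collection $\mathfrak{J}^2$ replaced by $\mathfrak{J}^3$. I expect no new obstacle; the main reason to spell it out is to make transparent that both inputs used in the level-$1$ argument, namely Guba's theorem combined with the noetherianity of random groups and the lifting property of finitely presented quotients (\thmref{1}, \thmref{2}), apply without modification to graded formal closures of completions, so that the inductive construction of the proof systems can proceed to depth $2$ and, by the same token, to all higher depths.
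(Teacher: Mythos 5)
Your proposal is correct and coincides with the paper's treatment: the paper's proof of this lemma is literally the single line ``Identical to \lemref{51}.'', and your transcription of that argument (lifting via \thmref{1}, equivalence of the finite and infinite systems over $F_{k}$ by Guba's theorem, membership of the lift in $\mathfrak{J}^{3}$, projection back to $\Gamma$, and the finite induction on non-exceptional terminal specializations) is exactly what that reference entails. The only cosmetic difference is that you route the first sentence through the noetherianity theorem of Section 2 rather than repeating the lift-and-project step inline, but these are the same argument.
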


\begin{proof}
Identical to \lemref{51}. 
\end{proof}
\begin{lem}
\label{lem:55}$ $ 
\begin{enumerate}
\item Let $(h_{2,0},g_{1,0},h_{1,0}',w_{0},p_{0},a)$ be an exceptional
specialization of $WPHGH(h_{2},g_{1},h_{1}',w,p,a)$, and assume that
it factors through one of the groups 
\[
\lambda^{2}WPGL_{1}(s,z,h_{2},g_{1},h_{1}',y,w,p,a),...,\lambda^{2}WPGL_{d_{2}}(s,z,h_{2},g_{1},h_{1}',y,w,p,a)\,.
\]
Then, every specialization of $WPHGH(h_{2},g_{1},h_{1}',w,p,a)$ that
belongs to the exceptional family represented by $(h_{2,0},g_{1,0},h_{1,0}',w_{0},p_{0},a)$,
factors through one of the groups 
\[
\lambda^{2}WPGL_{1}(s,z,h_{2},g_{1},h_{1}',y,w,p,a),...,\lambda^{2}WPGL_{d_{2}}(s,z,h_{2},g_{1},h_{1}',y,w,p,a)\,.
\]
\item Let $\Gamma$ be a group in the model. Let $(h_{2,0},g_{1,0},h_{1,0}',w_{0},p_{0},a)$
be a $\Gamma$-exceptional specialization of \\
 $WPHGH(h_{2},g_{1},h_{1}',w,p,a)$, and assume that it factors through
one of the groups 
\[
\lambda^{2}WPGL_{1}(s,z,h_{2},g_{1},h_{1}',y,w,p,a),...,\lambda^{2}WPGL_{d_{2}}(s,z,h_{2},g_{1},h_{1}',y,w,p,a)\,.
\]
Then, every $\Gamma$-specialization of $WPHGH(h_{2},g_{1},h_{1}',w,p,a)$
that belongs to the $\Gamma$-exceptional family represented by $(h_{2,0},g_{1,0},h_{1,0}',w_{0},p_{0},a)$,
factors through one of the groups 
\[
\lambda^{2}WPGL_{1}(s,z,h_{2},g_{1},h_{1}',y,w,p,a),...,\lambda^{2}WPGL_{d_{2}}(s,z,h_{2},g_{1},h_{1}',y,w,p,a)\,.
\]
\end{enumerate}
\end{lem}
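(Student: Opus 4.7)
The plan is to repeat verbatim the proof template of Lemma 52, one level higher in the proof-system hierarchy. I would first observe that, by construction, the systems $\lambda'{}_{i}^{2}(s,z,h_{2},g_{1},h_{1}',y,w,p,a)$ are the image of $\psi_{i}(x,y,w,p,a)$ under \emph{all} the modular automorphisms of the decompositions along the graded formal resolution $GFRes^{2}$, after specializing the free factors that miss a conjugate of $\langle z,h_{1},y,w,p,a\rangle$. In particular this includes the modular automorphisms of the $p$-JSJ of the base $p$-rigid or $p$-solid group $Term(GFCl)=WPHGH(h_{2},g_{1},h_{1}',w,p,a)$. Consequently, as subsets of $GFCl$, the collection $\{\lambda'{}_{i}^{2}\}_{i}$ is stable under the action of any modular automorphism $\theta$ of $WPHGH$: we have $\theta(\lambda'{}_{i}^{2})\subseteq\bigcup_{i'}\lambda'{}_{i'}^{2}$.

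For part 1, suppose $(h_{2,0},g_{1,0},h_{1,0}',w_{0},p_{0},a)$ is an exceptional specialization of $WPHGH$ that extends, via some $(s_{0},z_{0},y_{0})$, to a specialization of one of the $\lambda^{2}WPGL_{j}$. By the definition of these groups, this extension is a solution of $\lambda_{i}^{2}$ for some $i$, and hence, by Guba's theorem applied over $F_{k}$, also a solution of $\lambda'{}_{i}^{2}$. Any other representative of the same exceptional family has the form $(h_{2,0}\theta,g_{1,0}\theta,h_{1,0}'\theta,w_{0},p_{0},a)$ for some modular automorphism $\theta$ of $WPHGH$. Precomposing the full extension with $\theta$ would produce a solution of $\theta(\lambda'{}_{i}^{2})\subseteq\bigcup_{i'}\lambda'{}_{i'}^{2}$, hence a solution of some $\lambda'{}_{i'}^{2}$, and so (again by Guba) of $\lambda_{i'}^{2}$. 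Therefore the second specialization factors through one of the maximal limit groups $\lambda^{2}WPGL_{k}$.

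For part 2, I would apply the standard lift-and-project argument used in part 2 of Lemma 52. Given a $\Gamma$-exceptional specialization $(h_{2,0},g_{1,0},h_{1,0}',w_{0},p_{0},a)$ of $WPHGH$ that factors through some $\lambda^{2}WPGL_{j}$, together with any other representative of the same $\Gamma$-exceptional family, I would lift both to $F_{k}$-specializations via the lifting property (Theorem \ref{thm:1}) and invoke Theorems \ref{thm:15} and \ref{thm:17} to guarantee that the two lifts may be chosen in the same $F_{k}$-exceptional family of $WPHGH$. Part 1 then produces an $F_{k}$-extension of the second lift factoring through some $\lambda^{2}WPGL_{k}$, i.e.\ solving some $\lambda_{i'}^{2}$. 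Projecting the extension back to $\Gamma$ and invoking Lemma \ref{lem:54} (which gives the $\Gamma$-equivalence of $\lambda_{i'}^{2}$ and $\lambda'{}_{i'}^{2}$) yields the desired $\Gamma$-factorization.

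I do not expect a genuine obstacle here: the whole argument is structurally identical to Lemma 52, and every non-routine ingredient is already in place, namely the construction-level stability of $\{\lambda'{}_{i}^{2}\}_{i}$ under the modular automorphisms of $WPHGH$, the $\Gamma$-equivalence established in Lemma \ref{lem:54}, and the preservation of exceptional families under lifting provided by Theorems \ref{thm:15} and \ref{thm:17}. The only mildly subtle point is to keep track of the auxiliary variables $(s,z,h_{2},g_{1},h_{1}',y)$ when transporting the extension by $\theta$, but since these are being acted on by a single modular automorphism of a decomposition along the tower, no additional bookkeeping beyond that of Lemma 52 is required.
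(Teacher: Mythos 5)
Your argument is correct and is essentially the paper's own proof, which reads in full ``Similar to Lemma 52'': part 1 follows from the stability of the collection $\{\lambda'{}_{i}^{2}\}_{i}$ under the modular automorphisms of $WPHGH$ (since those automorphisms were already applied in defining the sets $\lambda'{}_{i}^{2}$), and part 2 is the same lift-and-project argument combined with Lemma 54. One small correction: the fact that two $\Gamma$-specializations in the same $\Gamma$-exceptional family admit lifts lying in the same $F_{k}$-exceptional family comes from lifting the witnessing homomorphism $u:Comp_{T}\rightarrow\Gamma$ of Definition 13, not from Theorems 15 and 17, which only bound the number of such families.
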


\begin{proof}
Similar to \lemref{52}. 
\end{proof}
\begin{lem}
\label{lem:56} Let $GFCl_{1},...,GFCl_{r}$ be a collection of terminal
graded formal closures in the graded formal MR diagram $GFMRD_{2}$
associated with the resolution $\lambda^{1}WPGRes(h_{1},y,w,p,a)$. 
\begin{enumerate}
\item Let $(g_{1,0},h_{1,0}',w_{0},p_{0},a)$ be an exceptional specialization
of the $wp$-graded base group $B$ of $Comp(\lambda^{1}WPGRes)$,
and for all $l=1,...,r$, let $(h_{2,0}^{l},g_{1,0}^{l},h_{1,0}'{}^{l},w_{0},p_{0},a)$
be an exceptional specialization of the $wp$-graded base group $Term(GFCl_{l})$
of $GFCl_{l}$, so that the restriction $(g_{1,0}^{l},h_{1,0}'{}^{l},w_{0},p_{0},a)$
belongs to the exceptional family of $B$ represented by $(g_{1,0},h_{1,0}',w_{0},p_{0},a)$.

Assume that the collection of the induced ungraded closures 
\[
GFCl_{l,0}=GFCl_{l,(h_{2,0}^{l},g_{1,0}^{l},h_{1,0}'{}^{l},w_{0},p_{0},a)}\,,
\]
$l=1,...,r$, form a covering closure for the induced ungraded resolution
\[
Comp_{0}^{2}=Comp(\lambda^{1}WPGRes)_{(g_{1,0},h_{1,0}',w_{0},p_{0},a)}\,.
\]
Assume further that for all $l=1,...,r$, the specialization $(h_{2,0}^{l},g_{1,0}^{l},h_{1,0}'{}^{l},w_{0},p_{0},a)$
cannot be extended to a specialization of any of the groups $\lambda^{2}WPGL_{j}(s,z,h_{2},g_{1},h_{1}',y,w,p,a)$
associated with $GFCl_{l}$.

Then, for every $y_{0}\in F_{k}$ that factors through the ungraded
resolution $Comp_{0}^{2}$, there exists some $x_{0}\in F_{k}$, so
that 
\[
\Sigma(x_{0},y_{0},w_{0},p_{0},a)=1\wedge\Psi(x_{0},y_{0},w_{0},p_{0},a)\neq1\,.
\]

\item Let $\Gamma$ be a group in the model.

Let $(g_{1,0},h_{1,0}',w_{0},p_{0},a)$ be a $\Gamma$-exceptional
specialization of the $wp$-graded base group $B$ of $Comp(\lambda WPGRes)$,
and for all $l=1,...,r$, let $(h_{2,0}^{l},g_{1,0}^{l},h_{1,0}'{}^{l},w_{0},p_{0},a)$
be a $\Gamma$-exceptional specialization of the $wp$-graded base
group $Term(GFCl_{l})$ of $GFCl_{l}$, so that the restriction $(g_{1,0}^{l},h_{1,0}'{}^{l},w_{0},p_{0},a)$
belongs to the $\Gamma$-exceptional family of $B$ represented by
$(g_{1,0},h_{1,0}',w_{0},p_{0},a)$.

Assume that the collection of the induced ungraded closures 
\[
GFCl_{l,0}=GFCl_{l,(h_{2,0}^{l},g_{1,0}^{l},h_{1,0}'{}^{l},w_{0},p_{0},a)}\,,
\]
$l=1,...,r$, form a covering closure for the induced ungraded resolution
\[
Comp_{0}^{2}=Comp(\lambda^{1}WPGRes)_{(g_{1,0},h_{1,0}',w_{0},p_{0},a)}\,.
\]
Assume further that for all $l=1,...,r$, the $\Gamma$-specialization
$(h_{2,0}^{l},g_{1,0}^{l},h_{1,0}'{}^{l},w_{0},p_{0},a)$ cannot be
extended to a $\Gamma$-specialization of any of the groups $\lambda^{2}WPGL_{j}(s,z,h_{2},g_{1},h_{1}',y,w,p,a)$
associated with $GFCl_{l}$.

Then, for every $y_{0}\in\Gamma$ that factors through the ungraded
resolution $Comp_{0}^{2}$, there exists some $x_{0}\in\Gamma$, so
that 
\[
\Sigma(x_{0},y_{0},w_{0},p_{0},a)=1\wedge\Psi(x_{0},y_{0},w_{0},p_{0},a)\neq1\,.
\]

\end{enumerate}
\end{lem}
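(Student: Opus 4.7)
The strategy is identical in both parts; I will write it out for Part 1 and indicate at the end how Part 2 follows by replacing each ingredient with its $\Gamma$-analog.

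Suppose toward a contradiction that some $y_0$ factors through $Comp_0^2$ but no $x_0 \in F_k$ makes $\Sigma(x_0,y_0,w_0,p_0,a)=1 \wedge \Psi(x_0,y_0,w_0,p_0,a)\neq 1$. Extend $y_0$ to a full specialization of $Comp_0^2$. Since the closures $GFCl_{l,0}$ form a covering closure for $Comp_0^2$, this specialization factors through some $GFCl_{l_0,0}$, giving a specialization $(s_0,z_0,h_{2,0}^{l_0},g_{1,0}^{l_0},h_{1,0}'^{l_0},y_0,w_0,p_0,a)$ of $GFCl_{l_0,0}$. The terminal closure $GFCl_{l_0}$ carries a formal solution $x\in GFCl_{l_0}$ (given by the graded formal MR diagram $GFMRD_2$) satisfying $\Sigma(x,y,w,p,a)=1$ as an element of $GFCl_{l_0}$; specializing yields $x_0$ with $\Sigma(x_0,y_0,w_0,p_0,a)=1$. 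Hence by hypothesis there must exist some $i$ with $\psi_i(x_0,y_0,w_0,p_0,a)=1$, and the same holds for every modular shift $x\theta$ of the formal solution.

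The crux is now to distinguish two cases for each index $i$. Either (a) for every modular shift $x\theta$ (and every specialization of the free factors along $GFRes^2$), the induced $\psi_i$-value at our specialization is trivial, or (b) there exists such a shift $x\theta_i$ for which $\psi_i(x\theta_{i,0},y_0,w_0,p_0,a)\neq 1$. If case (b) holds for every $i=1,\dots,s$, then \lemref{28} lets me amalgamate the shifts $x\theta_1,\dots,x\theta_s$ into a single shift $x\theta$ with $\psi_i(x\theta_0,y_0,w_0,p_0,a)\neq 1$ simultaneously for all $i$; combined with $\Sigma(x\theta_0,y_0,w_0,p_0,a)=1$ this contradicts the standing assumption on $y_0$. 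Consequently case (a) must hold for some index $i_0$.

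But case (a) says precisely that the specialization $(s_0,z_0,h_{2,0}^{l_0},g_{1,0}^{l_0},h_{1,0}'^{l_0},y_0,w_0,p_0,a)$ makes every element of the set $\lambda'^{\,2}_{i_0}$ trivial, i.e.\ it solves the system $\lambda'^{\,2}_{i_0}$. By Guba's theorem this is equivalent to solving $\lambda^{2}_{i_0}$ over $F_k$, and by the construction of the groups $\lambda^{2}WPGL_j$ (the maximal limit groups through which $\mathfrak{J}^3$ factors) this specialization factors through some $\lambda^{2}WPGL_j$ associated with $GFCl_{l_0}$. This extension of $(h_{2,0}^{l_0},g_{1,0}^{l_0},h_{1,0}'^{l_0},w_0,p_0,a)$ is exactly what the last hypothesis of the lemma forbids, giving the required contradiction.

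For Part 2 the same three-step argument applies verbatim after replacing: (i) specializations by $\Gamma$-specializations and exceptional families by $\Gamma$-exceptional families; (ii) \lemref{28} by its $\Gamma$-analog \lemref{29}; and (iii) the equivalence of $\lambda'^{\,2}_{i_0}$ and $\lambda^{2}_{i_0}$ over $F_k$ by their equivalence over $\Gamma$ together with the factorization statement of \lemref{54}, which hold as soon as $l$ is large enough. The main obstacle of the argument is the passage from ``for each $i$ some shift $x\theta_i$ works'' to ``one shift works for all $i$'' --- this is exactly the content absorbed into Lemmas \ref{lem:28} and \ref{lem:29}, and the rest of the proof is bookkeeping around the covering-closure hypothesis and the defining property of the auxiliary limit groups $\lambda^{2}WPGL_j$.
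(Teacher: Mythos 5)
Your proposal is correct and follows the same route as the paper's proof: covering closure to land in some $GFCl_{l_0,0}$, the non-factorization hypothesis through the groups $\lambda^{2}WPGL_{j}$ to rule out that all shifts of the formal solution kill some fixed $\psi_{i_0}$, and \lemref{28} (resp.\ \lemref{54} and \lemref{29} over $\Gamma$) to merge the per-inequality witnesses into one. You have merely spelled out the case analysis that the paper compresses into ``we deduce the desired.''
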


\begin{proof}
For the first statement, let $y_{0}\in F_{k}$ so that it factors
through the ungraded resolution $Comp_{0}^{2}$. Since the collection
$GFCl_{1,0},...,GFCl_{r,0}$ is a covering closure for $Comp_{0}^{2}$,
$y_{0}$ must factor through $GFCl_{l,0}$, for some $l=1,...,r$.
Since $(h_{2,0}^{l},g_{1,0}^{l},h_{1,0}'{}^{l},w_{0},p_{0},a)$ does
not factor through any of the groups $\lambda^{2}WPGL_{j}(s,z,h_{2},g_{1},h_{1}',y,w,p,a)$
associated with $GFCl_{l}$, and using \lemref{28}, we deduce the
desired.

For the second statement, let $y_{0}\in\Gamma$ so that it factors
through the ungraded resolution $Comp_{0}^{2}$. Since the collection
$GFCl_{1,0},...,GFCl_{r,0}$ is a covering closure for $Comp_{0}^{2}$,
$y_{0}$ must factor through $GFCl_{l,0}$, for some $l=1,...,r$.
Since $(h_{2,0}^{l},g_{1,0}^{l},h_{1,0}'{}^{l},w_{0},p_{0},a)$ does
not factor through any of the groups $\lambda^{2}WPGL_{j}(s,z,h_{2},g_{1},h_{1}',y,w,p,a)$
associated with $GFCl_{l}$, then according to \lemref{54}, and using
\lemref{29}, we deduce the desired. 
\end{proof}
We assume that the first level limit groups $\lambda^{1}WPGL_{1}(h_{1},y,w,p,a),...,\lambda^{1}WPGL_{d}(h_{1},y,w,p,a)$
of the $wp$-graded MR diagram $GRemMRD_{2}$ are of \emph{minimal
rank}. That is, we assume that for all $j=1,...,d$, if the group
$\lambda^{1}WPGL_{j}(h_{1},y,w,p,a)$ admits a restricted epimorphism
onto $F\ast F_{k}$, for some free group $F$, so that the subgroup
$\langle w,p,a\rangle$ is mapped entirely into $F_{k}$, then $F$
is the trivial group.

From now until the end of this section, we continue under this assumption.

This assumption, allows us to apply \thmref{48} in order to construct
from the collection $\mathfrak{J}^{3}$ of all the specializations
$(s_{0},z_{0},h_{2,0},g_{1,0},h_{1,0}',y_{0},w_{0},p_{0},a)$ of $GFCl$
that restrict to exceptional specializations of $Term(GFCl)=WPHGH(h_{2},g_{1},h_{1}',w,p,a)$,
and satisfy one of the systems $\lambda_{i}^{2}(s,z,h_{2},g_{1},h_{1}',y,w,p,a)$,
for some $i$, to construct the quotient resolutions $Comp_{1}^{3}(t,s,z,h_{2},g_{1},h_{1}',y,w,p,a),...,Comp_{s}^{3}(t,s,z,h_{2},g_{1},h_{1}',y,w,p,a)$
of the systems $\lambda_{i}^{2}(s,z,h_{2},g_{1},h_{1},y,w,p,a)$ w.r.t.
the $wp$-graded closure $GFCl$. According to the construction of
the quotient resolutions in this case, $Comp_{1}^{3},...,Comp_{s}^{3}$
are quotients of the graded formal closure $GFCl$. And in the case
that a quotient resolution $Comp_{j}^{3}$ is of maximal complexity,
i.e., of complexity that equals the complexity of $Comp(\lambda^{1}WPGRes)$,
then, the completed resolution $Comp_{j}^{3}$ can differ from $GFCl$
only in its base group, i.e., $Comp_{j}^{3}$ is obtained from $GFCl$
by adding some relations to $Term(GFCl)$.

The collection of quotient resolutions associated with the graded
formal closure $GFCl$ that are not of maximal complexity, is denoted
by $GRem_{3}$.
\begin{lem}
\label{lem:57}$ $ 
\begin{enumerate}
\item Let $(g_{1,0},h_{1,0}',w_{0},p_{0},a)$ be an exceptional specialization
of the $wp$-rigid or $wp$-solid base group $B$ of $Comp(\lambda^{1}WPGRes)$,
and assume that $(w_{0},p_{0})\in AE(w,p)$. Then, there exists a
collection of terminal graded formal closures $GFCl_{1},...,GFCl_{r}$
in the graded formal MR diagram $GFMRD_{2}$ associated with the resolution
$\lambda^{1}WPGRes(h_{1},y,w,p,a)$, together with exceptional specializations
$(h_{2,0}^{l},g_{1,0}^{l},h_{1,0}'{}^{l},w_{0},p_{0},a)$, $l=1,...,r$,
of their corresponding $wp$-rigid or $wp$-solid base groups $Term(GFCl_{l})$,
so that: 
\begin{enumerate}
\item The restriction $(g_{1,0}^{l},h_{1,0}'{}^{l},w_{0},p_{0},a)$ belongs
to the exceptional family of $B$ represented by \\
 $(g_{1,0},h_{1,0}',w_{0},p_{0},a)$, for all $l=1,...,r$. 
\item The induced ungraded closures $GFCl_{1,0},...,GFCl_{r,0}$ form a
covering closure for the induced ungraded completion $Comp_{0}^{2}$. 
\item For every maximal complexity quotient resolution $Comp_{j}^{3}$ that
is associated with $GFCl_{l}$, there is no specialization of the
$wp$-rigid or $wp$-solid base group $Term(Comp_{j}^{3})$ of $Comp_{j}^{3}$,
that restricts to the exceptional family of $Term(GFCl_{l})$ represented
by $(h_{2,0}^{l},g_{1,0}^{l},h_{1,0}'{}^{l},w_{0},p_{0},a)$. 
\end{enumerate}
\item Let $\Gamma$ be a group in the model. \\
 Let $(g_{1,0},h_{1,0}',w_{0},p_{0},a)$ be a $\Gamma$-exceptional
specialization of the $wp$-rigid or $wp$-solid base group $B$ of
$Comp(\lambda WPGRes)$, and assume that $(w_{0},p_{0})\in AE_{\Gamma}(w,p)$.
Then, there exists a collection of terminal graded formal closures
$GFCl_{1},...,GFCl_{r}$ in the graded formal MR diagram $GFMRD_{2}$
associated with the resolution $\lambda WPGRes(h_{1},y,w,p,a)$, together
with $\Gamma$-exceptional specializations $(h_{2,0}^{l},g_{1,0}^{l},h_{1,0}'{}^{l},w_{0},p_{0},a)$,
$l=1,...,r$, of their corresponding $wp$-rigid or $wp$-solid base
groups $Term(GFCl_{l})$, so that: 
\begin{enumerate}
\item The restriction $(g_{1,0}^{l},h_{1,0}'{}^{l},w_{0},p_{0},a)$ belongs
to the $\Gamma$-exceptional family of $B$ represented by \\
 $(g_{1,0},h_{1,0}',w_{0},p_{0},a)$, for all $l=1,...,r$. 
\item The induced ungraded closures $GFCl_{1,0},...,GFCl_{r,0}$ form a
covering closure for the induced ungraded completion $Comp_{0}^{2}$. 
\item For every maximal complexity quotient resolution $Comp_{j}^{3}$ that
is associated with $GFCl_{l}$, there is no $\Gamma$-specialization
of the $wp$-rigid or $wp$-solid base group $Term(Comp_{j}^{3})$
of $Comp_{j}^{3}$, that restricts to the $\Gamma$-exceptional family
of $Term(GFCl_{l})$ represented by $(h_{2,0}^{l},g_{1,0}^{l},h_{1,0}'{}^{l},w_{0},p_{0},a)$. 
\end{enumerate}
\end{enumerate}
\end{lem}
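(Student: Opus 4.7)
The plan is to apply Theorem \thmref{25} (respectively Theorem \thmref{27} for part 2) to extract an initial covering, and then refine it iteratively to secure property (c). In the first step, applying Theorem \thmref{25} to the witness $(w_0,p_0)\in AE(w,p)$ and the resolution $\lambda^1 WPGRes(h_1,y,w,p,a)$ produces graded formal closures $GFCl_1,\ldots,GFCl_r$ in $GFMRD_2$, together with exceptional specializations $g_{p_0}^l$ of their base groups $Term(GFCl_l)$ whose restrictions to $B$ lie in the exceptional family of $(g_{1,0},h_{1,0}',w_0,p_0,a)$, and formal solutions $x_l$ satisfying $\Sigma(x_l,y,w,p,a)=1$ in $GFCl_l$ with $\psi_i(x_l,y,w_0,p_0,a)\neq 1$ in $FCl_l$ for each inequality $\psi_i$. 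Combining these via Lemma \lemref{28} yields a single formal solution per $l$ satisfying $\Psi\neq 1$ simultaneously. Properties (a) and (b) are immediate from this application.

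For property (c), suppose the exceptional family $\mathcal{F}_l$ of some $g_{p_0}^l$ extends to a specialization of $Term(Comp_j^3)$ for a maximal-complexity quotient $Comp_j^3$ of $GFCl_l$, associated with some system $\lambda_i^2$. By Theorem \thmref{48}, $Comp_j^3$ is a graded formal closure of $Comp(\lambda^1 WPGRes)$ obtained from $GFCl_l$ by adding relations to $Term(GFCl_l)$, so $Term(Comp_j^3)$ is a proper quotient of $Term(GFCl_l)$, and within $Comp_j^3$ the equation $\psi_i(x_l,y,w,p,a)=1$ is forced by construction. I would refine the cover by replacing $GFCl_l$ with $Comp_j^3$ on the extending part, then reapply Theorem \thmref{25} inside $Comp_j^3$ (using that the AE-witness $(w_0,p_0)$ holds for every $y$, hence in particular for specializations factoring through $Comp_j^3$) to obtain a fresh exceptional $g_{p_0}^l$ of $Term(Comp_j^3)$ and a new formal solution whose inequalities remain non-trivial in the corresponding ungraded closure. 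Iterating over all offending pairs $(l,j)$ yields the desired covering.

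Termination uses a descending chain argument: each refinement either reduces the limit-group complexity of the base group (descending from $Term(GFCl_l)$ to the proper quotient $Term(Comp_j^3)$ under the DCC for limit groups) or invokes a resolution from $GRem_3$ of strictly smaller quadratic complexity, both of which can occur only finitely many times. The minimal rank assumption is essential, because Theorem \thmref{48} relies on it to guarantee that maximal-complexity quotients possess the correct graded formal closure structure, so that reapplying Theorem \thmref{25} inside $Comp_j^3$ is well-defined. The main obstacle will be maintaining property (b) (covering closure of $Comp_0^2$) throughout the refinement: the union of refined sub-covers must still cover the induced ungraded completion, which requires coordinating the refinements across different $l$ by separating extending from non-extending specializations via auxiliary formal MR diagrams. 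For part 2, the identical strategy applies using Theorem \thmref{27}, Lemma \lemref{29}, and Lemma \lemref{49} in place of their $F_k$ counterparts, with the random-group lifting property from \thmref{1} invoked to transport every structural conclusion between $F_k$ and $\Gamma_l$.
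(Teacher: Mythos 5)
Your first step --- applying \thmref{25} (resp.\ \thmref{27}) to the witness $(w_{0},p_{0})$ and the resolution $\lambda^{1}WPGRes$ to obtain closures satisfying (a) and (b), with formal solutions whose inequalities $\Psi\neq1$ survive in the induced ungraded closures --- is exactly how the paper begins. The gap is in your treatment of (c). The paper does not refine the cover at all: condition (c) holds \emph{automatically} for the closures produced by \thmref{25}, by a short contradiction. Since the words $\Psi(x_{l},y,p_{0},a)$ are non-trivial in $GFCl_{l,0}$, the chosen exceptional specialization $(h_{2,0}^{l},g_{1,0}^{l},h_{1,0}'^{l},w_{0},p_{0},a)$ does not extend to a solution of any system $\lambda_{i}^{2}$, so by \lemref{55} no specialization in its exceptional family factors through any of the groups $\lambda^{2}WPGL_{j}$. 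If some maximal complexity quotient resolution $Comp_{j}^{3}$ admitted a base-group specialization restricting to that family, then --- since a maximal complexity quotient differs from $GFCl_{l}$ only by relations added to $Term(GFCl_{l})$, and by \thmref{48}(2) the words of some $\lambda_{i}^{2}$ are trivial in it --- the induced ungraded resolution $Comp_{j,0}^{3}$ would produce specializations in that same family solving $\lambda_{i}^{2}$, i.e.\ factoring through some $\lambda^{2}WPGL_{j}$. Contradiction. You in fact isolate both ingredients of this argument (the base-group-only difference, and the forced equation $\psi_{i}=1$ in $Comp_{j}^{3}$) but never combine them with \lemref{55} to close the loop.

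Your proposed iteration does not repair this and would not work as described. $Comp_{j}^{3}$ is a quotient resolution, not a terminal graded formal closure of $GFMRD_{2}$, so it cannot be a member of the collection the lemma demands; and it carries, by construction, a formal solution for which some $\psi_{i}$ vanishes, so ``reapplying \thmref{25} inside $Comp_{j}^{3}$'' to extract a formal solution with $\Psi\neq1$ is precisely what fails there --- that failure is the defining property of the quotient resolution. Passing to quotient resolutions of strictly smaller complexity is the mechanism by which the \emph{deeper levels} of the proof system are built (the role of $GRem_{3}$ in \defref{59}), not a step in the proof of this lemma; and the maximal complexity quotients, which are the only ones condition (c) speaks about, leave nothing to iterate on. Finally, even granting the iteration, it never establishes (c) for whatever closures it terminates with: after each replacement you would face the same question about the maximal complexity quotients of the new objects, so the DCC argument controls termination of a process that does not converge to the stated conclusion.
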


\begin{proof}
For the first statement, since $(w_{0},p_{0})\in AE(w,p)$, then,
according to \thmref{25}, there exists a collection of terminal graded
formal closures $GFCl_{1},...,GFCl_{r}$ in $GFMRD_{2}$, together
with exceptional specializations $(h_{2,0}^{l},g_{1,0}^{l},h_{1,0}'{}^{l},w_{0},p_{0},a)$,
$l=1,...,r$, of their corresponding $wp$-graded base groups $Term(GFCl_{l})$,
so that the conditions (a) and (b) are satisfied, and the elements
of the system $\lambda_{i}^{2}(s,z,h_{2},g_{1},h_{1}',y,w,p,a)$ are
not all trivial in the induced ungraded formal closure $GFCl_{l,0}$,
for all $i$ and all $l=1,...,r$. Assume by contradiction that for
some maximal complexity quotient resolution $Comp_{j}^{3}$, the base
group $Term(Comp_{j}^{3})$ admits a specialization that belongs to
the modular family of $Term(GFCl_{l})$ represented by $(h_{2,0}^{l},g_{1,0}^{l},h_{1,0}'{}^{l},w_{0},p_{0},a)$,
and let $Comp_{j,0}^{3}$ be the induced ungraded resolution. Hence,
since $Comp_{j}^{3}$ is of maximal complexity, and according to \lemref{55},
we deduce that the elements of the system $\lambda_{i}^{2}(s,z,h_{2},g_{1},h_{1}',y,w,p,a)$
are not all trivial in $Comp_{j,0}^{3}$, for all $i$. This contradicts
the construction of $Comp_{j}^{3}$ being a quotient resolution for
the systems $\lambda_{i}^{2}(s,z,h_{2},g_{1},h_{1}',y,w,p,a)$ w.r.t.
the $wp$-graded closure $GFCl_{l}$.

For the second statement, since $(w_{0},p_{0})\in AE_{\Gamma}(w,p)$,
then, according to \thmref{27}, there exists a collection of terminal
graded formal closures $GFCl_{1},...,GFCl_{r}$ in $GFMRD_{2}$, together
with $\Gamma$-exceptional specializations $(h_{2,0}^{l},g_{1,0}^{l},h_{1,0}'{}^{l},w_{0},p_{0},a)$,
$l=1,...,r$, of their corresponding $wp$-graded base groups $Term(GFCl_{l})$,
so that the conditions (a) and (b) are satisfied, and the elements
of the system $\lambda_{i}^{2}(s,z,h_{2},g_{1},h_{1}',y,w,p,a)$ are
not all trivial in the induced ungraded formal closure $GFCl_{l,0}$,
for all all $i$ and all $l=1,...,r$. Assume by contradiction that
for some maximal complexity quotient resolution $Comp_{j}^{2}$, the
base group $Term(Comp_{j}^{3})$ admits a $\Gamma$-specialization
that belongs to the modular family of $Term(GFCl_{l})$ represented
by $(h_{2,0}^{l},g_{1,0}^{l},h_{1,0}'{}^{l},w_{0},p_{0},a)$, and
let $Comp_{j,0}^{3}$ be the induced ungraded resolution. Hence, since
$Comp_{j}^{3}$ is of maximal complexity, and according to \lemref{55},
we deduce that the elements of the system $\lambda_{i}^{2}(s,z,h_{2},g_{1},h_{1}',y,w,p,a)$
are not all trivial in $Comp_{j,0}^{3}$, for all $i$. This contradicts
the construction of $Comp_{j}^{3}$ being a quotient resolution for
the systems $\lambda_{i}^{2}(s,z,h_{2},g_{1},h_{1}',y,w,p,a)$ w.r.t.
the $wp$-graded closure $GFCl_{l}$. 
\end{proof}
\begin{defn}
\label{def:58} A \emph{proof system of depth 2} is to make the following
ordered choice: 
\begin{enumerate}
\item To choose one of the terminal graded formal closures $WPH(h_{1},w,p,a)\ast F_{y}$
in $GFMRD_{1}$. 
\item For every resolution $\lambda^{1}WPGRes(h_{1},y,w,p,a)$ in $GRemMRD_{2}$,
to choose an integer $0\leq n\leq M(WPHG)$, where $WPHG(g_{1},h_{1}',w,p,a)$
is the $wp$-rigid or $wp$-solid base group of the completed resolution
$Comp^{2}=Comp(\lambda^{1}WPGRes)(z,h_{1},y,w,p,a)$, and $M(WPHG)$
is the maximal possible number of exceptional families of specializations
of $WPHG$ for a fixed value of the parameters $w_{0},p_{0}\in F_{k}$. 
\item For every resolution $\lambda^{1}WPGRes(h_{1},y,w,p,a)$ in $GRemMRD_{2}$
for which the integer $n$ was chosen, and for every $1\leq i\leq n$,
to choose finitely many terminal graded formal closures $GFCl_{1}^{i},...,GFCl_{m_{i}}^{i}$
in the graded formal MR diagram $GFMRD_{2}$ associated with the resolution
$\lambda^{1}WPGRes(h_{1},y,w,p,a)$, where $1\leq m_{i}\leq\Sigma\left\{ M(Term(GFCl)):\,GFCl\text{ is a terminal closure in }GFMRD_{2}\right\} $,
and $M(Term(GFCl))$ is the maximal possible number of exceptional
families of specializations of the $wp$-rigid or $wp$-solid base
group $Term(GFCl)$ for a fixed value of the parameters $w_{0},p_{0}\in F_{k}$.
. 
\end{enumerate}
We say that a proof system $PS$ as above, \emph{verifies $(w_{0},p_{0})\in AE(w,p)$},
if for every graded formal closure $GFCl_{j}^{i}$ listed in the proof
system $PS$, there exists an exceptional specialization $(h_{2,0}^{i,j},g_{1,0}^{i,j},h_{1,0}'{}^{i,j},w_{0},p_{0},a)$
of $Term(GFCl_{j}^{i})$, so that 
\begin{enumerate}
\item For all $i,j$, the specializations $(h_{1,0}'{}^{i,j},w_{0},p_{0},a)$
represent the same exceptional family of the group $WPH(h_{1},w,p,a)$
that was specified by the proof system $PS$. 
\item For all $i$, all the specializations $(g_{1,0}^{i,j},h_{1,0}'{}^{i,j},w_{0},p_{0},a)$
represent the same exceptional family of the group $WPHG(g_{1},h_{1}',w,p,a)$
for which the graded formal closures $GFCl_{j}^{i}$ were associated
by the proof system $PS$. 
\item For all $i$, the induced ungraded closures $GFCl_{1,0}^{i},...,GFCl_{m_{i},0}^{i}$
form a covering closure for the induced ungraded resolution $Comp_{0}^{2}$,
where $Comp^{2}=Comp(\lambda^{1}WPGRes)(z,h_{1},y,w,p,a)$ is the
completion of the resolution of $GRemMRD_{2}$ for which the graded
formal closures $GFCl_{j}^{i}$ were associated by the proof system
$PS$. 
\item For every resolution $\lambda^{1}WPGRes(h_{1},y,w,p,a)$ in $GRemMRD_{2}$
for which the integer $n$ was chosen by the proof system $PS$, and
for every $1\leq i_{1}\neq i_{2}\leq n$, the exceptional specializations
$(g_{1,0}^{i_{1},j},h_{1,0}'{}^{i_{1},j},w_{0},p_{0},a)$ and $(g_{1,0}^{i_{2},j},h_{1,0}'{}^{i_{2},j},w_{0},p_{0},a)$
represent distinct exceptional families of the terminal $wp$-graded
group \\
 $WPHG(g_{1},h_{1}',w,p,a)$ of $\lambda^{1}WPGRes$. 
\item For every resolution $\lambda^{1}WPGRes(h_{1},y,w,p,a)$ in $GRemMRD_{2}$
for which the integer $n$ was chosen by the proof system $PS$, the
terminal group $WPHG(g_{1},h_{1}',w,p,a)$ of $\lambda^{1}WPGRes$,
admits exactly $n$ distinct exceptional families corresponding to
the value $w_{0}p_{0}$ of the parameters. 
\item For all $i,j$, the specialization $(h_{2,0}^{i,j},g_{1,0}^{i,j},h_{1,0}'{}^{i,j},w_{0},p_{0},a)$
does not factor through any of the maximal limit groups $\lambda^{2}WPGL_{1}(s,z,h_{2},g_{1},h_{1}',y,w,p,a),...,\lambda^{2}WPGL_{d_{2}}(s,z,h_{2},g_{1},h_{1}',y,w,p,a)$
associated with the graded formal closure $GFCl_{j}^{i}$. 
\end{enumerate}
Such a tuple 
\[
\left(\left(h_{2,0}^{i,j},g_{1,0}^{i,j},h_{1,0}'{}^{i,j}\right)_{i,j},w_{0},p_{0},a\right)
\]
that satisfies the conditions listed above, is called a \emph{validPS
statement} (of depth 2). Given a group $\Gamma$ in the model, a \emph{$\Gamma$-validPS
statement} (of depth 2) is defined similarly.
\end{defn}

\subsection{The Construction of Proof Systems of General Depth under Minimal
Rank Assumption}

The construction that we explained so far, suffices for encoding ``proofs''
for that $(w_{0},p_{0})\in AE(w,p)$ for a subset of the parameters
in $AE(w,p)$ (those that admit a validPS statement of depth 2 or
of depth 1 - see \defref{58} and \lemref{53} correspondingly).
In order to encode ``proofs'' for all the parameters in $AE(w,p)$,
we continue the construction with each of the quotient resolutions
\[
Comp_{1}^{3}(t,s,z,h_{2},g_{1},h_{1}',y,w,p,a),...,Comp_{s}^{3}(t,s,z,h_{2},g_{1},h_{1}',y,w,p,a)\,.
\]
According to \lemref{57}, we can continue only with those quotient
resolutions that are not of maximal complexity. We summarize the construction
so far, and explain how to continue it iteratively.

Let $Comp^{2}=Comp(\lambda^{1}WPGRes)(z,h_{1},y,w,p,a)$ be the completion
of the $wp$-graded resolution \\
 $\lambda^{1}WPGRes(h_{1},y,w,p,a)$ in the diagram $GRemMRD_{2}$,
who terminates in the $wp$-rigid or $wp$-solid group $WPHG(g_{1},h_{1}',w,p,a)$.
Let $GFMRD_{2}$ be the $wp$-graded formal MR diagram of the system
$\Sigma(x,y,w,p,a)$ over the resolution $Comp^{2}$. Let $GFRes^{2}(x,z,h_{1},y,w,p,a)$
be a graded formal resolution in $GFMRD_{2}$, who terminates in a
graded formal closure $GFCl(x,z,h_{1},y,w,p,a)$ of $Comp^{2}$ whose
$wp$-rigid or $wp$-solid base group is $Term(GFCl)=WPHGH(h_{2},g_{1},h_{1}',w,p,a)$.
Let $\lambda_{i}^{2}(s,z,h_{2},g_{1},h_{1}',y,w,p,a)$ be the finitely
many systems associated with $GFCl$ using the resolution $GFRes^{2}$
and the elements $\psi_{i}$ of the inequalities $\Psi$. Let $\mathfrak{J}^{3}$
be the collection of all the specializations $(s_{0},z_{0},h_{2,0},g_{1,0},h_{1,0}',y_{0},w_{0},p_{0},a)$
of $GFCl$ that restrict to exceptional specializations of $Term(GFCl)$,
and satisfy one of the systems $\lambda_{i}^{2}(s,z,h_{2},g_{1},h_{1}',y,w,p,a)$,
for some $i$. The specializations in the collection $\mathfrak{J}^{3}$,
factors through finitely many maximal limit groups $\lambda^{2}WPGL_{j}(s,z,h_{2},g_{1},h_{1}',y,w,p,a)$.

Let $Comp_{l}^{3}(t,s,z,h_{2},g_{1},h_{1}',y,w,p,a)$ be the finitely
many quotient resolutions of the systems \\
 $\lambda_{i}^{2}(s,z,h_{2},g_{1},h_{1},y,w,p,a)$ w.r.t. the $wp$-graded
closure $GFCl$, that are not of maximal complexity, i.e., the complexity
of each $Comp_{l}^{3}$ is strictly smaller than the complexity of
$Comp^{2}$. The collection of the quotient resolutions $Comp_{l}^{3}$
is denoted by $GRem_{3}$.

And we continue the construction iteratively with each quotient resolution
$Comp^{3}$ in $GRem_{3}$.

In light of the strict reduction in the complexities, the procedure
must terminate after finitely many steps. 
\begin{defn}
\label{def:59} Let $d\geq2$ be an integer. A (general) \emph{proof
system} is to make the following ordered choice: 
\begin{enumerate}
\item To choose one of the terminal graded formal closures $WPH(h_{1},w,p,a)\ast F_{y}$
in $GFMRD_{1}$. 
\item For every resolution $\lambda^{1}WPGRes(h_{1},y,w,p,a)$ in $GRemMRD_{2}$,
to choose an integer $0\leq n\leq M(WPHG)$, where $WPHG(g_{1},h_{1}',w,p,a)$
is the $wp$-rigid or $wp$-solid base group of the completed resolution
$Comp^{2}=Comp(\lambda^{1}WPGRes)(z,h_{1},y,w,p,a)$, and $M(WPHG)$
is the maximal possible number of exceptional families of specializations
of $WPHG$ for a fixed value of the parameters $w_{0},p_{0}\in F_{k}$. 
\item For every resolution $\lambda^{1}WPGRes(h_{1},y,w,p,a)$ in $GRemMRD_{2}$
for which the integer $n$ was chosen, and for every $1\leq i\leq n$,
to choose finitely many terminal graded formal closures $GFCl_{1}^{i},...,GFCl_{m_{i}}^{i}$
in the graded formal MR diagram $GFMRD_{2}$ associated with the resolution
$\lambda^{1}WPGRes(h_{1},y,w,p,a)$, where $1\leq m_{i}\leq\Sigma\left\{ M(Term(GFCl)):\,GFCl\text{ is a terminal closure in }GFMRD_{2}\right\} $,
and $M(Term(GFCl))$ is the maximal possible number of exceptional
families of specializations of the $wp$-rigid or $wp$-solid base
group $Term(GFCl)$ for a fixed value of the parameters $w_{0},p_{0}\in F_{k}$. 
\item And by induction on $l\geq2$, if for the resolution $Comp^{l}$ in
$GRem_{l}$ who terminates in $WP(HG)^{l-1}$ the integer $n$ was
chosen, and if for some $1\leq i\leq n$ the graded formal closure
$GFCl$ in the diagram $GFMRD_{l}$ associated with $Comp^{l}$ was
chosen, then, for every resolution $Comp^{l+1}$ in $GRem_{l+1}$
associated with $GFCl$ who terminates in $WP(HG)^{l}$, to choose
an integer $0\leq m\leq M(WP(HG)^{l})$, and for every $1\leq j\leq m$,
to choose finitely many terminal graded formal closures $GFCl_{1},...,GFCl_{s}$
in the diagram $GFMRD_{l+1}$ associated with $Comp^{l+1}$, where
\[
1\leq s\leq\Sigma\left\{ M(Term(GFCl')):\,GFCl'\text{ is a terminal closure in }GFMRD_{l+1}\right\} \,.
\]
\end{enumerate}
We say that a proof system $PS$ as above, is \emph{of depth $d$},
if $d$ is the maximal integer with the property that some graded
formal closure $GFCl$ listed in the proof system $PS$, is associated
with a resolution $Comp^{d}$ in $GRem_{d}$.

We say that a proof system $PS$ of depth $d$, \emph{verifies $(w_{0},p_{0})\in AE(w,p)$},
if for every graded formal closure $GFCl$ listed in the proof system
$PS$ and is deepest w.r.t. the proof system $PS$, there exists an
exceptional specialization $f_{0}$ of $Term(GFCl)$, so that: 
\begin{enumerate}
\item The restrictions of all the specializations $f_{0}$ to the group
$WPH(h_{1},w,p,a)$ that was specified by the proof system $PS$,
represent the same exceptional family of it. 
\item For every resolution $Comp^{l}$ that was listed in $PS$ and for
all $1\leq i\leq n$, where $n$ is the chosen integer for $Comp^{l}$,
all the induced specializations of the terminal group $WP(HG)^{l-1}$
of $Comp^{l}$ represent the same exceptional family of $WP(HG)^{l-1}$. 
\item For every resolution $Comp^{l}$ that was listed in $PS$ and for
all $1\leq i\leq n$, where $n$ is the chosen integer for $Comp^{l}$,
if the chosen collection of terminal graded formal closures in the
diagram $GFMRD_{l}$ associated with $Comp^{l}$, is $GFCl_{1},...,GFCl_{s}$,
then the induced ungraded closures $GFCl_{1,0},...,GFCl_{s,0}$ form
a covering closure for the induced ungraded resolution $Comp_{0}^{l}$. 
\item For every resolution $Comp^{l}$ that was listed in $PS$ and for
all $1\leq i_{1}\neq i_{2}\leq n$, where $n$ is the chosen integer
for $Comp^{l}$, the induced two exceptional families of the terminal
group $WP(HG)^{l-1}$ of $Comp^{l}$ from the directions of $i_{1}$
and $i_{2}$ in the proof system $PS$, are distinct exceptional families
of $WP(HG)^{l-1}$. 
\item For every resolution $Comp^{l}$ that was listed in $PS$ and for
which the integer $n$ was chosen, the terminal group $WP(HG)^{l-1}$
of $Comp^{l}$ admits exactly $n$ distinct exceptional families corresponding
to the value $w_{0}p_{0}$ of the parameters. 
\item Each of the specializations $f_{0}$ does not factor through any of
the maximal limit groups $\lambda^{t}WPGL_{j}$ associated with the
graded formal closure $GFCl$. 
\end{enumerate}
Such a tuple $\left((f_{0}),w_{0},p_{0},a\right)$, denoted for brevity
$(f_{0},w_{0},p_{0},a)$, that satisfies the conditions listed above,
is called a \emph{validPS statement} (of depth $d$). Given a group
$\Gamma$ in the model, a \emph{$\Gamma$-validPS statement} (of depth
$d$) is defined similarly.
\end{defn}

By the construction of proof systems that we have explained in this
section, there exist only finitely many proof systems. According to
the construction that we have explained in this section, for every
$p_{0}\in EAE(p)$, there exists a witness $w_{0}\in F_{k}$ and a
proof system $PS$, so that the proof system $PS$ verifies that $(w_{0},p_{0})\in AE(w,p)$.
Similarly, for every group $\Gamma$ in the model, and for every $p_{0}\in EAE_{\Gamma}(p)$,
there exist a witness $w_{0}\in\Gamma$ and a proof system $PS$,
so that $PS$ verifies that $(w_{0},p_{0})\in AE(w,p)$.

We fix a proof system $PS$. We extend each validPS statement $\left((f_{0}),w_{0},p_{0},a\right)$
(and similarly each $\Gamma$-validPS statement) in the following
two ways: 
\begin{enumerate}
\item Let $Comp$ be one of the quotient resolutions listed in the proof
system $PS$. Denote by $WPHG$ the $wp$-rigid or $wp$-solid base
group of $Comp$. Assume that $n$ is the integer chosen for the resolution
$Comp$, and let $1\leq i\leq n$. Let $GFCl_{1},...,GFCl_{s}$ be
the collection of terminal graded formal closures in the diagram $GFMRD_{l}$
(associated with $Comp$) that was chosen for $Comp$ and $i$. For
every non-cyclic maximal (pegged) abelian subgroup $A$ that appears
in one of the decompositions along the various levels of $Comp$,
and for each $j=1,...s$, one can associate a free abelian group $\mathbb{Z}^{m_{A}}$,
a subgroup of finite index $C_{A}^{j}\leq\mathbb{Z}^{m_{A}}$ and
a parameterized coset of $C_{A}^{j}$ (with integer parameter). We
denote by $lcm(A)$ the least common multiple of the indices of the
subgroups $C_{A}^{1},...,C_{A}^{s}$ in $\mathbb{Z}^{m_{A}}$.

The value of a representative for the associated coset depends only
on the (modular family of a) specialization of the base group $Term(GFCl_{j})$.
In other words, once we have fixed a specialization $f_{0}^{j}$ for
$Term(GFCl_{j})$, the coset $C_{A,0}^{j}$ is determined. Actually,
if the abelian group $A$ does not appear in the terminal level of
$Comp$ (i.e., $A$ does not appear in the $wp$-graded JSJ of $WPHG$),
then the coset $C_{A}^{j}$ depends only on the graded closure $GFCl_{j}$
itself, and does not depend on the specific specialization $f_{0}^{j}$
(i.e., $C_{A}^{j}=C_{A,0}^{j}$). Moreover, the entries of the elements
in the coset $C_{A}^{j}=C_{A,0}^{j}$ correspond to the indices of
the cyclic subgroups generated by the images of a fixed sub-basis
of the abelian group $A$, in the maximal cyclic subgroup generated
by the peg $peg_{A}$ of $A$ in the next level of the graded closure
$GFCl_{j}$, where $peg_{A}$ is a fixed generator for the cyclic
edge group connecting $A$ to the next level of $Comp$. However,
in the case that the abelian group $A$ appears in the terminal level
of $Comp$, the entries of the elements in the coset $C_{A,0}^{j}$
correspond to the indices of the cyclic subgroups generated by the
images of a fixed sub-basis of the abelian group $A$, in the maximal
cyclic subgroup generated by the maximal root of the image of $peg_{A}$
in the next level of the induced ungraded closure $GFCl_{j,0}$, under
the specializations of $Comp$ that factor through the induced ungraded
closure $GFCl_{j,0}$.

If further $f_{0}^{1},...,f_{0}^{s}$ are specializations of the base
groups $Term(GFCl_{1}),...,Term(GFCl_{s})$, and in addition the restrictions
of $f_{0}^{1},...,f_{0}^{s}$ to the base group $WPHG$ of $Comp$
represent the same exceptional family, then in order for the collection
of the induced ungraded closures $GFCl_{1,0},...,GFCl_{s,0}$ to form
a covering closure for the corresponding ungraded resolution $Comp_{0}$,
the union of the induced cosets $C_{A,0}^{1},...,C_{A,0}^{s}$ must
cover the entire group $\mathbb{Z}^{m_{A}}$ for every maximal abelian
group $A$ that appears along the decompositions in the resolution
$Comp$.

Hence, for each maximal abelian group $A$ that appears along the
decompositions in the resolution $Comp$, we add a new variable $r_{A}$
that will be mapped to the image of $peg_{A}$ when $A$ does not
appear in the terminal level of $Comp$, or to $Rpeg_{A,0}^{q}$ otherwise,
where $Rpeg_{A,0}$ is the maximal root of the image of $peg_{A}$
and $q$ is co-prime to each of the indices of the subgroups $C_{A}^{j}$
in $\mathbb{Z}^{m_{A}}$. In order for the collection of the induced
ungraded closures $GFCl_{1,0},...,GFCl_{s,0}$ to form a covering
closure for the corresponding ungraded resolution $Comp_{0}$, we
must be able to extend each of the specializations $(r_{A,0}^{l_{1}},...,r_{A,0}^{l_{m_{A}}})$
of the fixed sub-basis of the abelian group $A$, for all $l_{1},...,l_{m_{A}}=0,...,lcm(A)$,
to a specialization that factor through one of the ungraded closures
$GFCl_{1,0},...,GFCl_{s,0}$.

By the definition of a validPS statement, for every resolution $Comp^{l}$
that was listed in $PS$ and for all $1\leq i\leq n$, where $n$
is the chosen integer for $Comp^{l}$, if the chosen collection of
terminal graded formal closures in the diagram $GFMRD_{l}$ associated
with $Comp^{l}$ is $GFCl_{1},...,GFCl_{s}$, then the induced ungraded
closures $GFCl_{1,0},...,GFCl_{s,0}$ form a covering closure for
the induced ungraded resolution $Comp_{0}^{l}$. For each such $Comp^{l}$,
and each $1\leq i\leq n$, we add the corresponding specializations
$r_{A,0}$ for the new variables $r_{A}$. Moreover, we add specializations
for new variables $AbDem$. These specializations demonstrate that
each of the (finitely many) corresponding specializations $(r_{A,0}^{l_{1}},...,r_{A,0}^{l_{m_{A}}})$
of the fixed sub-bases of the maximal abelian groups $A$ that appear
in $Comp^{l}$, factors through one of the closures $GFCl_{1,0},...,GFCl_{s,0}$. 
\item We add specializations for new variables $SameFamily$ that demonstrates
that the restrictions of all the specializations $f_{0}$ in a given
validPS statement, to the group $WPH(h_{1},w,p,a)$ that was specified
by the proof system $PS$, represent the same modular family of it,
and demonstrates that for every resolution $Comp^{l}$ that was listed
in $PS$ and for all $1\leq i\leq n$, where $n$ is the chosen integer
for $Comp^{l}$, all the induced specializations of the terminal group
$WP(HG)^{l-1}$ of $Comp^{l}$ represent the same modular family of
$WP(HG)^{l-1}$. 
\end{enumerate}
We denote the extended validPS statements by the extended tuples $\left((SameFamily_{0}),(AbDem_{0}),(r_{0}),(f_{0}),w_{0},p_{0},a\right)$,
or briefly by $\left(SameFamily_{0},AbDem_{0},r_{0},f_{0},w_{0},p_{0},a\right)$.
We want to prove that the given $EAE(p)$ set, can be defined over
$F_{k}$ and over the random groups $\Gamma$ by a uniform formula
which belongs to the Boolean algebra of $AE$-formulas. For that,
we start by collecting all the validPS statements and $\Gamma$-validPS
statements in a finite uniform collection of limit groups.

We consider the collection $\mathfrak{J}$ of all the specializations
$\left(\left((SameFamily_{0}),(AbDem_{0}),(r_{0}),(f_{0}),w_{0},p_{0},a\right)\right)$
(over $F_{k}$) that satisfy all the conditions in the definition
of validPS statement, except maybe that for some resolution $Comp^{l}$
that was listed in $PS$ and for which the integer $n$ was chosen,
the terminal group $WP(HG)^{l-1}$ of $Comp^{l}$ admits more than
$n$ distinct exceptional families corresponding to the value $w_{0}p_{0}$
of the parameters. The collection $\mathfrak{J}$ factors through
finitely many maximal limit groups $PS\left(SameFamily,AbDem,r,f,w,p,a\right)$. 
\begin{lem}
$ $ 
\begin{enumerate}
\item Every validPS statement factors through the groups $PS\left(SameFamily,AbDem,r,f,w,p,a\right)$. 
\item Let $\Gamma$ be a group in the model. Every $\Gamma$-validPS statement
factors through the groups \\
 $PS\left(SameFamily,AbDem,r,f,w,p,a\right)$. 
\end{enumerate}
\end{lem}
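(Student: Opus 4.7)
The first claim is immediate from the construction. A validPS statement $(f_0, w_0, p_0, a)$ equipped with the auxiliary witnesses $SameFamily_0$, $AbDem_0$, and $r_0$ described above satisfies, by definition, all six conditions of \defref{59}; these are strictly stronger than the conditions defining membership in the collection $\mathfrak{J}$, which relaxes only condition~5. Hence the extended validPS statement lies in $\mathfrak{J}$, and as the groups $PS$ are by construction the maximal limit quotients through which the specializations in $\mathfrak{J}$ factor, it factors through one of them.

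For the second claim the plan is to lift a $\Gamma$-validPS statement to an $F_k$-specialization, verify that the lift lies in $\mathfrak{J}$, and then project the defining relations of $PS$ back to $\Gamma$. Since each $PS$ group is finitely presented, \thmref{1} applied to a single finite system encoding simultaneously the defining relations of the relevant limit groups and the equations imposed by the proof system provides, at sufficiently high level $l$, a lift $(\widetilde{SameFamily}_0, \widetilde{AbDem}_0, \tilde r_0, \tilde f_0, \tilde w_0, \tilde p_0, a)$ of the given $\Gamma$-validPS statement. The positive conditions 1, 2 and 3 of \defref{59} are witnessed by $SameFamily_0$ and $AbDem_0$ (specializations of auxiliary $Comp_T$ or abelian-cover systems satisfying finitely many equations) and so lift at once; exceptionality of the rigid or solid restrictions of $\tilde f_0$ lifts by the arguments behind \thmref{15} and \thmref{17}.

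The delicate points are the negative conditions, and both are handled by the contrapositive. For condition 4, if two lifted representatives of $WP(HG)^{l-1}$ were to lie in the same $F_k$-exceptional family, the $F_k$-witness (a specialization of the associated $Comp_T$) would project to a $\Gamma$-witness showing that the originals already lie in the same $\Gamma$-exceptional family, contradicting the hypothesis. For condition 6, if some lift $\tilde f_0$ extended over $F_k$ to a specialization of one of the maximal limit groups $\lambda^{t}WPGL_{j}$, projecting this extension to $\Gamma$ would give an extension of $f_0$ over $\Gamma$, again a contradiction. Condition 5 may genuinely fail for the lift — the lifted specialization of $WP(HG)^{l-1}$ can produce strictly more than $n$ distinct $F_k$-exceptional families — but this is precisely the relaxation admitted by $\mathfrak{J}$. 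Hence the lift lies in $\mathfrak{J}$ and factors through some $PS$, and since factoring through a limit group is an equational condition preserved under the projection $F_k \to \Gamma$, the original $\Gamma$-validPS statement factors through the same $PS$. The main obstacle is ensuring that among all the numerical and combinatorial conditions, only condition~5 can deteriorate under lifting; once the two contrapositive arguments above are in place, the rest is routine.
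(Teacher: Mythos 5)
Your proposal is correct and follows essentially the same route as the paper: the first claim is immediate from the construction of $\mathfrak{J}$, and the second is proved by lifting the $\Gamma$-validPS statement to $F_k$, checking that the lift lies in $\mathfrak{J}$ (positive conditions and exceptionality lift directly, the negative conditions 4 and 6 by projecting a putative $F_k$-witness back to $\Gamma$, and condition 5 being exactly the relaxation built into $\mathfrak{J}$), and projecting the resulting factorization back to $\Gamma$. The only point the paper makes that you pass over is the compatibility of the root/power structure of the $r_{A}$ coordinates between $F_k$ and $\Gamma$ (using that $\Gamma$ is torsion-free and the induced ungraded resolutions are non-degenerate), but this is a minor technical remark and does not affect the architecture of the argument.
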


\begin{proof}
The first claim follows directly from the definition of the groups
$PS\left(SameFamily,AbDem,r,f,w,p,a\right)$. For the second one,
let $\left((SameFamily_{0}),(AbDem_{0}),(r_{0}),(f_{0}),w_{0},p_{0},a\right)$
be a $\Gamma$-validPS statement. Let $\left((\widetilde{SameFamily_{0}}),(\widetilde{AbDem_{0}}),(\tilde{r}_{0}),(\tilde{f}_{0}),\tilde{w}_{0},\tilde{p}_{0},a\right)$
be a lift of $\left(SameFamily_{0},(AbDem_{0}),r_{0},f_{0},w_{0},p_{0},a\right)$
to a specialization in $F_{k}$ so that: 
\begin{enumerate}
\item each $\tilde{f}_{0}$ is a specialization for the corresponding graded
formal closure $GFCl$ listed in the proof system $PS$, 
\item for every resolution $Comp^{l}$ that was listed in $PS$ and for
all $1\leq i\leq n$, where $n$ is the chosen integer for $Comp^{l}$,
the specializations $\widetilde{AbDem_{0}}$ demonstrates that each
of the corresponding specializations $(\tilde{r}_{A,0}^{l_{1}},...,\tilde{r}_{A,0}^{l_{m_{A}}})$
of the fixed sub-bases of the maximal abelian groups that appear in
$Comp^{l}$, factor through one of the induced ungraded closures $GFCl_{1,0},...,GFCl_{s,0}$
(induced from the lifted specializations $(\tilde{f}_{0})$). 
\item The specializations $(\widetilde{SameFamily_{0}})$ demonstrates that
the restrictions of all the specializations $\tilde{f}_{0}$, to the
group $WPH(h_{1},w,p,a)$ that was specified by the proof system $PS$,
represent the same modular family of it, and demonstrates that for
every resolution $Comp^{l}$ that was listed in $PS$ and for all
$1\leq i\leq n$, where $n$ is the chosen integer for $Comp^{l}$,
all the induced specializations of the terminal group $WP(HG)^{l-1}$
of $Comp^{l}$ represent the same modular family of $WP(HG)^{l-1}$. 
\end{enumerate}
Note that if some specialization $\tilde{r}_{0}$ equals to $\tilde{r}_{0}=x_{0}^{q}$
for some non-trivial $x_{0}\in F_{k}$, then its projection $r_{0}$
to $\Gamma$ is also a $q$-th power ($\Gamma$ is torsion-free and
the corresponding ungraded resolutions are non-degenerate). Hence,
and since all the lifts of a $\Gamma$-exceptional specialization
of a given rigid or solid group to $F_{k}$ is also exceptional, we
deduce that the lifted specialization $\left((\widetilde{SameFamily_{0}}),(\widetilde{AbDem_{0}}),(\tilde{r}_{0}),(\tilde{f}_{0}),\tilde{w}_{0},\tilde{p}_{0},a\right)$
factors through the groups $PS\left(SameFamily,AbDem,r,f,w,p,a\right)$. 
\end{proof}
Finally, for every group $PS\left(SameFamily,AbDem,r,f,w,p,a\right)$,
we associate its standard $p$-graded MR diagram (note that $w$ is
dropped from the parameter set), and consider all the resolutions
\\
$GRes\left(SameFamily,AbDem,r,f,w,p,a\right)$ in these diagrams,
through which some specialization from the collection $\mathfrak{J}$
factors. For each such resolution, we apply the method presented in
\secref{An-Approximation-to-Abstract-EAE}, and this implies the desired. 
\begin{thm}
\label{thm:60} Let $\zeta(p)$ be an $EAE$ formula

\[
\zeta(p):\qquad\exists w\;\forall y\;\exists x\quad\Sigma(x,y,w,p,a)=1\,\wedge\,\Psi(x,y,w,p,a)\neq1\,.
\]

There exists a formula $\varphi(p)$ in the Boolean algebra of $AE$-formulas,
so that: 
\begin{enumerate}
\item Let $EAE(p)$ be the set of values $p=p_{0}\in F_{k}$ defined by
the formula $\zeta(p)$. The set $EAE(p)$ is defined by the formula
$\varphi(p)$. 
\item Let $\Gamma$ be a group of level $l$ in the model, and let $EAE_{\Gamma}(p)$
be the set of values $p=p_{0}\in\Gamma$ defined by the formula $\zeta(p)$.
If $l$ is large enough, then the set $EAE_{\Gamma}(p)$ is defined
by the formula $\varphi(p)$. 
\end{enumerate}
\end{thm}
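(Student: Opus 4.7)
The plan is to assemble the formula $\varphi(p)$ as a finite Boolean combination of formulas produced, one per proof system, by the iterative $AE$-approximation machinery of Section~\ref{sec:An-Approximation-to-Abstract-EAE}. First I would invoke the construction of Section~\ref{sec:Proof-Systems}: by \defref{59}, there are only finitely many proof systems $PS$ (the iterative branching terminates because successive quotient resolutions $Comp^{l+1}$ that are relevant have strictly smaller complexity than $Comp^l$), and by the discussion following \defref{59} every $p_0 \in EAE(p)$ admits a witness $w_0\in F_k$ together with some proof system $PS$ that verifies $(w_0,p_0)\in AE(w,p)$, and similarly every $p_0\in EAE_\Gamma(p)$ admits a witness $w_0\in\Gamma$ together with some $PS$ that $\Gamma$-verifies $(w_0,p_0)\in AE_\Gamma(w,p)$, provided $\Gamma$ lies in a sufficiently high level of the model.

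Next, I would fix a single proof system $PS$ and collect its validPS statements, suitably extended by the auxiliary variables $SameFamily, AbDem, r$, into the collection $\mathfrak{J}$ and its finitely many maximal limit quotients $PS(SameFamily,AbDem,r,f,w,p,a)$, exactly as constructed in the paragraphs preceding the statement. The lemma proved just above the theorem asserts that both validPS and $\Gamma$-validPS statements factor through these limit groups; this is the critical uniformity input, and it rests on the lifting property (so that $\Gamma$-exceptional specializations lift to exceptional ones over $F_k$, cosets of pegs lift to cosets of pegs, and $q$-th powers lift to $q$-th powers since $\Gamma$ is torsion-free and the relevant ungraded resolutions are non-degenerate). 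Then I would pass to each resolution in the standard $p$-graded MR diagram of $PS(SameFamily,AbDem,r,f,w,p,a)$ and bring it into the setting of Section~\ref{sec:An-Approximation-to-Abstract-EAE}: the $wp$-rigid/solid limit groups in $WPRS$ are dictated by the proof system (the terminal groups $WPHG$, $WPHGH$, $\ldots$ appearing along $PS$), the integers $e_i$ are dictated by the chosen $n$'s in $PS$, and the abstract positive system $AbsSys$ encodes the relations forcing that $(SameFamily,AbDem,r,f,w,p,a)$ witness a validPS statement.

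With this setup, I would apply \thmref{47} to each resolution to obtain, for each step of complexity, a formula in the Boolean algebra of $AE$-formulas that defines $TSPS(p)$ over $F_k$ and simultaneously $TSPS_\Gamma(p)$ over $\Gamma$ (for $\Gamma$ in a sufficiently high level). Then I would iterate as explained at the end of Section~\ref{sec:An-Approximation-to-Abstract-EAE}: by \thmref{50}, any validPS (resp.\ $\Gamma$-validPS) statement on the current resolution that is not captured by $TSPS(p)$ (resp.\ $TSPS_\Gamma(p)$) factors through a quotient resolution of strictly smaller complexity, and the procedure terminates after finitely many complexity steps. Taking the disjunction of the formulas $\varphi_j(p)$ obtained at each step, and then taking the disjunction over the finitely many proof systems $PS$ (including the depth-$1$ formula $\varphi_1(p)$ of \lemref{53} for the ``shallow'' case), yields one formula $\varphi(p)$ in the Boolean algebra of $AE$-formulas; the existential quantifier on $w$ is absorbed by the existential quantifiers already present in the $AE$-formulas defining each $TSPS$ and in the extended tuples.

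The main obstacle is not the existence of such a $\varphi$ separately over $F_k$ and over $\Gamma$—Sections~\ref{sec:An-Approximation-to-Abstract-EAE} and \ref{sec:Proof-Systems} do that work. The delicate point is \emph{uniformity}: one must verify that the same formula $\varphi(p)$ works on both sides, which requires checking at each stage (the construction of $GFMRD_i$, $GRem_i$, the $\lambda^i WPGL_j$, the quotient resolutions, the $TSPS$ formulas, and the final extended limit groups $PS(\cdots)$) that the $\Gamma$-version of the objects is encoded by the $F_k$-version once $l$ is large enough. This has been prepared by the lifting statements (\thmref{27}, \thmref{35}, Lemmas~\ref{lem:42}, \ref{lem:44}, \ref{lem:49}, \ref{lem:51}, \ref{lem:52}, \ref{lem:54}, \ref{lem:55}, \ref{lem:56}, \ref{lem:57}, and \thmref{46}); invoking each of them at the appropriate place in the construction is the content of the argument that finishes the proof.
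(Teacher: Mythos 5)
Your proposal is correct and follows essentially the same route as the paper: the paper's own argument for this theorem is precisely the content of Section~\ref{sec:Proof-Systems} (finitely many proof systems, collection of the extended validPS statements into the limit groups $PS(SameFamily,AbDem,r,f,w,p,a)$, and the uniformity lemma preceding the statement) combined with the closing instruction to apply the approximation machinery of Section~\ref{sec:An-Approximation-to-Abstract-EAE} to each resolution of their graded MR diagrams, iterating on complexity via \thmref{50} and taking the finite disjunction over steps and proof systems. Your identification of where the $WPRS$ data, the integers $e_i$, and the systems $AbsSys$ come from, and of the lifting statements that secure uniformity over $\Gamma$, matches the paper's intended (and rather terse) argument.
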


\begin{thm}
\label{thm:61} Let $\zeta V(p)$ be an $EAE$ formula

\begin{align*}
\zeta V(p) & :\\
 & \quad\exists w\;\forall y\;\exists x\quad\left(\Sigma_{1}(x,y,w,p,a)=1\,\wedge\,\Psi_{1}(x,y,w,p,a)\neq1\right)\,\vee\,...\,\vee\,\left(\Sigma_{r}(x,y,w,p,a)=1\,\wedge\,\Psi_{r}(x,y,w,p,a)\neq1\right)\,.
\end{align*}

There exists a formula $\varphi(p)$ in the Boolean algebra of $AE$-formulas,
so that: 
\begin{enumerate}
\item Let $EAEV(p)$ be the set of values $p=p_{0}\in F_{k}$ defined by
the formula $\zeta V(p)$. The set $EAEV(p)$ is defined by the formula
$\varphi(p)$. 
\item Let $\Gamma$ be a group of level $l$ in the model, and let $EAEV_{\Gamma}(p)$
be the set of values $p=p_{0}\in\Gamma$ defined by the formula $\zeta V(p)$.
If $l$ is large enough, then the set $EAEV_{\Gamma}(p)$ is defined
by the formula $\varphi(p)$. 
\end{enumerate}
\end{thm}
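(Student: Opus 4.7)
The plan is to generalize the machinery of proof systems developed in \secref{Proof-Systems} and \thmref{60} to accommodate the disjunctive inner matrix. The key observation is that the disjunction $\bigvee_{i=1}^{r}(\Sigma_i=1\wedge\Psi_i\neq 1)$ sits inside the scope of $\forall y\,\exists x$, so for distinct values of $y$ satisfying the $AE$-part of the formula, the existing $x$ is allowed to witness a different disjunct. This must be built into the proof systems: when an ungraded completion is covered by finitely many induced ungraded closures, different closures are permitted to carry formal solutions coming from different systems $\Sigma_i$ and to be accompanied by different systems of inequalities $\Psi_i$.

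First I would, in parallel with \secref{Proof-Systems}, replace the single graded formal MR diagram $GFMRD_{1}$ of $\Sigma$ over $F_{y}\ast F(w,p,a)$ by $r$ diagrams $GFMRD_{1}^{1},\ldots,GFMRD_{1}^{r}$, one for each $\Sigma_{i}$, built as in \secref{Formal-Solutions-over-Random-Groups}. Each graded formal resolution occurring at the first level of a proof system is then labeled by an index $i\in\{1,\ldots,r\}$ recording which $\Sigma_i$ its formal solution satisfies, and the auxiliary systems $\lambda'^{1}_{j}(h_{1},y,w,p,a)$ used to construct $GRemMRD_{2}$ are formed by applying modular automorphisms and free-factor specializations to the words $\psi_{j}\in\Psi_{i}$ for that same $i$. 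Iterating level-by-level precisely as in \defref{59}, a proof system becomes a finite rooted tree whose branches are additionally decorated with indices into the disjuncts, and the notion of a validPS statement is extended to record, for every deepest listed graded formal closure, both the chosen index $i$ and an exceptional specialization that testifies the corresponding $\Sigma_i$-equation and $\Psi_i$-inequalities.

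Once this enriched notion of proof system is set up, the remainder of the argument from \thmref{60} transfers with almost no modification. The quotient resolutions of \thmref{48} are insensitive to how the inequalities are partitioned among the $\Psi_i$, so under the minimal-rank hypothesis the complexity still drops strictly at each stage and the iteration terminates. The approximation of $EAEV(p)$ and $EAEV_{\Gamma}(p)$ by the analogues of $TSPS(p)$ and $TSPS_{\Gamma}(p)$ proceeds branchwise through the $ExtraPS$ and $CollapseExtra$ MR diagrams of \secref{ExtraPS-MR-Diagrams} and \secref{CollapseExtra-MR-Diagram}, and the uniformity between $F_{k}$ and almost all random groups of density $d<\tfrac12$ is inherited, exactly as in \thmref{60}, from \thmref[s]{27} and \ref{thm:35}. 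Collecting the resulting approximating formulas over the finitely many proof systems, and combining them with the first-approximation formula given by the analogue of \lemref{53}, produces the single formula $\varphi(p)$ in the Boolean algebra of $AE$-formulas which defines both $EAEV(p)$ over $F_{k}$ and $EAEV_{\Gamma}(p)$ over every sufficiently high-level random group $\Gamma$.

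The main obstacle is the bookkeeping for condition 3 of \defref{59}, i.e.\ the covering-closure requirement, in the presence of several disjuncts. One must verify that when a family $GFCl_{1},\ldots,GFCl_{s}$ of terminal graded formal closures, possibly drawn from different diagrams $GFMRD_{l}^{i_{j}}$ with varying indices $i_{j}$, together cover an induced ungraded resolution $Comp_{0}^{l}$, the branchwise formal solutions $x\theta_{j}\in GFCl_{j}$ indeed witness the entire disjunction $\bigvee_{i}(\Sigma_{i}=1\wedge\Psi_{i}\neq 1)$ for every $y$ factoring through $Comp_{0}^{l}$. This reduces on each branch to the single-system argument, where \lemref[s]{28} and \ref{lem:29} guarantee that the chosen $\Sigma_{i_{j}}$-formal solution continues to satisfy its $\Psi_{i_{j}}$-inequalities for every specialization factoring through $GFCl_{j,0}$; patching these branches together along the covering is the only genuinely new step compared to the $r=1$ case, and it goes through because the covering is of the ungraded resolution itself, independently of which disjunct each closure verifies.
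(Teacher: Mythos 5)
Your proposal is correct and follows essentially the same route as the paper: the paper's own (very terse) proof likewise reduces to re-running the proof-system construction of \secref{Proof-Systems}, replacing the single graded formal MR diagram of $\Sigma$ at each stage by the $r$ diagrams of $\Sigma_{1},\ldots,\Sigma_{r}$, and passing to the next stage exactly those specializations admitting no formal solution verifying the corresponding $\Psi_{j}\neq1$. Your additional discussion of the covering-closure bookkeeping across closures carrying different disjunct indices is a useful elaboration of a point the paper leaves implicit, and your resolution of it (the covering is of the ungraded resolution itself, with \lemref[s]{28} and \ref{lem:29} handling each branch separately) is sound.
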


\begin{proof}
For proving \thmref{60}, we have constructed finitely many proof
systems $PS$ for the formula $\zeta(p)$ that encode validPS statements
and $\Gamma$-validPS statements for all the values $p=p_{0}$ for
which the sentence $\zeta(p_{0})$ is a truth sentence over the corresponding
group.

For constructing a sentence $\varphi(p)$ that defines the sets $EAEV(p)$
and $EAEV_{\Gamma}(p)$, we construct finitely many proof systems
for the formula $\zeta V(p)$, in a similar way to the one explained
in this section.

The only change that we should perform in the construction for this
case, is that instead of constructing (during the corresponding steps)
a graded formal MR diagram of the system $\Sigma(x,y,w,p,a)$ for
each of the resolutions of the graded MR diagram constructed in the
previous step, we construct, for each $j=1,...,r$, the graded formal
MR diagram of the system $\Sigma_{j}(x,y,w,p,a)$ for each of the
resolutions of the graded MR diagram constructed in the previous step.
Then, the graded MR diagrams in the next step, corresponds to the
specializations for which there is no formal solution that verifies
the corresponding set of inequalities $\Psi_{j}(x,y,w,p,a)\neq1$. 
\end{proof}

\section{Truth Sentences of Minimal Rank over Random Groups }\label{sec:Truth-MR-Sentences-over-Random-Groups}
\begin{lem}
\label{lem:62} Let $V(p)$ be a formula in the Boolean algebra of
$AE$-formulas.

Then, there exists a formula $V'(p)$ which is a finite union of $EA$-formulas,
$AE$-formulas, and intersections of an $EA$-formula and an $AE$-formula,
so that $V(p)$ and $V'(p)$ are tautologically equivalent, i.e.,
for every group $\Gamma$, the formulas $V(p)$ and $V'(p)$ define
the same set over $\Gamma$. 
\end{lem}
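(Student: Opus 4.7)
The plan is to reduce $V(p)$ to a disjunctive normal form over $AE$ and $EA$ literals, and then collapse each disjunct using the fact that conjunctions of $AE$-formulas are $AE$-formulas, and conjunctions of $EA$-formulas are $EA$-formulas.

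First I would recall the two basic duality facts in first order logic that are used implicitly throughout the paper: the negation of an $AE$-formula is (tautologically equivalent to) an $EA$-formula, and vice versa, via $\neg\,\forall y\,\exists x\,\phi \equiv \exists y\,\forall x\,\neg\phi$. Consequently, the Boolean algebra of $AE$-formulas is generated, up to tautological equivalence, by $AE$-formulas and $EA$-formulas under the operations of finite union and finite intersection. Applying the standard distributive laws, any $V(p)$ in this Boolean algebra is tautologically equivalent to a formula in disjunctive normal form
\[
V(p) \;\equiv\; \bigvee_{i=1}^{N} \Bigl(\bigwedge_{j} A_{i,j}(p) \;\wedge\; \bigwedge_{k} E_{i,k}(p)\Bigr),
\]
where each $A_{i,j}$ is an $AE$-formula and each $E_{i,k}$ is an $EA$-formula.

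Next I would show that within each disjunct, the $AE$-conjunction and the $EA$-conjunction each collapse to a single formula of the same type. Writing $A_{i,j}(p)=\forall y_{i,j}\,\exists x_{i,j}\,\phi_{i,j}(x_{i,j},y_{i,j},p,a)$ with $\phi_{i,j}$ quantifier-free (a finite Boolean combination of equalities and inequalities), one has the tautology
\[
\bigwedge_{j} \forall y_{i,j}\,\exists x_{i,j}\,\phi_{i,j} \;\equiv\; \forall (y_{i,j})_{j}\,\exists (x_{i,j})_{j}\;\bigwedge_{j}\phi_{i,j}\,,
\]
so the conjunction $\bigwedge_{j} A_{i,j}(p)$ is itself tautologically equivalent to a single $AE$-formula $A_i(p)$. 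The analogous identity for $EA$-formulas, $\bigwedge_{k}\exists y\,\forall x\,\psi_{i,k}\equiv \exists(y_k)_k\,\forall(x_k)_k\,\bigwedge_k\psi_{i,k}$, shows that $\bigwedge_{k}E_{i,k}(p)$ is tautologically equivalent to a single $EA$-formula $E_i(p)$. Hence the $i$-th disjunct is tautologically equivalent to $A_i(p)\wedge E_i(p)$, and depending on whether only $AE$-literals or only $EA$-literals appear in that disjunct, it is respectively a pure $AE$-formula or a pure $EA$-formula; in general it is an intersection of an $AE$-formula with an $EA$-formula.

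Setting $V'(p)=\bigvee_{i=1}^{N}\bigl(A_i(p)\wedge E_i(p)\bigr)$, possibly with some terms being pure $AE$ or pure $EA$, yields a formula of the required form that is tautologically equivalent to $V(p)$ over every group. There is no substantive obstacle here, since each step is a purely syntactic manipulation valid in every model; the only care needed is the bookkeeping of bound variables when aggregating quantifiers (using fresh variables in each $A_{i,j}$ and each $E_{i,k}$) so that the pulled-out quantifier blocks are well-formed and the resulting matrices remain quantifier-free. This is a routine renaming issue, and it is the mildest potential pitfall in the argument.
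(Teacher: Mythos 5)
Your proposal is correct and follows essentially the same route as the paper: the paper's proof consists of the observation that finite unions and intersections of $AE$-formulas are tautologically $AE$, and likewise for $EA$, which combined with negation-duality and distribution into disjunctive normal form is exactly your argument. The quantifier-aggregation identities you invoke (for both conjunction and, implicitly, the outer disjunction) are standard and valid, so there is no gap.
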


\begin{proof}
A finite union or a finite intersection, of $AE$-formulas is tautologically
an $AE$-formula. Similarly, a finite union or a finite intersection,
of $EA$-formulas is tautologically an $EA$-formula. 
\end{proof}
\begin{lem}
\label{lem:63} The projection of an $EA$-formula is tautologically
(over any group) an $EA$-formula. The projection of the intersection
of an $EA$-formula and an $AE$-formula, is tautologically a projection
of an $AE$-formula. 
\end{lem}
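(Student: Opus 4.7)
The plan is to prove both statements by purely formal manipulations of quantifiers; no group-theoretic input is needed, which is why the equivalences are tautological (holding over any group). In particular, I expect that the only facts used are (i) consecutive existential (resp.~universal) quantifiers can be merged into a single existential (resp.~universal) quantifier over a tuple, and (ii) a quantifier $Q z$ commutes with a Boolean connective whose other arguments do not involve $z$.

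For the first statement, let the $EA$-formula be
\[
V(w,p) \;:\; \exists u \,\forall v \;\Phi(u,v,w,p),
\]
with $\Phi$ quantifier-free. Then $\pi V(p)=\exists w\,V(w,p)$ is tautologically
\[
\exists w\,\exists u\,\forall v\;\Phi(u,v,w,p)\;\equiv\;\exists (w,u)\,\forall v\;\Phi(u,v,w,p),
\]
which is already in $EA$-form in the parameter set $p$, as required.

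For the second statement, let the $EA$-formula be $\exists u\,\forall v\;\Phi_1(u,v,w,p)$ and the $AE$-formula be $\forall x\,\exists y\;\Phi_2(x,y,w,p)$. I will first push $\exists u$ outside the intersection, which is legitimate because the second conjunct does not involve $u$:
\[
\bigl[\exists u\,\forall v\;\Phi_1\bigr]\wedge\bigl[\forall x\,\exists y\;\Phi_2\bigr]\;\equiv\;\exists u\,\Bigl[\forall v\;\Phi_1\;\wedge\;\forall x\,\exists y\;\Phi_2\Bigr].
\]
Since $v$ does not occur in $\Phi_2$ and $x$ does not occur in $\Phi_1$, the two universal quantifiers merge and the existential $\exists y$ may be pulled outwards:
\[
\exists u\,\forall (v,x)\,\exists y\;\bigl[\Phi_1(u,v,w,p)\wedge \Phi_2(x,y,w,p)\bigr].
\]
Projecting over $w$ and merging the two consecutive existentials then yields
\[
\exists (w,u)\,\forall (v,x)\,\exists y\;\bigl[\Phi_1\wedge\Phi_2\bigr]\;=\;\exists q\,A(q,p),
\]
where $q=(w,u)$ and $A(q,p)=\forall(v,x)\,\exists y\,[\Phi_1\wedge\Phi_2]$ is an $AE$-formula in the enlarged parameter set $(q,p)$. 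This exhibits the projection as the projection (over $q$) of an $AE$-formula, which is the desired conclusion.

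There is no real obstacle: the proof is a sequence of tautological prenex rearrangements, and each rearrangement is valid over any group since it does not depend on any structural property of the group. The only mild subtlety is to keep track of which variables occur in which subformula when commuting a quantifier with a Boolean connective; once this bookkeeping is done, both claims follow immediately.
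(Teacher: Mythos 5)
Your proof is correct and follows essentially the same route as the paper's: both parts are handled by purely tautological prenex rearrangements, merging consecutive like quantifiers and commuting a quantifier past a conjunct that does not contain the bound variable, ending with a formula of the shape $\exists\,\forall\,\exists$. The only detail the paper makes explicit that you elide is the final iterative distribution of $\wedge$ over $\vee$ needed to put the quantifier-free matrix $\Phi_{1}\wedge\Phi_{2}$ back into the disjunctive normal form $\bigl(\Sigma_{1}=1\wedge\Psi_{1}\neq1\bigr)\vee\ldots\vee\bigl(\Sigma_{r}=1\wedge\Psi_{r}\neq1\bigr)$ demanded by the paper's convention for an $AE$-formula; this is immediate.
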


\begin{proof}
For the first part of the statement, let $EA(w,p)$ be a given $EA$-formula:

\begin{align*}
EA(w,p) & :\\
 & \quad\exists y\;\forall x\;\left(\Sigma_{1}(x,y,w,p,a)=1\,\wedge\,\Psi_{1}(x,y,w,p,a)\neq1\right)\,\vee\,...\,\vee\,\left(\Sigma_{r}(x,y,w,p,a)=1\,\wedge\,\Psi_{r}(x,y,w,p,a)\neq1\right)\,.
\end{align*}

The projection of $EA(w,p)$ is by definition the following formula
\begin{align*}
\pi EA(p) & :\\
 & \quad\exists w\;\exists y\;\forall x\;\left(\Sigma_{1}(x,y,w,p,a)=1\,\wedge\,\Psi_{1}(x,y,w,p,a)\neq1\right)\,\vee\,...\,\vee\,\left(\Sigma_{r}(x,y,w,p,a)=1\,\wedge\,\Psi_{r}(x,y,w,p,a)\neq1\right)\,,
\end{align*}
which is an $EA$-formula.

Now let $V(w,p)$ be the intersection of an an $EA$-formula and an
$AE$-formula. Then, $V(w,p)$ has the form:

\begin{align*}
V(w,p) & :\\
 & \quad\left(\forall y\;\exists x\;\left(\Sigma_{1}(x,y,w,p,a)=1\,\wedge\,\Psi_{1}(x,y,w,p,a)\neq1\right)\,\vee\,...\,\vee\,\left(\Sigma_{r}(x,y,w,p,a)=1\,\wedge\,\Psi_{r}(x,y,w,p,a)\neq1\right)\right)\wedge\\
 & \wedge\left(\exists t\;\forall u\;\left(\Sigma_{1}'(u,t,w,p,a)=1\,\wedge\,\Psi_{1}'(u,t,w,p,a)\neq1\right)\,\vee\,...\,\vee\,\left(\Sigma_{r'}'(u,t,w,p,a)=1\,\wedge\,\Psi_{r'}'(u,t,w,p,a)\neq1\right)\right)\,.
\end{align*}
Hence, the projection of $V(w,p)$ is given by:

\begin{align*}
\pi V(p) & :\\
 & \quad\exists w\;\left(\forall y\;\exists x\;\left(\Sigma_{1}(x,y,w,p,a)=1\,\wedge\,\Psi_{1}(x,y,w,p,a)\neq1\right)\,\vee\,...\,\vee\,\left(\Sigma_{r}(x,y,w,p,a)=1\,\wedge\,\Psi_{r}(x,y,w,p,a)\neq1\right)\right)\wedge\\
 & \quad\quad\wedge\left(\exists t\;\forall u\;\left(\Sigma_{1}'(u,t,w,p,a)=1\,\wedge\,\Psi_{1}'(u,t,w,p,a)\neq1\right)\,\vee\,...\,\vee\,\left(\Sigma_{r'}'(u,t,w,p,a)=1\,\wedge\,\Psi_{r'}'(u,t,w,p,a)\neq1\right)\right)\,,
\end{align*}
which is tautologically equivalent to:

\begin{align*}
 & \exists w\;\exists t\;\forall y\;\forall u\;\exists x\;\left(\left(\Sigma_{1}(x,y,w,p,a)=1\,\wedge\,\Psi_{1}(x,y,w,p,a)\neq1\right)\,\vee\,...\,\vee\,\left(\Sigma_{r}(x,y,w,p,a)=1\,\wedge\,\Psi_{r}(x,y,w,p,a)\neq1\right)\right)\wedge\\
 & \qquad\qquad\qquad\wedge\left(\left(\Sigma_{1}'(u,t,w,p,a)=1\,\wedge\,\Psi_{1}'(u,t,w,p,a)\neq1\right)\,\vee\,...\,\vee\,\left(\Sigma_{r'}'(u,t,w,p,a)=1\,\wedge\,\Psi_{r'}'(u,t,w,p,a)\neq1\right)\right)\,.
\end{align*}
Hence, iterative replacements of intersections of the form $A\wedge(B\vee C)$,
by unions of the form $(A\wedge B)\vee(A\wedge C)$, in the formula

\begin{align*}
 & \ \left(\left(\Sigma_{1}(x,y,w,p,a)=1\,\wedge\,\Psi_{1}(x,y,w,p,a)\neq1\right)\,\vee\,...\,\vee\,\left(\Sigma_{r}(x,y,w,p,a)=1\,\wedge\,\Psi_{r}(x,y,w,p,a)\neq1\right)\right)\wedge\\
 & \wedge\left(\left(\Sigma_{1}'(u,t,w,p,a)=1\,\wedge\,\Psi_{1}'(u,t,w,p,a)\neq1\right)\,\vee\,...\,\vee\,\left(\Sigma_{r'}'(u,t,w,p,a)=1\,\wedge\,\Psi_{r'}'(u,t,w,p,a)\neq1\right)\right)\,,
\end{align*}
imply the required. 
\end{proof}
\begin{lem}
\label{lem:64} Let $V(w,p)$ be a formula in the Boolean algebra
of $AE$-formulas.

Then, there exists finitely many $AE$-formulas $V_{1}(w,p),...,V_{s}(w,p)$,
and an $EA$-formula $H(p)$, so that the projection of $V(w,p)$
is tautologically equivalent to the union of $H(p)$ with the projections
of the $AE$-formulas $V_{1}(w,p),...,V_{s}(w,p)$: 
\[
\pi V(p):\qquad H(p)\vee\pi V_{1}(p)\vee...\vee\pi V_{s}(p)\,.
\]
\end{lem}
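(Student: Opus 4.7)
The plan is to combine \lemref{62} and \lemref{63} in a direct structural manner, using the fact that projection distributes over finite unions. First I would apply \lemref{62} to rewrite the given formula $V(w,p)$, which lies in the Boolean algebra of $AE$-formulas, as a tautologically equivalent finite union
\[
V(w,p) \;\equiv\; E_{1}(w,p)\,\vee\,\cdots\,\vee\,E_{a}(w,p)\,\vee\,A_{1}(w,p)\,\vee\,\cdots\,\vee\,A_{b}(w,p)\,\vee\,M_{1}(w,p)\,\vee\,\cdots\,\vee\,M_{c}(w,p)\,,
\]
where each $E_{i}$ is an $EA$-formula, each $A_{j}$ is an $AE$-formula, and each $M_{k}$ is the intersection of an $EA$-formula and an $AE$-formula.

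Next, since for any formulas $\Phi_{1},\Phi_{2}$ the projection satisfies $\pi(\Phi_{1}\vee\Phi_{2})=\pi\Phi_{1}\vee\pi\Phi_{2}$ tautologically (the existential quantifier distributes over disjunction), I would conclude
\[
\pi V(p) \;\equiv\; \pi E_{1}(p)\,\vee\,\cdots\,\vee\,\pi E_{a}(p)\,\vee\,\pi A_{1}(p)\,\vee\,\cdots\,\vee\,\pi A_{b}(p)\,\vee\,\pi M_{1}(p)\,\vee\,\cdots\,\vee\,\pi M_{c}(p)\,.
\]
By the first part of \lemref{63}, each $\pi E_{i}(p)$ is tautologically an $EA$-formula, and by the second part of \lemref{63}, each $\pi M_{k}(p)$ is tautologically the projection of an $AE$-formula, say $\pi \widetilde{M}_{k}(p)$.

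Finally, I would gather all the $EA$-type disjuncts into a single $EA$-formula $H(p):=\pi E_{1}(p)\,\vee\,\cdots\,\vee\,\pi E_{a}(p)$, invoking \lemref{62} (or directly the tautological observation from its proof) that a finite union of $EA$-formulas is itself an $EA$-formula. Setting $\{V_{1},\ldots,V_{s}\}:=\{A_{1},\ldots,A_{b},\widetilde{M}_{1},\ldots,\widetilde{M}_{c}\}$ then yields the desired equivalence
\[
\pi V(p) \;\equiv\; H(p)\,\vee\,\pi V_{1}(p)\,\vee\,\cdots\,\vee\,\pi V_{s}(p)\,.
\]
There is no substantive obstacle in this argument: it is a purely syntactic reorganisation built on \lemref[s]{62} and \ref{lem:63}; the only point that requires a moment's care is verifying that projection commutes with disjunction tautologically, which follows immediately from the semantics of the existential quantifier over any group.
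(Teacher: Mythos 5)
Your proposal is correct and follows essentially the same route as the paper: decompose $V(w,p)$ via \lemref{62}, distribute the projection over the disjunction, apply \lemref{63} to the $EA$-disjuncts and the mixed intersections, and absorb the resulting $EA$-formulas into a single $H(p)$. The paper's own proof is just a more compressed version of exactly this argument.
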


\begin{proof}
According to \lemref{62}, the formula $V(w,p)$ is a finite union
of $EA$-formulas, $AE$-formulas, and intersections of an $EA$-formula
and an $AE$-formula.

Hence, the projection of the formula $V(w,p)$ is tautologically the
union of the projections of these $EA$-formulas, $AE$-formulas,
and the intersections of an $EA$-formula and an $AE$-formula.

According to \lemref{63}, and since the union of $EA$-formulas is
tautologically an $EA$-formula, we obtain the required. 
\end{proof}
\begin{defn}
\label{def:65} Let $V(w,p)$ be a formula in the Boolean algebra
of $AE$-formulas. Let $V_{1}(w,p),...,V_{s}(w,p)$, and $H(p)$ be
as in \ref{lem:64}. We call the formula $V(w,p)$ a \emph{minimal
rank formula}, if all the graded limit groups along the construction
of the proof systems for each of the $EAE$-formulas $\pi V_{1}(p),...,\pi V_{s}(p)$,
are of minimal rank (see \secref{Proof-Systems}). Note that the construction
of proof systems depends only on the fixed free group $F_{k}$ and
the given $AE$-formula. 
\end{defn}

\begin{thm}
\label{thm:66} Let $V(w,p)$ be a minimal rank formula in the Boolean
algebra of $AE$-formulas. Let $\pi V(p)$ be the projection of $V(w,p)$.
Then, there exists a formula $R(p)$ in the Boolean algebra of $AE$-formulas,
so that: 
\begin{enumerate}
\item The formulas $\pi V(p)$ and $R(p)$ define the same set over $F_{k}$. 
\item Let $\Gamma$ be a random group. With overwhelming probability, the
formulas $\pi V(p)$ and $R(p)$ define the same set over $\Gamma$. 
\end{enumerate}
\end{thm}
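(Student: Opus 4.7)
The plan is to reduce the theorem to \thmref{61} by decomposing the $AE$-formula $V(w,p)$ via \lemref{64}.

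First, I would apply \lemref{64} to $V(w,p)$ to obtain an $EA$-formula $H(p)$ and $AE$-formulas $V_1(w,p),\dots,V_s(w,p)$ such that
\[
\pi V(p) \ \equiv\  H(p) \vee \pi V_1(p) \vee \cdots \vee \pi V_s(p)
\]
tautologically, i.e.\ as formulas interpreted over any group. Hence this identity holds simultaneously over $F_k$ and over every random group $\Gamma$ of density $d<\frac{1}{2}$, independently of any probabilistic consideration.

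Next, each $\pi V_i(p)$ is an $EAEV$-formula, and by the minimal-rank hypothesis (\defref{65}) all graded limit groups arising in the construction of the proof systems for $\pi V_i(p)$ are of minimal rank, which is exactly the input needed for \thmref{61}. I would apply \thmref{61} to each $\pi V_i(p)$ in turn, obtaining a formula $R_i(p)$ in the Boolean algebra of $AE$-formulas together with an integer $l_i$, such that $R_i(p)$ defines the same set as $\pi V_i(p)$ over $F_k$ and, for every $l \ge l_i$, the same set over a random group of level $l$ with overwhelming probability. Since the $EA$-formula $H(p)$ is the negation of an $AE$-formula, it already belongs to the Boolean algebra of $AE$-formulas, so I would set
\[
R(p) \ :=\  H(p) \vee R_1(p) \vee \cdots \vee R_s(p),
\]
which lies in the Boolean algebra of $AE$-formulas by construction.

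The verification over $F_k$ is then immediate from the tautological decomposition combined with the defining property of each $R_i$. Over a random group $\Gamma$ of density $d<\frac{1}{2}$, the decomposition still holds tautologically, and for each $i \le s$ the event that $R_i$ agrees with $\pi V_i$ over $\Gamma$ has overwhelming probability once the level is at least $l_i$; intersecting these finitely many overwhelming-probability events (together with the lifting-property events used by the earlier sections) yields the conclusion. I do not expect a genuine obstacle here: the content is entirely carried by \thmref{61}, and the statement is essentially a bookkeeping corollary. The one point that might appear delicate — namely, that the $EA$-summand $H(p)$ cannot itself be approximated by an $AE$-formula in any probabilistic sense — is irrelevant, because $H(p)$ appears identically on both sides of the equivalence $\pi V(p) \equiv R(p)$ and therefore needs no translation between the free-group and the random-group settings.
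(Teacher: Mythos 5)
Your proposal is correct and follows exactly the paper's own argument: decompose $\pi V(p)$ as $H(p)\vee\pi V_1(p)\vee\cdots\vee\pi V_s(p)$ via \lemref{64}, apply \thmref{61} to each $\pi V_i(p)$, and take the disjunction of $H(p)$ with the resulting $AE$-Boolean formulas. The additional remarks about intersecting finitely many overwhelming-probability events and about $H(p)$ needing no translation are sound and only make explicit what the paper leaves implicit.
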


\begin{proof}
Let $V_{1}(w,p),...,V_{s}(w,p)$, and $H(p)$ be as in \lemref{64},
and write 
\[
\pi V(p):\qquad H(p)\vee\pi V_{1}(p)\vee...\vee\pi V_{s}(p)\,.
\]
According to \thmref{61}, for every $i=1,...,s$, there exists a
formula $\varphi_{i}(p)$ in the Boolean algebra of $AE$-formulas,
so that: 
\begin{enumerate}
\item The formulas $\pi V_{i}(p)$ and $\varphi_{i}(p)$ define the same
set over $F_{k}$. 
\item The formulas $\pi V_{i}(p)$ and $\varphi_{i}(p)$ define the same
set over $\Gamma$ (with overwhelming probability). 
\end{enumerate}
Hence, the formula 
\[
R(p):\qquad H(p)\vee\varphi_{1}(p)\vee...\vee\varphi_{s}(p)\,,
\]
is as required. 
\end{proof}
\begin{defn}
\label{def:67} Let $\psi$ be a first order sentence. 
\begin{enumerate}
\item If $\psi$ belongs to the Boolean algebra of $AE$-sentences, then,
we say that $\psi$ is a \emph{minimal rank sentence}, if $\psi$,
as a formula with empty set of parameters, is a minimal rank formula
(\defref{65}). 
\item If $\psi$ is an $EAE$-sentence, then $\psi$ is called a minimal
rank sentence if it is the projection of a minimal rank formula. 
\item If $\psi$ is an $AEA$-sentence, then $\psi$ is called a minimal
rank sentence if $\neg\psi$ is a minimal rank sentence. 
\item Assume that the sentence $\psi$ has the (prenex) normal form 
\begin{align*}
 & \forall y_{1}\;\exists x_{1}\;...\;\exists x_{n-1}\;\forall y_{n}\;\exists x_{n}\;\\
 & \left(\Sigma_{1}(x_{i},y_{i},w,a)=1\,\wedge\,\Psi_{1}(x_{i},y_{i},w,a)\neq1\right)\,\vee\,...\,\vee\,\left(\Sigma_{r}(x_{i},y_{i},w,a)=1\,\wedge\,\Psi_{r}(x_{i},y_{i},w,a)\neq1\right)\,.
\end{align*}
We denote $y=y_{n}$, $x=x_{n}$, $w=x_{n-1}$, and we denote by $p$
the union of the variables sets $y_{1},x_{1},...,y_{n-1}$. We consider
the $AE$-formula 
\begin{align*}
V(w,p) & :\\
 & \quad\forall y\;\exists x\;\left(\Sigma_{1}(x,y,w,p,a)=1\,\wedge\,\Psi_{1}(x,y,w,p,a)\neq1\right)\,\vee\,...\,\vee\,\left(\Sigma_{r}(x,y,w,p,a)=1\,\wedge\,\Psi_{r}(x,y,w,p,a)\neq1\right)\,,
\end{align*}
and we assume that $V(w,p)$ is a minimal rank formula (\defref{65}).
Note that 
\[
\psi=\forall y_{1}\;\exists x_{1}\;...\;\forall y_{n-1}\;\pi V(p)\,.
\]
Let $R(p)$ be the formula obtained from \thmref{66} corresponding
to $V(w,p)$. In light of \thmref{66}, the sentence $\psi$ is equivalent
over $F_{k}$ and over random groups (with overwhelming probability)
to the sentence 
\[
\psi':\qquad\forall y_{1}\;\exists x_{1}\;...\;\forall y_{n-1}\;R(p)\,.
\]
Since $R(p)$ belongs to the Boolean algebra of $AE$-formulas, $\psi'$
is tautologically equivalent to a sentence of the form 
\[
\psi'=\forall y_{1}\;\exists x_{1}\;...\;\forall y_{n-1}\;\forall u\;\exists t\;\forall z\;R'(p,u,t,z)\,,
\]
(note the consecutive universal quantifiers $\forall y_{n-1}\;\forall u$).
The sentence $\psi$ is called a minimal rank sentence if $\psi'$
is a minimal rank sentence (defined by induction). 
\item Assume that the sentence $\psi$ has the normal form 
\begin{align*}
 & \exists y_{1}\;\forall x_{1}\;...\;\forall x_{n-1}\;\exists y_{n}\;\forall x_{n}\;\\
 & \left(\Sigma_{1}(x_{i},y_{i},w,a)=1\,\wedge\,\Psi_{1}(x_{i},y_{i},w,a)\neq1\right)\,\vee\,...\,\vee\,\left(\Sigma_{r}(x_{i},y_{i},w,a)=1\,\wedge\,\Psi_{r}(x_{i},y_{i},w,a)\neq1\right)\,.
\end{align*}
We denote $y=y_{n}$, $x=x_{n}$, $w=x_{n-1}$, and we denote by $p$
the union of the variables sets $y_{1},x_{1},...,y_{n-1}$. We consider
the $AE$-formula 
\begin{align*}
V(w,p) & :\\
 & \ \neg\left(\exists y\;\forall x\;\left(\Sigma_{1}(x,y,w,p,a)=1\,\wedge\,\Psi_{1}(x,y,w,p,a)\neq1\right)\,\vee\,...\,\vee\,\left(\Sigma_{r}(x,y,w,p,a)=1\,\wedge\,\Psi_{r}(x,y,w,p,a)\neq1\right)\right)\,,
\end{align*}
and we assume that $V(w,p)$ is a minimal rank formula (\defref{65}).
Note that 
\[
\psi=\exists y_{1}\;\forall x_{1}\;...\;\exists y_{n-1}\;\neg\pi V(p)\,.
\]
In a similar way to the previous point of the definition, using $V(w,p)$
and \thmref{66}, we construct a sentence $\psi'$ that is equivalent
to $\psi$ over $F_{k}$ and over random groups, so that $\psi'$
is of the form 
\[
\psi'=\exists y_{1}\;\forall x_{1}\;...\;\exists y_{n-1}\;\neg\left(\forall u\;\exists t\;\forall z\;R'(p,u,t,z)\right)\,.
\]
And we call the sentence $\psi$ a minimal rank sentence if $\psi'$
is a minimal rank sentence (defined by induction). 
\end{enumerate}
\end{defn}

\begin{thm}
\label{thm:68} Let $\psi$ be a first order sentence of minimal rank.
Then, there exists a minimal rank sentence $R(\psi)$ that belongs
to the Boolean algebra of $AE$-sentences, so that: 
\begin{enumerate}
\item The sentences $\psi$ and $R(\psi)$ are equivalent over $F_{k}$. 
\item Let $\Gamma$ be a random group. With overwhelming probability, the
sentences $\psi$ and $R(\psi)$ are equivalent over $\Gamma$. 
\end{enumerate}
\end{thm}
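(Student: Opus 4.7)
The plan is to proceed by induction on the quantifier alternation complexity of $\psi$, following exactly the inductive structure of Definition \ref{def:67}, and collapsing one layer at each step by a single application of Theorem \ref{thm:66} to the innermost $AE$-formula. Because Definition \ref{def:67} is phrased so that a minimal rank sentence of alternation complexity $n$ reduces to a minimal rank sentence of strictly lower complexity, the induction terminates after finitely many steps.

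For the base cases: if $\psi$ already belongs to the Boolean algebra of $AE$-sentences, take $R(\psi)=\psi$; if $\psi$ is an $EAE$-sentence, write $\psi=\pi V$ for the underlying minimal rank $AE$-formula $V$ (with empty parameter tuple) and invoke Theorem \ref{thm:66} directly to produce $R(\psi)$ in the Boolean algebra of $AE$-sentences equivalent to $\pi V$ over $F_{k}$ and, with overwhelming probability, over a random group $\Gamma$; if $\psi$ is $AEA$, apply the previous case to the minimal rank sentence $\neg\psi$ and set $R(\psi)=\neg R(\neg\psi)$, using that the Boolean algebra of $AE$-sentences is closed under negation.

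For the inductive step, take for concreteness $\psi=\forall y_{1}\,\exists x_{1}\,\cdots\,\forall y_{n-1}\,\pi V(p)$ with $V(w,p)$ a minimal rank $AE$-formula. Theorem \ref{thm:66} furnishes $R(p)$ in the Boolean algebra of $AE$-formulas equivalent to $\pi V(p)$ over $F_{k}$ and, with overwhelming probability, over $\Gamma$. Substituting this back into $\psi$ produces a sentence $\psi'$ equivalent to $\psi$ in both settings; since $R(p)$ can be written with a leading universal quantifier, the alternation at the $y_{n-1}$ level collapses, and by Definition \ref{def:67}(4) the sentence $\psi'$ is minimal rank of strictly smaller complexity. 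The induction hypothesis applied to $\psi'$ then yields $R(\psi'):=R(\psi)$ in the Boolean algebra of $AE$-sentences, equivalent to $\psi'$ (hence to $\psi$) over $F_{k}$ and over $\Gamma$ with overwhelming probability. Sentences whose outermost quantifier is $\exists$ are handled symmetrically via Definition \ref{def:67}(5), either directly or by dualizing through $\neg\psi$.

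The one point requiring care, and the main obstacle to address, is tracking the overwhelming-probability statement through the finitely many reductions. Each invocation of Theorem \ref{thm:66} requires the random group $\Gamma$ to lie in an event $E_{V_{i}}$ of asymptotic probability one depending on the particular minimal rank $AE$-formula $V_{i}$ being eliminated at that stage. Because the induction terminates after finitely many steps, only finitely many such events $E_{V_{1}},\ldots,E_{V_{m}}$ enter the argument, and their intersection still has asymptotic probability one. On this intersection every link in the chain of equivalences is valid over $\Gamma$; combined with the same chain of equivalences over $F_{k}$, this shows that the final $R(\psi)$ has the same truth value as $\psi$ over both $F_{k}$ and $\Gamma$, which is exactly the conclusion of Theorem \ref{thm:68}.
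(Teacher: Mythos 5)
Your proof is correct and follows essentially the same route as the paper: the paper leaves the proof of this theorem implicit in the inductive construction of Definition \defref{65}--\defref{67}, which reduces a minimal rank sentence to one of strictly lower alternation complexity by applying \thmref{66} to the innermost $AE$-formula, exactly as you do. Your explicit remark that only finitely many overwhelming-probability events are intersected is the right justification for carrying the probabilistic statement through the induction, and it matches what the paper tacitly assumes.
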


\begin{thm}
\label{thm:69} Let $\psi$ be a minimal rank sentence in the Boolean
algebra of $AE$-sentences. Let $\Gamma$ be a random group. Then,
$\psi$ is a truth sentence over $F_{k}$ if and only if $\psi$ is
a truth sentence over $\Gamma$. 
\end{thm}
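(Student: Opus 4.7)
The plan is to apply \thmref{66} directly to $\psi$ and then invoke the main theorem of the previous paper in this series. Since $\psi$ is a minimal rank sentence in the Boolean algebra of $AE$-sentences, by \defref{67} point~1 and \defref{65}, $\psi$ viewed as a formula with empty parameter set $p$ (and empty auxiliary variable $w$) is a minimal rank formula in the Boolean algebra of $AE$-formulas. Applying \thmref{66}, there is a formula $R$ in the Boolean algebra of $AE$-formulas such that $\psi$ and $R$ define the same set over $F_{k}$, and over $\Gamma$ with overwhelming probability. Since $p$ is empty, $R$ is a sentence, and it suffices to show that $R$ is a truth sentence over $F_{k}$ if and only if it is a truth sentence over $\Gamma$ with overwhelming probability.

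Next, I inspect the construction of $R$ through \thmref{66}, \thmref{61}, and \thmref{47}, and observe that $R$ is in fact an $EA$-sentence. By the proof of \thmref{66}, $R(p)$ has the form $H(p) \vee \varphi_{1}(p) \vee \cdots \vee \varphi_{s}(p)$, where $H(p)$ is the $EA$-formula supplied by \lemref{64} and each $\varphi_{i}(p)$ is the formula produced by \thmref{61} applied to the $EAE$-subformula $\pi V_{i}(p)$. Unwrapping the construction through \thmref{47}, each $\varphi_{i}(p)$ is a finite disjunction, over uniformly bounded integer counts $n_{i}$ (bounded by the global exceptional-family bounds of \thmref{15} and \thmref{17}), of an existentially quantified conjunction of universal conditions: the chosen specializations are exceptional solutions (a universal non-factorization condition), represent pairwise distinct exceptional families (a universal condition on potential identification witnesses), form a covering closure, and satisfy the universal non-positive Diophantine conditions on the systems $S_{i}$ and $C_{k}$ listed in \thmref{47}. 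A finite disjunction of $EA$-formulas is again an $EA$-formula, so $R$ is an $EA$-sentence.

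Consequently $\neg R$ is a universal sentence, hence lies in the Boolean algebra of universal sentences. By the main theorem of the previous paper in this series, $\neg R$ has the same truth value over $F_{k}$ as over $\Gamma$ with overwhelming probability; therefore so does $R$. Combined with the equivalence $\psi \equiv R$ separately over $F_{k}$ and over $\Gamma$ obtained from \thmref{66}, this yields that $\psi$ is a truth sentence over $F_{k}$ if and only if it is a truth sentence over $\Gamma$ with overwhelming probability.

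The hard part will be the second step, namely establishing that the $R$ produced by the uniform quantifier elimination of \secref{Proof-Systems} and \secref{An-Approximation-to-Abstract-EAE} genuinely has the claimed $EA$ quantifier prefix. This is not asserted explicitly in \thmref{66}, which places $R$ only in the Boolean algebra of $AE$-formulas; one has to trace the quantifier structure through the proof systems construction carefully, replacing each range over the integer counts $n_{i}$ by a finite disjunction via the uniform bounds of \thmref{15} and \thmref{17}, and checking that the ``exceptional solution'', ``covering closure'', and ``distinct exceptional families'' conditions can all be rewritten as purely universal assertions over suitable auxiliary existentially quantified witness variables.
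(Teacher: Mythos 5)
Your argument breaks down at two places, and unfortunately they are the two places that carry all the weight.

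First, the claim that the formula $R$ produced by \thmref{66} is an $EA$-sentence is not correct. Tracing the construction through \thmref{47}, the formula $\varphi(p)$ contains, besides the existentially quantified witnesses, (i) the condition that each terminal group admits \emph{exactly} $n_{i}$ distinct exceptional families for the given value of the parameters, and (ii) clauses of the form $\forall u_{0}\,\bigl[E_{h_{j}^{i}}(u_{0},\dots)\implies\bigvee_{k}C_{k}(\dots)\bigr]$. Condition (i) decomposes as ``at least $n_{i}$'' (an $EA$-condition) conjoined with ``not at least $n_{i}+1$'' (an $AE$-condition), and in (ii) the antecedent $E_{h_{j}^{i}}$ and the consequent $C_{k}$ are both positive Diophantine, hence existential, so the clause is genuinely $\forall\exists$ and cannot be rewritten as a universal assertion over auxiliary existential witnesses (the same is true of the covering-closure conditions hidden in the validPS statements). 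This is exactly why \thmref{66} only places $R$ in the Boolean algebra of $AE$-formulas; if $R$ could always be taken $EA$, the proof-systems machinery of \secref{Proof-Systems} would be largely superfluous. Note also that applying \thmref{66} to $\psi$ itself merely replaces one sentence in the Boolean algebra of $AE$-sentences by another, so without the (false) $EA$ claim no progress has been made.

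Second, even granting that $R$ were an $EA$-sentence $\exists w\,\forall y\,\Phi$, its negation is $\forall w\,\exists y\,\neg\Phi$, an $AE$-sentence, not a universal sentence; it does not lie in the Boolean algebra of universal sentences, so the main theorem of the previous paper does not apply to it. The paper's own proof avoids both issues: by \lemref{62} and closure under negation it suffices to prove the single implication ``true over $F_{k}$ implies true over $\Gamma$'' separately for $EA$-sentences (handled by a result of the first paper about $EA$-sentences, not the Boolean algebra of universal sentences) and for $AE$-sentences, and the latter is established by constructing the proof system of $\psi$ over $F_{k}$ and observing that this proof system validates $\psi$ over $\Gamma$ with overwhelming probability.
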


\begin{proof}
If $\psi$ is an $EA$-sentence, and $\psi$ is true over $F_{k}$,
then, according to the first paper, $\psi$ is a truth sentence over
$\Gamma$. Hence, it suffices to prove that if $\psi$ is an $AE$-sentence
that is true over $F_{k}$, then $\psi$ is true over $\Gamma$. Indeed,
we construct the proof system for the sentence $\psi$ (as a $EAE$-formula
with empty set of parameters) over $F_{k}$. According to the construction
of proof systems in \secref{Proof-Systems}, the proof system of $\psi$
constructed over $F_{k}$, validates the truthiness of $\psi$ over
$\Gamma$ (with overwhelming probability). 
\end{proof}
\begin{thm}
\label{thm:70} Let $\psi$ be a first order sentence of minimal rank.
Let $\Gamma$ be a random group. Then, $\psi$ is a truth sentence
over $F_{k}$ if and only if $\psi$ is a truth sentence over $\Gamma$. 
\end{thm}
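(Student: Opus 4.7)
The plan is to combine the two preceding theorems, which already carry out the entire substance of the argument. By Theorem~\ref{thm:68}, there exists a minimal rank sentence $R(\psi)$ that belongs to the Boolean algebra of $AE$-sentences such that $\psi$ and $R(\psi)$ are equivalent over $F_k$, and equivalent over a random group $\Gamma$ (with overwhelming probability). The first task is therefore just to invoke Theorem~\ref{thm:68} and reduce to the $AE$ (Boolean) case.

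Once this reduction is made, I would apply Theorem~\ref{thm:69} to $R(\psi)$: as a minimal rank sentence in the Boolean algebra of $AE$-sentences, $R(\psi)$ is a truth sentence over $F_k$ if and only if it is a truth sentence over $\Gamma$ (with overwhelming probability). Chaining these two equivalences gives the theorem: $\psi$ is true over $F_k$ iff $R(\psi)$ is true over $F_k$ iff $R(\psi)$ is true over $\Gamma$ iff $\psi$ is true over $\Gamma$.

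Since all three intermediate ``with overwhelming probability'' statements concern countable many conditions indexed by the fixed sentence $\psi$ (the sentence is fixed in advance, so the reduction to $R(\psi)$ is a single deterministic replacement, and the probabilistic statements come from Theorem~\ref{thm:69} and Theorem~\ref{thm:68} applied once each), it suffices to intersect finitely many overwhelming-probability events to obtain a single overwhelming-probability event on which the whole chain of equivalences holds. Thus no difficulty arises from aggregating probabilities.

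The main obstacle is not in this step but in the two results it depends on: Theorem~\ref{thm:68} encapsulates the uniform quantifier elimination for minimal rank formulas, which is built inductively from Theorem~\ref{thm:66} by recursively peeling off the outermost $\forall\exists$ blocks and replacing the inner $AE$-formula with its uniform $AE$-equivalent $R(p)$; and Theorem~\ref{thm:69} handles the base case by constructing proof systems over $F_k$ and verifying (via Sections~\ref{sec:Proof-Systems} and~\ref{sec:An-Approximation-to-Abstract-EAE}) that these same proof systems testify the truth of $\psi$ over $\Gamma$. Given these two inputs, the proof of Theorem~\ref{thm:70} itself is immediate.
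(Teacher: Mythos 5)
Your proposal is correct and is essentially identical to the paper's own proof: it invokes Theorem~\ref{thm:68} to replace $\psi$ by an equivalent minimal rank sentence in the Boolean algebra of $AE$-sentences, then applies Theorem~\ref{thm:69} and chains the equivalences. The added remark about intersecting finitely many overwhelming-probability events is a harmless (and reasonable) elaboration that the paper leaves implicit.
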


\begin{proof}
According to \thmref{68}, there exists a sentence $\psi'$ in the
Boolean algebra of $AE$-sentences that is equivalent to $\psi$ over
$F_{k}$ and over $\Gamma$. According to \thmref{69}, $\psi'$ is
true over $F_{k}$ if and only if it is true over $\Gamma$. Hence,
$\psi$ is true over $F_{k}$ if and only if $\psi$ is true over
$\Gamma$. 
\end{proof}

\end{document}